\newcommand{\+}{\protect\nobreakdash-}
\renewcommand{\:}{\colon}
\newcommand{\rarrow}{\longrightarrow}
\newcommand{\ot}{\otimes}
\newcommand{\ocn}{\odot}
\newcommand{\tim}{\rightthreetimes}
\newcommand{\lrarrow}{\mskip.5\thinmuskip\relbar\joinrel\relbar\joinrel
 \rightarrow\mskip.5\thinmuskip\relax}
\newcommand{\bu}{{\text{\smaller\smaller$\scriptstyle\bullet$}}}
\newcommand{\ilim}
 {\mathop{\text{\normalfont``$\varinjlim$''\!\!}}\nolimits}
\DeclareMathOperator{\Hom}{Hom}
\DeclareMathOperator{\Ext}{Ext}
\DeclareMathOperator{\Tor}{Tor}
\DeclareMathOperator{\Spec}{Spec}
\DeclareMathOperator{\Tot}{Tot}
\newcommand{\Modl}{{\operatorname{\mathsf{--Mod}}}}
\newcommand{\Modr}{{\operatorname{\mathsf{Mod--}}}}
\newcommand{\Discr}{{\operatorname{\mathsf{Discr--}}}}
\newcommand{\Bimod}{{\operatorname{\mathsf{--Bimod--}}}}
\newcommand{\Comodl}{{\operatorname{\mathsf{--Comod}}}}
\newcommand{\Contra}{{\operatorname{\mathsf{--Contra}}}}
\newcommand{\qcoh}{{\operatorname{\mathsf{--qcoh}}}}
\newcommand{\dfl}{{\operatorname{\mathsf{-flat}}}}
\newcommand{\Sets}{\mathsf{Sets}}
\newcommand{\Fun}{\mathsf{Fun}}
\newcommand{\Com}{\mathsf{Com}}
\newcommand{\Ac}{\mathsf{Ac}}
\newcommand{\Ab}{\mathsf{Ab}}
\renewcommand{\flat}{\mathsf{flat}}
\renewcommand{\cot}{\mathsf{cot}}
\newcommand{\sep}{\mathsf{sep}}
\newcommand{\id}{{\mathrm{id}}}
\newcommand{\B}{\mathcal B}
\newcommand{\C}{\mathcal C}
\newcommand{\D}{\mathcal D}
\newcommand{\F}{\mathcal F}
\newcommand{\G}{\mathcal G}
\newcommand{\HH}{\mathcal H}
\newcommand{\LL}{\mathcal L}
\newcommand{\M}{\mathcal M}
\newcommand{\N}{\mathcal N}
\newcommand{\cO}{\mathcal O}
\newcommand{\R}{\mathfrak R}
\newcommand{\fP}{\mathfrak P}
\newcommand{\fQ}{\mathfrak Q}
\newcommand{\fF}{\mathfrak F}
\newcommand{\fG}{\mathfrak G}
\newcommand{\fH}{\mathfrak H}
\newcommand{\fI}{\mathfrak I}
\newcommand{\fJ}{\mathfrak J}
\newcommand{\fM}{\mathfrak M}
\newcommand{\fN}{\mathfrak N}
\newcommand{\fA}{\mathfrak A}
\newcommand{\fB}{\mathfrak B}
\newcommand{\fU}{\mathfrak U}
\newcommand{\fX}{\mathfrak X}
\newcommand{\fZ}{\mathfrak Z}
\newcommand{\sB}{\mathsf B} 
\newcommand{\sC}{\mathsf C}
\newcommand{\sD}{\mathsf D}
\newcommand{\sE}{\mathsf E}
\newcommand{\sF}{\mathsf F}
\newcommand{\sG}{\mathsf G}
\newcommand{\sK}{\mathsf K}
\newcommand{\sL}{\mathsf L}
\newcommand{\sM}{\mathsf M}
\newcommand{\sS}{\mathsf S}
\newcommand{\boZ}{\mathbb Z}
\newcommand{\Section}[1]{\bigskip\section{#1}\medskip}
\theoremstyle{plain}
\newtheorem{thm}{Theorem}[section]
\newtheorem{lem}[thm]{Lemma}
\newtheorem{prop}[thm]{Proposition}
\newtheorem{cor}[thm]{Corollary}
\theoremstyle{definition}
\newtheorem{ex}[thm]{Example}
\newtheorem{rem}[thm]{Remark}
\newtheorem{qst}[thm]{Question}
\newtheorem{qsts}[thm]{Questions}
\begin{document}

\title{Flat comodules and contramodules as directed colimits,
and cotorsion periodicity}

\author{Leonid Positselski}

\address{Institute of Mathematics, Czech Academy of Sciences \\
\v Zitn\'a~25, 115~67 Praha~1 \\ Czech Republic}

\email{positselski@math.cas.cz}

\begin{abstract}
 This paper is a follow-up to~\cite{PS6}.
 We consider two algebraic settings of comodules over a coring and
contramodules over a topological ring with a countable base of
two-sided ideals.
 These correspond to two (noncommutative) algebraic geometry settings
of certain kind of stacks and ind-affine ind-schemes.
 In the context of a coring $\C$ over a noncommutative ring $A$, we show
that all $A$\+flat $\C$\+comodules are $\aleph_1$\+directed colimits
of $A$\+countably presentable $A$\+flat $\C$\+comodules.
 In the context of a complete, separated topological ring $\R$
with a countable base of neighborhoods of zero consisting of
two-sided ideals, we prove that all flat $\R$\+contramodules are
$\aleph_1$\+directed colimits of countably presentable flat
$\R$\+contramodules.
 We also describe arbitrary complexes, short exact sequences, and pure
acyclic complexes of $A$\+flat $\C$\+comodules and flat
$\R$\+contramodules as $\aleph_1$\+directed colimits of similar
complexes of countably presentable objects.
 The arguments are based on a very general category-theoretic technique
going back to an unpublished 1977 preprint of Ulmer and rediscovered
in~\cite{Pacc}.
 Applications to cotorsion periodicity and coderived categories of
flat objects in the respective settings are discussed.
 In particular, in any acyclic complex of cotorsion $\R$\+contramodules,
all the contramodules of cocycles are cotorsion.
\end{abstract}

\maketitle

\tableofcontents

\section*{Introduction}
\medskip

 The classical \emph{Govorov--Lazard theorem}~\cite{Gov,Laz} says
that all flat modules (over an arbitrary associative ring~$R$) are
directed colimits of projective modules, and in fact, even of finitely
generated free $R$\+modules.
 In the context of algebraic geometry, over a nonaffine scheme $X$,
there are usually \emph{not} enough projective objects in the abelian
category of quasi-coherent sheaves $X\qcoh$, which makes the role of
flat quasi-coherent sheaves even more important than the role of
flat modules in module theory.
 What should a suitable version of the Govorov--Lazard theorem say
about flat quasi-coherent sheaves?

 One could try to use locally free or locally projective quasi-coherent
sheaves in the role of projective modules.
 But it is still an open problem whether there are enough locally
projective quasi-coherent sheaves on $X$ under any reasonable
assumptions on a scheme~$X$ \,\cite{To}.
 On the other hand, it is known that there are enough flat
quasi-coherent sheaves on any quasi-compact semi-separated
scheme~\cite[Section~2.4]{M-n}, \cite[Lemma~A.1]{EP}.
 So describing flat quasi-coherent sheaves is a worthwhile undertaking.

 The approach in this paper follows the idea that, in many homological
algebra contexts, one can use objects of finite projective dimension
in lieu of projective ones.
 It is known that any countably presentable flat $R$\+module has
projective dimension~$\le1$ \,\cite[Corollary~2.23]{GT}.
 Thus our suggested answer to the question in the first paragraph is
this: the Govorov--Lazard theorem in algebraic geometry should tell us
that any flat sheaf is a directed colimit of locally countably
presentable flat ones.
 Notice, for comparison, that any finitely presentable flat module is
projective.

 A partial result was obtained in the paper~\cite{EGO}, where it was
shown that, under certain additional assumptions on a quasi-compact
semi-separated scheme $X$, any flat quasi-coherent sheaf on $X$ is
a directed colimit of locally countably generated flat quasi-coherent
sheaves locally of projective dimension~$\le1$
\,\cite[Theorem~B or Theorem~4.9]{EGO}.
 In full generality, the desired assertion for quasi-compact
quasi-separated schemes (or even more generally, for countably
quasi-compact, countably quasi-separated schemes) $X$ was proved in
the preprint~\cite{PS6}: any flat quasi-coherent sheaf on $X$ is
an $\aleph_1$\+directed colimit of locally countably presentable
flat quasi-coherent sheaves~\cite[Theorem~2.4 or Theorem~3.5]{PS6}.
 Here an \emph{$\aleph_1$\+directed colimit} means the directed colimit
of a diagram indexed by an $\aleph_1$\+directed poset, i.~e., a poset
in which every countable subset has an upper bound.

 The present paper aims to extend the results of~\cite{PS6} to two
algebraic geometry settings more general than schemes: the stacks
and the ind-schemes.
 In fact, we don't assume our rings to be commutative in this paper,
so it also extends the results of~\cite{PS6} into certain noncommutative
algebraic geometry realms.

 The observation that certain noncommutative stacks $X$ can be described
by corings $\C$ over noncommutative rings $A$ is due to Kontsevich
and Rosenberg~\cite[Section~2]{KR}, \cite{KR2}.
 The quasi-coherent sheaves on $X$ are then interpreted as left
$\C$\+comodules (which form an abelian category whenever $\C$ is a flat
right $A$\+module).
 The quasi-coherent sheaf corresponding to a $\C$\+comodule $\M$ is
flat if and only if $\M$ is a flat $A$\+module.

 In commutative algebraic geometry, this description applies to
the stacks $X$ admitting a flat affine surjective morphism $U\rarrow X$
from an affine scheme~$U$.
 Specifically, one has $A=\cO(U)$ and $\C=\cO(U\times_XU)$.
 For example, if $X$ is a quasi-compact semi-separated scheme covered
by a finite collection of affine open subschemes $U_\alpha\subset X$,
then one can take $U$ to be the disjoint union
$U=\coprod_\alpha U_\alpha$.
 Then $A=\bigoplus_\alpha\cO(U_\alpha)$ and
$\C=\bigoplus_{\alpha,\beta}\cO(U_\alpha\cap U_\beta)$.
 The category of quasi-coherent sheaves on $X$ is equivalent to
the category of $\C$\+comodules, and the inverse image functor
$X\qcoh\rarrow U\qcoh$ corresponds to the forgetful functor
$\C\Comodl\rarrow A\Modl$.
 The $\C$\+comodule structure on a given $A$\+module describes
the descent/gluing datum needed to glue a quasi-coherent sheaf on $X$
from a given collection of quasi-coherent sheaves on~$U_\alpha$.

 Let $\C$ be a coring over a noncommutative ring~$A$ (in the sense of
the paper~\cite{Swe} and the book~\cite{BW}).
 Then we prove that any $\C$\+comodule that is flat as an $A$\+module
can be obtained as an $\aleph_1$\+directed colimit of $\C$\+comodules
that are flat and countably presentable as $A$\+modules
(see Theorem~\ref{flat-comodules-as-directed-colimits-theorem}).
 Similarly, any $\C$\+comodule is an $\aleph_1$\+directed colimit of
$\C$\+comodules that are countably presentable as $A$\+modules
(Remark~\ref{arbitrary-comodules-aleph1-loc-presentable}).
 Furthermore, any complex of $A$\+flat $\C$\+comodules is
an $\aleph_1$\+directed colimit of complexes of $A$\+countably
presentable $A$\+flat $\C$\+comodules
(by Proposition~\ref{complexes-of-flat-comodules-prop}).
 We also show that any short exact sequence of $A$\+flat $\C$\+comodules
is an $\aleph_1$\+directed colimit of short exact sequences of
$A$\+countably presentable $A$\+flat $\C$\+comodules (this is the result
of our Proposition~\ref{short-exact-sequences-of-flat-comodules}).
 More generally, any $A$\+pure acyclic complex of $A$\+flat
$\C$\+comodules is an $\aleph_1$\+directed colimit of $A$\+pure acyclic
complexes of $A$\+countably presentable $A$\+flat $\C$\+comodules (see
Corollary~\ref{comods-pure-acycl-complexes-as-aleph1-dir-colims}).

 A left $R$\+module $P$ is said to be \emph{cotorsion} if
$\Ext^1_R(F,P)=0$ for all flat left $R$\+modules~$F$.
 The \emph{cotorsion periodicity theorem} of Bazzoni,
Cort\'es-Izurdiaga, and Estrada~\cite{BCE} claims that, in any
acyclic complex of cotorsion $R$\+modules, all the modules of
cocycles are also cotorsion~\cite[Theorem~1.2(2),
Proposition~4.8(2), or Theorem~5.1(2)]{BCE}.

 The following comodule version of cotorsion periodicity theorem is
obtained in this paper.
 Assume that $\C$ is a flat right $A$\+module, all left $\C$\+comodules
are quotients of $A$\+flat left $\C$\+comodules, and all left
$\C$\+comodules having finite projective dimension as left $A$\+modules
also have finite projective dimension as $\C$\+comodules.
 Let us say that a left $\C$\+comodule $\B$ is \emph{cotorsion} if
$\Ext^1_\C(\F,\B)=0$ for all $A$\+flat left $\C$\+comodules~$\F$.
 Here $\Ext^*_\C$ denotes the $\Ext$ groups in the abelian category of
left $\C$\+comodules $\C\Comodl$.
 Then, in any acyclic complex of cotorsion left $\C$\+comodules,
the comodules of cocycles are also cotorsion
(see Theorem~\ref{comodule-cotorsion-periodicity-theorem}
and Corollary~\ref{comodule-cotorsion-cocycles-cor}).
 As a corollary of this periodicity theorem, we conclude that
the derived category of the abelian category of left $\C$\+comodules
is equivalent to the derived category of the exact category of
cotorsion left $\C$\+comodules
(Corollary~\ref{comodule-cotorsion-derived-equivalence}).

 Under the same assumptions as in the previous paragraph, we also
obtain the following description of $A$\+pure acyclic complexes of
$A$\+flat left $\C$\+comodules.
 Such complexes are $\aleph_1$\+directed colimits of totalizations of
finite acyclic complexes (of a certain fixed length) of complexes of
$A$\+countably presentable $A$\+flat $\C$\+comodules.
 This result, based on~\cite[Lemma~2.1]{Psemi} and~\cite[proof of
Proposition~8.8]{Pedg}, provides a comodule version of the well-known
description of pure acyclic complexes of flat $R$\+modules as
directed colimits of finite contractible complexes of finitely
generated projective
$R$\+modules~\cite[Theorem~2.4\,(1)\,$\Leftrightarrow$\,(3)]{EG},
\cite[Theorem~8.6\,(ii)\,$\Leftrightarrow$\,(iii)]{Neem}.
 This is our
Theorem~\ref{flat-comodule-pure-acyclic-complexes-characterized}.

 The discussion of ind-schemes in this paper is mostly restricted to
strict ind-affine $\aleph_0$\+ind-schemes, i.~e., the ind-schemes
represented by countable directed diagrams of closed immersions of
affine schemes.
 The category of such ind-schemes $\mathfrak X$ is anti-equivalent to
the category of complete, separated topological commutative rings $\R$
with a countable base of neighborhoods of zero consisting of open
ideals~\cite[Example~7.11.2(i)]{BD2}, \cite[Example~1.6(2)]{Psemten}.
 Flat pro-quasi-coherent pro-sheaves on $\mathfrak X$ (in the sense
of~\cite[Section~7.11.3]{BD2}, \cite[Section~3.4]{Psemten}) are
described by flat $\R$\+contramodules, while the category of all
pro-quasi-coherent pro-sheaves on $\mathfrak X$ is equivalent to
the category of \emph{separated}
$\R$\+contramodules~\cite[Examples~3.8(1\+-2)]{Psemten}.

 As a noncommutative generalization of this class of ind-schemes, we
consider complete, separated topological associative rings $\R$ with
a countable base of neighborhoods of zero consisting of open
\emph{two-sided} ideals.
 Contramodules over such topological rings $\R$ were discussed
in the long preprint~\cite[Appendix~E]{Pcosh}, while the more general
case of a countable base of \emph{right} ideals was studied in
the paper~\cite{PR}.

 In this context (assuming a countable topology base of two-sided
ideals in~$\R$), we prove that any flat $\R$\+contramodule is
an $\aleph_1$\+directed colimit of countably presentable flat
$\R$\+contramodules (see
Theorem~\ref{flat-contramodules-as-directed-colimits-theorem}).
 Furthermore, any complex of flat $\R$\+contramodules is
an $\aleph_1$\+directed colimit of complexes of countably presentable
flat $\R$\+contramodules (by
Proposition~\ref{complexes-of-flat-contramodules-prop}).
 We also show that any short exact sequence of flat
$\R$\+contramodules is an $\aleph_1$\+directed colimit of short exact
sequences of countably presentable flat $\R$\+contramodules
(Proposition~\ref{short-exact-sequences-of-flat-contramodules}), and
any acyclic complex of flat $\R$\+contramodules with flat
$\R$\+contramodules of cocycles is an $\aleph_1$\+directed colimit of
such complexes of countably presentable flat $\R$\+contramodules
(Corollary~\ref{contramods-pure-acycl-cplxs-as-aleph1-dir-colims}).
 It follows that any acyclic complexes of flat $\R$\+contramodules
with flat $\R$\+contramodules of cocycles is an $\aleph_1$\+directed
colimit of totalizations of short exact sequences of complexes of
flat $\R$\+contramodules
(Theorem~\ref{flat-contramodule-pure-acyclic-complexes-charact-ed}).

 Then we deduce the following contramodule version of cotorsion
periodicity theorem.
 A left $\R$\+contramodule $\fB$ is said to be \emph{cotorsion} if
$\Ext^{\R,1}(\fF,\fB)=0$ for all flat left $\R$\+contramodules~$\fF$.
 Here $\Ext^{\R,*}$ denotes the $\Ext$ groups in the abelian
category of left $\R$\+contramodules $\R\Contra$.
 The theorem claims that, in any acyclic complex of cotorsion
$\R$\+contramodules, the contramodules of cocycles are also cotorsion
(see Theorem~\ref{contramodule-cotorsion-periodicity-theorem}
and Corollary~\ref{contramodule-cotorsion-cocycles-cor}).
 As a corollary of this periodicity theorem, we deduce an equivalence
between the derived category of the abelian category of left
$\R$\+contramodules and the derived category of the exact category of
cotorsion left $\R$\+contramodules
(Corollary~\ref{contramodule-cotorsion-derived-equivalence}).

 Periodicity theorems can be thought of as expressing special
properties of directed colimit closures of exact categories (see
the paper~\cite{Plce} for a discussion of the general concept of
such directed colimit closure).
 We refer to the introduction to~\cite{BHP} for a quick survey on
periodicity theorems and to the preprint~\cite[Sections~7.8
and~7.10]{Pphil} for a discussion of the mentioned point of view
on periodicity theorems.
 In particular, the category of flat modules is the directed colimit
closure of the category of finitely generated projective modules,
which is split exact.
 In this context, the flat/projective periodicity theorem~\cite{BG,Neem}
can be interpreted as saying that \emph{for the exact category of flat
modules, the contraderived category coincides with the derived
category}~\cite[Theorem~7.14]{Pphil}, while the cotorsion periodicity
theorem~\cite{BCE} is closely related to the assertion that \emph{for
the exact category of flat modules, the coderived category coincides
with the derived category}~\cite[Theorem~7.18]{Pphil}.

 We refer the reader to the survey paper~\cite[Section~7]{Pksurv} for
a discussion of the history and philosophy of the coderived and
contraderived categories (see also~\cite[Remark~9.2]{PS4}).
 More advanced discussions of the coderived and contraderived categories
in the context relevant to the present paper can be found in
the papers~\cite{PS4,PS5} (see~\cite{Pedg} for a different point
of view).
 In this paper we observe that, under suitable assumptions, the results
describing the pure acyclic complexes of flat objects as the directed
colimits of pure acyclic complexes of countably presentable flat objects
can be interpreted as implying that \emph{for the exact category of
such flat objects, the coderived category coincides with the derived
category}.
 The description of the coderived category obtained
in~\cite[Corollary~0.5 or Proposition~8.13]{PS5} is used as
the reference point for the analogy or comparison here, in connection
with our
Theorems~\ref{flat-comodule-pure-acyclic-complexes-characterized}
and~\ref{flat-contramodule-pure-acyclic-complexes-charact-ed}.

 Let us say a few words about the proofs.
 The proofs of the main results of this paper are based on very
general category-theoretic observations concerning preservation of
$\kappa$\+accessible categories and $\kappa$\+presentable objects
(for a regular cardinal~$\kappa$) by category-theoretic constructions
such as the pseudopullback~\cite[Proposition~3.1]{CR},
\cite[Pseudopullback Theorem~2.2]{RR},
the equifier~\cite[Section~3]{Pacc}, and
the inserter~\cite[Section~4]{Pacc}.
 These results, going back to an unpublished 1977 preprint of
Ulmer~\cite{Ulm} and rediscovered in~\cite{Pacc}, depend on
the assumption of existence of a smaller infinite cardinal
$\lambda<\kappa$ such that the $\kappa$\+accessible categories involved
have colimits of $\lambda$\+indexed chains and the functors involved
preserve such colimits.
 In the situation at hand, we take $\kappa=\aleph_1$ and
$\lambda=\aleph_0$.
 The key observation is that the classes of $A$\+flat $\C$\+comodules
and flat $\R$\+contramodules, as well as various classes of complexes
of flat comodules and flat contramodules, are preserved by all directed
colimits.
 This allows to prove that the respective categories are
$\aleph_1$\+accessible, and describe their full subcategories
of $\aleph_1$\+presentable objects.

\subsection*{Acknowledgement}
 I~am grateful to Jan \v St\!'ov\'\i\v cek for helpful discussions.
 The author is supported by the GA\v CR project 23-05148S and
the Czech Academy of Sciences (RVO~67985840).

\Section{Preliminaries on Accessible Categories}
\label{accessible-preliminaries-secn}

 Let $\kappa$~be a regular cardinal.
 We refer to~\cite[Definition~1.4, Theorem~1.5, Corollary~1.5,
Definition~1.13(1), and Remark~1.21]{AR} for the discussion of
\emph{$\kappa$\+directed posets} vs.\ \emph{$\kappa$\+filtered small
categories}, and accordingly, $\kappa$\+directed vs.\
$\kappa$\+filtered colimits.

 Let $\sC$ be a category with $\kappa$\+directed (equivalently,
$\kappa$\+filtered) colimits.
 An object $S\in\sC$ is said to be \emph{$\kappa$\+presentable} if
the functor $\Hom_\sC(S,{-})\:\sC\rarrow\Sets$ preserves
$\kappa$\+directed colimits (see~\cite[Definition~1.13(2)]{AR}).
 We will denote the class of all $\kappa$\+presentable objects of $\sC$
by $\sC_{<\kappa}\subset\sC$.
 The $\aleph_0$\+presentable objects are called {finitely presentable}
(see~\cite[Definition~1.1]{AR}); and the $\aleph_1$\+presentable objects
can be similarly called \emph{countably presentable}.

 This category-theoretic terminology is consistent with
the module-theoretic one.
 Given an associative ring $A$, an $A$\+module $M$ is 
$\kappa$\+presentable as an object of the category of left $A$\+modules
$A\Modl$ (in the sense of the definition above) if and only if it is
the cokernel of a morphism of free $A$\+modules with less than~$\kappa$
generators.

 Given an additive category $\sE$, we denote by $\Com(\sE)$
the category of (unbounded) cochain complexes in~$\sE$.
 Clearly, if $\kappa$\+directed colimits exist in $\sE$, then they
also exist in $\Com(\sE)$.
 The following lemma is standard.

\begin{lem} \label{complexes-presentability}
\textup{(a)} Let\/ $\sE$ be an additive category with directed colimits.
 Then any \emph{bounded} complex of finitely presentable objects of\/
$\sE$ is a finitely presentable object in\/ $\Com(\sE)$. \par
\textup{(b)} Let $\kappa$~be an \emph{uncountable} regular cardinal
and\/ $\sE$ be an additive category with $\kappa$\+directed colimits.
 Then any complex of $\kappa$\+presentable objects of\/ $\sE$ is
a $\kappa$\+presentable object in\/ $\Com(\sE)$.
\end{lem}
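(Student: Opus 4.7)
The plan is to express the $\Hom$-set in $\Com(\sE)$ as a limit (in $\Sets$) of the individual $\Hom$-sets $\Hom_\sE(C^n, D^m)$, and then invoke the classical commutation of suitably small limits with suitably directed colimits in $\Sets$. Unpacking the definition of a morphism of complexes, one has the natural equalizer presentation
\[
\Hom_{\Com(\sE)}(C^\bullet, D^\bullet) \;=\; \mathrm{eq}\!\left(\prod_{n \in \boZ} \Hom_\sE(C^n, D^n) \;\rightrightarrows\; \prod_{n \in \boZ} \Hom_\sE(C^n, D^{n+1})\right),
\]
in which the two parallel arrows are induced by post-composition with the differentials of $D^\bullet$ and by pre-composition with the differentials of $C^\bullet$, respectively. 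I will also use the standard fact that directed colimits in $\Com(\sE)$ are computed termwise, i.e., $(\varinjlim_i D_i^\bullet)^n = \varinjlim_i D_i^n$.

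For part~(a), since $C^\bullet$ is bounded, $C^n = 0$ for all but finitely many $n \in \boZ$, and the indexing sets in both products above reduce to a common finite subset of~$\boZ$. Each functor $\Hom_\sE(C^n, -)$ preserves directed colimits by the finite presentability of $C^n$; finite products and equalizers are finite limits in $\Sets$; and finite limits commute with directed colimits in $\Sets$. Composing these facts yields that $\Hom_{\Com(\sE)}(C^\bullet, -)$ preserves directed colimits, which is precisely the assertion.

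For part~(b), the indexing set $\boZ$ is countable and hence of cardinality strictly less than the uncountable regular cardinal~$\kappa$; consequently the equalizer diagram above is a $\kappa$-small limit in $\Sets$. Each $\Hom_\sE(C^n, -)$ preserves $\kappa$-directed colimits by the $\kappa$-presentability of $C^n$, and $\kappa$-directed colimits commute with $\kappa$-small limits in $\Sets$. Together these observations allow one to pull the $\kappa$-directed colimit through the entire equalizer expression, showing that $\Hom_{\Com(\sE)}(C^\bullet, -)$ preserves $\kappa$-directed colimits.

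The only nontrivial ingredient in either part is the standard commutation property of $\kappa$-small limits with $\kappa$-directed colimits in $\Sets$, which is precisely the reason uncountability of~$\kappa$ is required in (b) to handle the countable indexing by~$\boZ$. Otherwise the argument is a routine unpacking of the definition of a morphism of complexes, so I do not anticipate any genuine obstacle.
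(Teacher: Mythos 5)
Your proof is correct and follows essentially the same route as the paper: the paper's (very terse) proof just invokes the commutation of directed colimits with finite limits for part~(a) and of $\kappa$\+directed colimits with countable limits for part~(b), which is exactly the commutation principle you apply after writing out the equalizer presentation of $\Hom_{\Com(\sE)}(C^\bu,{-})$ and noting that colimits in $\Com(\sE)$ are computed termwise. Your version simply makes the limit diagram explicit; there is no gap.
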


\begin{proof}
 Part~(a) holds, because directed colimits commute with finite limits
in the category of abelian groups.
 Part~(b) is true due to the fact that $\aleph_1$\+directed colimits
commute with countable limits of abelian groups.
\end{proof}

 A category $\sC$ with $\kappa$\+directed colimits is said to be
\emph{$\kappa$\+accessible} if there is a \emph{set} of
$\kappa$\+presentable objects $\sS\subset\sC$ such that
every object of $\sC$ is a $\kappa$\+directed colimit of objects
from~$\sS$ (see~\cite[Definition~2.1]{AR}).
 If this is the case, then the $\kappa$\+presentable objects of $\sC$
are precisely all the retracts of objects from~$\sS$.
 A $\kappa$\+accessible category $\sC$ is said to be \emph{locally
$\kappa$\+presentable} if all colimits exist in~$\sC$
(see~\cite[Definition~1.17 and Theorem~1.20]{AR}).
 The $\aleph_0$\+accessible categories are called \emph{finitely
accessible}~\cite[Remark~2.2(1)]{AR}, and the locally
$\aleph_0$\+presentable categories are called \emph{locally finitely
presentable}~\cite[Definition~1.9 and Theorem~1.11]{AR}.

 We denote by $A\Modl_\flat\subset A\Modl$ the full subcategory of
flat left $A$\+modules.

\begin{lem} \label{flat-modules-accessible}
 For any ring $A$ and any regular cardinal~$\kappa$, the category of
flat left $A$\+modules $A\Modl_\flat$ is $\kappa$\+accessible.
 The $\kappa$\+presentable objects of $A\Modl_\flat$ are precisely
all the $\kappa$\+presentable flat $A$\+modules (i.~e., the flat
$A$\+modules that are $\kappa$\+presentable in $A\Modl$).
\end{lem}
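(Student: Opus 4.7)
The plan is to combine the classical Govorov--Lazard theorem with the standard trick of refining a directed colimit into a $\kappa$-directed system of ``small'' partial colimits.

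First I would observe that $A\Modl_\flat\subset A\Modl$ is closed under directed colimits, because directed colimits of flat left $A$-modules are flat. In particular $A\Modl_\flat$ has all $\kappa$-directed colimits, and they are computed as in $A\Modl$.

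For the accessibility, by Govorov--Lazard every flat left $A$-module $F$ is a directed colimit $F=\varinjlim_{i\in I}P_i$ of finitely generated free $A$-modules~$P_i$. The case $\kappa=\aleph_0$ is then immediate, with $P_i$ themselves finitely presentable in $A\Modl$. For an uncountable regular $\kappa$, I would let $J$ be the collection of all directed subposets $j\subseteq I$ with $|j|<\kappa$, ordered by inclusion. Regularity of $\kappa$ together with directedness of $I$ makes $J$ a $\kappa$-directed poset, and the canonical map $\varinjlim_{j\in J}F_j\rarrow F$ is an isomorphism, where $F_j=\varinjlim_{i\in j}P_i$. Each $F_j$ is flat (being a directed colimit of flats) and is $\kappa$-presentable in $A\Modl$, as it is a colimit of fewer than $\kappa$ finitely presentable $A$-modules. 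Since there is, up to isomorphism, only a set of $\kappa$-presentable $A$-modules, this supplies the required generating set of $\kappa$-presentable objects in $A\Modl_\flat$.

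For the identification of the $\kappa$-presentable objects, the easy inclusion is the general fact: if $\sC\subseteq\sD$ is a full subcategory closed under $\kappa$-directed colimits, any object of $\sC$ that is $\kappa$-presentable in $\sD$ is $\kappa$-presentable in $\sC$. So every $\kappa$-presentable flat $A$-module is a $\kappa$-presentable object of $A\Modl_\flat$. Conversely, if $F\in A\Modl_\flat$ is $\kappa$-presentable in $A\Modl_\flat$, then by the previous paragraph I can write $F=\varinjlim_{j\in J}F_j$ as a $\kappa$-directed colimit in $A\Modl_\flat$ of objects $F_j$ that are $\kappa$-presentable in $A\Modl$. The identity $F\rarrow F$ factors through some structure map $F_j\rarrow F$, exhibiting $F$ as a retract of $F_j$. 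Since $\kappa$-presentable objects in $A\Modl$ are closed under retracts, $F$ itself is $\kappa$-presentable in $A\Modl$.

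The only step requiring any nontrivial verification is the $\kappa$-directedness of $J$ together with the identification $\varinjlim_{j\in J}F_j=F$; this is a familiar accessibility-theoretic refinement, and essentially encodes the fact that a finitely accessible category is $\kappa$-accessible for every uncountable regular~$\kappa$. Everything else in the argument is formal.
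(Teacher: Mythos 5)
Your proposal is correct and follows essentially the same route as the paper: Govorov--Lazard gives finite accessibility, the passage to $\kappa$-accessibility and the retract argument for identifying the $\kappa$-presentable objects are exactly the content of the results of Ad\'amek--Rosick\'y (Theorem~2.11 and Example~2.13(1)) that the paper cites, and your two auxiliary observations (closure of $A\Modl_\flat$ under directed colimits, and $\kappa$-presentability of colimits over posets of size $<\kappa$ of $\kappa$-presentable objects) are the same two points the paper singles out. The only difference is that you unpack the cited accessibility-theoretic refinement instead of quoting it.
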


\begin{proof}
 The Govorov--Lazard description of flat $A$\+modules as the directed
colimits of finitely generated projective $A$\+modules~\cite{Gov},
\cite{Laz}, \cite[Corollary~2.22]{GT} means that the category
$A\Modl_\flat$ is finitely accessible, and the finitely generated
projective $A$\+modules are its finitely presentable objects.
 By~\cite[Theorem~2.11 and Example~2.13(1)]{AR}, it follows that
the category $A\Modl_\flat$ is $\kappa$\+presentable for all
regular cardinals~$\kappa$; and the argument in~\cite[proof of
Theorem~2.11\,(iv)\,$\Rightarrow$\,(i)]{AR} implies the desired
description of $\kappa$\+presentable objects.
 The point is that all flat $A$\+modules that are $\kappa$\+presentable
in $A\Modl$ are also $\kappa$\+presentable in $A\Modl_\flat$, since
the full subcategory $A\Modl_\flat$ is closed under directed colimits
in $A\Modl$.
 On the other hand, any directed colimit of $\kappa$\+presentable flat
$A$\+modules indexed by a directed poset of cardinality smaller
than~$\kappa$ is a $\kappa$\+presentable flat $A$\+module again.
\end{proof}

 The following lemma illustrates the significance of countably
presentable (i.~e., $\aleph_1$\+presentable) flat modules.

\begin{lem} \label{count-pres-flat-mods}
 Any countably presentable flat module over an associative ring $A$ is
a countable directed colimit of finitely generated free $A$\+modules.
 Consequently, the projective dimension of any countably presentable
flat module is less than or equal to\/~$1$.
\end{lem}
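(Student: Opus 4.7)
The plan is to combine the $\aleph_0$\+accessibility of $A\Modl_\flat$ recorded in Lemma~\ref{flat-modules-accessible} with the classical explicit construction going back to Lazard~\cite{Laz}, and then to deduce the bound on projective dimension from the mapping-telescope resolution.

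Let $M$ be a countably presentable flat left $A$\+module. I would first fix a surjection $\pi\:A^{(\omega)}\twoheadrightarrow M$ from a countably generated free $A$\+module; since $M$ is countably presented, the kernel $K=\ker\pi$ is also countably generated, and I would fix enumerations $\{m_k\}_{k\ge1}$ of generators of $M$ and $\{r_k\}_{k\ge1}$ of generators of~$K$. The main tool is Lazard's local characterization of flatness: for every morphism $\psi\:F\rarrow M$ from a finitely generated free module $F$ and every finitely generated submodule $L\subseteq\ker\psi$, there exists a factorization $F\rarrow F'\rarrow M$ through a finitely generated free $F'$ such that $L$ is contained in the kernel of $F\rarrow F'$.

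Using this, I would build by induction on $n\ge 0$ finitely generated free modules $F_n$, transition maps $\iota_n\:F_n\rarrow F_{n+1}$, and compatible maps $\phi_n\:F_n\rarrow M$ (with $\phi_{n+1}\iota_n=\phi_n$), starting from $F_0=0$. At each step, $F_{n+1}$ is first enlarged so that the image of $\phi_{n+1}$ picks up~$m_{n+1}$; then Lazard's criterion is applied to force a prescribed finitely generated piece of $\ker\phi_n$ to lie in $\ker\iota_n$. A diagonal enumeration ensures that \emph{every} finitely generated submodule of $\ker\phi_n$ for each~$n$ is eventually killed further along the chain; here the countable presentability of $M$ is essential, because the countably many relations $r_k$ provide a countable cofinal family of finitely generated submodules of $K$ that one needs to kill, and everything else in each $\ker\phi_n$ can be reduced to this. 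Organizing this bookkeeping is the one delicate technical point of the proof. The resulting colimit $\varinjlim_n F_n$ then maps to $M$ surjectively by the first step of the construction and injectively by the second, hence isomorphically.

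For the final assertion on projective dimension, I would assemble $\{F_n,\iota_n\}$ into the standard mapping-telescope short exact sequence
\[
 0\rarrow\bigoplus_{n\ge 0}F_n\xrightarrow{\,\id-\iota\,}\bigoplus_{n\ge 0}F_n\rarrow M\rarrow 0,
\]
whose middle map sends $x\in F_n$ to $x-\iota_n(x)\in F_n\oplus F_{n+1}$. Since both outer terms are free, this is a free resolution of $M$ of length~$1$, whence $\mathrm{pd}_A M\le 1$.
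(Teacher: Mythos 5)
The second half of your argument is correct and coincides with the paper's: the paper cites \cite[Corollary~2.23]{GT} for the first assertion and then, exactly as you do, reads off $\mathrm{pd}_A M\le 1$ from the telescope presentation $0\to\bigoplus_n F_n\to\bigoplus_n F_n\to M\to 0$.

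The gap is in your reconstruction of the first assertion, at the injectivity of $\varinjlim_n F_n\to M$. For injectivity you must arrange that \emph{every} element of $\ker\phi_n$ dies at some later stage, but your only killing mechanism is Lazard's local criterion, which disposes of one finitely generated submodule $L\subseteq\ker\phi_n$ per step; over the whole construction you can therefore only kill countably many finitely generated submodules, so you need each $\ker\phi_n$ to be exhausted by a countable family of such submodules, i.e.\ to be countably generated. This is not automatic: the kernel of a non-surjective morphism from a finitely generated free module to a countably presentable module need not be countably generated (already $\ker(A\xrightarrow{\;\cdot b\;}A)=\{a\in A: ab=0\}$ can fail to be countably generated for suitable $A$ and $b$, even though $A$ itself is finitely presented and flat), and your $\phi_n$ are produced by Lazard factorizations through \emph{abstract} finitely generated free modules $F'$, so their kernels bear no controlled relation to the fixed relation module $K=\langle r_1,r_2,\dots\rangle$. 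The sentence ``everything else in each $\ker\phi_n$ can be reduced to this'' is precisely the missing argument, and I do not see how to supply it within your setup. Two standard repairs: (i)~first use countable presentability alone to write $M=\varinjlim_n C_n$ as the colimit of a countable \emph{chain} of finitely presented modules, then interleave this chain with finitely generated free modules using the factorization form of flatness (every map from a finitely presented module to $M$ factors through a finitely generated free module); injectivity of $\varinjlim_n G_n\to M$ is then automatic from cofinality of the interleaved chain and no kernels need to be controlled. Or (ii)~keep every $F_n$ as a finitely generated free direct summand of the fixed $P=A^{(\omega)}$ with $\phi_n=\pi|_{F_n}$, and take transition maps of the form $(\mathrm{id}-\theta_n)|_{F_n}$, where $\theta_n\colon P\to K$ fixes $r_1,\dots,r_{g(n)}$ (Villamayor's criterion for flatness of $P/K$); then $\ker\phi_n=K\cap F_n$, each of whose elements lies in $\langle r_1,\dots,r_N\rangle$ for some finite $N$ and is killed once $g(m)$ is large enough, provided $g$ is chosen adaptively so as to dominate the indices of the relations occurring in $\theta_i(F_{i})$ for $i<m$. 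Either route uses the countably many relations in the way you intended, but the reduction has to be built into the construction rather than asserted afterwards.
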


\begin{proof}
 See~\cite[Corollary~2.23]{GT}.
 The second assertion of the lemma follows from the first one
because the telescope construction of countable directed colimits
provides a two-term projective resolution of any countable directed
colimit of projective $A$\+modules.
\end{proof}

 In the rest of this section, we discuss several category-theoretic
constructions which will be used in this paper: the product,
the pseudopullback, the isomorpher, the inserter, the equifier,
and the diagram category.
 We recall results from the papers~\cite{Ulm,CR,RR,Pacc} concerning
$\kappa$\+accessibility of the categories produced by such constructions
and the descriptions of $\kappa$\+presentable objects in these
categories.

 In almost all the contexts, we will consider a regular
cardinal~$\kappa$ and a smaller infinite cardinal $\lambda<\kappa$
(so $\kappa$~has to be uncountable).
 In the applications in the main body of the paper, we will use
$\kappa=\aleph_1$ and $\lambda=\aleph_0$.

 A \emph{$\lambda$\+indexed chain} (of objects and morphisms) in
a category $\sC$ is a functor $\lambda\rarrow\sC$, where $\lambda$ is
considered as an ordinal, and this ordinal, viewed as an ordered set,
is considered as a small category.
 In other words, a $\lambda$\+indexed chain is a directed diagram
$(C_i\to C_j)_{0\le i<j<\lambda}$ in~$\sC$.

\begin{prop} \label{product-proposition}
 Let $\kappa$~be a regular cardinal and\/ $\Xi$ be a set of
cardinality smaller than~$\kappa$.
 Let $(\sK_\xi)_{\xi\in\Xi}$ be a family of $\kappa$\+accessible
categories indexed by the set\/~$\Xi$.
 Then the Cartesian product\/ $\sK=\prod_{\xi\in\Xi}\sK_\xi$ is also
a $\kappa$\+accessible category.
 The $\kappa$\+presentable objects of\/ $\sK$ are precisely all
the collections of objects $(S_\xi\in\sK_\xi)_{\xi\in\Xi}$ with
$S_\xi\in(\sK_\xi)_{<\kappa}$ for all $\xi\in\Xi$.
\end{prop}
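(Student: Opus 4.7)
The plan is to reduce everything to component-wise statements and exploit the fact that $\kappa$\+directed colimits in $\Sets$ commute with $\kappa$\+small limits---in particular, with products indexed by $\Xi$, whose cardinality is smaller than~$\kappa$. As a first step I would note that all (small) limits and colimits in $\sK$ are computed component-wise; so $\kappa$\+directed colimits exist in~$\sK$. To verify that any collection $(S_\xi)_{\xi\in\Xi}$ with $S_\xi\in(\sK_\xi)_{<\kappa}$ for every~$\xi$ is $\kappa$\+presentable in~$\sK$, I would apply the formula
\[
\Hom_\sK\bigl((S_\xi),(X_\xi)\bigr)=\prod_{\xi\in\Xi}\Hom_{\sK_\xi}(S_\xi,X_\xi).
\]
Given a $\kappa$\+directed colimit $(X_\xi)=\varinjlim_\alpha(X_\xi^{(\alpha)})$ (computed component-wise), each factor $\Hom_{\sK_\xi}(S_\xi,{-})$ preserves this colimit by hypothesis, and the product of these factors over a set of fewer than~$\kappa$ indices still preserves $\kappa$\+directed colimits, since that product is a $\kappa$\+small limit in~$\Sets$.

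For $\kappa$\+accessibility of~$\sK$, I would pick, for each $\xi\in\Xi$, a set $\sS_\xi\subset(\sK_\xi)_{<\kappa}$ witnessing the $\kappa$\+accessibility of~$\sK_\xi$, and form $\sS=\prod_\xi\sS_\xi$ (a genuine set since $\Xi$ is small). Every element of $\sS$ is $\kappa$\+presentable in~$\sK$ by the preceding paragraph. Given any $(X_\xi)\in\sK$, I would write each $X_\xi$ as a $\kappa$\+directed colimit of a diagram $D_\xi\:I_\xi\rarrow\sK_\xi$ with values in $\sS_\xi$, and then consider the product poset $I=\prod_\xi I_\xi$. This poset is $\kappa$\+directed, because any family of fewer than~$\kappa$ elements of~$I$ projects in each coordinate to such a family in~$I_\xi$, which admits an upper bound by $\kappa$\+directedness of~$I_\xi$. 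The functor $D\:I\rarrow\sK$ sending $(i_\xi)$ to $(D_\xi(i_\xi))$ takes values in~$\sS$, and since each projection $I\rarrow I_\xi$ is cofinal (its comma fibers are $\kappa$\+directed, hence nonempty and connected), the component-wise colimit of~$D$ recovers $(X_\xi)$.

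The remaining claim---that $\kappa$\+presentability of $(S_\xi)\in\sK$ forces $S_\xi\in(\sK_\xi)_{<\kappa}$ for each~$\xi$---follows from the standard principle, already invoked in the proof of Lemma~\ref{flat-modules-accessible}, that in a $\kappa$\+accessible category with generating set $\sS$ of $\kappa$\+presentable objects the $\kappa$\+presentable objects are precisely the retracts of members of~$\sS$. A retract $(S_\xi)$ of $(T_\xi)\in\sS$ yields component-wise retractions, so each $S_\xi$ is a retract of $T_\xi\in\sS_\xi$ in~$\sK_\xi$ and therefore $\kappa$\+presentable there. The delicate point is the cofinality argument for the product-poset projections in the accessibility step; this is routine, but relies essentially on $\kappa$\+directedness of each $I_\xi$ and on the assumption $|\Xi|<\kappa$, which keeps the relevant products on the $\kappa$\+small side of the $\Sets$-level commutation of limits with $\kappa$\+directed colimits.
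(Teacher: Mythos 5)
Your proof is correct and follows essentially the same route as the paper, which simply delegates to~\cite[Proposition~2.1]{Pacc}: the componentwise computation of Hom sets and colimits, the commutation of $\kappa$\+small products with $\kappa$\+directed colimits in $\Sets$, the $\kappa$\+directed product poset $\prod_\xi I_\xi$ with cofinal projections, and the retract characterization of $\kappa$\+presentable objects in a $\kappa$\+accessible category. All the points you flag as delicate (cofinality of the projections, the hypothesis $|\Xi|<\kappa$) are handled correctly.
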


\begin{proof}
 This is a corrected version of~\cite[proof of Proposition~2.67]{AR}.
 See~\cite[Proposition~2.1]{Pacc} for the details.
\end{proof}

 Let $\sK_1$, $\sK_2$, and $\sL$ be three categories, and let
$F_1\:\sK_1\rarrow\sL$ and $F_2\:\sK_2\rarrow\sL$ be two functors.
 The \emph{pseudopullback} $\sC$ of the pair of functors $F_1$, $F_2$
is defined as the category of triples $(K_1,K_2,\theta)$, where
$K_1\in\sK_1$ and $K_2\in\sK_2$ are two objects, and $\theta\:F_1(K_1)
\simeq F_2(K_2)$ is an isomorphism in~$\sL$.

\begin{thm} \label{pseudopullback-theorem}
 Let $\kappa$ be a regular cardinal and $\lambda<\kappa$ be a smaller
infinite cardinal.
 Let\/ $\sK_1$, $\sK_2$, and\/ $\sL$ be three $\kappa$\+accessible
categories where all $\lambda$\+indexed chains have colimits.
 Assume that two functors $F_1\:\sK_1\rarrow\sL$ and $F_2\:\sK_2\rarrow
\sL$ preserve $\kappa$\+directed colimits and colimits of
$\lambda$\+indexed chains.
 Assume further that the functors $F_1$ and $F_2$ take
$\kappa$\+presentable objects to $\kappa$\+presentable objects.
 Then the pseudopullback category\/ $\sC$ is $\kappa$\+accessible.
 The $\kappa$\+presentable objects of\/ $\sC$ are precisely all
the triples $(S_1,S_2,\theta)\in\sC$ with $S_1\in(\sK_1)_{<\kappa}$
and $S_2\in(\sK_2)_{<\kappa}$.
\end{thm}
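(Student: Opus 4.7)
The plan is to follow the standard template for $\kappa$\+accessibility of pseudopullbacks (as in~\cite{Ulm,CR,RR,Pacc}), with the $\lambda$\+chain colimit hypothesis driving a back-and-forth refinement. First I would verify that $\sC$ has $\kappa$\+directed colimits, computed componentwise: for a $\kappa$\+directed diagram $((K_{1,i},K_{2,i},\theta_i))$, form $K_j=\varinjlim_i K_{j,i}$ in $\sK_j$; since $F_1$ and $F_2$ preserve $\kappa$\+directed colimits, the isomorphisms $\theta_i$ assemble into an isomorphism $\theta\:F_1(K_1)\to F_2(K_2)$, yielding a colimit $(K_1,K_2,\theta)$ in $\sC$. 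Call a triple \emph{small} if its components are $\kappa$\+presentable in $\sK_1$ and $\sK_2$. Because $F_j$ sends $\kappa$\+presentables to $\kappa$\+presentables and there is only a set of isomorphisms between two fixed $\kappa$\+presentable objects of $\sL$, the small triples form an essentially small class $\sC_0\subseteq\sC$.

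Next I would show that every small triple $(S_1,S_2,\sigma)$ is $\kappa$\+presentable in $\sC$. Given a morphism into a $\kappa$\+directed colimit $\varinjlim(K_{1,i},K_{2,i},\theta_i)$, the $\kappa$\+presentability of $S_1$ and $S_2$ in $\sK_1$ and $\sK_2$ produces factorizations through some index, and the $\kappa$\+presentability of $F_j(S_j)$ in $\sL$ (together with componentwise computation of colimits) forces these factorizations to be compatible with the isomorphism data at a sufficiently late stage.

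The heart of the argument is \emph{density}: every $(K_1,K_2,\theta)\in\sC$ must appear as a $\kappa$\+directed colimit of objects of $\sC_0$. Considering the comma category $\sC_0/(K_1,K_2,\theta)$, I need to check (i)~it is $\kappa$\+filtered and (ii)~its canonical cocone is a colimit cocone. Point (ii) follows from the componentwise colimit formula in $\sC$ and the $\kappa$\+accessibility of $\sK_1$, $\sK_2$, once (i) is in hand and one knows that enough $\kappa$\+presentable subobjects of $K_j$ extend to small triples over $(K_1,K_2,\theta)$, which is a byproduct of the construction used for (i).

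The main obstacle is (i): given fewer than $\kappa$ small triples $(S_{1,\alpha},S_{2,\alpha},\sigma_\alpha)\to(K_1,K_2,\theta)$, one must produce a common upper bound in $\sC_0/(K_1,K_2,\theta)$. Using $\kappa$\+accessibility of $\sK_1$ and $\sK_2$, I can enlarge to a single pair of $\kappa$\+presentable objects $T_1^{(0)}\to K_1$ and $T_2^{(0)}\to K_2$ receiving all the $S_{j,\alpha}$, but the composite $F_1(T_1^{(0)})\to F_1(K_1)\xrightarrow{\theta}F_2(K_2)$ generally fails to factor through $F_2(T_2^{(0)})$. To fix this I would run an Ulmer-style back-and-forth along the ordinal~$\lambda$: at successor stages, alternately factor $F_1(T_1^{(n)})\to F_2(K_2)$ through some $F_2(T_2^{(n+1)})$ with $T_2^{(n)}\to T_2^{(n+1)}$ in $\sK_2$ (possible because $K_2$ is a $\kappa$\+directed colimit of $\kappa$\+presentables, $F_2$ preserves this colimit, and $F_1(T_1^{(n)})$ is $\kappa$\+presentable in $\sL$), and symmetrically via $\theta^{-1}$ produce $T_1^{(n)}\to T_1^{(n+1)}$; at limit stages below~$\lambda$, take $\lambda$\+chain colimits in $\sK_1$ and $\sK_2$, which exist by hypothesis and remain $\kappa$\+presentable since $\lambda<\kappa$. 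Because $F_1$ and $F_2$ preserve colimits of $\lambda$\+indexed chains, the accumulating partial factorizations assemble at stage~$\lambda$ into mutually inverse morphisms $F_1(T_1^{(\lambda)})\rightleftarrows F_2(T_2^{(\lambda)})$ compatible with~$\theta$, producing the desired small triple $(T_1^{(\lambda)},T_2^{(\lambda)},\sigma)$ dominating the given family. The essential role of the $\lambda$\+chain hypothesis is precisely to ensure that the zig-zag closes up into a genuine isomorphism at stage~$\lambda$, rather than merely an approximation.
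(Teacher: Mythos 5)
Your argument is correct in substance and follows essentially the same route as the sources the paper cites for this theorem (the paper gives no independent proof, deferring to Raptis--Rosick\'y, Chorny--Rosick\'y, Ulmer, and~\cite{Pacc}): componentwise $\kappa$\+directed colimits, $\kappa$\+presentability of the ``small'' triples via $\kappa$\+presentability of $F_j(S_j)$ in $\sL$, and a back-and-forth refinement of length~$\lambda$ to establish $\kappa$\+filteredness of the canonical diagram. One imprecision to repair: at a limit stage $\mu<\lambda$ the chain built so far is $\mu$\+indexed, and the hypotheses only provide colimits of $\lambda$\+indexed chains, so you cannot ``take $\lambda$\+chain colimits'' there; for $\lambda=\aleph_0$ (the only case used in this paper) no such stages occur, and for general~$\lambda$ one instead chooses at stage~$\mu$ an upper bound of the $\mu$\+chain inside the $\kappa$\+filtered canonical diagram of $\kappa$\+presentables over $K_j$ (possible since $\mu<\kappa$), reserving the actual chain colimit for the final stage~$\lambda$. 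With that adjustment, and the observation that retracts of small triples are again small triples (so the ``precisely'' clause holds), the proof is complete.
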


\begin{proof}
 This is~\cite[Pseudopullback Theorem~2.2]{RR}, based on the argument
in~\cite[proof of Proposition~3.1]{CR}.
 The assertion essentially goes back to~\cite[Remark~3.2(I),
Theorem~3.8, Corollary~3.9, and Remark~3.11(II)]{Ulm}.
 See also~\cite[Corollary~5.1]{Pacc}.
\end{proof}

 Let $\sK$ and $\sL$ be two categories, and let $F_1$, $F_2\:\sK
\rightrightarrows\sL$ be two parallel functors.
 The \emph{isomorpher} $\sC$ of the pair of functors $F_1$, $F_2$ is
defined as the category of pairs $(K,\theta)$, where $K\in\sK$ is
an object and $\theta\:F_1(K)\simeq F_2(K)$ is an isomorphism in~$\sL$.

\begin{thm} \label{isomorpher-theorem}
 Let $\kappa$ be a regular cardinal and $\lambda<\kappa$ be a smaller
infinite cardinal.
 Let\/ $\sK$ and \/ $\sL$ be two $\kappa$\+accessible categories
where all $\lambda$\+indexed chains have colimits.
 Assume that two parallel functors $F_1$, $F_2\: \sK\rightrightarrows
\sL$ preserve $\kappa$\+directed colimits and colimits of
$\lambda$\+indexed chains.
 Assume further that the functors $F_1$ and $F_2$ take
$\kappa$\+presentable objects to $\kappa$\+presentable objects.
 Then the isomorpher category\/ $\sC$ is $\kappa$\+accessible.
 The $\kappa$\+presentable objects of\/ $\sC$ are precisely all
the pairs $(S,\theta)\in\sC$ with $S\in\sK_{<\kappa}$.
\end{thm}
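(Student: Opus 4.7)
The plan is to reduce this to the Pseudopullback Theorem~\ref{pseudopullback-theorem} by exhibiting the isomorpher $\sC$ as equivalent to a pseudopullback. Specifically, I consider the pseudopullback $\sC'$ of the pair of functors $(F_1,F_2)\colon\sK\rarrow\sL\times\sL$ and the diagonal functor $\Delta\colon\sL\rarrow\sL\times\sL$. Objects of $\sC'$ are quadruples $(K,L,\alpha,\beta)$ with $K\in\sK$, $L\in\sL$, and isomorphisms $\alpha\colon F_1(K)\simeq L$, $\beta\colon F_2(K)\simeq L$ in~$\sL$. The functor $\sC\rarrow\sC'$ sending $(K,\theta)\mapsto(K,F_2(K),\theta,\id_{F_2(K)})$ has a quasi-inverse $(K,L,\alpha,\beta)\mapsto(K,\beta^{-1}\alpha)$, giving an equivalence of categories $\sC\simeq\sC'$ by a routine diagram check.

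Next, the hypotheses of Theorem~\ref{pseudopullback-theorem} need to be verified for $(F_1,F_2)$ and~$\Delta$ with target~$\sL\times\sL$. By Proposition~\ref{product-proposition}, the product $\sL\times\sL$ is $\kappa$\+accessible, and $\lambda$\+indexed chain colimits exist there and are computed coordinatewise. The functor $(F_1,F_2)\colon\sK\rarrow\sL\times\sL$ preserves $\kappa$\+directed and $\lambda$\+indexed chain colimits since $F_1$ and $F_2$ do; and it sends $\kappa$\+presentable objects to $\kappa$\+presentable objects by the hypothesis on $F_1$, $F_2$ combined with the characterization of $\kappa$\+presentable objects in $\sL\times\sL$ from Proposition~\ref{product-proposition}. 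The diagonal~$\Delta$ trivially preserves all colimits, and sends $\kappa$\+presentable objects to $\kappa$\+presentable ones, again by the same Proposition.

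Applying Theorem~\ref{pseudopullback-theorem} then yields that $\sC'$ is $\kappa$\+accessible and its $\kappa$\+presentable objects are precisely the quadruples $(K,L,\alpha,\beta)$ with $K\in\sK_{<\kappa}$ and $L\in\sL_{<\kappa}$. Transferring along the equivalence $\sC\simeq\sC'$, it remains to identify these with the class asserted in the theorem. The key observation is that in any object $(K,L,\alpha,\beta)$ of~$\sC'$ one has $L\simeq F_2(K)$, so if $K\in\sK_{<\kappa}$ then $L\in\sL_{<\kappa}$ automatically, by the assumption that $F_2$ preserves $\kappa$\+presentable objects. Conversely, the object $(S,\theta)\in\sC$ is carried to $(S,F_2(S),\theta,\id)\in\sC'$, which is $\kappa$\+presentable if and only if $S\in\sK_{<\kappa}$. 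I do not expect any serious obstacle; the only subtlety is keeping track of the equivalence $\sC\simeq\sC'$ correctly and invoking the preservation of $\kappa$\+presentables by $F_2$ at the right moment.
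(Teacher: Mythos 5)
Your proof is correct and follows essentially the same route as the paper, which also derives the isomorpher theorem as a reformulation of the pseudopullback theorem (deferring the details of the reduction to a cited remark in~\cite{Pacc}). You have simply made that reduction explicit, via the pseudopullback of $(F_1,F_2)$ against the diagonal $\Delta\:\sL\rarrow\sL\times\sL$, and the verification of the hypotheses and the identification of $\kappa$\+presentable objects are all in order.
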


\begin{proof}
 This is an equivalent reformulation of
Theorem~\ref{pseudopullback-theorem}; see~\cite[Remark~5.2]{Pacc}.
 The assertion can be also viewed as a particular case
of~\cite[Remark~3.2(I), Theorem~3.8, Corollary~3.9, and
Remark~3.11(II)]{Ulm}.
\end{proof}

 Let $\sK$ and $\sL$ be two categories, and $F$, $G\:\sK
\rightrightarrows\sL$ be two parallel functors.
 The \emph{inserter} $\sD$ of the pair of functors $F$, $G$ is
defined~\cite[Section~2.71]{AR} as the category of pairs $(K,\phi)$,
where $K\in\sK$ is an object and $\phi\:F(K)\rarrow G(K)$ is
a morphism in~$\sL$.
 
\begin{thm} \label{inserter-theorem}
 Let $\kappa$ be a regular cardinal and $\lambda<\kappa$ be a smaller
infinite cardinal.
 Let\/ $\sK$ and \/ $\sL$ be two $\kappa$\+accessible categories
where all $\lambda$\+indexed chains have colimits.
 Assume that two parallel functors $F$, $G\: \sK\rightrightarrows
\sL$ preserve $\kappa$\+directed colimits and colimits of
$\lambda$\+indexed chains.
 Assume further that the functor $F$ takes $\kappa$\+presentable objects
to $\kappa$\+presentable objects.
 Then the inserter category\/ $\sD$ is $\kappa$\+accessible.
 The $\kappa$\+presentable objects of\/ $\sD$ are precisely all
the pairs $(S,\phi)\in\sD$ with $S\in\sK_{<\kappa}$.
\end{thm}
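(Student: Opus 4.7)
The plan is to mirror the strategy used for the pseudopullback and isomorpher theorems: exhibit a set of $\kappa$-presentable generators in $\sD$, verify that every pair $(S,\phi)$ with $S \in \sK_{<\kappa}$ is $\kappa$-presentable in $\sD$, and show every object of $\sD$ is a $\kappa$-directed colimit of such pairs.

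First, I would observe that the forgetful functor $\sD \rarrow \sK$ creates $\kappa$-directed colimits: given a $\kappa$-directed diagram $\{(K_i,\phi_i)\}$ in $\sD$, set $K=\varinjlim K_i$ and use preservation of $\kappa$-directed colimits by $F$ and $G$ to define $\phi = \varinjlim\phi_i \: F(K)\rarrow G(K)$. Then any $(S,\phi)\in\sD$ with $S\in\sK_{<\kappa}$ is $\kappa$-presentable in $\sD$. Indeed, a morphism $(S,\phi)\rarrow(K,\psi)=\varinjlim(K_i,\psi_i)$ is a map $f\:S\rarrow K$ satisfying $G(f)\circ\phi = \psi\circ F(f)$; the $\kappa$-presentability of $S$ in $\sK$ factors $f$ through some $K_i$, and the $\kappa$-presentability of $F(S)$ in $\sL$, guaranteed by the hypothesis on $F$, witnesses the required equation (living in $\Hom_\sL(F(S), G(K))$) at some later stage of the colimit.

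The main step is representing an arbitrary $(K,\phi)\in\sD$ as a $\kappa$-directed colimit of such pairs. Fix a generating set $\sS\subset\sK_{<\kappa}$ for $\sK$ and write $K=\varinjlim_{i\in I}S_i$ as a $\kappa$-directed colimit with $S_i\in\sS$. Since $F(S_i)\in\sL_{<\kappa}$ by hypothesis, the composite $F(S_i)\rarrow F(K)\rarrow G(K)=\varinjlim G(S_j)$ factors through some $G(S_{j(i)})$ with $j(i)\ge i$ in $I$. To turn this into an honest morphism $F(\bar S_i)\rarrow G(\bar S_i)$, I would iterate the factorization along a $\lambda$-indexed chain $S_i=S_i^{(0)}\rarrow S_i^{(1)}\rarrow\cdots$ inside $\sS$ (at each successor step absorbing the previously chosen $j$, at limit steps taking colimits in $\sK$). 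Setting $\bar S_i=\varinjlim_{\alpha<\lambda}S_i^{(\alpha)}$, we have $\bar S_i\in\sK_{<\kappa}$ because $\lambda<\kappa$, and preservation of $\lambda$-indexed chain colimits by $F$ and $G$ turns the maps $F(S_i^{(\alpha)})\rarrow G(S_i^{(\alpha+1)})$ into the desired $\bar\phi_i\:F(\bar S_i)\rarrow G(\bar S_i)$, automatically compatible with $\phi$ under the structure map $\bar S_i\rarrow K$. Passing to a cofinal sub-system of such $(\bar S_i,\bar\phi_i)$ yields a $\kappa$-directed diagram in $\sD$ whose colimit is $(K,\phi)$.

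The main obstacle is the combinatorial bookkeeping needed to make these iterated factorizations \emph{coherent} across different starting indices $i\in I$, so that the collection $\{(\bar S_i,\bar\phi_i)\}$ is actually $\kappa$-directed in $\sD$ and not merely a pointwise approximation to $(K,\phi)$. This is the same delicate point that arises in the proofs of Theorems~\ref{pseudopullback-theorem} and~\ref{isomorpher-theorem}, and the organization would follow the pattern of~\cite[Section~4]{Pacc}; the only simplification relative to the isomorpher case is that a morphism rather than an isomorphism is tracked, which is why the hypothesis that $G$ preserves $\kappa$-presentable objects is not required.
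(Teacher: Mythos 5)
Your proposal is correct and follows essentially the same route as the paper, which simply cites \cite[Theorem~3.8, Corollary~3.9, and Remark~3.11(II)]{Ulm} and \cite[Theorem~4.1]{Pacc}: the argument there is exactly your two-step scheme, namely that the forgetful functor to $\sK$ creates $\kappa$\+directed colimits (whence pairs $(S,\phi)$ with $S\in\sK_{<\kappa}$ are $\kappa$\+presentable, using that $F(S)$ is $\kappa$\+presentable and $G$ preserves $\kappa$\+directed colimits), followed by the iterated factorization of $F(S_i)\rarrow G(K)$ along a $\lambda$\+indexed chain, with the colimit of the chain staying $\kappa$\+presentable because $\lambda<\kappa$. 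Your closing observation about why only $F$, and not $G$, needs to preserve $\kappa$\+presentable objects is also the correct explanation of the asymmetry between the inserter and isomorpher statements.
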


\begin{proof}
 This is~\cite[Theorem~3.8, Corollary~3.9, and Remark~3.11(II)]{Ulm}
or~\cite[Theorem~4.1]{Pacc}.
\end{proof}

 Let $\sD$ and $\sM$ be two categories, $F$, $G\:\sD\rightrightarrows
\sM$ be two parallel functors, and $\phi$, $\psi\:F\rightrightarrows G$
be two parallel natural transformations.
 The \emph{equifier} $\sE$ of the pair of natural transformations
$\phi$,~$\psi$ is defined~\cite[Lemma~2.76]{AR} as the full subcategory
in $\sD$ consisting of all objects $E\in\sD$ for which the two morphisms
$\phi_E\:F(E)\rarrow G(E)$ and $\psi_E\:F(E)\rarrow G(E)$ are equal
to each other, i.~e., $\phi_E=\psi_E$.

\begin{thm} \label{equifier-theorem}
 Let $\kappa$ be a regular cardinal and $\lambda<\kappa$ be a smaller
infinite cardinal.
 Let\/ $\sD$ and \/ $\sM$ be two $\kappa$\+accessible categories
where all $\lambda$\+indexed chains have colimits.
 Assume that two parallel functors $F$, $G\: \sD\rightrightarrows
\sM$ preserve $\kappa$\+directed colimits and colimits of
$\lambda$\+indexed chains.
 Assume further that the functor $F$ takes $\kappa$\+presentable objects
to $\kappa$\+presentable objects.
 Let $\phi$, $\psi\:F\rightrightarrows G$ be two parallel natural
transformations.
 Then the equifier category\/ $\sE$ is $\kappa$\+accessible.
 The $\kappa$\+presentable objects of\/ $\sE$ are precisely all
the objects of\/ $\sE$ that are $\kappa$\+presentable as objects
of\/~$\sD$.
\end{thm}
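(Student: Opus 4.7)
My plan is to model the argument on Theorems~\ref{pseudopullback-theorem} and~\ref{inserter-theorem} above, using a $\lambda$\+indexed chain iteration to ``fix'' the discrepancy between $\phi_D$ and $\psi_D$ on $\kappa$\+presentable approximations of a given object of~$\sE$. The opening observation will be that the full subcategory $\sE\subset\sD$ is closed under $\kappa$\+directed colimits as well as colimits of $\lambda$\+indexed chains: if $E=\varinjlim E_i$ is such a colimit with each $E_i\in\sE$, and both $F$ and $G$ preserve it, then $\phi_E$ and $\psi_E$ are the colimits of the equal families $(\phi_{E_i})_i$ and $(\psi_{E_i})_i$, so $\phi_E=\psi_E$. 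In particular, $\sE$ inherits $\kappa$\+directed colimits from $\sD$, and any object of $\sE$ that happens to be $\kappa$\+presentable in $\sD$ is automatically $\kappa$\+presentable in $\sE$.

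The heart of the proof will be a lifting construction: given $E\in\sE$ and a morphism $h\:D\to E$ with $D$ $\kappa$\+presentable in $\sD$, I will build a $\lambda$\+indexed chain $D=D^{(0)}\to D^{(1)}\to D^{(2)}\to\cdots$ of $\kappa$\+presentable objects of $\sD$ equipped with compatible morphisms $h_n\:D^{(n)}\to E$, such that every transition $\alpha_n\:D^{(n)}\to D^{(n+1)}$ satisfies $\phi_{D^{(n+1)}}\circ F(\alpha_n)=\psi_{D^{(n+1)}}\circ F(\alpha_n)$. For the successor step, I start from the identity $G(h_n)\circ\phi_{D^{(n)}}=G(h_n)\circ\psi_{D^{(n)}}$, which follows from $\phi_E=\psi_E$ by naturality applied to~$h_n$. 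The hypothesis that $F$ preserves $\kappa$\+presentability gives that $F(D^{(n)})$ is $\kappa$\+presentable in $\sM$, and since $G$ preserves the $\kappa$\+directed colimit expressing~$E$ as the colimit of the $\kappa$\+filtered diagram of $\kappa$\+presentable objects of $\sD$ over~$E$ that receive $D^{(n)}$, the two maps $F(D^{(n)})\to G(E)$ must already coincide after composition with $G(\alpha_n)$ for some suitably chosen $D^{(n+1)}\to E$ in this diagram. Naturality of $\phi$ and $\psi$ then converts this into the required equality on the level of $\phi_{D^{(n+1)}}$ and $\psi_{D^{(n+1)}}$.

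For the limit step, I will set $D^*=\varinjlim_{n<\lambda}D^{(n)}$, using the hypothesis that $\sD$ has $\lambda$\+indexed chain colimits; the resulting object comes with a compatible map $h^*\:D^*\to E$ and is $\kappa$\+presentable in $\sD$ because $\lambda<\kappa$ and $\kappa$\+presentable objects are stable under $\lambda$\+small colimits. The preservation of $\lambda$\+indexed chain colimits by $F$ and $G$ yields $F(D^*)=\varinjlim F(D^{(n)})$ and $G(D^*)=\varinjlim G(D^{(n)})$; a short diagram chase combining naturality with the step condition then shows that $\phi_{D^*}$ and $\psi_{D^*}$ agree after precomposition with each coprojection $F(D^{(n)})\to F(D^*)$, whence $\phi_{D^*}=\psi_{D^*}$ and $D^*\in\sE$. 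By the opening observation, $D^*$ is $\kappa$\+presentable in~$\sE$ as well.

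To finish, I will argue that the inclusion of the category of $\kappa$\+presentable objects of $\sE$ over~$E$ into the $\kappa$\+filtered category of $\kappa$\+presentable objects of $\sD$ over~$E$ is cofinal: the existence of factorizations $D\to D^*\to E$ with $D^*\in\sE$ is the lifting result above, and two parallel such factorizations can be equalized by first coequalizing inside $\sD$ and then applying the lifting once more. Since the larger diagram has colimit~$E$ in $\sD$, so does the smaller one, exhibiting every object of $\sE$ as a $\kappa$\+directed colimit of $\kappa$\+presentable objects of~$\sE$; this yields $\kappa$\+accessibility of $\sE$, and the description of $\sE_{<\kappa}$ follows by combining the opening observation with the fact that every $\kappa$\+presentable object of a $\kappa$\+accessible category is a retract of one in a generating set, since retracts of $\kappa$\+presentable objects of $\sD$ remain $\kappa$\+presentable in~$\sD$. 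The main technical difficulty is the $\lambda$\+step iteration: a single successor step only enforces equalization on the image of $F(\alpha_n)$ in $F(D^{(n+1)})$, not on all of $F(D^{(n+1)})$, and it is precisely the passage through the $\lambda$\+indexed chain colimit that upgrades these partial equalizations into the genuine equality $\phi_{D^*}=\psi_{D^*}$.
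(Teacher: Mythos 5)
Your proposal is correct and follows essentially the same route as the proofs the paper points to (\cite{Ulm} and \cite[Theorem~3.1]{Pacc}): the paper gives no argument beyond the citation, and the cited proofs proceed exactly by iterating your presentability/naturality step along a $\lambda$\+indexed chain inside the canonical $\kappa$\+filtered diagram of $\kappa$\+presentable objects over~$E$ and then passing to the chain colimit, on which $F$ and $G$ are assumed to behave well. Two small points to tidy: when $\lambda>\aleph_0$ the chain has limit ordinals $\beta<\lambda$, at which you should take $D^{(\beta)}$ to be an upper bound of the preceding stages in the $\kappa$\+filtered comma category $\sD_{<\kappa}\downarrow E$ (no colimit is needed, and none is guaranteed to exist there); and ``coequalizing inside $\sD$'' in the final cofinality step should likewise be read as invoking the $\kappa$\+filteredness of $\sD_{<\kappa}\downarrow E$ rather than actual coequalizers, which $\sD$ need not have.
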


\begin{proof}
 This is~\cite[Theorem~3.8, Corollary~3.9, and Remark~3.11(II)]{Ulm}
or~\cite[Theorem~3.1]{Pacc}.
\end{proof}

 Let $k$~be a commutative ring.
 The notion of a \emph{$\kappa$\+presented $k$\+linear category}
spelled out in~\cite[Section~6]{Pacc} represents an intuitively clear
idea of a quiver with less than~$\kappa$ vertices, less than~$\kappa$
arrows, and less than~$\kappa$ relations between $k$\+linear
combinations of various iterated compositions of the arrows.
 We refer to~\cite{Pacc} for the details.
 Given a small $k$\+linear category $D$ and a $k$\+linear category
$\sK$, we denote by $\Fun_k(D,\sK)$ the category of $k$\+linear
functors $D\rarrow\sK$.

\begin{thm} \label{diagram-theorem}
 Let $\kappa$ be a regular cardinal and $\lambda<\kappa$ be a smaller
infinite cardinal.
 Let\/ $\sC$ be a $\kappa$\+accessible $k$\+linear category where all
$\lambda$\+indexed chains have colimits, and let $D$ be
a $\kappa$\+presented $k$\+linear category.
 Then the diagram category\/ $\Fun_k(D,\sC)$ is $\kappa$\+accessible.
 The $\kappa$\+presentable objects of\/ $\Fun_k(D,\sC)$ are precisely
all the $k$\+linear functors $D\rarrow\sC_{<\kappa}$.
\end{thm}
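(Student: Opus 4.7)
My plan is to realize $\Fun_k(D,\sC)$ by successively applying the categorical constructions covered by the preceding theorems, in the spirit of~\cite[Section~6]{Pacc}. By the definition of a $\kappa$\+presented $k$\+linear category, a $k$\+linear functor $F\:D\rarrow\sC$ is specified by three pieces of data: (i)~an object $F(v)\in\sC$ for each of the fewer than~$\kappa$ vertices $v$ of $D$; (ii)~a morphism $F(a)\:F(v)\rarrow F(w)$ in~$\sC$ for each of the fewer than~$\kappa$ generating arrows $a\:v\rarrow w$; and (iii)~the validity of the fewer than~$\kappa$ prescribed $k$\+linear relations between iterated compositions of these morphisms.

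To handle~(i), I would form the product category $\sK_0=\prod_{v}\sC$ and invoke Proposition~\ref{product-proposition} to conclude that $\sK_0$ is $\kappa$\+accessible, with $\kappa$\+presentable objects exactly the families $(X_v)_v$ such that $X_v\in\sC_{<\kappa}$ for every~$v$. Colimits of $\lambda$\+indexed chains exist componentwise in~$\sK_0$, and every projection $\pi_v\:\sK_0\rarrow\sC$ preserves both $\kappa$\+directed and $\lambda$\+indexed chain colimits, as well as $\kappa$\+presentable objects. To handle~(ii), I would introduce the arrow data transfinitely: well-ordering the generating arrows in order type less than~$\kappa$, I would apply Theorem~\ref{inserter-theorem} at each stage to the pair of projection functors on the category built so far that pick out the source and target of the next arrow, obtaining at each stage a new $\kappa$\+accessible category whose $\kappa$\+presentable objects remain precisely those whose underlying families of vertex components lie in $\sC_{<\kappa}$. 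To handle~(iii), each relation translates into the equality of two parallel $k$\+linear natural transformations between $k$\+linear functors expressible through the projections and the inserted morphisms constructed in the previous step; a successive application of Theorem~\ref{equifier-theorem} over the fewer than~$\kappa$ relations then yields a $\kappa$\+accessible category canonically equivalent to $\Fun_k(D,\sC)$, with the same description of $\kappa$\+presentable objects.

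The main obstacle I anticipate is the bookkeeping for the transfinite iteration. One must verify inductively that, after each inserter or equifier step, the resulting intermediate category is still $\kappa$\+accessible, still admits colimits of $\lambda$\+indexed chains that are compatible with the forgetful functor back to $\sK_0$, and that the projection-style functors to $\sC$ needed for the next step continue to preserve $\kappa$\+directed colimits, $\lambda$\+indexed chain colimits, and $\kappa$\+presentable objects. The induction hypothesis, namely the uniform description of $\kappa$\+presentable objects at each stage in terms of vertex components lying in $\sC_{<\kappa}$, is exactly what is required to feed into the next invocation of Theorem~\ref{inserter-theorem} or Theorem~\ref{equifier-theorem}, so the induction closes. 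Once this induction is in hand, the final identification of the $\kappa$\+presentable objects of $\Fun_k(D,\sC)$ with the $k$\+linear functors $D\rarrow\sC_{<\kappa}$ follows immediately.
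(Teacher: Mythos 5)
Your overall strategy --- realizing $\Fun_k(D,\sC)$ by combining the product, inserter, and equifier constructions --- is the right one; the paper's own proof is nothing but a citation of \cite[Theorem~6.2]{Pacc}, whose argument proceeds along exactly these lines. The genuine problem is your treatment of steps~(ii) and~(iii) by \emph{transfinite iteration} over the arrows and relations. Since $D$ may have infinitely many generating arrows (e.g.\ countably many when $\kappa=\aleph_1$), your well-ordered iteration has limit stages, and at a limit stage the ``category built so far'' is a pseudolimit of the preceding tower of inserter categories. None of the results you invoke (Proposition~\ref{product-proposition}, Theorems~\ref{inserter-theorem} and~\ref{equifier-theorem}) says anything about preservation of $\kappa$-accessibility, of colimits of $\lambda$-indexed chains, or of the description of $\kappa$-presentable objects under such pseudolimits of towers; that is a separate and nontrivial limit theorem. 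So the induction you describe does not close at limit ordinals, and the ``bookkeeping'' you flag as the main obstacle is in fact a missing ingredient rather than a routine verification.

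The fix is to avoid the iteration altogether by inserting all arrows in a single step and equifying all relations in a single step. Take $\sL=\prod_{a}\sC$, the product over the set of generating arrows (of cardinality less than~$\kappa$, so Proposition~\ref{product-proposition} applies), and let $F$, $G\:\sK_0\rightrightarrows\sL$ send $(X_v)_v$ to $(X_{s(a)})_a$ and $(X_{t(a)})_a$ respectively, where $s(a)$ and $t(a)$ denote the source and target of the arrow~$a$; these functors preserve $\kappa$-directed colimits, colimits of $\lambda$-indexed chains, and $\kappa$-presentable objects, so a single application of Theorem~\ref{inserter-theorem} produces the category of representations of the underlying quiver together with the desired description of its $\kappa$-presentable objects. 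Similarly, take $\sM=\prod_{\rho}\sC$ over the set of relations, let $F$ and $G$ be the evaluations at the source and target vertices of the relations, and let $\phi$, $\psi\:F\rightrightarrows G$ be the natural transformations whose components are the two $k$-linear combinations of iterated compositions occurring in each relation~$\rho$; one application of Theorem~\ref{equifier-theorem} then cuts out $\Fun_k(D,\sC)$ and yields the stated description of its $\kappa$-presentable objects. With this modification your argument is complete and agrees with the proof of \cite[Theorem~6.2]{Pacc}.
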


\begin{proof}
 This is~\cite[Theorem~6.2]{Pacc}.
\end{proof}

\Section{Preliminaries on Corings and Comodules}
\label{comod-preliminaries-secn}

 The following definitions seem to go back to Sweedler's
paper~\cite{Swe}.
 A detailed exposition can be found in the book~\cite{BW}.
 The book~\cite{Psemi} and the survey paper~\cite[Sections~2.5
and~3.4]{Prev} can be used as further references.

 A \emph{coring} over an associative ring $A$ is a comonoid object in
the monoidal category of $A$\+$A$\+bimodules with respect to
the tensor product over~$A$.
 In other words, a coring $\C$ over $A$ is an $A$\+$A$\+bimodule
endowed with $A$\+$A$\+bimodule maps of \emph{comultiplication}
$\mu\:\C\rarrow\C\ot_A\C$ and \emph{counit} $\epsilon\:\C\rarrow A$
satisfying the usual coassociativity and counitality axioms.
 Specifically, the two compositions $(\mu\ot\id_\C)\circ\mu$ and
$(\id_\C\ot\mu)\circ\mu$ must be equal to each other,
$$
 \C\rarrow\C\ot_A\C\rightrightarrows\C\ot_A\C\ot_A\C,
$$
and both the compositions $(\epsilon\ot\id_\C)\circ\mu$ and
$(\id_\C\ot\epsilon)\circ\mu$ must be equal to
the identity map~$\id_\C$,
$$
 \C\rarrow\C\ot_A\C\rightrightarrows\C.
$$

 The category of left $A$\+modules $A\Modl$ is a left module category
(with respect to the tensor product operation) over the monoidal
category of $A$\+$A$\+bimodules $A\Bimod A$.
 A \emph{left\/ $\C$\+comodule} is a comodule object in the module
category $A\Modl$ over the comonoid object $\C$ in the monoidal
category $A\Bimod A$.
 In other words, a left comodule $\M$ over $\C$ is a left $A$\+module
endowed with a left $A$\+module map of \emph{left coaction}
$\nu\:\M\rarrow\C\ot_A\M$ satisfying the usual coassociativity and
counitality axioms together with the comultiplication map~$\mu$ and
the counit map~$\epsilon$ of the coring~$\C$.
 Specifically, the two compositions $(\mu\ot\id_\M)\circ\nu$ and
$(\id_\C\ot\nu)\circ\nu$ must be equal to each other,
$$
 \M\rarrow\C\ot_A\M\rightrightarrows\C\ot_A\C\ot_A\M,
$$
and the composition $(\epsilon\ot\id_\M)\circ\nu$ must be equal to
the identity map~$\id_\M$,
$$
 \M\rarrow\C\ot_A\M\rarrow\M.
$$

 We denote the additive category of left $\C$\+comodules by
$\C\Comodl$, and use the notation $\Hom_\C({-},{-})$ for the groups
of morphisms in $\C\Comodl$.
 A $\C$\+comodule is said to be \emph{$A$\+flat} if it is flat as
an $A$\+module.
 The full subcategory of $A$\+flat left $\C$\+comodules is denoted
by $\C\Comodl_{A\dfl}\subset\C\Comodl$.

\begin{lem} \label{comodule-category-lemma}
 Let\/ $\C$ be a coring over a ring~$A$. \par
\textup{(a)} All colimits exist in the additive category of
left\/ $\C$\+comodules.
 The forgetful functor\/ $\C\Comodl\rarrow A\Modl$ preserves colimits
(in particular, coproducts). \par
\textup{(b)} If\/ $\C$ is a flat right $A$\+module, then the category\/
$\C\Comodl$ is a Grothendieck abelian category, and the forgetful
functor\/ $\C\Comodl\rarrow A\Modl$ is exact. \par
\textup{(c)} Conversely, if the category\/ $\C\Comodl$ is abelian
\emph{and} the forgetful functor\/ $\C\Comodl\rarrow A\Modl$ is exact,
then\/ $\C$ is a flat right $A$\+module. \par
\textup{(d)} There exists a coring\/ $\C$ such that the category\/
$\C\Comodl$ is \emph{not} abelian.
\end{lem}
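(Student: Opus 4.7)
The plan is to exploit throughout the forgetful--cofree adjunction $U\dashv F$, where $U\:\C\Comodl\rarrow A\Modl$ is the forgetful functor and $F(M)=\C\ot_A M$ carries the coaction $\mu\ot\id_M$; this adjunction is formal and exists whether or not $\C\Comodl$ is abelian. For part~(a), given a diagram $(\M_i)$ in $\C\Comodl$, I~would form $\M=\varinjlim_i U(\M_i)$ in $A\Modl$. Since the functors $\C\ot_A-$ and $\C\ot_A\C\ot_A-$ are left adjoints on $A\Modl$, they preserve colimits, so the coactions $\nu_{\M_i}$ induce a unique morphism $\nu_\M\:\M\rarrow\C\ot_A\M$. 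The counitality and coassociativity axioms for $\nu_\M$ follow by passing to the colimit from the corresponding identities for each $\nu_{\M_i}$, and $\M$ inherits the universal property of the colimit in $\C\Comodl$, with $U$ preserving it by construction.

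For part~(b), the flatness of $\C$ as a right $A$\+module makes $\C\ot_A-$ exact, hence preserves kernels. Given a morphism $f\:\M\rarrow\N$ in $\C\Comodl$, the comodule compatibility $\nu_\N\circ U(f)=(\id_\C\ot U(f))\circ\nu_\M$ forces the restriction of $\nu_\M$ to $K=\ker U(f)$ to land in $\ker(\id_\C\ot U(f))=\C\ot_A K$, equipping $K$ with a coaction that realizes it as the kernel of~$f$ in $\C\Comodl$. Together with cokernels from~(a) this yields the abelian structure, and by construction the functor $U$ is exact. Directed colimits in $\C\Comodl$ are then exact because they are so in $A\Modl$ and $U$~is exact and faithful. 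For a set of generators, I~would use that the coaction $\nu_\M\:\M\rarrow\C\ot_A\M$ is $A$\+split injective by counitality; expanding $\nu(m)=\sum c_i\ot m_i$ and iterating, one sees that every comodule is the directed union of subcomodules generated over~$A$ by a bounded cardinality of elements, and a set of isomorphism classes of such small subcomodules constitutes a generator.

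Part~(c) is the cleanest step. Whatever $\C$ looks like, the adjunction $U\dashv F$ exists. Assuming $\C\Comodl$ is abelian, the right adjoint~$F$ preserves finite limits, so $F$~is left exact. Composing the left exact~$F$ with the exact~$U$ (the latter by hypothesis) gives that $UF=\C\ot_A-$ is left exact on $A\Modl$. Since tensor products are always right exact, $\C\ot_A-$ is exact, which is precisely the statement that $\C$ is flat as a right $A$\+module.

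For part~(d), I~would construct an explicit counterexample via the ``trivial coextension'' coring $\C=A\oplus M$, with $\epsilon(a,m)=a$ and $\mu(a,m)=a\ot 1+1\ot m+m\ot 1$, where $M$ is an $A$\+$A$\+bimodule which is not flat as a right $A$\+module (for instance, $A=\boZ$ and $M=\boZ/n\boZ$ for some $n>1$). The coring axioms are verified by direct computation, and $\C$~fails to be right-flat because $M$ does. It then remains to exhibit a concrete morphism in $\C\Comodl$ whose $A$\+module kernel does not lift to any comodule enjoying the required universal property, showing directly that $\C\Comodl$ is not abelian. This last verification is the main obstacle of the lemma, since part~(c) only rules out $\C\Comodl$ being abelian with an exact forgetful functor, and pinning down the explicit failure demands a careful choice of comodules and morphisms.
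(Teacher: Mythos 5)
Your treatments of parts (a), (b), and (c) are correct and follow the standard route; the paper itself gives only the one-line colimit-preservation argument for (a) and cites \cite[Sections~18.6, 18.14 and~18.16]{BW} for (b)--(c), so your adjunction argument for (c) (right adjoints preserve kernels, hence $UF=\C\ot_A{-}$ is left exact, hence exact) is a clean writeup of what those references contain. The generator argument in (b) is only sketched, but the idea -- every comodule is a directed union of subcomodules of bounded $A$\+generation, using right $A$\+flatness of $\C$ to identify $\C\ot_A N$ with a submodule of $\C\ot_A\M$ and iterating the coaction countably many times -- does work and is the standard one.

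The genuine gap is in part (d), and you have correctly diagnosed it yourself: showing that your coring $\C=A\oplus M$ is not flat as a right $A$\+module does \emph{not} show that $\C\Comodl$ fails to be abelian. By part (c) it only shows that $\C\Comodl$ cannot be abelian \emph{with an exact forgetful functor}; a priori the category could still be abelian with kernels computed differently from those in $A\Modl$ (and nothing in (a)--(c) rules this out -- that is exactly why (d) is a separate assertion requiring a genuine counterexample rather than a formal consequence). Your proposal stops at the point where the actual work begins: one must take a specific morphism of $\C$\+comodules and verify that no candidate object can serve as its kernel (or cokernel of some monomorphism, etc.) in $\C\Comodl$, which requires analyzing all comodule structures on the relevant $A$\+modules, not just the one inherited from $A\Modl$. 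The paper does not carry this out either; it refers to \cite[Example~C.1.1]{Pcosh} for a worked counterexample. As submitted, part (d) of your proof is therefore incomplete, and the remaining verification is the only nontrivial content of that part.
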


\begin{proof}
 Part~(a) follows from the fact that the tensor product functor
$\C\ot_A{-}$ preserves colimits in $A\Modl$.
 Parts~(b\+-c) are covered by~\cite[Sections~18.6, 18.14
and~18.16]{BW}; see also~\cite[Proposition~2.12(a)]{Prev}.
 A counterexample for part~(d) can be found
in~\cite[Example~C.1.1]{Pcosh}.
\end{proof}

 When $\C$ is not a flat right $A$\+module, exact sequences of
arbitrary left $\C$\+comodules are not well-behaved.
 But there is still a well-behaved notion of a \emph{short exact
sequence of $A$\+flat left\/ $\C$\+comodules}, defined as
a short sequence of $A$\+flat left $\C$\+comodules that is exact
in $A\Modl$.
 We refer to the survey~\cite{Bueh} for background material on
exact categories in the sense of Quillen.

\begin{lem} \label{flat-comodules-exact-category}
 The additive category of $A$\+flat $\C$\+comodules\/
$\C\Comodl_{A\dfl}$ endowed with the class of short exact sequences
defined above is an exact category.
\end{lem}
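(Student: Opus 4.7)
The plan is to verify Quillen's axioms for an exact category on $\C\Comodl_{A\dfl}$ directly. Since $\C$ is not assumed to be a flat right $A$\+module, the category $\C\Comodl$ may fail to be abelian (cf.\ Lemma~\ref{comodule-category-lemma}(d)), so one cannot simply invoke the standard principle that an extension-closed full subcategory of an abelian category inherits an exact structure. Instead, the verification will be driven by a lifting lemma that uses $A$\+flatness to make the functor $\C\ot_A({-})$ preserve the relevant exactness, thereby allowing comodule structures to be transferred along constructions performed in $A\Modl$.

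The lifting lemma to be established first reads: given a short exact sequence $0\rarrow L\rarrow M\rarrow N\rarrow 0$ in $A\Modl$ whose three terms are all $A$\+flat, and given a $\C$\+comodule structure on any two of the three terms for which the corresponding map is a comodule morphism, there exists a unique $\C$\+comodule structure on the third term making the whole sequence a sequence of $\C$\+comodule morphisms. The proof is a short diagram chase: $A$\+flatness of the terms kills $\Tor^A_1(\C,{-})$ on each, so applying $\C\ot_A({-})$ preserves the exact sequence, and the candidate coaction on the third term is obtained either by factoring $\nu_M$ through the quotient or by restricting it to the submodule, with coassociativity and counitality inherited via the universal property of the quotient or submodule.

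With the lemma in hand, the axioms are verified as follows. Identities are admissible via split conflations, and the class of short exact sequences is closed under isomorphism by construction. For closure of admissible monomorphisms under composition $L\hookrightarrow M\hookrightarrow M'$, the $A$\+module cokernel $M'/L$ fits into a short exact sequence $0\rarrow M/L\rarrow M'/L\rarrow M'/M\rarrow 0$ whose outer terms are $A$\+flat comodules, so $M'/L$ is $A$\+flat and, by the lifting lemma, carries a comodule structure making the whole sequence a conflation in $\C\Comodl_{A\dfl}$; composition of admissible epimorphisms is dual. For the pushout axiom, given an admissible mono $L\hookrightarrow M$ and an arbitrary morphism $L\rarrow L'$, I would form the pushout $P$ in $A\Modl$; it fits into $0\rarrow L'\rarrow P\rarrow M/L\rarrow 0$, hence is $A$\+flat, and the universal property of the pushout applied to the pair $M\rarrow\C\ot_A M\rarrow\C\ot_A P$ and $L'\rarrow\C\ot_A L'\rarrow\C\ot_A P$ (which agree on $L$ by the comodule-morphism property of $L\rarrow M$ and $L\rarrow L'$ together with the pushout identity $L\rarrow M\rarrow P=L\rarrow L'\rarrow P$) produces a coaction $\nu_P\:P\rarrow\C\ot_A P$ with respect to which both $M\rarrow P$ and $L'\rarrow P$ are comodule morphisms. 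Pullbacks of admissible epimorphisms are dual, using $\Tor^A_1(\C,N)=0$ for the $A$\+flat base comodule $N$ to ensure that $\C\ot_A({-})$ preserves the relevant pullback square.

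The main obstacle is precisely the absence of a natural ambient abelian category into which $\C\Comodl_{A\dfl}$ embeds as an extension-closed full subcategory. Everything hinges on the lifting lemma, which converts the available $A$\+flatness into the $\Tor$\+vanishing needed to push coactions through kernels, cokernels, pushouts, and pullbacks computed in $A\Modl$. Once the lemma is in place, each exact-category axiom reduces to a routine combination of universal properties in $A\Modl$ with functoriality of the coactions.
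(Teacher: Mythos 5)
Your proposal rests on exactly the same mechanism as the paper's (very terse) proof: $A$\+flatness of the terms makes the functors $\C\ot_A{-}$ and $\C\ot_A\C\ot_A{-}$ carry the relevant short exact sequences of flat $A$\+modules to exact sequences, so coactions can be transferred along kernels, cokernels, pushouts and pullbacks computed in $A\Modl$. The paper compresses this into the single observation that kernels of admissible epimorphisms exist in $\C\Comodl_{A\dfl}$ and agree with those computed in the exact category of flat $A$\+modules; you verify Quillen's axioms directly. The architecture is sound.

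One correction is needed, though. Your lifting lemma is overstated: in the case where the two given comodule structures sit on the \emph{outer} terms $L$ and $N$ and a structure on the middle term $M$ is to be produced, the conclusion (existence and uniqueness) is false --- an extension of comodules is not determined by its end terms, and the set of compatible coactions on a fixed $A$\+module extension is governed by an $\Ext^1$ in the comodule category, so it can be empty or contain several distinct coactions. Only the kernel case (structures on $M$ and $N$, using $\Tor^A_1(\C,N)=0=\Tor^A_1(\C\ot_A\C,N)$ for $N$ flat) and the cokernel case (structures on $L$ and $M$, using right exactness of $\C\ot_A{-}$) are valid. This matters because, as written, your verification of closure of admissible monomorphisms under composition applies the lemma to $0\rarrow M/L\rarrow M'/L\rarrow M'/M\rarrow 0$ with the outer terms given --- the invalid case. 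The fix is immediate: use that auxiliary sequence only to conclude that $M'/L$ is $A$\+flat, and obtain its coaction as the cokernel of the comodule morphism $L\rarrow M'$ in the sequence $0\rarrow L\rarrow M'\rarrow M'/L\rarrow 0$. With that repair, and restricting the lifting lemma to its two legitimate cases, the argument goes through.
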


\begin{proof}
 Essentially, it suffices to check that the kernel of any surjective
morphism of $A$\+flat $\C$\+comodules exists in $\C\Comodl_{A\dfl}$
and agrees with the kernel of the same morphism computed in
the category of flat $A$\+modules.
 For this purpose, one needs to use the fact that the functors
$\C\ot_A{-}$ and $\C\ot_A\C\ot_A{-}$ take short exact sequences
of flat $A$\+modules to short exact sequences of $A$\+modules.
\end{proof}

 More generally, an exact sequence of left $A$\+modules is called
\emph{pure} if it stays exact after the functor $N\ot_A{-}$ is
applied to it, for every right $A$\+module~$N$.
 An \emph{$A$\+pure short exact sequence of\/ $\C$\+comodules} is
a short sequence of $\C$\+comodules that is pure exact as
a short sequence of $A$\+modules.
 The category of left $\C$\+comodules $\C\Comodl$ with $A$\+pure short
exact sequences is also an exact category.

 A left $\C$\+comodule is said to be \emph{$A$\+countably presentable}
if it is countably presentable as an $A$\+module.
 Similarly, a $\C$\+comodule is said to be \emph{$A$\+finitely
presentable} if it is finitely presentable as an $A$\+module.

\begin{lem} \label{comodule-presentability-lemma}
 Let\/ $\C$ be a coring over a ring~$A$.  Then \par
\textup{(a)} all $A$\+finitely presentable\/ $\C$\+comodules are
finitely presentable as objects of the additive category\/ $\C\Comodl$; 
\par
\textup{(b)} all \emph{bounded} complexes of $A$\+finitely presentable\/
$\C$\+comodules are finitely presentable as objects of the additive
category of complexes of\/ $\C$\+comodules; \par
\textup{(c)} all $A$\+countably presentable\/ $\C$\+comodules are
countably presentable (i.~e., $\aleph_1$\+presentable) as objects of
the additive category\/ $\C\Comodl$; \par
\textup{(d)} all complexes of $A$\+countably presentable\/
$\C$\+comodules are countably presentable as objects of the additive
category of complexes of\/ $\C$\+comodules.
\end{lem}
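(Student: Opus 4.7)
The plan is to exploit the standard equalizer description of Hom-groups in $\C\Comodl$. For any two left $\C$-comodules $\M$ and $\N$ with coaction maps $\nu_\M$ and $\nu_\N$, an $A$-linear map $f\:\M\rarrow\N$ is a morphism of $\C$-comodules if and only if $\nu_\N\circ f = (\id_\C\ot f)\circ\nu_\M$. This yields, naturally in $\N$, an equalizer diagram of abelian groups
\[
\Hom_\C(\M,\N)\rarrow\Hom_A(\M,\N)\rightrightarrows\Hom_A(\M,\C\ot_A\N),
\]
where the two parallel arrows send $f$ to $\nu_\N\circ f$ and to $(\id_\C\ot f)\circ\nu_\M$ respectively. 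Moreover, by Lemma~\ref{comodule-category-lemma}(a) the forgetful functor $\C\Comodl\rarrow A\Modl$ preserves all colimits, and the tensor product $\C\ot_A({-})\:A\Modl\rarrow A\Modl$ preserves all colimits as well, being a left adjoint. Consequently, to show that $\Hom_\C(\M,{-})$ preserves a given class of directed colimits in $\C\Comodl$, it suffices to show that both $\Hom_A(\M,{-})$ and $\Hom_A(\M,\C\ot_A{-})$ preserve the corresponding directed colimits of $A$-modules, and that this class of colimits commutes with equalizers in the category of abelian groups.

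To prove part~(a), assume that $\M$ is $A$-finitely presentable. Then by definition $\Hom_A(\M,{-})$ preserves directed colimits, and therefore so does the composition $\Hom_A(\M,\C\ot_A{-})$. Since directed colimits commute with finite limits in the category of abelian groups, the displayed equalizer is preserved by directed colimits, proving~(a). The proof of part~(c) is completely parallel: if $\M$ is $A$-countably presentable, then $\Hom_A(\M,{-})$ preserves $\aleph_1$-directed colimits, hence so does $\Hom_A(\M,\C\ot_A{-})$, and the fact that $\aleph_1$-directed colimits commute with countable limits in $\Ab$ (in particular with equalizers) yields the conclusion.

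Finally, parts~(b) and~(d) follow formally by applying Lemma~\ref{complexes-presentability} to the additive category $\sE=\C\Comodl$, which admits directed and $\aleph_1$-directed colimits by Lemma~\ref{comodule-category-lemma}(a). Namely, part~(b) is part~(a) of the present lemma fed into Lemma~\ref{complexes-presentability}(a), and part~(d) is part~(c) fed into Lemma~\ref{complexes-presentability}(b) with $\kappa=\aleph_1$. No serious obstacle is anticipated; the only mild point worth checking is the naturality of the equalizer description with respect to $\N$, which is an immediate diagram chase.
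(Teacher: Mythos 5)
Your proof is correct and follows essentially the same route as the paper: the kernel/equalizer description of $\Hom_\C(\M,\N)$ inside $\Hom_A(\M,\N)\rightrightarrows\Hom_A(\M,\C\ot_A\N)$, the colimit-preservation of the forgetful functor and of $\C\ot_A{-}$, the commutation of directed (resp.\ $\aleph_1$-directed) colimits with finite (resp.\ countable) limits of abelian groups, and the reduction of parts~(b) and~(d) to~(a) and~(c) via Lemma~\ref{complexes-presentability}. Nothing to add.
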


\begin{proof}
 Part~(b) follows from part~(a) according to
Lemma~\ref{complexes-presentability}(a); and part~(d) follows from
part~(c) in view of Lemma~\ref{complexes-presentability}(b).
 Parts~(a) and~(c) follow from the facts that, for any two left
$\C$\+comodules $\LL$ and $\M$, the abelian group $\Hom_\C(\LL,\M)$
can be computed as the kernel of (the difference of) a natural pair
of maps
$$
 \Hom_A(\LL,\M)\,\rightrightarrows\,\Hom_A(\LL,\>\C\ot_A\M),
$$
and the functor $\C\ot_A{-}$ preserves colimits.
\end{proof}

 For the converse assertions to
Lemma~\ref{comodule-presentability-lemma}(c\+-d),
see Remarks~\ref{arbitrary-comodules-aleph1-loc-presentable}
and~\ref{arbitrary-complexes-of-comodules} below.
 It will be also explained there that the additive categories of
left $\C$\+comodules $\C\Comodl$ and complexes of left
$\C$\+comodules $\Com(\C\Comodl)$ are locally $\aleph_1$\+presentable.

\begin{lem} \label{finite-weak-dimension-lemma}
 Let\/ $\C$ be a coring over a ring~$A$.
 Assume that\/ $\C$ is a flat left and right $A$\+module, and the ring
$A$ has finite weak global dimension (or in other words, finite\/
$\Tor$\+dimension).
 Then every\/ $\C$\+comodule is a quotient comodule of an $A$\+flat\/
$\C$\+comodule.
\end{lem}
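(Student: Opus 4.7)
The plan is to proceed by induction on the $A$\+flat dimension of $\M$, which is bounded by the weak global dimension~$d$ of~$A$. The base case---when $\M$ is already $A$\+flat---is trivial; in the general case, the plan is to construct a ``pullback cover'' $\F'\twoheadrightarrow\M$ in $\C\Comodl$ whose $A$\+flat dimension is strictly smaller than that of~$\M$, and then to iterate at most $d$ times. Concretely, given $\M$ of $A$\+flat dimension $n\geq1$, pick a surjection of left $A$\+modules $G\twoheadrightarrow\M$ with $G$ a free $A$\+module. Since $\C$ is right $A$\+flat, the induced map $\C\ot_A G\twoheadrightarrow\C\ot_A\M$ is a surjection in~$\C\Comodl$; combined with the coaction $\nu\:\M\to\C\ot_A\M$ (a $\C$\+comodule map), it yields the pullback
\[
 \F'\;:=\;(\C\ot_A G)\times_{\C\ot_A\M}\M
\]
in the abelian category~$\C\Comodl$. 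Because the forgetful functor $\C\Comodl\to A\Modl$ is exact and admits a right adjoint, the pullback is computed on underlying $A$\+modules; in particular the canonical projection $\F'\twoheadrightarrow\M$ is a $\C$\+comodule surjection, and $\F'$ sits as the $A$\+module kernel of the composite $\C\ot_A G\to\C\ot_A\M\to Q$, where $Q:=(\C\ot_A\M)/\nu(\M)$.

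The main computation is the flat-dimension reduction, and it rests on three ingredients. First, the cofree comodule $\C\ot_A G$ is itself a flat left $A$\+module: for any right $A$\+module~$N$ one has $N\ot_A(\C\ot_A G)=(N\ot_A\C)\ot_A G$, exact in $N$ by the left $A$\+flatness of $\C$ and of~$G$. Second, tensoring a flat resolution of $\M$ of length $\le n$ on the left with $\C$ (and invoking the right $A$\+flatness of $\C$) produces a flat $A$\+resolution of $\C\ot_A\M$ of the same length, so $\C\ot_A\M$ again has $A$\+flat dimension at most~$n$. Third, the coaction $\nu$ is split injective as an $A$\+module map with retract $\epsilon\ot\id_\M$ by counitality, so each map $\Tor^A_i(X,\M)\to\Tor^A_i(X,\C\ot_A\M)$ is a split injection for every right $A$\+module~$X$ and every $i\ge0$. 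Feeding the third fact into the $\Tor$ long exact sequence of $0\to\M\to\C\ot_A\M\to Q\to 0$ forces every connecting homomorphism to vanish, yielding $\Tor^A_i(X,Q)\cong\coker\bigl(\Tor^A_i(X,\M)\hookrightarrow\Tor^A_i(X,\C\ot_A\M)\bigr)$ for all $i\ge 0$; combined with the second ingredient this shows that $Q$ has $A$\+flat dimension at most~$n$. Finally, the short exact sequence $0\to\F'\to\C\ot_A G\to Q\to0$, with $\C\ot_A G$ flat by the first ingredient, produces isomorphisms $\Tor^A_i(X,\F')\cong\Tor^A_{i+1}(X,Q)$ for all $i\ge1$, which vanish for $i\ge n$; hence $\F'$ has $A$\+flat dimension at most~$n-1$, as required.

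The hardest step is this flat-dimension computation, and within it the vanishing of the connecting maps in the $\Tor$ long exact sequence, which rests entirely on the counital $A$\+splitting of the coaction~$\nu$. Once this single-step reduction is in hand, the construction iterates in the obvious way: starting from $\M$, replace it successively by the pullback cover $\F^{(1)}:=\F'$, then by a pullback cover $\F^{(2)}$ of~$\F^{(1)}$, and so on. After at most $d$ steps one arrives at an $A$\+flat $\C$\+comodule $\F^{(k)}$, and the composite surjection $\F^{(k)}\twoheadrightarrow\F^{(k-1)}\twoheadrightarrow\cdots\twoheadrightarrow\M$ in $\C\Comodl$ gives the required $A$\+flat cover of~$\M$.
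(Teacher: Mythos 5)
Your proof is correct, and it is essentially the argument the paper invokes: the paper's ``proof'' is just a citation to~\cite[Lemma~1.1.3]{Psemi}, where exactly this construction appears --- pull back the cofree comodule surjection $\C\ot_A G\twoheadrightarrow\C\ot_A\M$ along the ($A$\+split) coaction $\nu$, check that the pullback has strictly smaller $A$\+flat dimension, and iterate at most $d$~times using the finite weak global dimension of~$A$. Your write-up supplies the details (in particular the vanishing of the connecting maps in the $\Tor$ long exact sequence via the counital splitting) that the cited source leaves terse.
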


\begin{proof}
 This is~\cite[Lemma~1.1.3]{Psemi}.
 For a discussion of this construction in a module-theoretic context,
see~\cite[Section~2.2]{Pctrl}.
\end{proof}

\begin{ex} \label{kontsevich-rosenberg}
 This example is due to Kontsevich and
Rosenberg~\cite[Section~2]{KR}, \cite{KR2}.
 Let $X$ be a quasi-compact semi-separated scheme, and let
$X=\bigcup_\alpha U_\alpha$ be a finite covering of $X$ by affine
open subschemes.
 Let $U$ denote the disjoint union $U=\coprod_\alpha U_\alpha$; so
$U$ is an affine scheme and there is a natural surjective flat affine
morphism of schemes $U\rarrow X$.
 Let $A=\cO(U)$ and $\C=\cO(U\times_XU)$ denote the rings of functions
on the affine schemes $U$ and $U\times_XU$.
 Then the two projections $p_1$, $p_2\:U\times_XU\rightrightarrows U$
induce two ring homomorphisms $A\rightrightarrows\C$, endowing $\C$
with two $A$\+module structures (which we view as the ``left'' and
the ``right'' one).

 The diagonal morphism $\delta\:U\rarrow U\times_XU$ induces a ring
homomorphism $\delta\spcheck\:\C\rarrow A$.
 Furthermore, we have $\C\ot_A\C=\cO((U\times_XU)\times_U(U\times_XU))
=\cO(U\times_XU\times_XU)$.
 The projection $q\:U\times_XU\times_XU\rarrow U\times_XU$ dropping
the second (inner) coordinate induces a ring homomorphism
$q\spcheck\:\C\rarrow\C\ot_A\C$.
 The maps $\epsilon=\delta\spcheck\:\C\rarrow A$ and $\mu=q\spcheck\:
\C\rarrow\C\ot_A\C$ that we have constructed endow
the $A$\+$A$\+bimodule $\C$ with a coring structure
(over the commutative ring~$A$).

 For any quasi-coherent sheaf $N$ on $X$, the inverse image of $N$
with respect to the morphism $U\rarrow X$ produces a quasi-coherent
sheaf $M$ on $U$, that is an $A$\+module~$\M$.
 The inverse images of $M$ with respect to the two projections
$U\times_XU\rightrightarrows U$ are naturally isomorphic as
quasi-coherent sheaves on~$U\times_XU$, that is $p_1^*M\simeq p_2^*M$.
 The corresponding bimodules of sections are $\M\ot_A\C$ and
$\C\ot_A\M$; so we have a natural isomorphism of $A$\+$A$\+bimodules
$\M\ot_A\C\simeq\C\ot_A\M$.
 Precomposing this isomorphism with the map $\M\rarrow\M\ot_A\C$
induced by the ``left'' map $A\rarrow\C$ (induced by~$p_1$), we obtain
a map $\nu\:\M\rarrow\C\ot_A\M$, endowing $\M$ with a left
$\C$\+comodule structure.
 The assignment $N\longmapsto\M$ is an equivalence of abelian categories
$X\qcoh\simeq\C\Comodl$.
 Here $X\qcoh$ denotes the abelian category of quasi-coherent sheaves
on~$X$.
\end{ex}

\begin{rem} \label{flat-sheaves-and-comodules}
 A quasi-coherent sheaf $F$ on a scheme $X$ is said to be \emph{flat} if
the tensor product functor $F\ot_{\cO_X}{-}\,\:X\qcoh\rarrow X\qcoh$ is
exact, or equivalently, the $\cO(U)$\+module $F(U)$ is flat for every
affine open subscheme $U\subset X$.
 It suffices to check the latter condition for the affine open
subschemes $U=U_\alpha$ from a given affine open covering
$X=\bigcup_\alpha U_\alpha$ of the scheme~$X$.

 Now let $X$ be a quasi-compact semi-separated scheme.
 Then it follows that, in the context of
Example~\ref{kontsevich-rosenberg}, a quasi-coherent sheaf on $X$ is
flat if and only if the corresponding $\C$\+comodule is $A$\+flat.

 By~\cite[Section~2.4]{M-n} or~\cite[Lemma~A.1]{EP}, any quasi-coherent
sheaf on a quasi-compact semi-separated scheme $X$ is a quotient sheaf
of a flat quasi-coherent sheaf.
 Therefore, the corings $\C$ appearing in
Example~\ref{kontsevich-rosenberg} also have the property that any
$\C$\+comodule is a quotient comodule of an $A$\+flat $\C$\+comodule.
\end{rem}

\Section{$A$-Flat $\C$-Comodules as Directed Colimits}
\label{flat-comodules-as-directed-colimits-secn}

 The following theorem describing $A$\+flat $\C$\+comodules is
the first main result of this paper.
 In view of Example~\ref{kontsevich-rosenberg} and
Remark~\ref{flat-sheaves-and-comodules}, it is a generalization
of the assertion of~\cite[Theorem~2.4]{PS6} for quasi-compact
semi-separated schemes.

\begin{thm} \label{flat-comodules-as-directed-colimits-theorem}
 Let\/ $\C$ be a coring over an associative ring~$A$.
 Then the category\/ $\C\Comodl_{A\dfl}$ of $A$\+flat left\/
$\C$\+comodules is\/ $\aleph_1$\+accessible.
 The\/ $\aleph_1$\+presentable objects of\/ $\C\Comodl_{A\dfl}$ are
precisely all the $A$\+countably presentable $A$\+flat\/
$\C$\+comodules.
 Consequently, every $A$\+flat\/ $\C$\+comodule is
an\/ $\aleph_1$\+directed colimit of $A$\+countably presentable
$A$\+flat\/ $\C$\+comodules.
\end{thm}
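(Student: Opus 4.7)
The plan is to realize $\C\Comodl_{A\dfl}$ as the result of applying in succession an inserter and two equifiers to the base category $A\Modl_\flat$, so that Theorems~\ref{inserter-theorem} and~\ref{equifier-theorem} (with $\kappa=\aleph_1$ and $\lambda=\aleph_0$) yield both the $\aleph_1$\+accessibility of $\C\Comodl_{A\dfl}$ and the claimed characterization of its $\aleph_1$\+presentable objects. By Lemma~\ref{flat-modules-accessible}, the category $A\Modl_\flat$ is $\aleph_1$\+accessible, its $\aleph_1$\+presentable objects are the countably presentable flat $A$\+modules, and all directed colimits (in particular, colimits of $\omega$\+indexed chains) exist there and agree with those computed in $A\Modl$. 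The target $\sL=A\Modl$ is locally $\aleph_0$\+presentable, and the two functors $F_1=\id$ and $F_2=\C\ot_A(-)$ from $A\Modl_\flat$ to $A\Modl$ preserve all colimits.

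First, I would apply Theorem~\ref{inserter-theorem} to the pair $(F_1,F_2)$. Because $F_1$ is the inclusion, it sends every $\aleph_1$\+presentable object of $A\Modl_\flat$ (a countably presentable flat module) to an $\aleph_1$\+presentable $A$\+module, so the required hypothesis on the distinguished functor is satisfied. The inserter $\sD$ is then an $\aleph_1$\+accessible category whose objects are pairs $(\M,\nu\:\M\rarrow\C\ot_A\M)$ with $\M$ flat, and whose $\aleph_1$\+presentable objects are exactly the pairs $(\M,\nu)$ with $\M$ countably presentable.

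Next, I would carve out from $\sD$ the full subcategory of genuine $\C$\+comodules by imposing coassociativity and counitality through two successive applications of Theorem~\ref{equifier-theorem}. Coassociativity is the equality of the natural transformations $\phi_{(\M,\nu)}=(\mu\ot\id_\M)\circ\nu$ and $\psi_{(\M,\nu)}=(\id_\C\ot\nu)\circ\nu$ from the forgetful functor $(\M,\nu)\mapsto\M$ to the functor $(\M,\nu)\mapsto\C\ot_A\C\ot_A\M$, both landing in $A\Modl$; naturality is a routine diagram check. Counitality is imposed analogously by equating $(\epsilon\ot\id_\M)\circ\nu$ with $\id_\M$ on the forgetful endofunctor. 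In both equifier applications the functor $F$ is the forgetful one, which sends $\aleph_1$\+presentable objects to $\aleph_1$\+presentable $A$\+modules by the inserter description. After both equifiers, the resulting $\aleph_1$\+accessible category coincides with $\C\Comodl_{A\dfl}$, and its $\aleph_1$\+presentable objects are the pairs $(\M,\nu)$ with $\M$ countably presentable flat, i.e., the $A$\+countably presentable $A$\+flat $\C$\+comodules. The final assertion that every $A$\+flat $\C$\+comodule is an $\aleph_1$\+directed colimit of such is then just the definition of $\aleph_1$\+accessibility.

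The one conceptual point that demands care is that the presentability hypothesis in the inserter theorem be placed on $F_1$ rather than on $F_2$: since $\C$ itself need not be countably presentable as an $A$\+module, there is no reason for $\C\ot_A\M$ to be $\aleph_1$\+presentable when $\M$ is, and the argument would fail with the roles of $F_1$ and $F_2$ exchanged. Apart from this, and the standard verifications that the inserter and each equifier inherit colimits of $\omega$\+indexed chains from $\sL=A\Modl$ along colimit-preserving functors, the proof is mechanical.
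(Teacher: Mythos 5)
Your proposal is correct and follows essentially the same route as the paper: an inserter over $A\Modl_\flat$ with $F=\id$ and $G=\C\ot_A(-)$, followed by equifying the coassociativity and counitality conditions, with the crucial observation (which you make explicitly) that the presentability hypothesis is only needed for the identity/forgetful functor and not for $\C\ot_A(-)$. The only cosmetic difference is that the paper imposes both axioms in a single equifier step with target the product category $A\Modl\times A\Modl_\flat$ (via Proposition~\ref{product-proposition}), whereas you use two successive equifiers; these are interchangeable.
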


\begin{proof}
 All $A$\+countably presentable $\C$\+comodules are
$\aleph_1$\+presentable as objects of $\C\Comodl$ by
Lemma~\ref{comodule-presentability-lemma}(c).
 Since the full subcategory $\C\Comodl_{A\dfl}$ is closed under
directed colimits in $\C\Comodl$, it follows that all
$A$\+countably presentable $A$\+flat $\C$\+comodules are
$\aleph_1$\+presentable in $\C\Comodl_{A\dfl}$.

 The full assertion of the theorem is obtained by applying
Theorems~\ref{inserter-theorem} and~\ref{equifier-theorem} for
$\kappa=\aleph_1$ and $\lambda=\aleph_0$.
 One starts with the category $\sK=A\Modl_\flat$, the category
$\sL=A\Modl$, and the pair of parallel functors $F$, $G\:\sK
\rightrightarrows\sL$ given by the rules $F(M)=M$ and $G(M)=\C\ot_AM$
for all $M\in A\Modl_\flat$.
 Then the inserter category $\sD$ is the category of all
``noncoassociative, noncounital'' $A$\+flat left $\C$\+comodules,
i.~e., flat left $A$\+modules $\D$ endowed with a left $A$\+module
morphism $\nu\:\D\rarrow\C\ot_A\D$.

 The category $\sK=A\Modl_\flat$ is $\aleph_1$\+accessible by
Lemma~\ref{flat-modules-accessible}; and the abelian category
$A\Modl$ is locally finitely presentable, hence locally
$\kappa$\+presentable for every regular cardinal~$\kappa$
\,\cite[Remark~1.20]{AR}.
 Theorem~\ref{inserter-theorem} is applicable (notice that the functor
$F$ takes $\aleph_1$\+presentable objects to $\aleph_1$\+presentable
objects, as it must; while the functor $G$ does not, and it does not
have to).
 The theorem tells us that the category $\sD$ is $\aleph_1$\+accessible,
and provides a description of the $\aleph_1$\+presentable objects
of~$\sD$.

 Now let $\sM$ be the Cartesian product of categories
$\sM=A\Modl\times A\Modl_\flat$.
 The functor $F\:\sD\rarrow\sM$ assigns to a noncoassociative, 
noncounital $A$\+flat $\C$\+comodule $\D$ the pair of $A$\+modules
$(\D,\D)$.
 The functor $G\:\sD\rarrow\sM$ assigns to $\D$ the pair of
$A$\+modules $(\C\ot_A\C\ot_A\D,\>\D)$.
 The natural transformation $\phi\:F\rarrow G$ assigns to $\D$
the pair of compositions $(\mu\ot\id_\D)\circ\nu\:\D\rarrow\C\ot_A\D
\rarrow\C\ot_A\C\ot_A\D$ and $(\epsilon\ot\id_D)\circ\nu\:\D\rarrow
\C\ot_A\D\rarrow\D$.
 The natural transformation $\psi\:F\rarrow G$ assigns to $\D$
the pair of maps consisting of the composition
$(\id_\C\ot\nu)\circ\nu\:\D\rarrow\C\ot_A\D\rarrow\C\ot_A\C\ot_A\D$
and the identity map $\id_\D\:\D\rarrow\D$.
 Then the equifier category $\sE$ is the category of coassociative,
counital $A$\+flat left $\C$\+comodules $\C\Comodl_{A\dfl}$.

 Proposition~\ref{product-proposition} tells us that the category $\sM$
is $\aleph_1$\+accessible, and describes its full subcategory of
$\aleph_1$\+presentable objects.
 Theorem~\ref{equifier-theorem} is applicable (once again,
the functor $F$ takes $\aleph_1$\+presentable objects to
$\aleph_1$\+presentable objects, while the functor $G$ does not).
 The theorem tells us that the category $\sE=\C\Comodl_{A\dfl}$ is
$\aleph_1$\+accessible, and provides the desired description of
the $\aleph_1$\+presentable objects of~$\sE$.
\end{proof}

\begin{rem} \label{arbitrary-comodules-aleph1-loc-presentable}
 Replacing the original category $\sK=A\Modl_\flat $ by $\sK=A\Modl$
(and the category $\sM=A\Modl\times A\Modl_\flat$ by
$\sM=A\Modl\times A\Modl$) in the proof of
Theorem~\ref{flat-comodules-as-directed-colimits-theorem},
one obtains a proof of the assertion that the category of all comodules
$\C\Comodl$ is $\aleph_1$\+accessible for any coring~$\C$.
 In fact, all colimits exist in $\C\Comodl$ by
Lemma~\ref{comodule-category-lemma}(a); so the additive category
$\C\Comodl$ is even locally $\aleph_1$\+presentable.
 Furthermore, Theorems~\ref{inserter-theorem} and~\ref{equifier-theorem}
tell us that the $\aleph_1$\+presentable objects of $\C\Comodl$ are
precisely all the $A$\+countably presentable $\C$\+comodules.
 So any $\C$\+comodule is an $\aleph_1$\+directed colimit of
$A$\+countably presentable $\C$\+comodules.
 Thus we obtain a much more precise version of~\cite[Theorem~10]{Por}.
\end{rem}

 Let us also provide a description of arbitrary complexes of
$A$\+flat $\C$\+comodules as directed colimits.

\begin{prop} \label{complexes-of-flat-comodules-prop}
 Let\/ $\C$ be a coring over an associative ring~$A$.
 Then the category\/ $\Com(\C\Comodl_{A\dfl})$ of complexes of
$A$\+flat left\/ $\C$\+comodules is\/ $\aleph_1$\+accessible.
 The\/ $\aleph_1$\+presentable objects of\/ $\Com(\C\Comodl_{A\dfl})$
are precisely all the complexes of $A$\+countably presentable
$A$\+flat\/ $\C$\+comodules.
 Consequently, every complex of $A$\+flat\/ $\C$\+comodules is
an\/ $\aleph_1$\+directed colimit of complexes of
$A$\+countably presentable $A$\+flat\/ $\C$\+comodules.
\end{prop}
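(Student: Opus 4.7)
The strategy is to run the inserter--equifier argument from the proof of Theorem~\ref{flat-comodules-as-directed-colimits-theorem} again, but one categorical level up, at the level of complexes instead of at the level of comodules themselves. The plan is to realize\/ $\Com(\C\Comodl_{A\dfl})$ as an equifier of an inserter built from the pair of functors\/ $\Com(A\Modl_\flat)\rightrightarrows\Com(A\Modl)$ sending $M^\bu$ to $M^\bu$ and to $\C\ot_A M^\bu$, respectively.

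The first preparatory step is to show that\/ $\Com(A\Modl_\flat)$ is\/ $\aleph_1$\+accessible, with $\aleph_1$\+presentable objects precisely the complexes of countably presentable flat $A$\+modules. For this I would apply Theorem~\ref{diagram-theorem} with $k=\boZ$, the $\boZ$\+linear category $\sC=A\Modl_\flat$ (which is\/ $\aleph_1$\+accessible by Lemma~\ref{flat-modules-accessible} and admits\/ $\aleph_0$\+indexed chain colimits because $A\Modl_\flat$ is closed under directed colimits in $A\Modl$), and the countable\/ $\boZ$\+linear category $D$ presented by the vertices $n\in\boZ$, the generating arrows $d^n\:n\to n+1$, and the relations $d^{n+1}d^n=0$. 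The category $D$ is\/ $\aleph_1$\+presented, and a\/ $\boZ$\+linear functor $D\to\sC$ is the same thing as a cochain complex in~$\sC$. The same argument, applied to the locally finitely (hence locally\/ $\aleph_1$\+) presentable category $\sL=A\Modl$, simultaneously yields that\/ $\Com(A\Modl)$ is\/ $\aleph_1$\+accessible with its\/ $\aleph_1$\+presentable objects being the complexes of countably presentable $A$\+modules.

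Given these two ingredients, the rest of the argument transplants essentially verbatim from the proof of Theorem~\ref{flat-comodules-as-directed-colimits-theorem}. Set\/ $\sK=\Com(A\Modl_\flat)$, $\sL=\Com(A\Modl)$, and let $F,G\:\sK\rightrightarrows\sL$ be given termwise by $F(M^\bu)=M^\bu$ and $G(M^\bu)=\C\ot_A M^\bu$. Both functors preserve\/ $\aleph_1$\+directed and\/ $\aleph_0$\+indexed chain colimits (they do so termwise), and $F$ preserves\/ $\aleph_1$\+presentable objects by the description above; so Theorem~\ref{inserter-theorem} applies and makes the inserter\/ $\sD$, which is the category of complexes of flat $A$\+modules equipped with a chain morphism $\nu\:\M^\bu\to\C\ot_A\M^\bu$, an\/ $\aleph_1$\+accessible category with the expected description of its\/ $\aleph_1$\+presentable objects. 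Next, taking\/ $\sM=\Com(A\Modl)\times\Com(A\Modl_\flat)$ (accessible by Proposition~\ref{product-proposition}) and constructing two parallel functors $\sD\rightrightarrows\sM$ together with two natural transformations $\phi,\psi$ termwise out of $\mu$, $\epsilon$, $\nu$ exactly as in the proof of Theorem~\ref{flat-comodules-as-directed-colimits-theorem}, I would invoke Theorem~\ref{equifier-theorem} to conclude that the equifier\/ $\Com(\C\Comodl_{A\dfl})$ is\/ $\aleph_1$\+accessible, with $\aleph_1$\+presentable objects precisely those that are already\/ $\aleph_1$\+presentable as objects of~$\sD$, i.e., the complexes of $A$\+countably presentable $A$\+flat\/ $\C$\+comodules.

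The main obstacle I anticipate is the preparatory step: carefully verifying the hypotheses of Theorem~\ref{diagram-theorem} so as to obtain both the\/ $\aleph_1$\+accessibility of\/ $\Com(A\Modl_\flat)$ and the correct identification of its\/ $\aleph_1$\+presentable objects. Once this setup is in hand, the inserter and equifier machinery is essentially a termwise reenactment of the corresponding step from Theorem~\ref{flat-comodules-as-directed-colimits-theorem}, and the final claim about\/ $\aleph_1$\+directed colimits follows from the accessibility conclusion in the usual way.
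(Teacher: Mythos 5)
Your proof is correct, but it takes a different route from the paper's. The paper's own proof is shorter: having already established in Theorem~\ref{flat-comodules-as-directed-colimits-theorem} that $\C\Comodl_{A\dfl}$ is $\aleph_1$\+accessible with the $\aleph_1$\+presentable objects being the $A$\+countably presentable $A$\+flat comodules, it applies Theorem~\ref{diagram-theorem} \emph{once}, directly to the category $\C\Comodl_{A\dfl}$ itself, with $D$ the $\aleph_1$\+presented $\boZ$\+linear category describing complexes (the same $D$ you construct); this immediately identifies $\Com(\C\Comodl_{A\dfl})$ with $\Fun_\boZ(D,\C\Comodl_{A\dfl})$ and yields both the accessibility and the description of the $\aleph_1$\+presentable objects. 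You instead apply the diagram theorem at the bottom of the tower, to $A\Modl_\flat$ and $A\Modl$, and then re-run the entire inserter--equifier construction one categorical level up, at the level of complexes. All the hypotheses you need to recheck (termwise preservation of $\aleph_1$\+directed and $\aleph_0$\+chain colimits by $M^\bu\mapsto\C\ot_AM^\bu$, preservation of $\aleph_1$\+presentables by the first functor in each pair, the componentwise description of presentables in the product) do go through, so the argument is sound; it is simply longer, since the reenactment of the inserter and equifier steps duplicates work that Theorem~\ref{flat-comodules-as-directed-colimits-theorem} has already done. What your version buys is an explicit realization of $\Com(\C\Comodl_{A\dfl})$ as an equifier over an inserter built from complexes of flat modules, which the paper's one-line deduction does not exhibit; what the paper's version buys is economy and the clean modularity of ``accessible category plus $\aleph_1$\+presented diagram shape implies accessible diagram category.'' Your preliminary step and the paper's final step both reduce, in the end, to the same construction of $D$ from~\cite[proof of Corollary~10.4]{Pacc}.
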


\begin{proof}
 All complexes of $A$\+countably presentable $\C$\+comodules are
$\aleph_1$\+presentable as objects of $\Com(\C\Comodl)$ by
Lemma~\ref{comodule-presentability-lemma}(d).
 Since the full subcategory $\Com(\C\Comodl_{A\dfl})$ is closed under
directed colimits in $\Com(\C\Comodl)$, it follows that all complexes
of $A$\+countably presentable $A$\+flat $\C$\+comodules are
$\aleph_1$\+presentable in $\Com(\C\Comodl_{A\dfl})$.
{\hbadness=1800\par}

 The full assertion of the proposition is obtained by combining
the results of
Theorems~\ref{flat-comodules-as-directed-colimits-theorem}
and~\ref{diagram-theorem}.
 All one needs to do is to produce a suitable $\aleph_1$\+presented
$\boZ$\+linear category $D$ describing complexes in additive
categories.
 This simple construction can be found
in~\cite[proof of Corollary~10.4]{Pacc}.
\end{proof}

\begin{rem} \label{arbitrary-complexes-of-comodules}
 Similarly to the proof of
Proposition~\ref{complexes-of-flat-comodules-prop}, it follows from
Remark~\ref{arbitrary-comodules-aleph1-loc-presentable} that
the category $\Com(\C\Comodl)$ of arbitrary complexes of $\C$\+comodules
is locally $\aleph_1$\+presentable.
 Furthermore, the $\aleph_1$\+presentable objects of $\Com(\C\Comodl)$
are precisely all the complexes of $A$\+countably presentable
$\C$\+comodules.
 So any complex of $\C$\+comodules in an $\aleph_1$\+directed colimit
of complexes of $A$\+countably presentable $\C$\+comodules.
 Alternatively, as the categories involved are locally presentable
rather than only accessible, it suffices to use a suitable
additive version of~\cite[Theorem~1.2]{Hen} instead of
Theorem~\ref{diagram-theorem} in this proof.
\end{rem}

\Section{Exact Sequences of $A$-Flat $\C$-Comodules
as Directed Colimits} \label{comodule-exact-sequences-secn}

 We start with three module-theoretic lemmas.

\begin{lem} \label{countable-flat-coherence}
 Let $A$ be an associative ring.
 Then the kernel of any surjective morphism from a countably presentable
flat $A$\+module to a countably presentable flat $A$\+module is
a countably presentable (flat) $A$\+module.
\end{lem}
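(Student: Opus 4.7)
The plan is to prove the two claims of the lemma separately. Write the given surjection as part of a short exact sequence $0\to K\to F\to G\to 0$ with $F$ and $G$ countably presentable flat $A$-modules. For flatness of $K$, the long exact sequence of $\Tor$ applied to an arbitrary right $A$-module $N$ contains the segment
$$\Tor_2^A(N,G)\;\longrightarrow\;\Tor_1^A(N,K)\;\longrightarrow\;\Tor_1^A(N,F),$$
whose outer terms vanish, since $F$ and $G$ are flat; hence $\Tor_1^A(N,K)=0$ for every $N$, and $K$ is flat.

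For countable presentability, I would invoke Lemma~\ref{count-pres-flat-mods} to produce two-term projective resolutions
$$0\to Q_1\to Q_0\to F\to 0,\qquad 0\to P_1\to P_0\to G\to 0,$$
in which all four modules $Q_0$, $Q_1$, $P_0$, $P_1$ are countably generated free (and hence countably presentable) $A$-modules, arising from the telescope construction of $F$ and $G$ as countable directed colimits of finitely generated free modules. Let $K'$ denote the kernel of the composite surjection $Q_0\twoheadrightarrow F\twoheadrightarrow G$. Since $Q_1=\ker(Q_0\to F)$ is contained in $K'$, and the image of $K'$ in $F$ is precisely $K$, a direct diagram chase produces a short exact sequence
$$0\to Q_1\to K'\to K\to 0,$$
while $0\to K'\to Q_0\to G\to 0$ is a second two-term projective resolution of~$G$. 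Schanuel's lemma, applied to this resolution and to $0\to P_1\to P_0\to G\to 0$, yields an isomorphism
$$K'\oplus P_0\;\cong\;P_1\oplus Q_0.$$
Since $P_1\oplus Q_0$ is countably generated free, $K'$ is countably generated projective, being a direct summand. In particular $K'$ is countably presentable, as a retract of the countably presentable $A$-module $P_1\oplus Q_0$.

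Finally, $K$ is the cokernel of the morphism $Q_1\to K'$ between two countably presentable modules. Choosing a countable free presentation of $K'$ and lifting the map from the countably generated free module $Q_1$ into the top term exhibits $K$ as the cokernel of a map between two countably generated free $A$-modules, which is the defining property of countable presentability. The one nonformal ingredient is the application of Schanuel's lemma, whose hypotheses force us to replace the flat module $F$ in the original sequence by the projective module $Q_0$ furnished by Lemma~\ref{count-pres-flat-mods}; without that substitution one would be comparing a merely flat resolution with a projective resolution of~$G$, for which Schanuel does not directly apply. This is exactly the place where countable presentability of the \emph{flat} module $F$ (with its consequence that $F$ has projective dimension $\le 1$) is used in an essential way.
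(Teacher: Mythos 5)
Your proof is correct. It shares the paper's key ingredient---the two-term telescope resolution by countably generated free modules provided by Lemma~\ref{count-pres-flat-mods}---and, like the paper's proof, it runs a Schanuel-type comparison of two short exact sequences ending in the quotient module~$G$. The difference is in the bookkeeping: the paper resolves only the quotient, writing $0\to P''\to P'\to G\to 0$ for its telescope, lifts $P'\to G$ through the given surjection $F\to G$, and extracts a short exact sequence $0\to P''\to K\oplus P'\to F\to 0$; since countably presentable modules are closed under extensions, $K\oplus P'$ is countably presentable and $K$ is a direct summand of it. You instead also resolve the source $F$, pass to the syzygy $K'=\ker(Q_0\to G)$, apply Schanuel's lemma literally, and then recover $K$ as a cokernel of a map of countably presentable modules. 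Both routes work; the paper's is a step shorter and, notably, never uses flatness of the \emph{source}---only its countable presentability---whereas your argument invokes the projective resolution $0\to Q_1\to Q_0\to F\to 0$ and hence the flatness of $F$ as well. (Your closing remark that countable presentability of the flat source is used ``in an essential way'' is therefore an artifact of your route rather than of the statement.)
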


\begin{proof}
 Notice that the kernel of a surjective morphism from a countably
presentable module to a countably presentable module need not be
countably presentable in general (unless the ring is countably
coherent).
 The lemma claims that for \emph{flat} modules it is.

 Let $F$ be a countably presentable flat $A$\+module.
 By Lemma~\ref{count-pres-flat-mods}, there exists a countable directed
diagram of finitely generated projective $A$\+modules $P_0\rarrow P_1
\rarrow P_2\rarrow\dotsb$ such that $F=\varinjlim_{n\ge0}P_n$.
 Hence we have a short exact sequence of $A$\+modules $0\rarrow
\bigoplus_{n=0}^\infty P_n\rarrow\bigoplus_{n=0}^\infty P_n\rarrow F
\rarrow0$.
 Put $P'=P''=\bigoplus_{n=0}^\infty P_n$.

 Now let $0\rarrow H\rarrow G\rarrow F\rarrow0$ be a short exact
sequence of $A$\+modules.
 Since the $A$\+module $P'$ is projective, the $A$\+module morphism
$P'\rarrow F$ lifts over the surjective $A$\+module morphism
$G\rarrow F$.
 We obtain a commutative diagram of a morphism of short exact sequences
acting by the identity on the rightmost terms
$$
 \xymatrix{
  0 \ar[r] & P'' \ar[r] \ar[d] & P' \ar[r] \ar[d] & F \ar[r] \ar@{=}[d]
  & 0 \\
  0 \ar[r] & H \ar[r] & G \ar[r] & F \ar[r] & 0
 }
$$

 Consequently, there is a short exact sequence of $A$\+modules
$0\rarrow P''\rarrow H\oplus P'\rarrow G\rarrow0$.
 Now we notice that the $A$\+module $P''$ is countably presentable.
 If the $A$\+module $G$ is countably presentable as well, then it
follows that the $A$\+module $H\oplus P'$ is countably presentable
(as the class of countably presentable $A$\+modules is closed under
extensions).
 Consequently, the $A$\+module $H$ is countably presentable as
a direct summand of a countably presentable $A$\+module.
\end{proof}

 For a generalization of Lemma~\ref{countable-flat-coherence} to
arbitrary regular cardinals~$\kappa$, see~\cite[Corollary~10.12]{Pacc}
or~\cite[Corollary~4.7]{Plce}.

\begin{lem} \label{three-term-complexes-of-modules}
 Let $A$ be an associative ring.
 Then the abelian category of three-term complexes of left $A$\+modules
$C^1\rarrow C^2\rarrow C^3$ is locally finitely presentable, and
consequently, locally\/ $\aleph_1$\+presentable.
 The\/ $\aleph_1$\+presentable objects of this category are precisely
all the three-term complexes of countably presentable $A$\+modules.
\end{lem}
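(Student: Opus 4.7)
The plan is to recognize this category as a $\boZ$-linear diagram category and apply Theorem~\ref{diagram-theorem}. Let $D$ denote the small $\boZ$-linear category with three objects $1,2,3$, whose morphism modules are freely generated (together with the identities) by two arrows $d^1\:1\rarrow 2$ and $d^2\:2\rarrow 3$ subject to the single relation $d^2\circ d^1=0$. Then $D$ is finitely presented in the sense of~\cite[Section~6]{Pacc}, and in particular $\aleph_1$\+presented. Evaluation at the three objects yields a canonical equivalence between $\Fun_\boZ(D,A\Modl)$ and the abelian category of three-term complexes of left $A$\+modules in the statement.

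For the first assertion, one identifies $\Fun_\boZ(D,A\Modl)$ with the category of left modules over the associative ring $R=R_D\ot_\boZ A$, where $R_D=\bigoplus_{i,j}\Hom_D(i,j)$ is a finitely generated free $\boZ$\+module with unit $e_1+e_2+e_3$ regarded as an ordinary associative ring. Since the category of left modules over any associative ring is a locally finitely presentable Grothendieck abelian category, the first claim follows. The local $\aleph_1$\+presentability is then immediate from~\cite[Theorem~1.20]{AR}.

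For the description of the $\aleph_1$\+presentable objects, apply Theorem~\ref{diagram-theorem} with $\kappa=\aleph_1$, $\lambda=\aleph_0$, $\sC=A\Modl$, and $D$ as above. The category $A\Modl$ is locally finitely presentable, hence $\aleph_1$\+accessible with colimits of $\aleph_0$\+indexed chains, and $D$ is $\aleph_1$\+presented. The theorem then identifies the $\aleph_1$\+presentable objects of $\Fun_\boZ(D,A\Modl)$ with the $\boZ$\+linear functors $D\rarrow(A\Modl)_{<\aleph_1}$, i.e., precisely the three-term complexes of countably presentable left $A$\+modules, as required.

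The only real verification needed is that $D$ genuinely meets the definition of a $\kappa$\+presented $k$\+linear category given in~\cite[Section~6]{Pacc}, but with finitely many objects, finitely many generating arrows, and a single relation this is immediate, so no substantial obstacle is expected.
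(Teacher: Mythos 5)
Your proof is correct, but it takes a different route from the paper: the paper simply cites~\cite[Proposition~1.1(c)]{Plce}, which establishes the statement (for an arbitrary regular cardinal~$\kappa$ and in greater generality) in one stroke, whereas you assemble the result from ingredients already present in the paper. Your identification of the three-term complex category with $\Fun_\boZ(D,A\Modl)$, and further with $(R_D\ot_\boZ A)\Modl$ for the rank-$5$ free $\boZ$-algebra $R_D$ (the path algebra of $1\to2\to3$ modulo $d^2d^1=0$), correctly yields local finite presentability, and the application of Theorem~\ref{diagram-theorem} with $\kappa=\aleph_1$, $\lambda=\aleph_0$, $\sC=A\Modl$ correctly identifies the $\aleph_1$\+presentable objects as the termwise countably presentable complexes; the hypotheses are satisfied since $A\Modl$ is locally finitely presentable and $D$ has finitely many objects, arrows, and relations. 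What your approach buys is self-containedness relative to the toolkit the paper has already set up (and it transparently extends to any uncountable regular~$\kappa$ via the same theorem); what the paper's citation buys is uniformity, since \cite[Proposition~1.1(c)]{Plce} covers all regular cardinals including $\aleph_0$ and more general diagram shapes without the ad hoc module-category detour. No gaps.
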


\begin{proof}
 The assertion holds for any regular cardinal~$\kappa$ in lieu
of~$\aleph_1$.
 See~\cite[Proposition~1.1(c)]{Plce} for a much more general result.
\end{proof}

\begin{lem} \label{short-exact-of-flat-modules-accessible}
 For any ring $A$, the category of short exact sequences of flat
$A$\+modules is\/ $\aleph_1$\+accessible.
 The\/ $\aleph_1$\+presentable objects of this category are precisely
all the short exact sequences of countably presentable flat
$A$\+modules.
\end{lem}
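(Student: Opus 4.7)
The plan is to apply the inserter/equifier machinery of Section~\ref{accessible-preliminaries-secn} in parallel with the proof of Theorem~\ref{flat-comodules-as-directed-colimits-theorem}, based on the observation that the category of short exact sequences of flat $A$-modules is equivalent, via the kernel functor, to the category of surjective morphisms $d\colon F^2 \to F^3$ between flat $A$-modules. The inverse equivalence takes a surjection to its kernel, which is automatically flat: if $F^2$ and $F^3$ are flat, then the $\Tor$ long exact sequence for $0 \to \ker(d) \to F^2 \to F^3 \to 0$ combined with $\Tor_1^A(F^3,-) = 0 = \Tor_2^A(F^3,-)$ forces $\Tor_1^A(\ker(d),-) = 0$.

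To show this latter category is $\aleph_1$-accessible, I would first apply Theorem~\ref{inserter-theorem} with $\kappa = \aleph_1$ and $\lambda = \aleph_0$ to the two projection functors $\sK \rightrightarrows \sL$, where $\sK = A\Modl_\flat \times A\Modl_\flat$ and $\sL = A\Modl$. By Proposition~\ref{product-proposition} together with Lemma~\ref{flat-modules-accessible}, $\sK$ is $\aleph_1$-accessible; and $A\Modl$ is locally finitely presentable. The resulting inserter $\sD$ is the category of morphisms $d\colon F^2 \to F^3$ between flat $A$-modules, and Theorem~\ref{inserter-theorem} identifies its $\aleph_1$-presentable objects as the morphisms between countably presentable flat modules. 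Next, I would apply Theorem~\ref{equifier-theorem} to the pair of parallel functors $F = G = \coker\colon \sD \to A\Modl$ equipped with the two natural transformations $\phi = \id_{\coker}$ and $\psi = 0$. The equifier carves out precisely the full subcategory $\sE \subset \sD$ of surjective morphisms. The hypotheses of Theorem~\ref{equifier-theorem} are satisfied because the cokernel functor preserves all colimits, and it sends any $\aleph_1$-presentable object of $\sD$ to a countably presentable (hence $\aleph_1$-presentable) $A$-module, since $\coker(d)$ is a quotient of $F^3$. Hence $\sE$ is $\aleph_1$-accessible, with $\aleph_1$-presentable objects the surjections between countably presentable flat modules.

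Finally, I translate the $\aleph_1$-presentability statement back through the kernel equivalence: an $\aleph_1$-presentable surjection $d\colon F^2 \to F^3$ between countably presentable flat modules corresponds to the short exact sequence $0 \to \ker(d) \to F^2 \to F^3 \to 0$, and at this point I invoke Lemma~\ref{countable-flat-coherence} to conclude that $\ker(d)$ is itself countably presentable flat. Thus the $\aleph_1$-presentable short exact sequences of flat modules are exactly the short exact sequences of countably presentable flat modules. The main non-formal input of the proof is this last use of Lemma~\ref{countable-flat-coherence}: for a general (non-coherent) ring $A$, the kernel of a surjection between countably presentable $A$-modules need not be countably presentable, so the flatness hypothesis on $F^1$ is essential for the clean presentability characterization; the inserter and equifier theorems then do the remaining bookkeeping.
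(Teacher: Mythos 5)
Your argument is correct, and it is self-contained in a way the paper's own ``proof'' is not: the paper simply cites \cite[Corollary~10.13]{Pacc} and \cite[Propositions~1.1(c) and~4.6]{Plce}, which establish the statement for arbitrary regular cardinals~$\kappa$ via the general theory of locally $\kappa$\+coherent exact categories. Your route instead reassembles the result from the inserter and equifier theorems already quoted in Section~\ref{accessible-preliminaries-secn}, in the same style as the proofs of Theorem~\ref{flat-comodules-as-directed-colimits-theorem} and Proposition~\ref{short-exact-sequences-of-flat-comodules}. The reduction to the category of surjections of flat modules (kernels of surjections of flat modules being flat), the identification of that category as an equifier inside the inserter of the two projections $A\Modl_\flat\times A\Modl_\flat\rightrightarrows A\Modl$, and the final appeal to Lemma~\ref{countable-flat-coherence} to recover countable presentability of the kernel all check out; you correctly isolate Lemma~\ref{countable-flat-coherence} as the only non-formal input, which is precisely the ``local coherence'' used in the cited references. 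Two cosmetic points: in the $\Tor$ computation you want $\Tor_2^A(-,F^3)=0=\Tor_1^A(-,F^2)$ (you wrote $\Tor_1$ of $F^3$ rather than of $F^2$, though both vanish anyway); and the claim that $\coker(d)$ is countably presentable needs ``quotient of a countably presentable module by a countably \emph{generated} submodule,'' not merely ``quotient of $F^3$'' --- the image of $d$ is countably generated because $F^2$ is, so this is immediate, but worth saying.
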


\begin{proof}
 Once again, the assertion holds for any regular cardinal~$\kappa$.
 See~\cite[Corollary~10.13]{Pacc} or~\cite[Propositions~1.1(c)
and~4.6]{Plce}.
\end{proof}

 Recall from the discussion in Section~\ref{comod-preliminaries-secn}
that the category $\C\Comodl$ need not be abelian for a coring $\C$
over an associative ring $A$ in general; but one can still speak
about \emph{short exact sequences of $A$\+flat $\C$\+comodules}.
 The following proposition describes such short exact sequences.

\begin{prop} \label{short-exact-sequences-of-flat-comodules}
 Let\/ $\C$ be a coring over an associative ring~$A$.
 Then the category of short exact sequences of $A$\+flat\/
$\C$\+comodules is\/ $\aleph_1$\+accessible.
 The\/ $\aleph_1$\+presentable objects of this category are precisely
all the short exact sequences of $A$\+countably presentable $A$\+flat\/
$\C$\+comodules.
 Consequently, every short exact sequence of $A$\+flat\/ $\C$\+comodules
is an\/ $\aleph_1$\+directed colimit of short exact sequences of
$A$\+countably presentable $A$\+flat\/ $\C$\+comodules.
\end{prop}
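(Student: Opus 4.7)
The plan is to realize the category of short exact sequences of $A$\+flat $\C$\+comodules as a pseudopullback of two already\+understood $\aleph_1$\+accessible categories, and then invoke Theorem~\ref{pseudopullback-theorem} with $\kappa=\aleph_1$ and $\lambda=\aleph_0$. This parallels the use of inserter, equifier and pseudopullback arguments in the earlier proofs of this paper (e.g., the proofs of Theorem~\ref{flat-comodules-as-directed-colimits-theorem} and Proposition~\ref{complexes-of-flat-comodules-prop}), and simultaneously yields the description of $\aleph_1$\+presentable objects, whence the final ``consequently'' clause by the definition of $\aleph_1$\+accessibility.

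Concretely, let $D$ denote the $\boZ$\+linear path category of the quiver $0\rarrow1\rarrow2$, so that $\Fun_\boZ(D,\sA)$ is the category of three\+term sequences in an additive category $\sA$; this $D$ is plainly a finitely (hence $\aleph_1$\+) presented $\boZ$\+linear category. Combining Theorem~\ref{flat-comodules-as-directed-colimits-theorem} with Theorem~\ref{diagram-theorem}, the category $\sK_1=\Fun_\boZ(D,\C\Comodl_{A\dfl})$ is $\aleph_1$\+accessible, with $\aleph_1$\+presentable objects the three\+term sequences of $A$\+countably presentable $A$\+flat $\C$\+comodules. By Lemma~\ref{short-exact-of-flat-modules-accessible}, the category $\sK_2$ of short exact sequences of flat $A$\+modules is $\aleph_1$\+accessible, with $\aleph_1$\+presentable objects the short exact sequences of countably presentable flat $A$\+modules. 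Finally, the category $\sL=\Fun_\boZ(D,A\Modl)$ is locally finitely presentable by Theorem~\ref{diagram-theorem} applied to $A\Modl$, its $\aleph_1$\+presentable objects being the three\+term sequences of countably presentable $A$\+modules.

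I would then introduce two comparison functors: let $F_1\:\sK_1\rarrow\sL$ be the termwise forgetful functor to $A$\+modules, and let $F_2\:\sK_2\rarrow\sL$ be the functor that forgets the short\+exactness data, viewing a short exact sequence as a three\+term complex of $A$\+modules. Both $F_1$ and $F_2$ preserve arbitrary directed colimits, since the classes of flat $A$\+modules in $A\Modl$ and of $A$\+flat $\C$\+comodules in $\C\Comodl$ are closed under directed colimits, and short\+exactness is preserved by directed colimits in $A\Modl$. Both also carry $\aleph_1$\+presentable objects to $\aleph_1$\+presentable objects, by the descriptions of $\aleph_1$\+presentable objects recorded above. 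Hence all hypotheses of Theorem~\ref{pseudopullback-theorem} are satisfied, and the pseudopullback $\sC$ of $F_1$ and $F_2$ is $\aleph_1$\+accessible, with $\aleph_1$\+presentable objects precisely the triples $(S_1,S_2,\theta)$ whose first two components are $\aleph_1$\+presentable in $\sK_1$ and $\sK_2$ respectively.

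The remaining task is to identify the pseudopullback $\sC$ with the category of short exact sequences of $A$\+flat $\C$\+comodules. Given a triple $(K_1,K_2,\theta)\in\sC$, the isomorphism $\theta\:F_1(K_1)\simeq F_2(K_2)$ transports the short\+exactness of $K_2$ onto the underlying $A$\+module diagram of $K_1$, so $K_1$ becomes a short exact sequence in $\C\Comodl_{A\dfl}$; conversely, every short exact sequence of $A$\+flat $\C$\+comodules yields such a triple with $\theta=\id$. I expect this bookkeeping step to be the only delicate point, in particular the verification that morphisms in $\sC$ correspond bijectively to commutative diagrams of short exact sequences in $\C\Comodl_{A\dfl}$; this should be a routine unpacking of definitions. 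Once the identification is established, the characterization of $\aleph_1$\+presentable objects provided by Theorem~\ref{pseudopullback-theorem} translates into the assertion that they are precisely the short exact sequences of $A$\+countably presentable $A$\+flat $\C$\+comodules, completing the proof.
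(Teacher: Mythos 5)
Your proof is correct, but it takes a genuinely different route from the paper's. The paper re-runs the inserter--equifier argument of Theorem~\ref{flat-comodules-as-directed-colimits-theorem} one level up: it starts from the category of short exact sequences of flat $A$\+modules (Lemma~\ref{short-exact-of-flat-modules-accessible}) in the role of $\sK$, uses three-term complexes of $A$\+modules (Lemma~\ref{three-term-complexes-of-modules}) in the role of $\sL$, and imposes the coaction $\nu\:\D\rarrow\C\ot_A\D$ together with coassociativity and counitality by the same inserter and equifier constructions as before --- i.e., it puts the comodule structure on top of the exactness structure. You do the opposite: you first form the diagram category $\Fun_\boZ(D,\C\Comodl_{A\dfl})$ using Theorem~\ref{flat-comodules-as-directed-colimits-theorem} as a black box together with Theorem~\ref{diagram-theorem}, and then impose exactness of the underlying $A$\+module sequence by a pseudopullback against the category of short exact sequences of flat $A$\+modules over $\Fun_\boZ(D,A\Modl)$. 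Both arguments rest on the same Ulmer-type machinery, and your verification of the hypotheses of Theorem~\ref{pseudopullback-theorem} (preservation of directed colimits and of $\aleph_1$\+presentable objects by $F_1$ and $F_2$) is sound; the identification of the pseudopullback with the category of short exact sequences of $A$\+flat $\C$\+comodules via $E\longmapsto(E,U(E),\id)$ is indeed routine, using that $F_2$ is fully faithful and that short exactness is invariant under isomorphism of diagrams. Your decomposition is arguably more modular, since it quotes the comodule-theoretic input once and for all; the paper's version keeps the argument uniform with the proof of Theorem~\ref{flat-comodules-as-directed-colimits-theorem} and avoids invoking the diagram theorem for the comodule side. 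It is worth noting that a splicing/isomorpher argument close in spirit to yours is what the paper itself uses one step later, in the proof of Corollary~\ref{comods-pure-acycl-complexes-as-aleph1-dir-colims}.
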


\begin{proof}
 Similar to the proof of
Theorem~\ref{flat-comodules-as-directed-colimits-theorem}, with
the following changes.
 Use the category of short exact sequences of flat $A$\+modules
(from Lemma~\ref{short-exact-of-flat-modules-accessible}) in
the role of $\sK$, the category of three-term complexes of
$A$\+modules (from Lemma~\ref{three-term-complexes-of-modules})
in the role of $\sL$, and put $\sM=\sL\times\sK$.
 The functors and natural transformations are given by the same
formulas as in the proof of
Theorem~\ref{flat-comodules-as-directed-colimits-theorem}, applied
to short exact sequences or three-term complexes instead of (co)modules.
\end{proof}

 The assertion of
Proposition~\ref{short-exact-sequences-of-flat-comodules} (together
with Theorem~\ref{flat-comodules-as-directed-colimits-theorem})
can be restated by saying that the exact category of
$A$\+flat $\C$\+comodules $\C\Comodl_{A\dfl}$ from
Lemma~\ref{flat-comodules-exact-category} is
a \emph{locally\/ $\aleph_1$\+coherent exact category} in
the sense of~\cite[Section~1]{Plce}.

 We refer to~\cite[Section~10]{Bueh} for the definition of an acyclic
complex in an exact category.
 Acyclic complexes in the exact category $\C\Comodl_{A\dfl}$ are called
\emph{$A$\+pure acyclic complexes of $A$\+flat\/ $\C$\+comodules}.
 Equivalently, the $A$\+pure acyclic complexes of $A$\+flat
$\C$\+comodules are the complexes of $\C$\+comodules whose underlying
complexes of $A$\+modules are acyclic with flat $A$\+modules
of cocycles.

\begin{cor} \label{comods-pure-acycl-complexes-as-aleph1-dir-colims}
 Let\/ $\C$ be a coring over an associative ring~$A$.
 Then the category of $A$\+pure acyclic complexes of $A$\+flat\/
$\C$\+comodules is\/ $\aleph_1$\+accessible.
 The\/ $\aleph_1$\+presentable objects of this category are precisely
all the $A$\+pure acyclic complexes of $A$\+countably presentable
$A$\+flat\/ $\C$\+comodules.
 Consequently, every $A$\+pure acyclic complex of $A$\+flat\/
$\C$\+comodules is an\/ $\aleph_1$\+directed colimit of $A$\+pure
acyclic complexes of $A$\+countably presentable $A$\+flat\/
$\C$\+comodules.
\end{cor}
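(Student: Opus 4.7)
The plan is to realize the category $\sA$ of $A$\+pure acyclic complexes of $A$\+flat $\C$\+comodules as the isomorpher of two functors between $\aleph_1$\+accessible product categories, and then invoke Theorem~\ref{isomorpher-theorem}. The starting observation is that an acyclic complex in the exact category $\C\Comodl_{A\dfl}$ is equivalent, via the admissible epi/mono factorization of each differential, to a $\boZ$\+indexed family of short exact sequences $\sigma_n\:0\to K_n\to M_n\to N_n\to 0$ of $A$\+flat $\C$\+comodules together with isomorphisms $\theta_n\:N_n\xrightarrow{\sim}K_{n+1}$. Given such data, the differential $d^n\:M_n\to M_{n+1}$ is recovered as $i_{n+1}\circ\theta_n\circ e_n$; the relation $d^{n+1}d^n=0$ and acyclicity hold automatically since $e_{n+1}\circ i_{n+1}=0$ and $\im(i_{n+1})=\ker(e_{n+1})$.

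By Proposition~\ref{short-exact-sequences-of-flat-comodules} the category $\sS$ of short exact sequences of $A$\+flat $\C$\+comodules is $\aleph_1$\+accessible with the expected description of $\aleph_1$\+presentable objects; and since $|\boZ|=\aleph_0<\aleph_1$, Proposition~\ref{product-proposition} yields the same for the product $\sS^{\boZ}$. Combining Theorem~\ref{flat-comodules-as-directed-colimits-theorem} with Proposition~\ref{product-proposition} shows likewise that $\C\Comodl_{A\dfl}^{\boZ}$ is $\aleph_1$\+accessible with $\aleph_1$\+presentable objects the families of $A$\+countably presentable $A$\+flat $\C$\+comodules. Both categories admit all directed, and in particular $\aleph_0$\+chain, colimits. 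Define parallel functors $F,G\:\sS^{\boZ}\rightrightarrows\C\Comodl_{A\dfl}^{\boZ}$ by $F((\sigma_n)_n)=(N_n)_n$ and $G((\sigma_n)_n)=(K_{n+1})_n$. Both preserve all colimits (being componentwise projections, post-composed with an index shift in the case of~$G$), and both send $\aleph_1$\+presentable objects to $\aleph_1$\+presentable objects. Theorem~\ref{isomorpher-theorem} then applies: the isomorpher $\sC$ of $F$ and $G$ is $\aleph_1$\+accessible, and its $\aleph_1$\+presentable objects are the pairs $(\sigma,\theta)$ for which each $\sigma_n$ is a short exact sequence of $A$\+countably presentable $A$\+flat $\C$\+comodules.

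Transporting along the equivalence $\sC\simeq\sA$ established in the first paragraph yields the stated description of $\aleph_1$\+presentable objects of $\sA$, and hence the corollary. The point demanding care is precisely this equivalence: one must check that every $A$\+pure acyclic complex in $\C\Comodl_{A\dfl}$ canonically produces an isomorpher datum (with each $\theta_n$ the identity once the canonical cocycles are taken), that every isomorpher datum produces an acyclic complex, and that these assignments are mutually inverse up to natural equivalence on morphisms. As an independent sanity check on the description of presentable objects, $\sA$ is closed under directed colimits in $\Com(\C\Comodl_{A\dfl})$ — since directed colimits preserve $A$\+pure exactness of short sequences of $A$\+modules — so Proposition~\ref{complexes-of-flat-comodules-prop} already guarantees that $A$\+pure acyclic complexes of $A$\+countably presentable $A$\+flat $\C$\+comodules are $\aleph_1$\+presentable in~$\sA$.
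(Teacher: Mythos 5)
Your argument is correct and is essentially identical to the one the paper spells out: the paper also realizes the category of $A$\+pure acyclic complexes as the isomorpher of the two functors $(\sigma_n)_n\mapsto(N_n)_n$ and $(\sigma_n)_n\mapsto(K_{n+1})_n$ between the $\boZ$\+indexed product of the category of short exact sequences of $A$\+flat $\C$\+comodules and the $\boZ$\+indexed product of $\C\Comodl_{A\dfl}$, and then applies Proposition~\ref{product-proposition}, Proposition~\ref{short-exact-sequences-of-flat-comodules}, Theorem~\ref{flat-comodules-as-directed-colimits-theorem}, and Theorem~\ref{isomorpher-theorem} exactly as you do.
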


\begin{proof}
 This is provable similarly to
Proposition~\ref{short-exact-sequences-of-flat-comodules}
(using~\cite[Corollary~10.14]{Pacc} instead of
Lemma~\ref{short-exact-of-flat-modules-accessible}, and
the obvious version of Lemma~\ref{three-term-complexes-of-modules}
for unbounded complexes).
 This is akin to the argument used in~\cite[proof of
Theorem~4.2]{PS6}.
 But we prefer to spell out an argument deducing the corollary from
the proposition.

 The argument is similar to~\cite[proof of Corollary~10.14]{Pacc}.
 An $A$\+pure acyclic complex of $A$\+flat $\C$\+comodules $\F^\bu$ is
the same thing as a collection of short exact sequences of
$A$\+flat $\C$\+comodules $0\rarrow\G^n\rarrow\F^n\rarrow\HH^n\rarrow0$
together with a collection of isomorphisms of $\C$\+comodules
$\HH^n\simeq\G^{n+1}$, \,$n\in\boZ$.
 Therefore, the category of $A$\+pure acyclic complexes of $A$\+flat
$\C$\+comodules can be constructed from the category of short exact
sequences of $A$\+flat $\C$\+comodules using Cartesian products
(as in Proposition~\ref{product-proposition}) and the isomorpher
construction (as in Theorem~\ref{isomorpher-theorem}).

 Specifically, take $\kappa=\aleph_1$ and $\lambda=\aleph_0$.
 Denote by $\sK$ the Cartesian product of copies of the category
of short exact sequences of $A$\+flat $\C$\+comodules
(from Proposition~\ref{short-exact-sequences-of-flat-comodules}),
indexed by all the integers $n\in\boZ$.
 Denote by $\sL$ the Cartesian product of copies of the category
of $A$\+flat $\C$\+comodules $\C\Comodl_{A\dfl}$, also indexed by
$n\in\boZ$.
 Let $F_1\:\sK\rarrow\sL$ be the functor taking a collection of short
exact sequences $(0\to\G^n\to\F^n\to\HH^n\to0)_{n\in\boZ}$ to
the collection of flat comodules $(\HH^n)_{n\in\boZ}$, and let
$F_2\:\sK\rarrow\sL$ be the functor taking the same collection of
short exact sequences to the collection of flat comodules
$(\G^{n+1})_{n\in\boZ}$.
 Then the isomorpher category $\sC$ is the category of $A$\+pure
acyclic complexes of $A$\+flat $\C$\+comodules.

 Theorem~\ref{flat-comodules-as-directed-colimits-theorem}
and Proposition~\ref{short-exact-sequences-of-flat-comodules}
together with Proposition~\ref{product-proposition} tell us that
the categories $\sK$ and $\sL$ are $\aleph_1$\+accessible, and
describe their full subcategories of $\aleph_1$\+presentable objects.
 In view of these descriptions, Theorem~\ref{isomorpher-theorem} is
applicable, telling us that the isomorpher category $\sC$ is
$\aleph_1$\+accessible, and describing its full subcategory of
$\aleph_1$\+presentable objects.
\end{proof}

\Section{Cotorsion Periodicity for Comodules}
\label{comodule-cotorsion-periodicity-secn}

 In this section we deduce certain conditional results, based on
the techniques of~\cite[Section~8]{PS6} and generalizing some results
from~\cite[Section~9]{PS6}.

 Let $\C$ be a coring over a ring $A$ such that $\C$ is a flat right
$A$\+module (so the category of left $\C$\+comodules is abelian
by Lemma~\ref{comodule-category-lemma}(b)).
 Consider the following condition on the coring~$\C$:
\begin{itemize}
\item[($*$)] Every left $\C$\+comodule is a quotient comodule of
an $A$\+flat left $\C$\+comodule.
\end{itemize}
 Two classes of corings $\C$ known to satisfy~($*$) are described in
Remark~\ref{flat-sheaves-and-comodules} and
Lemma~\ref{finite-weak-dimension-lemma} (geometrically, these correspond
to quasi-compact semi-separated schemes and a certain kind of
smooth stacks).
 We are not aware of any example of a coring $\C$ over $A$ that is flat
as a (left and) right $A$\+module but does not satisfy~($*$).
{\hbadness=1500\par}

 We will also need a second condition:
\begin{itemize}
\item[(${*}{*}$)] Let $\M$ be a left $\C$\+comodule such that
the underlying $A$\+module of $\M$ has finite projective dimension
as an object of the abelian category $A\Modl$.
 Then $\M$ has a finite projective dimension as an object of
the abelian category $\C\Comodl$.
\end{itemize}

\begin{rem} \label{two-star-nontrivial}
 The corings $\C$ corresponding to quasi-compact semi-separated schemes
$X$ as per Example~\ref{kontsevich-rosenberg} satisfy~(${*}{*}$).
 This is the result of~\cite[Theorem~5.3]{PS6}.

 On the other hand, when $A=k$ is a field and $\C$ is a coalgebra
over~$k$, the condition~(${*}{*}$) asks the comodule category
$\C\Comodl$ to have finite homological dimension.
 This, of course, does \emph{not} hold in general.
 For example, if $\C$ is a finite-dimensional coalgebra over~$k$,
then the category of $\C$\+comodules is equivalent to the category
of modules over the $k$\+vector space dual finite-dimensional algebra,
$\C\Comodl\simeq\C^*\Modl$.
 Taking $\C^*$ to be a finite-dimensional $k$\+algebra of infinite
global dimension, one obtains an example when (${*}{*}$)~does not hold.

 Informally speaking, (${*}{*}$)~means that ``the coring~$\C$ has
finite homological dimension relative to the ring~$A$''.
 In the context of algebraic geometry, one can expect this to be
satisfied for many stacks in characteristic zero, such as
the quotient stacks $X/G$ of algebraic varieties $X$ by actions of
algebraic groups~$G$ (because the homological dimension of
the category of algebraic group representations does not exceed
the dimension of the group, in characteristic zero).
 On the other hand, in prime characteristic~$p$, the quotient of
a point by the action of a finite group of the order divisible by~$p$
does \emph{not} satisfy~(${*}{*}$), as the previous paragraph explains.

 See Remarks~\ref{two-star-cant-be-dropped-in-cotorsion-periodicity}
and~\ref{two-star-cant-be-dropped-in-flat-co-derived}
below for a further discussion.
\end{rem}

 We denote by $\Ext_\C^*({-},{-})$ the Ext groups in the abelian
category $\C\Comodl$.
 A left $\C$\+comodule $\B$ is said to be
\emph{cotorsion}~\cite[Section~C.2]{Pcosh} if $\Ext_\C^1(\F,\B)=0$
for all $A$\+flat left $\C$\+comodules~$\F$.
 Let $\sB=\C\Comodl^\cot$ denote the full subcategory of cotorsion
$\C$\+comodules in the abelian category $\sC=\C\Comodl$.
 Obviously, the full subcategory $\C\Comodl^\cot$ is closed
under extensions in $\C\Comodl$, so it inherits an exact category
structure from the abelian exact structure of $\C\Comodl$.

 We will say that a class of objects $\sF$ in an abelian category $\sC$
is \emph{resolving} if $\sF$ is closed under extensions and kernels of
epimorphisms in $\sC$, and every object of $\sC$ is a quotient object
of an object from~$\sF$.

 The following corollary is our version of~\cite[Corollary~5.6 and
Lemma~9.1]{PS6}.
 Part~(a) was already mentioned in the proof of
Theorem~\ref{flat-comodules-as-directed-colimits-theorem}.

\begin{cor} \label{prepare-for-comodule-cotorsion-periodicity}
 Let\/ $\C$ be a coring over a ring~$A$. \par
\textup{(a)} The class of all $A$\+flat left\/ $\C$\+comodules is
closed under directed colimits in the additive category\/ $\C\Comodl$. \par
\textup{(b)} Assume that the condition~\textup{($*$)} above on
the coring\/ $\C$ is satisfied.
 Then the class of all $A$\+flat left\/ $\C$\+comodules is resolving
in the abelian category\/ $\C\Comodl$.
 Consequently, one has\/ $\Ext_\C^n(\F,\B)=0$ for all $A$\+flat left\/
$\C$\+comodules\/ $\F$, all\/ cotorsion left\/ $\C$\+comodules\/ $\B$,
and all integers $n\ge1$. \par
\textup{(c)} Assume that the condition~\textup{(${*}{*}$)} above
on the coring\/ $\C$ is satisfied.
 Then any $A$\+flat left\/ $\C$\+comodule is an\/ $\aleph_1$\+directed
colimit of $A$\+flat\/ $\C$\+comodules having finite projective
dimensions as objects of\/ $\C\Comodl$.
\end{cor}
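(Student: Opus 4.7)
Part~(a) is immediate: by Lemma~\ref{comodule-category-lemma}(a) the forgetful functor $\C\Comodl\to A\Modl$ preserves all colimits, and the class of flat left $A$\+modules is closed under directed colimits by the Govorov--Lazard theorem (as already used in the proof of Lemma~\ref{flat-modules-accessible}).  Hence a directed colimit in $\C\Comodl$ of $A$\+flat comodules remains $A$\+flat.

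For part~(b), set $\sF=\C\Comodl_{A\dfl}$.  Hypothesis~($*$) is precisely the statement that every $\C$\+comodule is a quotient of an object of~$\sF$.  Closure of $\sF$ under extensions and under kernels of epimorphisms reduces, via the exact forgetful functor $\C\Comodl\to A\Modl$ (available by Lemma~\ref{comodule-category-lemma}(b) since $\C$ is right $A$\+flat), to the classical $\Tor$ argument that flat $A$\+modules are closed under both operations.  Thus $\sF$ is resolving.  To obtain the vanishing of $\Ext^n_\C(\F,\B)$ for $A$\+flat $\F$, cotorsion $\B$, and $n\ge1$, one iterates~($*$) together with the closure under kernels of epimorphisms to construct a resolution $\cdots\to P_1\to P_0\to\F\to0$ in $\C\Comodl$ with all $P_i$ and all syzygies $K_n$ in~$\sF$; a Yoneda-theoretic dimension-shifting argument --- representing any class in $\Ext^n_\C(\F,\B)$ as the Yoneda composition of the truncated resolution with a 1-extension of $K_{n-1}$ by~$\B$ --- then reduces the desired vanishing to $\Ext^1_\C(K_{n-1},\B)=0$, which holds because $K_{n-1}\in\sF$ and $\B$ is cotorsion.

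For part~(c), apply Theorem~\ref{flat-comodules-as-directed-colimits-theorem} to present a given $A$\+flat $\C$\+comodule $\F$ as an $\aleph_1$\+directed colimit $\F=\varinjlim_{i\in I}\F_i$ of $A$\+countably presentable $A$\+flat $\C$\+comodules.  Each $\F_i$, being countably presentable and flat as an $A$\+module, has $A$\+projective dimension at most~$1$ by Lemma~\ref{count-pres-flat-mods}, and hypothesis~(${*}{*}$) then promotes this to finite projective dimension in $\C\Comodl$, yielding the conclusion.

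The principal obstacle lies in the $\Ext$\+vanishing assertion of~(b).  Resolving-ness of $\sF$ is a routine exact-category check, but $\C\Comodl$ generally lacks projective objects, so the classical module-theoretic proof that ``resolving left class implies hereditary cotorsion pair'' via a projective resolution is not directly available here.  One must therefore argue through the Yoneda interpretation of $\Ext$ and verify that, for the chosen resolution of $\F$ by objects of~$\sF$, the connecting map $\Ext^1_\C(K_{n-1},\B)\to\Ext^n_\C(\F,\B)$ is surjective --- using only that $\sF$ is closed under kernels of epimorphisms, not that its objects are projective in $\C\Comodl$.
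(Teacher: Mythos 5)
Your proposal is correct and, for part~(a), part~(c), and the resolving-class portion of part~(b), follows the paper's own argument exactly: (a)~reduces to closure of flat modules under directed colimits via the colimit-preserving forgetful functor, (c)~combines Theorem~\ref{flat-comodules-as-directed-colimits-theorem} with Lemma~\ref{count-pres-flat-mods} and~(${*}{*}$), and the closure properties in~(b) are pulled back through the exact forgetful functor of Lemma~\ref{comodule-category-lemma}(b). The only divergence is the higher $\Ext$ vanishing in~(b): the paper simply cites~\cite[Lemma~6.1]{PS6}, whereas you inline a proof of that lemma by Yoneda-theoretic dimension shifting. Your sketch is sound, but the surjectivity you flag at the end is best not extracted from the long exact sequence of a \emph{fixed} resolution (that route would require knowing $\Ext^{j}_\C(P_i,\B)=0$ for $j\ge2$, which is what one is trying to prove). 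Instead argue by induction on~$n$: given a class in $\Ext^n_\C(\F,\B)$ represented by an $n$\+extension with rightmost epimorphism $X_0\to\F$, use~($*$) to choose an epimorphism $G\to X_0$ with $G$ an $A$\+flat comodule and pull back to replace $X_0$ by~$G$; the kernel $K$ of the composite $G\to\F$ lies in $\sF$ by closure under kernels of epimorphisms, the modified $n$\+extension represents the same class and splices as the Yoneda product of the $1$\+extension $0\to K\to G\to\F\to0$ with an $(n-1)$\+extension of $K$ by~$\B$, and $\Ext^{n-1}_\C(K,\B)=0$ by the induction hypothesis. With that step made explicit, your argument is a complete proof of the cited lemma and hence of~(b).
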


\begin{proof}
 Part~(a) follows from Lemma~\ref{comodule-category-lemma}(a), since
the class of all flat modules is closed under directed colimits
in $A\Modl$.
 In the context of part~(b), condition~($*$) presumes that $\C$ is
a flat right $A$\+module.
 Then the class of all $A$\+flat left $\C$\+comodules is closed under
extensions and kernels of epimorphisms in $\C\Comodl$ by
Lemma~\ref{comodule-category-lemma}(b), since the class of all flat
modules is closed under extensions and kernels of epimorphisms in
$A\Modl$.
 Every left $\C$\+comodule is an epimorphic image of an $A$\+flat
left $\C$\+comodule by the condition~($*$).
 The final assertion of~(b) is provided by~\cite[Lemma~6.1]{PS6}.

 In part~(c), all countably presentable flat $A$\+modules have
projective dimensions~$\le1$ in $A\Modl$ by
Lemma~\ref{count-pres-flat-mods}.
 Under condition~(${*}{*}$), it follows that all $A$\+countably
presentable $A$\+flat $\C$\+comodules have finite projective dimensions
in $\C\Comodl$.
 So it remains to refer to
Theorem~\ref{flat-comodules-as-directed-colimits-theorem}.
\end{proof}

 Asume that $\C$ is a flat right $A$\+module.
 Let $\sF=\C\Comodl_{A\dfl}$ denote the full subcategory of all
$A$\+flat $\C$\+comodules in the abelian category $\sC=\C\Comodl$.
 Since the full subcategory $\sF$ is closed under extensions in
$\C\Comodl$, it inherits an exact category structure from
the abelian exact structure of $\C\Comodl$.
 Since $\sF$ is also closed under directed colimits in $\sC$,
and the directed colimit functors are exact in $\sC=\C\Comodl$, it
follows that $\sF$ is an \emph{exact category with exact directed
colimits} in the sense of~\cite[Section~7]{PS6}, i.~e., all directed
colimits exist and directed colimits of admissible short exact
sequences are admissible short exact sequences in~$\sF$.

 Let $\sC$ be an abelian category and $\sB\subset\sC$ be a class of
objects.
 An object $M\in\sC$ is called \emph{$\sB$\+periodic} if there exists
a short exact sequence $0\rarrow M\rarrow B\rarrow M\rarrow0$ in
$\sC$ with $M$ as both the rightmost and the leftmost term,
and a middle object $B\in\sB$.

 The following theorem summarizes the technology of proving cotorsion
periodicity theorems going back to~\cite{BCE} and developed in its
present form in~\cite{PS6}.

\begin{thm} \label{general-cotorsion-periodicity-scheme}
 Let\/ $\sC$ be an abelian category with coproducts and\/ $\sF\subset
\sC$ be a resolving subcategory closed under directed colimits.
 Assume that the directed colimit functors in\/ $\sF$ take admissible
short exact sequences to admissible short exact sequences (in
the exact category structure on\/ $\sF$ inherited from the abelian
exact structure of\/ $\sC$, i.~e., the short sequences in\/ $\sF$
that are exact in\/~$\sC$).
 Assume further that the category\/ $\sF$ coincides with the directed
colimit closure of its full subcategory consisting of all the objects
of\/ $\sF$ that have finite projective dimensions in\/~$\sC$.
 Let\/ $\sB=\sF^{\perp_1}\subset\sC$ be the full subcategory of all
objects $B\in\sC$ such that\/ $\Ext^1_\sC(F,B)=0$ for all $F\in\sF$.
 Then any\/ $\sB$\+periodic object of\/ $\sC$ belongs to\/~$\sB$.
\end{thm}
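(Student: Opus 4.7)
The plan is as follows. Fix a $\sB$-periodic object $M$ with distinguished short exact sequence $\zeta\colon 0\to M\to B\to M\to 0$ in $\sC$ and $B\in\sB$; the goal is to show $\Ext^1_\sC(F,M)=0$ for every $F\in\sF$. As a preliminary, I would verify that the same dimension-shifting argument as in the proof of Corollary~\ref{prepare-for-comodule-cotorsion-periodicity}(b) applies in the abstract setup at hand: since $\sF$ is resolving in $\sC$, one obtains $\Ext^n_\sC(F,B')=0$ for all $F\in\sF$, all $B'\in\sB$, and all $n\ge 1$.

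I would then settle the case of $F\in\sF$ of finite projective dimension $d$ in $\sC$ by iterating the long exact sequence of $\Ext^*_\sC(F,{-})$ attached to $\zeta$. The vanishing just recorded kills the contributions of the middle term $B$ in all positive degrees, giving the dimension-shifting chain
\[
 \Ext^1_\sC(F,M)\simeq\Ext^2_\sC(F,M)\simeq\cdots\simeq\Ext^{d+1}_\sC(F,M)=0.
\]
Equivalently, $\zeta$ splices into an unbounded acyclic complex $\cdots\to B\to B\to B\to\cdots$ in $\sC$ whose every cocycle object is identified with $M$, so $M$ sits as a syzygy of $B$'s at arbitrary depth.

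For a general $F\in\sF$, the hypothesis supplies a presentation $F=\varinjlim_{i\in I}F_i$ as a directed colimit in $\sF$ with each $F_i$ of finite projective dimension in $\sC$. Direct passage to the colimit is not immediate, since $\Ext^1_\sC({-},M)$ need not preserve directed colimits. Instead, I would represent a class in $\Ext^1_\sC(F,M)$ by an extension $\xi\colon 0\to M\to E\to F\to 0$ in $\sC$ and form the pullback extensions $\xi_i\colon 0\to M\to E_i\to F_i\to 0$ along the structure maps $F_i\to F$. Each $\xi_i$ splits by the finite-projective-dimension case already handled. The hypothesis that directed colimits are exact in $\sF$ (and $\sF$ is closed under them in $\sC$) exhibits $\xi$ as the directed colimit of the split extensions $\xi_i$.

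The main obstacle is lifting the individual splittings $\sigma_i\colon F_i\to E_i$ to a single global splitting $F\to E$. These splittings are non-canonical, and their discrepancies along transition maps form a $1$-cocycle of obstructions valued in the system of $\Hom_\sC(F_i,M)$'s. I expect the resolution of this obstruction to be exactly where the $\sB$-periodic structure of $M$ is used in an essential way: following the argument schemes of~\cite{BCE} and~\cite[Section~8]{PS6}, each obstruction should be correctable by composing the splittings with suitable factorizations through the map $B\to M$, yielding a compatible family of splittings via a telescope-type construction. Verifying that this correction can indeed be carried out in the abstract categorical setting---making full use of the resolvingness of $\sF$, the assumed exactness of directed colimits in it, and the existence of coproducts in $\sC$---is the step I expect to demand the most care.
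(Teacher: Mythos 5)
The paper does not actually prove this theorem in situ; its proof is a citation of~\cite[Corollary~8.2]{PS6}, so your attempt has to be judged as a self-contained argument. Your preliminary dimension-shifting step and the case of $F\in\sF$ of finite projective dimension are correct, and the observation that $\xi$ is the directed colimit of the pulled-back extensions $\xi_i$ is also correct --- although the reason is right-exactness of colimits over a connected diagram plus the five lemma, not the hypothesis on exact directed colimits in $\sF$: that hypothesis concerns only short exact sequences all of whose terms lie in $\sF$, and the kernel $M$ of $\xi_i$ is not assumed to lie in~$\sF$.

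The gap is the final step, and it is the entire content of the theorem. A directed colimit of split extensions is precisely a ``pure'' extension relative to $\sF$, and such extensions by $M$ split for every $F\in\sF$ exactly when $M\in\sB$ --- which is what is to be proved; so deferring the trivialization of the discrepancy cocycle is deferring the theorem itself. Concretely, the obstruction $[d]$, with $d_{ij}=\sigma_jt_{ij}-\sigma_i\:F_i\to M$, lives in the first cohomology of the index poset with coefficients in the diagram $i\mapsto\Hom_\sC(F_i,M)$, and your proposed correction --- lifting the $d_{ij}$ through the surjections $\Hom_\sC(F_i,B)\to\Hom_\sC(F_i,M)$ --- merely trades this class for a class one degree higher, via the long exact sequence of derived limits attached to $0\to\Hom(F_i,M)\to\Hom(F_i,B)\to\Hom(F_i,M)\to0$; periodicity yields isomorphisms between these cohomology groups in consecutive degrees, not their vanishing, so the process does not terminate. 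To close the argument one needs a genuine vanishing input. For instance, when $I$ is a countable chain one can use the telescope presentation $0\to\coprod_nF_n\to\coprod_nF_n\to F\to0$ (here the exact-directed-colimit hypothesis on $\sF$ really is used, to see this sequence is exact), deduce $\varprojlim^1\Hom(F_n,B)=0$ from $\Ext^1_\sC(F,B)=0$, and combine this with $\varprojlim^2=0$ for countable inverse systems to kill $\varprojlim^1\Hom(F_n,M)$; the general case requires either a reduction to well-ordered chains with a transfinite induction or higher derived-limit vanishing over directed posets. None of this is in your write-up, and it is exactly what the cited~\cite[Corollary~8.2]{PS6} supplies.
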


\begin{proof}
 This is a particular case of~\cite[Corollary~8.2]{PS6}.
\end{proof}

 Now we can prove our cotorsion periodicity theorem for comodules.

\begin{thm} \label{comodule-cotorsion-periodicity-theorem}
 Let\/ $\C$ be a coring over an associative ring~$A$.
 Assume that both the conditions~\textup{($*$)}
and~\textup{(${*}{*}$)} on the coring\/ $\C$ are satisfied.
 Then any cotorsion-periodic left\/ $\C$\+comodule is cotorsion.
\end{thm}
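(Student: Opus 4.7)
The plan is to realize this as a direct application of Theorem~\ref{general-cotorsion-periodicity-scheme}. I take $\sC=\C\Comodl$ and $\sF=\C\Comodl_{A\dfl}$. First I need to check that $\sC$ is abelian with coproducts: condition~($*$) tacitly presumes that $\C$ is a flat right $A$\+module (otherwise the phrase ``$A$\+flat left\/ $\C$\+comodule'' in the definition of cotorsion would be of doubtful use, and moreover~($*$) itself is mentioned only under this assumption throughout Section~\ref{comodule-cotorsion-periodicity-secn}), so Lemma~\ref{comodule-category-lemma}(b) provides the Grothendieck abelian structure. Then, by definition, $\sB:=\sF^{\perp_1}\subset\sC$ is precisely the full subcategory of cotorsion left\/ $\C$\+comodules.

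Next I verify the four hypotheses of Theorem~\ref{general-cotorsion-periodicity-scheme}. The class $\sF$ is closed under directed colimits in $\sC$ by Corollary~\ref{prepare-for-comodule-cotorsion-periodicity}(a), and is resolving in $\sC$ by Corollary~\ref{prepare-for-comodule-cotorsion-periodicity}(b), where condition~($*$) is used to guarantee that every $\C$\+comodule is a quotient of an object of $\sF$. The exactness of directed colimits in the Grothendieck category $\sC$ means that a directed colimit of short sequences in $\sF$ which are exact in $\sC$ stays exact in $\sC$ with terms still in $\sF$, hence is admissible in the inherited exact structure on~$\sF$.

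The remaining hypothesis --- that $\sF$ be the directed colimit closure (inside $\sC$) of its subcategory of objects of finite projective dimension in $\sC$ --- is the step where condition~(${*}{*}$) enters. This is exactly Corollary~\ref{prepare-for-comodule-cotorsion-periodicity}(c): by Theorem~\ref{flat-comodules-as-directed-colimits-theorem} every object of $\sF$ is an $\aleph_1$\+directed colimit of $A$\+countably presentable $A$\+flat $\C$\+comodules, and each such comodule has projective dimension $\le 1$ over $A$ by Lemma~\ref{count-pres-flat-mods}, hence finite projective dimension in $\C\Comodl$ by~(${*}{*}$).

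With all hypotheses verified, Theorem~\ref{general-cotorsion-periodicity-scheme} immediately yields that every $\sB$\+periodic object of $\sC$ lies in $\sB$, i.e.\ every cotorsion-periodic left\/ $\C$\+comodule is cotorsion. There is no real obstacle here: the entire argument is an assembly job, and the only place where one could conceivably slip is checking that the exact structure on $\sF$ inherited from $\sC$ coincides with the one used in Corollary~\ref{prepare-for-comodule-cotorsion-periodicity} and in the general scheme --- but since $\C$ is flat over $A$ on the right, admissible short exact sequences in $\sF$ are precisely those short sequences of $A$\+flat $\C$\+comodules that are exact in $\C\Comodl$, which is the same as being exact as sequences of $A$\+modules (Lemma~\ref{flat-comodules-exact-category}).
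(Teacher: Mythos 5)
Your proposal is correct and follows exactly the paper's own route: the paper also proves this theorem by a direct application of Theorem~\ref{general-cotorsion-periodicity-scheme} with $\sC=\C\Comodl$ and $\sF=\C\Comodl_{A\dfl}$, citing Corollary~\ref{prepare-for-comodule-cotorsion-periodicity} for all the hypotheses (including the use of~(${*}{*}$) via part~(c)) and the preceding discussion for the exactness of directed colimits in the inherited exact structure on~$\sF$. Your extra care about the flatness of $\C$ as a right $A$\+module being implicit in~($*$) and about the identification of the exact structures matches the paper's remarks verbatim in substance.
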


\begin{proof}
 This is our generalization of~\cite[Theorem~9.2]{PS6}.
 The assumptions of Theorem~\ref{general-cotorsion-periodicity-scheme}
are satisfied by
Corollary~\ref{prepare-for-comodule-cotorsion-periodicity}
(cf.\ the discussion above).
\end{proof}

\begin{cor} \label{comodule-cotorsion-cocycles-cor}
 Let\/ $\C$ be a coring over an associative ring~$A$.
 Assume that both the conditions~\textup{($*$)}
and~\textup{(${*}{*}$)} on the coring\/ $\C$ are satisfied.
 Let\/ $\B^\bu$ be an acyclic complex in\/ $\C\Comodl$ whose terms\/
$\B^n$ are cotorsion left\/ $\C$\+comodules for all $n\in\boZ$.
 Then the\/ $\C$\+comodules of cocycles of the complex\/ $\B^\bu$ are
also cotorsion.
\end{cor}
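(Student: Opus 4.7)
The plan is to apply Theorem~\ref{comodule-cotorsion-periodicity-theorem} to a single comodule that packages all the cocycles together. Let $Z^n := \ker(\B^n \to \B^{n+1})$. Acyclicity of $\B^\bu$ provides short exact sequences
$$
0 \rarrow Z^n \rarrow \B^n \rarrow Z^{n+1} \rarrow 0, \qquad n \in \boZ,
$$
in the Grothendieck abelian category $\C\Comodl$ (Lemma~\ref{comodule-category-lemma}(b)). Since coproducts are exact in any Grothendieck category, forming the coproduct over $n \in \boZ$ and reindexing the rightmost term identifies it with the leftmost one, yielding a short exact sequence $0 \to M \to C \to M \to 0$, where $M := \bigoplus_{n\in\boZ} Z^n$ and $C := \bigoplus_{n\in\boZ} \B^n$.

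Granting that $C$ is cotorsion, this sequence witnesses $M$ as cotorsion-periodic, and Theorem~\ref{comodule-cotorsion-periodicity-theorem} then gives $M \in \C\Comodl^\cot$. Each individual cocycle $Z^k$ is a direct summand of $M$ via the obvious projection and inclusion, and the cotorsion class, being defined by an $\Ext^1$-orthogonality condition, is closed under direct summands. Hence each $Z^k$ is cotorsion, which is the desired conclusion.

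The main obstacle is verifying that the coproduct $C = \bigoplus_n \B^n$ of cotorsion comodules is again cotorsion. I would approach this via Theorem~\ref{flat-comodules-as-directed-colimits-theorem}: every $A$-flat $\C$-comodule $\F$ is an $\aleph_1$-directed colimit of $A$-countably presentable $A$-flat comodules $\F_\alpha$, and under condition~(${*}{*}$) each $\F_\alpha$ has finite projective dimension in $\C\Comodl$ (by Lemma~\ref{count-pres-flat-mods} combined with~(${*}{*}$), as used in Corollary~\ref{prepare-for-comodule-cotorsion-periodicity}(c)). This bounded projective dimension permits a Mittag--Leffler-type argument on the inverse system of groups $\Ext^1_\C(\F_\alpha, \bigoplus_n \B^n)$ to reduce the vanishing of $\Ext^1_\C(\F, C)$ to the vanishing of $\Ext^1_\C(\F_\alpha, \B^n)$ for each $\alpha$ and $n$, and the latter is immediate from the cotorsion hypothesis on the $\B^n$. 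An alternative route replaces the coproduct by the product $\prod_n \B^n$ — which automatically preserves cotorsion, since $\Ext^1$ commutes with products in the second variable — but then one must cope with the possible failure of product-exactness in $\C\Comodl$ by a dual Mittag--Leffler argument in order to construct the required short exact sequence $0 \to M \to C \to M \to 0$.
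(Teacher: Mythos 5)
There is a genuine gap, and it sits exactly where you placed the ``main obstacle'': the coproduct $C=\bigoplus_{n\in\boZ}\B^n$ of cotorsion comodules need \emph{not} be cotorsion, and no Mittag--Leffler-type argument can repair this, because $\Ext^1_\C(\F_\alpha,{-})$ does not commute with infinite coproducts even when $\F_\alpha$ is countably presentable of projective dimension~$1$ (the relevant $\Hom$ out of an infinitely generated projective already fails to commute with coproducts). The failure occurs in the baseline case $\C=A=\boZ$: each $\boZ/p^n$ is cotorsion, but $\bigoplus_{n\ge1}\boZ/p^n$ is an unbounded reduced torsion group and hence not cotorsion. Taking $\B^\bu$ to be the contractible complex with $\B^{2n}=\B^{2n+1}=\boZ/p^n$ and identity differentials shows that the hypotheses and the conclusion of the corollary can both hold while your intermediate object $\bigoplus_n\B^n$ fails to be cotorsion; so the primary route is irreparably broken, not merely incomplete.

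The viable route is the one you relegate to your last sentence: replace the coproduct by the product $\prod_n$. This is what the paper does. Closure of the cotorsion class under products is not automatic in a Grothendieck category (products need not be exact, so $\Ext^1_\C(\F,{-})$ need not commute with them); the paper invokes \cite[Corollary~8.3]{CoFu} or \cite[Corollary~A.2]{CoSt} for it. The remaining difficulty --- that non-exactness of products in $\C\Comodl$ obstructs the naive construction of a short exact sequence $0\to\prod_nZ^n\to\prod_n\B^n\to\prod_nZ^n\to0$ --- is precisely what \cite[Proposition~8.4]{PS6} handles; its hypotheses are verified using ($*$), (${*}{*}$), Theorem~\ref{flat-comodules-as-directed-colimits-theorem} and Lemma~\ref{count-pres-flat-mods} (every comodule is a quotient of an $A$\+flat comodule of finite projective dimension in $\C\Comodl$), together with the vanishing $\Ext^n_\C(\F,\B)=0$ for $n\ge1$ from Corollary~\ref{prepare-for-comodule-cotorsion-periodicity}(b). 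Your ``dual Mittag--Leffler argument'' is exactly the content of that proposition, but it requires these inputs and is not a routine exercise, so as written the proof is incomplete even along the second route.
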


\begin{proof}
 This is our generalization of~\cite[Corollary~9.4]{PS6}.
 To deduce it from Theorem~\ref{comodule-cotorsion-periodicity-theorem},
we apply~\cite[Proposition~8.4]{PS6}.
 It remains to check the assumptions of that proposition
from~\cite{PS6}.

 Indeed, the abelian category $\C\Comodl$ is Grothendieck by
Lemma~\ref{comodule-category-lemma}(b), so it has infinite products.
 The full subcategory $\sB$ of cotorsion left $\C$\+comodules
is closed under infinite products in the abelian category
$\sC=\C\Comodl$ by~\cite[Corollary 8.3]{CoFu}
or~\cite[Corollary A.2]{CoSt}.
 It is also obvious that $\sB$ is closed under direct summands in~$\sC$.
 
 Finally, any left $\C$\+comodule is a quotient comodule of
$A$\+flat $\C$\+comodule by the condition~($*$), and any $A$\+flat
$\C$\+comodule is a quotient comodule of a coproduct of
$A$\+countably presentable $A$\+flat left $\C$\+comodules by
Theorem~\ref{flat-comodules-as-directed-colimits-theorem}.
 The underlying $A$\+modules of the $\C$\+comodules of the latter kind
have projective dimension~$\le1$ by Lemma~\ref{count-pres-flat-mods}.
 Such $\C$\+comodules have finite projective dimensions in
$\C\Comodl$ by the condition~(${*}{*}$).
 Therefore, every left $\C$\+comodule is a quotient comodule of
an $A$\+flat left $\C$\+comodule having finite projective dimension
in $\C\Comodl$.
 It remains to recall that $\Ext_\C^n(\F,\B)=0$ for any $A$\+flat
left $\C$\+comodule $\F$, any left $\C$\+comodule $\B\in\sB$, and all
integers $n\ge1$ by
Corollary~\ref{prepare-for-comodule-cotorsion-periodicity}(b).
\end{proof}

\begin{cor} \label{comodule-cotorsion-derived-equivalence}
 Let\/ $\C$ be a coring over an associative ring~$A$.
 Assume that both the conditions~\textup{($*$)}
and~\textup{(${*}{*}$)} on the coring\/ $\C$ are satisfied.
 Then the inclusion of exact/abelian categories\/
$\C\Comodl^\cot\rarrow\C\Comodl$ induces an equivalence of their
unbounded derived categories,
$$
 \sD(\C\Comodl^\cot)\,\simeq\,\sD(\C\Comodl).
$$
\end{cor}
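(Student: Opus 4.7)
The plan is to verify that the inclusion of the exact category $\C\Comodl^\cot$ into the abelian category $\C\Comodl$ satisfies the standard two conditions that force an equivalence on unbounded derived categories: (i) every complex in $\C\Comodl$ is quasi-isomorphic to a complex of cotorsion comodules, and (ii) every complex in $\C\Comodl^\cot$ that is acyclic in $\C\Comodl$ is already acyclic in the exact structure of $\C\Comodl^\cot$ (i.e., has cotorsion comodules of cocycles). Condition (ii) is literally Corollary~\ref{comodule-cotorsion-cocycles-cor}, available here because we are assuming both~($*$) and~(${*}{*}$).

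For (i), the strategy is to exhibit the pair $(\C\Comodl_{A\dfl},\,\C\Comodl^\cot)$ as a \emph{complete} cotorsion pair in the Grothendieck abelian category $\C\Comodl$. The $\Ext$-orthogonality $\Ext_\C^1(\F,\B)=0$ is the definition of cotorsion, and the higher orthogonality $\Ext_\C^n(\F,\B)=0$ for $n\ge1$ is furnished by Corollary~\ref{prepare-for-comodule-cotorsion-periodicity}(b). Completeness will be obtained via the Eklof--Trlifaj / Quillen small object argument applied to a \emph{set} of generators of $\C\Comodl_{A\dfl}$: Theorem~\ref{flat-comodules-as-directed-colimits-theorem} says that $\C\Comodl_{A\dfl}$ is $\aleph_1$\+accessible with $\aleph_1$\+presentable objects precisely the $A$\+countably presentable $A$\+flat comodules, which by Lemma~\ref{count-pres-flat-mods} have projective dimension $\le 1$ as $A$\+modules, hence by~(${*}{*}$) finite projective dimension as $\C$\+comodules. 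Every comodule $M\in\C\Comodl$ thus admits a special cotorsion preenvelope $0\to M\to B\to F\to 0$ with $B\in\C\Comodl^\cot$ and $F\in\C\Comodl_{A\dfl}$; iterating on $F$ yields a right resolution $0\to M\to B^0\to B^1\to\cdots$ by cotorsion comodules. Splicing such resolutions (or applying the standard Cartan--Eilenberg-type construction) shows that any complex in $\C\Comodl$ is quasi-isomorphic to one in $\C\Comodl^\cot$, giving essential surjectivity of $\sD(\C\Comodl^\cot)\to\sD(\C\Comodl)$.

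Full faithfulness is then a formal consequence of~(ii). Indeed, a morphism of complexes in $\C\Comodl^\cot$ that becomes a quasi-isomorphism in $\C\Comodl$ has a cone acyclic in $\C\Comodl$; by Corollary~\ref{comodule-cotorsion-cocycles-cor} the cocycles of this cone are cotorsion, so the cone is acyclic in the exact category $\C\Comodl^\cot$ as well. The calculus of fractions for the triangulated localization at acyclic complexes thus matches on the two sides, whence the inclusion induces the asserted equivalence.

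The main obstacle is the cotorsion preenvelope step: one must make the Eklof--Trlifaj machinery run in the abelian category $\C\Comodl$ with the specific generating set supplied by Theorem~\ref{flat-comodules-as-directed-colimits-theorem}. The conditions~($*$) and~(${*}{*}$) are exactly what make this work, as they ensure respectively that $\C\Comodl_{A\dfl}$ is resolving (Corollary~\ref{prepare-for-comodule-cotorsion-periodicity}(b)) and that the generators have finite $\C$\+projective dimension (Corollary~\ref{prepare-for-comodule-cotorsion-periodicity}(c)); this is directly parallel to the argument carried out for quasi-compact semi-separated schemes in~\cite[Corollary~9.5]{PS6}, which the present corollary generalizes.
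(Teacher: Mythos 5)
Your overall skeleton---(i)~every complex of comodules is quasi-isomorphic to a complex of cotorsion comodules, and (ii)~acyclic complexes of cotorsion comodules are acyclic in the exact structure of $\C\Comodl^\cot$ by Corollary~\ref{comodule-cotorsion-cocycles-cor}, so the two Verdier localizations agree---is the same as the paper's, which disposes of the corollary by quoting Corollary~\ref{comodule-cotorsion-cocycles-cor} together with the general resolution statement of~\cite[Proposition~9.6]{PS6} (spelled out in the contramodule case as Propositions~\ref{cotorsion-coresolving-prop} and~\ref{abelian-exact-products-coresolution-prop}). The gap is in your step~(i). To run the Eklof--Trlifaj argument and conclude that the middle term $B$ of $0\to M\to B\to F\to0$ is genuinely cotorsion, i.e.\ lies in $(\C\Comodl_{A\dfl})^{\perp_1}$ and not merely in $\sS^{\perp_1}$ for your chosen set $\sS$ of $A$\+countably presentable $A$\+flat comodules, you need every $A$\+flat comodule to be (a direct summand of) a \emph{transfinite extension} of objects of $\sS$, i.e.\ you need deconstructibility of $\C\Comodl_{A\dfl}$. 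Theorem~\ref{flat-comodules-as-directed-colimits-theorem} provides something of a different nature, namely $\aleph_1$\+accessibility: a presentation by $\aleph_1$\+directed colimits. Left $\Ext^1$\+orthogonality passes to filtrations (Eklof's lemma) but not to directed colimits, so the theorem you invoke does not feed the small object argument. The step is repairable (deconstructibility of the class of $A$\+flat comodules can be established by the usual purification argument, using that $\C$ is a flat right $A$\+module under~($*$)), but as written your argument conflates accessibility with deconstructibility.

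Moreover, the cotorsion-pair machinery is unnecessary for step~(i). Under~($*$) the category $\C\Comodl$ is Grothendieck, hence has enough injectives, and injective comodules are trivially cotorsion; so the class $\C\Comodl^\cot$ is cogenerating with no work at all. Combined with its closure under infinite products and direct summands (already used in the proof of Corollary~\ref{comodule-cotorsion-cocycles-cor}), a Spaltenstein/Eilenberg--Moore-type resolution construction in the spirit of Proposition~\ref{abelian-exact-products-coresolution-prop} produces the required quasi-isomorphism from an arbitrary complex into a complex of cotorsion comodules; this is in essence what the quoted \cite[Proposition~9.6]{PS6} supplies. This also shows that the hypotheses~($*$) and~(${*}{*}$) enter only through step~(ii) (via the periodicity theorem and the resolving property of $\C\Comodl_{A\dfl}$), not through the resolution step as your closing paragraph suggests.
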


\begin{proof}
 This is our generalization of~\cite[Corollary~9.7]{PS6}.
 To deduce it from Corollary~\ref{comodule-cotorsion-cocycles-cor},
it suffices to refer to~\cite[Proposition~9.6]{PS6}.
\end{proof}

\begin{rem} \label{two-star-cant-be-dropped-in-cotorsion-periodicity}
 The assumption of condition~(${*}{*}$) \emph{cannot} be dropped in
any one of the assertions of
Theorem~\ref{comodule-cotorsion-periodicity-theorem},
Corollary~\ref{comodule-cotorsion-cocycles-cor},
or Corollary~\ref{comodule-cotorsion-derived-equivalence}.
 It suffices to choose a field $A=k$ and a finite-dimensional coalgebra
$\C$ over~$k$ whose dual algebra $\C^*$ is a Frobenius algebra of
infinite global dimension (e.~g., the algebra of dual numbers
$\C^*=k[\epsilon]/(\epsilon^2)$).
 Then $\C\Comodl=\C^*\Modl$ (cf.\ Remark~\ref{two-star-nontrivial}),
all $\C$\+comodules are $A$\+flat, and the cotorsion $\C$\+comodules
are the injective $\C$\+comodules, or equivalently the injective
$\C^*$\+modules.
 Pick any $\C^*$\+module of infinite projective/injective dimension;
then its two-sided projective-injective resolution is an unbounded
acyclic complex of injective $\C$\+comodules refuting the conclusions
of both Corollaries~\ref{comodule-cotorsion-cocycles-cor}
and~\ref{comodule-cotorsion-derived-equivalence}.
 Then it is clear from~\cite[proof of Proposition~7.6]{CH} or
\cite[Proposition~2]{EFI} that the conclusion of
Theorem~\ref{comodule-cotorsion-periodicity-theorem} does not hold
for the coalgebra/coring $\C$, either.
\end{rem}

\Section{Exact Sequences of $A$-Flat $\C$-Comodules~II}
\label{comodules-exact-sequences-II-secn}

 In this section we also deduce some results conditional upon
the assumptions ($*$) and~(${*}{*}$) from
Section~\ref{comodule-cotorsion-periodicity-secn} on a coring~$\C$.

 We start with an abstract category-theoretic discussion of absolutely
acyclic complexes in exact categories of finite homological dimension.
 Given an additive category $\sE$, we denote by $\sK(\sE)$
the triangulated homotopy category of (unbounded cochain
complexes in)~$\sE$.

 Let $\sE$ be an exact category.
 Any (termwise admissible) short exact sequence of complexes in $\sE$
can be viewed as a bicomplex with three rows, so one can construct its
total complex.
 A complex in $\sE$ is said to be \emph{absolutely
acyclic}~\cite[Sections~2 and~4]{Psemi}, \cite[Section~A.1]{Pcosh},
\cite[Section~5.1]{Pedg} if it belongs to the minimal thick subcategory
of the homotopy category $\sK(\sE)$ containing the totalizations of
short exact sequences in~$\sE$.
 In other words, a complex in $\sE$ is absolutely acyclic if and only if
it can be constructed as a homotopy direct summand of a complex obtained
from the totalizations of short exact sequences of complexes in $\sE$
by passing to cones of closed morphisms of complexes a finite number of times.

 Equivalently, the full subcategory of absolutely acyclic complexes
in $\Com(\sE)$ can be constructed as the closure of the full subcategory
of contractible complexes under extensions (in the termwise exact
structure on $\Com(\sE)$) and direct summands.
 This is essentially shown in~\cite[Proposition~8.12]{PS5}.

 Denote by $\Ext_\sE^*({-},{-})$ the Yoneda Ext functor in an exact
category~$\sE$.
 The exact category $\sE$ is said to have \emph{homological
dimension\/~$\le d$} (where $d\ge-1$ is an integer) if
$\Ext_\sE^n(X,Y)=0$ for all $X$, $Y\in\sE$ and $n>d$.

\begin{lem} \label{acyclic-are-absolutely-acyclic}
 In an exact category of finite homological dimension, all acyclic
complexes are absolutely acyclic.
\end{lem}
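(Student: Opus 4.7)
The plan is to first handle bounded acyclic complexes directly, and then exploit the finite homological dimension hypothesis to reduce the unbounded case to the bounded one.

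For the bounded case, I would argue by induction on the number of nonzero terms that any bounded acyclic complex in $\sE$ is absolutely acyclic. The base is a three-term exact sequence $0 \to A \to B \to C \to 0$ in $\sE$, which is by definition the totalization of a short exact sequence. For a longer bounded acyclic complex $D^\bu = (0 \to D^a \to D^{a+1} \to \cdots \to D^b \to 0)$, consider the admissible subcomplex $S^\bu = (0 \to D^a \to D^{a+1} \to Z^{a+2} \to 0)$, where $Z^{a+2}\hookrightarrow D^{a+2}$ is the admissible mono of cocycles (well-defined because $D^\bu$ is acyclic). Then $S^\bu$ is a three-term exact sequence and hence absolutely acyclic, while the quotient $D^\bu/S^\bu$ is acyclic with one fewer nonzero term, hence absolutely acyclic by induction. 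Closure of the class of absolutely acyclic complexes under extensions in the termwise exact structure on $\Com(\sE)$, as recalled just before the lemma, finishes the induction.

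For the unbounded case, let $C^\bu$ be acyclic and fix $d$ with the homological dimension of $\sE$ at most~$d$. Splicing $d+1$ consecutive cocycle short exact sequences $0 \to Z^{n+i} \to C^{n+i} \to Z^{n+i+1} \to 0$ produces a $(d+1)$\+fold Yoneda extension representing a class in $\Ext^{d+1}_\sE(Z^{n+d+1},Z^n)$, which vanishes by hypothesis. I would then proceed by induction on $d$. The base case $d=0$ is immediate: every admissible short exact sequence in $\sE$ splits, so picking compatible splittings of the cocycle sequences exhibits $C^\bu$ as contractible, hence absolutely acyclic. For the inductive step, the vanishing of the $(d+1)$\+fold Yoneda class provides, via Yoneda Ext calculus, a diagram of admissible short exact sequences showing that each length-$(d+1)$ window of $C^\bu$ is Yoneda-equivalent to a trivial (split) extension; this should produce an admissible filtration of $C^\bu$ by subcomplexes whose successive subquotients are either bounded acyclic (handled by the first stage) or acyclic in an exact subcategory with strictly smaller effective homological dimension.

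The principal obstacle is the inductive step in the unbounded case: one must globalize the Yoneda trivialization across all windows of $C^\bu$ simultaneously while staying inside the finite operations (extensions and direct summands, equivalently, cones of chain maps) which define the thick subcategory of absolutely acyclic complexes. A cleaner alternative that avoids an explicit induction on $d$ would be to realize $C^\bu$ in the homotopy category $\sK(\sE)$ as a direct summand of a complex built from bounded acyclic pieces by a bounded number (controlled by $d+1$) of successive cone constructions; this requires choosing chain maps between the length-$(d+1)$ windows whose iterated cones collapse in $\sK(\sE)$, and verifying finiteness of the construction is where the homological dimension hypothesis is used crucially.
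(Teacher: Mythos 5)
Your first stage is correct but it is the easy half of the lemma: \emph{any} bounded acyclic complex in \emph{any} exact category is absolutely acyclic, by exactly the induction you describe (peel off the termwise admissible subcomplex $0\to D^a\to D^{a+1}\to Z^{a+2}\to 0$, which is the totalization of a short exact sequence, and use closure of the absolutely acyclic class under termwise admissible extensions). The finite homological dimension hypothesis enters nowhere in this stage, so the entire content of the lemma lies in the unbounded case --- and there your proposal does not contain a proof. The inductive step on~$d$ is never executed: you assert that vanishing of the $(d+1)$\+fold Yoneda products ``should produce an admissible filtration of $C^\bu$ by subcomplexes whose successive subquotients are either bounded acyclic \dots\ or acyclic in an exact subcategory with strictly smaller effective homological dimension,'' but no such filtration is constructed, and no candidate for that smaller-dimensional exact subcategory is identified (the cocycles $Z^n$ are simply objects of $\sE$, and there is no evident exact category attached to them whose homological dimension drops). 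The ``cleaner alternative'' in your last paragraph has the same status: choosing the chain maps between the length-$(d+1)$ windows and verifying that finitely many cones suffice are precisely the points where the hypothesis must do its work, and they are left unverified. You flag this yourself as ``the principal obstacle,'' which is the correct diagnosis; as written, the argument establishes only the bounded case.

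The obstacle is genuine and not merely a matter of bookkeeping. Yoneda triviality of the spliced class in $\Ext^{d+1}_\sE(Z^{n+d+1},Z^n)$ is witnessed by a zigzag of morphisms of $(d+1)$\+extensions, and the windows for different~$n$ overlap in $d+1$ consecutive positions, so these witnesses must be chosen compatibly for all $n\in\boZ$ simultaneously before they can be assembled into a finite iterated extension of totalizations of short exact sequences of complexes; nothing in your sketch accomplishes this globalization. The paper itself gives no inline argument here --- it cites~[Psemi, Lemma~2.1], where the unbounded case is handled by an actual construction --- so a self-contained proof would have to supply that construction; the reduction to windows plus the bounded case is not by itself sufficient.
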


\begin{proof}
 This is~\cite[Lemma~2.1]{Psemi}.
\end{proof}

\begin{prop} \label{absolutely-acyclic-are-summands-of-totalizations}
 Let\/ $\sE$ be an exact category of homological dimension\/~$\le d$,
where $d$~is a finite integer.
 Then any absolutely acyclic complex in\/ $\sE$ is a direct summand of
the totalization of a $(d+\nobreak2)$\+term exact complex\/
$0\rarrow E^\bu_d\rarrow E^\bu_{d-1}\rarrow\dotsb\rarrow E^\bu_0
\rarrow E^\bu_{-1}\rarrow0$ of complexes in\/~$\sE$.
\end{prop}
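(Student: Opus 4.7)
The plan is to invoke the characterization of absolutely acyclic complexes recorded in the paragraph immediately preceding Lemma~\ref{acyclic-are-absolutely-acyclic}: the class of absolutely acyclic complexes in $\Com(\sE)$ coincides with the closure of the contractible complexes under termwise admissible extensions and direct summands. Let $\mathcal{T}$ denote the class of complexes in $\sE$ that occur as direct summands of $\Tot(E^\bu_\bullet)$ for some $(d+2)$-term admissible exact complex $0\to E^\bu_d\to\dotsb\to E^\bu_{-1}\to 0$ in $\Com(\sE)$. Proving the proposition reduces to showing that $\mathcal{T}$ contains every contractible complex and is closed under termwise admissible extensions in $\Com(\sE)$.

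The contractible case is immediate: for a contractible $C^\bu$, the mapping cone of $\id_{C^\bu}$ is the totalization of the $2$-term admissible exact complex $0\to C^\bu\xrightarrow{\id} C^\bu\to 0$, and $C^\bu$ is a direct summand of this cone. Prepending $d$ zero terms realizes this as a $(d+2)$-term admissible exact complex, so $C^\bu\in\mathcal{T}$ for every $d\ge 0$.

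For closure under termwise admissible extensions, given $0\to X^\bu\to Y^\bu\to Z^\bu\to 0$ termwise admissible with $X^\bu,Z^\bu\in\mathcal{T}$, I would first splice the two witnessing $(d+2)$-term admissible exact complexes of complexes along this extension to produce a $(2d+3)$-term admissible exact complex of complexes whose totalization has $Y^\bu$ as a direct summand. The crucial second step is to compress this spliced complex back to length $d+2$. For this, I would use that the termwise exact structure on $\Com(\sE)$ inherits a homological dimension bound from $\sE$ (termwise $\Ext$'s vanish above degree $d$), and then invoke the Yoneda-Ext vanishing: any $(d+3)$-term admissible exact complex (in $\Com(\sE)$ with its termwise exact structure) represents the trivial class in $\Ext^{d+1}$, hence becomes, up to direct summands, a termwise split extension that compresses into a shorter exact complex. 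Iterating this compression converts the $(2d+3)$-term complex into a $(d+2)$-term complex while preserving $Y^\bu$ as a summand of the totalization.

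The main obstacle is precisely this compression step: making rigorous the passage from Yoneda-Ext vanishing for exact sequences of \emph{complexes} to a direct summand decomposition that discards ``excess'' terms without disturbing the summand relationship with $Y^\bu$ at the level of totalizations. The required technology is presumably the explicit bicomplex manipulation developed in \cite[Lemma~2.1]{Psemi} and \cite[proof of Proposition~8.8]{Pedg} (both referenced in the introduction), which would need to be adapted here; keeping the direct summand bookkeeping compatible with the totalization functor throughout the iterated compression is the delicate point.
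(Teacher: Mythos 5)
Your overall reduction is reasonable in outline: since the paper records that the absolutely acyclic complexes form the closure of the contractible ones under termwise admissible extensions and direct summands, it would suffice to show that your class $\mathcal{T}$ contains the contractible complexes (which you verify correctly) and is closed under such extensions. But the extension step, which is where all the content of the proposition lives, has two genuine problems. First, the ``splicing'' is unsupported: given a termwise admissible sequence $0\rarrow X^\bu\rarrow Y^\bu\rarrow Z^\bu\rarrow0$ with $X^\bu$ and $Z^\bu$ direct summands of totalizations of two $(d+2)$\+term exact complexes of complexes, those two witnessing exact complexes have no common term at which to splice, and the extension class of $Y^\bu$ is extra data invisible in either of them; an extension of one totalization by another is not the totalization of any concatenation of the two exact complexes, and no construction producing the promised $(2d+3)$\+term exact complex is given. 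Second, and more seriously, your compression step rests on the claim that $\Com(\sE)$ with the termwise exact structure inherits the homological dimension bound~$d$ from~$\sE$. This is false: already for $\sE=k\vect$ (homological dimension~$0$, where the termwise exact structure is the abelian one) one has $\Ext^n_{\Com(\sE)}(X^\bu,Y^\bu)\simeq\Hom_{\sK(\sE)}(X^\bu,Y^\bu[n])$ for $n\ge1$, which is nonzero for every~$n$ when, say, $X^\bu=Y^\bu$ is the complex with $k$ in every degree and zero differential. So there is no Yoneda Ext vanishing in $\Com(\sE)$ to invoke, and the proposed shortening of a long exact complex of complexes does not get off the ground. (Even where such a vanishing did hold, a Yoneda equivalence of exact sequences only identifies the associated classes, not the totalizations as strict direct summands of one another.) You flag the compression as the delicate point and defer to the references, but that is precisely where the proof of the proposition lies.

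For comparison, the paper does not attempt this induction over the build-up of absolutely acyclic complexes. It quotes \cite[proof of Proposition~8.8]{Pedg} for the statement that every absolutely acyclic complex $A^\bu$ is a \emph{homotopy} direct summand (a direct summand in $\sK(\sE)$) of the totalization $T^\bu$ of a $(d+2)$\+term exact complex of complexes --- this is where the finite homological dimension of $\sE$ genuinely enters, applied to objects of $\sE$ rather than to objects of $\Com(\sE)$ --- and then concentrates on upgrading the homotopy direct summand to a strict one: the identity of $A^\bu$ factors through $T^\bu\oplus\mathrm{Cone}(\id_{A^\bu})$, and the cone is itself the totalization of the $2$\+term exact complex $0\rarrow A^\bu\rarrow A^\bu\rarrow0$, which can be padded with zeros to length $d+2$. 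Your proposal never confronts this homotopy-versus-strict issue because it tries to keep strict summands throughout, but the mechanism you propose for doing so is exactly the part that fails.
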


\begin{proof}
 This result is a particular case of a similar assertion for
\emph{exact DG\+categories}, which is essentially proved
in~\cite[proof of Proposition~8.8]{Pedg}.
 More precisely, as a particular case of~\cite[proof of
Proposition~8.8]{Pedg} (corresponding to~\cite[Examples~4.39
and~6.1(4)]{Pedg}) one obtains the claim that any absolutely
acyclic complex $A^\bu$ in $\sE$ is a \emph{homotopy direct summand} of
the totalization $T^\bu$ of a $(d+\nobreak2)$\+term exact complex of
complexes in~$\sE$.
 Here a ``homotopy direct summand'' means a direct summand in
the homotopy category~$\sK(\sE)$.
 It remains to show that a homotopy direct summand can be turned into
a strict direct summand.

 Now we have a pair of closed morphisms of complexes $i\:A^\bu
\rarrow T^\bu$ and $p\:T^\bu\rarrow A^\bu$ such that the composition
$pi\:A^\bu\rarrow A^\bu$ is homotopic to the identity morphism
$\id\:A^\bu\rarrow A^\bu$.
 It follows that the difference $pi-\id\:A^\bu\rarrow A^\bu$ factorizes
as $A^\bu\rarrow C^\bu\rarrow A^\bu$, where $C^\bu$ is the cone of
identity morphism $\id\:A^\bu\rarrow A^\bu$ and $A^\bu\rarrow C^\bu$
is the natural closed morphism, while $C^\bu\rarrow A^\bu$ is some
arbitrary closed morphism.
 Hence the identity morphism $\id\:A^\bu\rarrow A^\bu$ factorizes
through the direct sum $T^\bu\oplus C^\bu$ as a morphism
in the category of complexes in~$\sE$.
 Finally, we point out that $C^\bu$ is the totalization of
a $2$\+term exact complex $0\rarrow A^\bu\rarrow A^\bu\rarrow0$
of complexes in~$\sE$.
 This takes care of all cases with $d\ge0$, while in the trivial
case of $d=-1$ we have $\sE=0$ and there is nothing to prove.
\end{proof}

 Let $\sF$ be an exact category.
 A class of objects $\sG\subset\sF$ is said to be
\emph{self-generating}~\cite[Section~6]{PS6} if for any admissible
epimorphism $F\rarrow H$ in $\sF$ with $H\in\sG$ there exists a morphism
$G\rarrow F$ in $\sF$ with $G\in\sG$ such that the composition
$G\rarrow F\rarrow H$ is an admissible epimorphism in~$\sF$.
 A class of objects $\sG$ in $\sF$ is said to be
\emph{self-resolving}~\cite[Section~7.1]{Pedg}, \cite[Section~8]{PS6}
if it is self-generating, closed under extensions, and closed under
kernels of admissible epimorphisms.
 
 Clearly, any self-resolving full subcategory $\sG$ in an exact
category $\sF$ interits an exact category structure from $\sF$ (since
$\sG$ is closed under extensions in~$\sF$).
 The following lemma is straightforward and well-known.

\begin{lem} \label{self-resolving-ext-isomorphism}
 Let\/ $\sF$ be an exact category and\/ $\sG\subset\sF$ be
a self-resolving full subcategory.
 Then the inclusion of exact categories\/ $\sG\rarrow\sF$ induces
isomorphisms on the\/ $\Ext$ groups,
$$
 \Ext_\sG^n(X,Y)\simeq\Ext_\sF^n(X,Y)
 \qquad\text{for all\/ $X$, $Y\in\sG$ and\/ $n\ge0$}.
$$
\end{lem}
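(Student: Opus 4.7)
The plan is to show that the natural comparison homomorphism
$\iota_n\:\Ext_\sG^n(X,Y)\rarrow\Ext_\sF^n(X,Y)$,
induced by viewing any admissible Yoneda $n$-extension in $\sG$ as one in $\sF$, is an isomorphism for all $n\ge0$ and all $X,Y\in\sG$, by induction on~$n$. The case $n=0$ is immediate from fullness of $\sG$ in~$\sF$. For $n=1$ a direct argument suffices: any admissible short exact sequence $0\rarrow Y\rarrow E\rarrow X\rarrow0$ in $\sF$ with $X,Y\in\sG$ has $E\in\sG$ by closure of $\sG$ under extensions, and any three-term morphism in $\sF$ between two such sequences lies in $\sG$ since all its objects do.

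For the inductive step $n\ge2$, the self-resolving hypothesis guarantees that for every $X\in\sG$ there is an admissible short exact sequence $0\rarrow K\rarrow G\rarrow X\rarrow0$ in $\sG$ with $G,K\in\sG$: apply self-generating to $\id_X$ to produce $G$ and an admissible epi $G\rarrow X$, then invoke closure under kernels of admissible epimorphisms to place $K=\ker(G\rarrow X)$ in~$\sG$. Such a sequence is admissible in $\sF$ as well, so the long exact sequence of Yoneda Ext fits into a commutative ladder in $\sG$ and~$\sF$. The key tool in the inductive step is then Yoneda splicing.

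For surjectivity at degree~$n$: given $\xi\:0\rarrow Y\rarrow F_{n-1}\rarrow\dotsb\rarrow F_0\rarrow X\rarrow0$ in~$\sF$, decompose its class at the $X$-end as $[\xi]_\sF=[\xi_0']_\sF\cdot[\eta_0'']_\sF$, where $\eta_0''\:0\rarrow Z_0\rarrow F_0\rarrow X\rarrow0$ is the final short exact sequence. Apply self-generating to the admissible epi $F_0\rarrow X$ to produce $G_0\in\sG$ with a morphism $G_0\rarrow F_0$ whose composite $G_0\rarrow X$ is admissible epi; put $K_0=\ker(G_0\rarrow X)\in\sG$, with induced map $f\:K_0\rarrow Z_0$. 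Since the resulting morphism of short exact sequences $\eta'\rarrow\eta_0''$ fixes $X$, we have $[\eta_0'']_\sF=f_*[\eta']_\sF$ where $\eta'\:0\rarrow K_0\rarrow G_0\rarrow X\rarrow0$; by naturality of Yoneda products, $[\xi]_\sF=\bigl(f^*[\xi_0']_\sF\bigr)\cdot[\eta']_\sF$. The first factor lies in $\Ext_\sF^{n-1}(K_0,Y)$ between objects of~$\sG$, so by induction it lifts to $\Ext_\sG^{n-1}(K_0,Y)$; splicing yields an $n$-extension in $\sG$ mapping to~$[\xi]_\sF$. Injectivity at degree~$n$ is the mirror argument: given $[\xi]_\sG$ mapping to zero, decompose $[\xi]_\sG=[\xi_0]_\sG\cdot[\eta_0]_\sG$ with $\eta_0$ the final short exact sequence of $\xi$ (with $G_0,K_0\in\sG$ automatic), and recall that Yoneda multiplication by $[\eta_0]$ is the connecting homomorphism~$\delta$. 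Then $\delta_\sF([\xi_0]_\sF)=[\xi]_\sF=0$ places $[\xi_0]_\sF$ in the image of the restriction from $\Ext_\sF^{n-1}(G_0,Y)$; two applications of the inductive isomorphism (at $(G_0,Y)$ and at $(K_0,Y)$) transport this to~$\sG$, yielding $[\xi_0]_\sG\in\ker\delta_\sG$ and hence $[\xi]_\sG=\delta_\sG([\xi_0]_\sG)=0$.

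The main obstacle is bookkeeping rather than conceptual: correctly invoking the naturality identity $[\alpha\cdot f_*\beta]=[f^*\alpha\cdot\beta]$ for Yoneda products in an exact category, and the identification of the connecting homomorphism of the long exact Ext sequence with Yoneda multiplication by the corresponding short exact sequence class. Both are standard in Yoneda Ext theory but must be set up carefully outside the abelian setting. Note also that the induction is necessarily strong: at degree~$n$, both halves of the argument invoke $\iota_{n-1}$ on the auxiliary pairs $(G_0,Y)$ and $(K_0,Y)$ produced by self-generating, not only on the original pair~$(X,Y)$.
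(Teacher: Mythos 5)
Your argument is correct: the dimension-shifting d\'evissage via Yoneda splicing, using self-generation to refine an admissible epimorphism $F_0\rarrow X$ by one from an object of $\sG$ with kernel in $\sG$, and the long exact Ext sequences in both $\sG$ and $\sF$ for injectivity, is exactly the standard proof of this comparison lemma. The paper itself only cites \cite[Section~12]{Kel} and \cite[Proposition~A.2.1]{Pcosh}, whose proofs proceed by essentially the same induction, so your proposal matches the intended approach and correctly flags the only delicate points (naturality of Yoneda products and the identification of the connecting map, both valid in any exact category).
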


\begin{proof}
 See~\cite[Section~12]{Kel} or~\cite[Proposition~A.2.1]{Pcosh}.
\end{proof}

 Now we return to comodules over a coring $\C$ over an associative
ring~$A$.

\begin{cor} \label{comodules-count-pres-flat-exact-category}
 Let\/ $\C$ be a coring over an associative ring $A$ such that\/ $\C$
is a flat right $A$\+module. \par
\textup{(a)} The full subcategory\/ $\sG$ of all $A$\+countably
presentable $A$\+flat left\/ $\C$\+comodules is self-resolving in
the exact category\/ $\sF$ of all $A$\+flat left\/ $\C$\+comodules. \par
\textup{(b)} If the conditions \textup{($*$)} and~\textup{(${*}{*}$)}
on the coring\/ $\C$ are satisfied, then the exact category\/ $\sG$
of $A$\+countably presentable $A$\+flat left\/ $\C$\+comodules has
finite homological dimension.
\end{cor}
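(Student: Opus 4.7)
The plan is to handle the two parts separately: part~(a) amounts to checking the three conditions in the definition of a self-resolving subcategory of $\sF$, while for part~(b) I would transfer homological bounds from $\C\Comodl$ down to $\sG$ via two applications of Lemma~\ref{self-resolving-ext-isomorphism}.

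For part~(a), closure of $\sG$ under extensions in $\sF$ is immediate from the classical fact that countably presentable modules over any ring are closed under extensions (applied to the underlying $A$-modules), and closure under kernels of admissible epimorphisms is exactly the content of Lemma~\ref{countable-flat-coherence}. The nontrivial condition is self-generation: given an admissible epimorphism $p\colon F\to H$ in $\sF$ with $H\in\sG$, I would invoke Proposition~\ref{short-exact-sequences-of-flat-comodules} to write the short exact sequence $0\to\ker p\to F\to H\to 0$ as an $\aleph_1$-directed colimit of short exact sequences $0\to K_i\to F_i\to H_i\to 0$ in $\sG$. Since $H$ is $\aleph_1$-presentable in $\sF$ by Theorem~\ref{flat-comodules-as-directed-colimits-theorem}, the identity $\id_H$ factors through some colimit structure map $H_{i_0}\to H$; this exhibits $H_{i_0}\to H$ as a split, hence admissible, epimorphism in $\sF$. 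Setting $G=F_{i_0}\in\sG$, the composition $G\to F\to H$ agrees with the composition $F_{i_0}\to H_{i_0}\to H$ of two admissible epimorphisms, and is therefore itself admissible.

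For part~(b), part~(a) together with Lemma~\ref{self-resolving-ext-isomorphism} yields $\Ext^n_\sG(X,Y)\cong\Ext^n_\sF(X,Y)$ for all $X,Y\in\sG$ and $n\ge 0$. The subcategory $\sF\subset\C\Comodl$ is itself self-resolving under~($*$): closure under extensions and under kernels of admissible epimorphisms in the abelian category $\C\Comodl$ both follow from the flatness arguments in Lemma~\ref{flat-comodules-exact-category} (recalling that $\C$ is a flat right $A$-module throughout this section), while self-generation in $\C\Comodl$ is immediate from~($*$). A second application of Lemma~\ref{self-resolving-ext-isomorphism} then gives $\Ext^n_\sG(X,Y)\cong\Ext^n_{\C\Comodl}(X,Y)$ for $X,Y\in\sG$. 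Since every $X\in\sG$ has $A$-projective dimension at most~$1$ by Lemma~\ref{count-pres-flat-mods}, condition~(${*}{*}$), read in the spirit of Remark~\ref{two-star-nontrivial} as asserting that the coring $\C$ has finite homological dimension relative to $A$, supplies a fixed integer $d$ such that every $X\in\sG$ has $\C$-projective dimension at most $d$. Consequently $\Ext^n_\sG(X,Y)=0$ for all $X,Y\in\sG$ and $n>d$, so $\sG$ has homological dimension at most $d$.

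The main obstacle lies in part~(b), in the extraction of a uniform bound from condition~(${*}{*}$). The literal pointwise statement of~(${*}{*}$) only guarantees, for each $X\in\sG$ individually, a finite $\C$-projective dimension whose value could a priori depend on $X$, whereas the finiteness of the homological dimension of the exact category $\sG$ requires uniformity. This uniformity matches the interpretation of~(${*}{*}$) advocated in Remark~\ref{two-star-nontrivial} and is the genuinely substantive content of that condition for the present application.
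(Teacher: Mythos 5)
Your part~(a) is correct. For the self-generation condition you decompose the whole short exact sequence via Proposition~\ref{short-exact-sequences-of-flat-comodules} and split off the structure map $H_{i_0}\to H$ using the $\aleph_1$\+presentability of $H$, whereas the paper only decomposes the source $\F=\varinjlim_\xi\G_\xi$ via Theorem~\ref{flat-comodules-as-directed-colimits-theorem} and picks~$\xi_0$ so that $\G_{\xi_0}\to\F\to\HH$ is already surjective; both routes work, yours just invoking slightly heavier machinery. Likewise your reduction of part~(b) to an Ext comparison between $\sG$ and $\C\Comodl$ via Lemma~\ref{self-resolving-ext-isomorphism} agrees with the paper (the paper composes the two self-resolving inclusions and applies the lemma once, you apply it twice; this is immaterial).

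The genuine gap is exactly where you locate it: the uniform bound on the projective dimensions in $\C\Comodl$ of the objects of~$\sG$. You close it by reading condition~(${*}{*}$) ``in the spirit of Remark~\ref{two-star-nontrivial}'' as a uniform statement, but that amounts to strengthening the hypothesis rather than proving the claim; the corollary is asserted under the literal, pointwise condition~(${*}{*}$). The missing argument is short. The class $\sG$ is closed under countable coproducts in $\C\Comodl$: a countable coproduct of $A$\+countably presentable $A$\+flat comodules is again one, and its underlying $A$\+module still has projective dimension~$\le1$ by Lemma~\ref{count-pres-flat-mods}, so the pointwise~(${*}{*}$) applies to it. Since $\Ext^n_\C\bigl(\bigoplus_m X_m,\,Y\bigr)\simeq\prod_m\Ext^n_\C(X_m,Y)$, the projective dimension of a countable coproduct equals the supremum of the projective dimensions of the summands. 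Hence, if the dimensions of objects of $\sG$ were not uniformly bounded, a choice of $X_m\in\sG$ with projective dimension at least~$m$ would produce an object $\bigoplus_m X_m\in\sG$ of infinite projective dimension in $\C\Comodl$, contradicting~(${*}{*}$). This supplies the fixed integer~$d$ you need, after which the rest of your argument goes through.
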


\begin{proof}
 In part~(a), the assertion that $\sG$ is self-generating in $\sF$ is
an easy corollary of
Theorem~\ref{flat-comodules-as-directed-colimits-theorem}.
 Let $\HH$ be an $A$\+countably presentable $A$\+flat left
$\C$\+comodule, $\F$ be an $A$\+flat $\C$\+comodule, and $\F\rarrow\HH$
be a surjective $\C$\+comodule morphism.
 By Theorem~\ref{flat-comodules-as-directed-colimits-theorem},
there exists an $\aleph_1$\+directed poset $\Xi$ and a $\Xi$\+indexed
diagram of $A$\+countably presentable $A$\+flat $\C$\+comodules
$(\G_\xi)_{\xi\in\Xi}$ for which $\F=\varinjlim_{\xi\in\Xi}\G_\xi$.
 Then it is clear that there exists an index $\xi_0\in\Xi$ such that
the composition $\G_{\xi_0}\rarrow\F\rarrow\HH$ is surjective.
 The full subcategory $\sG$ is closed under kernels of surjective
homomorphisms in $\sF$ by Lemma~\ref{countable-flat-coherence},
and the assertion that $\sG$ is closed under extensions in $\sF$ is
provable similarly to
Corollary~\ref{prepare-for-comodule-cotorsion-periodicity}(b).

 In part~(b), the full subcategory $\sG$ is self-resolving in $\sF$ by
part~(a), and the full subcategory $\sF$ is resolving in the abelian
category $\sC=\C\Comodl$
by Corollary~\ref{prepare-for-comodule-cotorsion-periodicity}(b).
 Consequently, $\sG$ is self-resolving in~$\sC$.
 The underlying $A$\+modules of all the $\C$\+comodules from $\sG$ have
projective dimensions at most~$1$ by
Lemma~\ref{count-pres-flat-mods}, hence the objects of $\sG$ has
finite projective dimensions in $\sC$ by condition~(${*}{*}$).
 Moreover, the latter projective dimensions must be uniformly bounded
by some fixed integer~$d$, since the class of objects $\sG$ is
closed under countable coproducts in~$\sC$.
 Finally, Lemma~\ref{self-resolving-ext-isomorphism} tells us that
the Ext groups computed in $\sG$ and in $\sC$ agree.
 Thus the homological dimension of the exact category $\sG$ cannot
exceed~$d$.
\end{proof}

 The following theorem is a comodule version
of~\cite[Theorem~2.4\,(1)\,$\Leftrightarrow$\,(3)]{EG} or
\cite[Theorem~8.6\,(ii)\,$\Leftrightarrow$\,(iii)]{Neem}.
 It should be also compared to~\cite[Corollary~0.5 or
Proposition~8.13]{PS5}.
 In the latter context, one can say that
Theorem~\ref{flat-comodule-pure-acyclic-complexes-characterized} means
that (under the conditions ($*$) and~(${*}{*}$))
\emph{for the exact category of $A$\+flat\/ $\C$\+comodules,
the derived category coincides with the} (\emph{suitably defined})
\emph{coderived category}.

\begin{thm} \label{flat-comodule-pure-acyclic-complexes-characterized}
 Let\/ $\C$ be a coring over an associative ring~$A$.
 Assume that both the conditions \textup{($*$)} and~\textup{(${*}{*}$)}
from Section~\ref{comodule-cotorsion-periodicity-secn}
on the coring\/ $\C$ are satisfied.
 Then there exists a finite integer $d\ge0$ such that the following
classes of complexes of left\/ $\C$\+comodules coincide:
\begin{enumerate}
\item all $A$\+pure acyclic complexes of $A$\+flat\/ $\C$\+comodules;
\item the closure of the class of all contractible complexes of
$A$\+countably presentable $A$\+flat\/ $\C$\+comodules under extensions
and directed colimits;
\item all\/ $\aleph_1$\+directed colimits of totalizations of
$(d+\nobreak2)$\+term exact complexes of complexes of $A$\+countably
presentable $A$\+flat\/ $\C$\+comodules.
\end{enumerate}
\end{thm}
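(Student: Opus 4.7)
By Corollary~\ref{comodules-count-pres-flat-exact-category}(b) the exact category $\sG$ of $A$-countably presentable $A$-flat $\C$-comodules has finite homological dimension; fix such a bound $d_0$, and take $d\ge d_0$ slightly larger if necessary (the statement only asks for the existence of \emph{some} finite $d$). My plan is to verify the four inclusions $(2)\subseteq(1)$, $(3)\subseteq(1)$, $(1)\subseteq(2)$, and $(1)\subseteq(3)$. The two inclusions into $(1)$ are routine: any contractible complex of $A$-flat $\C$-comodules is $A$-pure acyclic, and class $(1)$ is closed under termwise admissible extensions in $\Com(\C\Comodl_{A\dfl})$ (via a $3\times3$ diagram chase in $A\Modl$) and under directed colimits (these being exact and preserving flatness in $A\Modl$), which gives $(2)\subseteq(1)$; and $(3)\subseteq(1)$ follows by breaking a $(d+2)$-term exact complex of complexes in $\Com(\sG)$ into an iterated sequence of short exact sequences in $\Com(\C\Comodl_{A\dfl})$, observing that the totalization of each such short exact sequence is the cone of a quasi-isomorphism of complexes of $A$-flat comodules and is hence $A$-pure acyclic, and then invoking closure of $(1)$ under $\aleph_1$-directed colimits.

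For $(1)\subseteq(2)$, given $X^\bu\in(1)$, Corollary~\ref{comods-pure-acycl-complexes-as-aleph1-dir-colims} writes $X^\bu=\varinjlim_{i\in I}Y_i^\bu$ with $I$ an $\aleph_1$-directed poset and each $Y_i^\bu$ an $A$-pure acyclic complex of $\sG$-objects. Each $Y_i^\bu$ is acyclic in the exact category $\sG$ of homological dimension~$\le d_0$, hence absolutely acyclic in $\Com(\sG)$ by Lemma~\ref{acyclic-are-absolutely-acyclic}. The absolutely acyclic complexes in $\Com(\sG)$ coincide with the closure of contractible complexes under extensions and direct summands (Proposition~8.12 of~\cite{PS5}, as quoted in the excerpt). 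Since any direct summand is the colimit of the telescope $T\xrightarrow{e}T\xrightarrow{e}\cdots$ of its splitting idempotent---a countable, hence arbitrary, directed colimit---this closure is contained in the closure under extensions and directed colimits, i.e., in class $(2)$. Taking the outer $\aleph_1$-directed colimit over $I$ preserves membership in $(2)$.

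The inclusion $(1)\subseteq(3)$ is the heart of the argument and the main obstacle. With the same decomposition $X^\bu=\varinjlim_IY_i^\bu$ and Proposition~\ref{absolutely-acyclic-are-summands-of-totalizations}, each $Y_i^\bu$ is a direct summand of the totalization $T_i^\bu$ of a $(d_0+2)$-term exact complex in $\Com(\sG)$. The obstacle is that the telescope expressing $Y_i^\bu$ as a countable directed colimit of copies of $T_i^\bu$ cannot simply be merged with the $\aleph_1$-directed colimit over $I$ to produce an $\aleph_1$-directed diagram, since the product $I\times\omega$ is not $\aleph_1$-directed (an unbounded countable subset of $\omega$ has no upper bound in $\omega$). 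My plan for this step is an Eilenberg-swindle argument inside $\Com(\sG)$: as $\sG$ is closed under countable coproducts, the countable coproduct $\tilde T_i^\bu:=\bigoplus_{n\in\omega}T_i^\bu$ is itself the totalization of a $(d_0+2)$-term exact complex in $\Com(\sG)$, and the swindle identity $\tilde T_i^\bu\cong Y_i^\bu\oplus\tilde T_i^\bu$ absorbs the direct-summand choice $Y_i^\bu\hookrightarrow T_i^\bu$ into a much larger totalization in a way that can be propagated compatibly along the $\aleph_1$-directed transition maps of $I$. Verifying this reabsorption and showing that it realizes $X^\bu$ as an $\aleph_1$-directed colimit of genuine totalizations rather than of their summands---at the cost of enlarging $d$ by a bounded amount---will be the careful bookkeeping step, but appears to be the only remaining technical ingredient once the previous three inclusions are in place.
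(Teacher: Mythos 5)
Your plan follows the same route as the paper's proof: the inclusions $(3)\subseteq(2)\subseteq(1)$ are routine, and $(1)\subseteq(3)$ is assembled from Corollary~\ref{comods-pure-acycl-complexes-as-aleph1-dir-colims}, Corollary~\ref{comodules-count-pres-flat-exact-category}(b), Lemma~\ref{acyclic-are-absolutely-acyclic}, and Proposition~\ref{absolutely-acyclic-are-summands-of-totalizations}. Two remarks on the easy inclusions. Your justification of $(3)\subseteq(1)$ --- ``the totalization of a short exact sequence of complexes is the cone of a quasi-isomorphism of complexes of $A$\+flat comodules, hence $A$\+pure acyclic'' --- rests on a false general principle: the cone of a quasi-isomorphism between complexes of flat objects need not have flat cocycles. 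The correct observation is that the totalization of a termwise admissible short exact sequence of complexes is an extension of two cones of identity morphisms, so the totalization of a $(d+2)$\+term exact complex is a finitely iterated extension of $d+1$ contractible complexes, and class~(1) is closed under such extensions. Also, your separate argument for $(1)\subseteq(2)$ is not needed, since it follows from $(1)\subseteq(3)\subseteq(2)$.

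The step you flag as open in $(1)\subseteq(3)$ is the one genuine subtlety (which the paper's own proof leaves implicit), and you have correctly diagnosed it; but as stated your swindle does not close it: the identity $\widetilde T_i\cong Y_i^\bu\oplus\widetilde T_i$ still only exhibits $Y_i^\bu$ as a direct summand of a totalization. What closes the gap is the choice of \emph{transition maps}, not the enlargement of the objects. Arrange that the index poset $I$ has no maximal element (replace $I$ by $I\times\omega_1$ if necessary), write $e_i=\iota_ip_i$ for the idempotent on the totalization $T_i^\bu$ with image $Y_i^\bu$, and consider the $I$\+indexed diagram with objects $T_i^\bu$, identity maps at equal indices, and transition maps $\iota_jf_{ij}p_i\colon T_i^\bu\rarrow T_j^\bu$ for $i<j$. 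This is functorial, and every cocone out of it factors through the projections~$p_i$ (using that each $i$ admits some $j>i$), so its colimit is $\varinjlim_iY_i^\bu=X^\bu$; thus $X^\bu$ is an $\aleph_1$\+directed colimit of honest totalizations, with no increase of~$d$ beyond the homological dimension bound of Corollary~\ref{comodules-count-pres-flat-exact-category}(b). Your coproduct variant works equally well provided you take the transition maps to vanish on the $\widetilde T_i$\+summand, so that this summand dies in the colimit. With this re-indexing supplied, your argument is complete and coincides with the paper's.
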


\begin{proof}
 One can easily see that the totalization of a $(d+\nobreak2)$\+term
exact complex of complexes is a finitely iterated extension of $d+1$
cones of identity endomorphisms of complexes.
 Furthermore, $\aleph_1$\+directed colimits are a subclass of directed
colimits, the class of all $A$\+pure acyclic complexes of $A$\+flat
$\C$\+comodules is closed under extensions and directed colimits, and
the contractible complexes of $A$\+flat $\C$\+comodules are $A$\+pure
acyclic.
 This proves the inclusions (3)\,$\subset$\,(2)\,$\subset$\,(1).

 Conversely, to prove that (1)\,$\subset$\,(3), recall that all
$A$\+pure acyclic complexes of $A$\+flat $\C$\+comodules are
$\aleph_1$\+directed colimits of $A$\+pure acyclic complexes of
$A$\+countably presentable $A$\+flat $\C$\+comodules by
Corollary~\ref{comods-pure-acycl-complexes-as-aleph1-dir-colims}.
 The exact category $\sG$ of $A$\+countably presentable $A$\+flat
$\C$\+comodules has finite homological dimension by
Corollary~\ref{comodules-count-pres-flat-exact-category}(b).
 Hence the $A$\+pure acyclic complexes of $A$\+countably presentable
$A$\+flat $\C$\+comodules are absolutely acyclic
by Lemma~\ref{acyclic-are-absolutely-acyclic}, and it remains to refer
to Proposition~\ref{absolutely-acyclic-are-summands-of-totalizations}.
\end{proof}

\begin{rem} \label{two-star-cant-be-dropped-in-flat-co-derived}
 Without the assumption~(${*}{*}$), the conclusion of
Theorem~\ref{flat-comodule-pure-acyclic-complexes-characterized}
certainly does \emph{not} hold (cf.\ Remarks~\ref{two-star-nontrivial}
and~\ref{two-star-cant-be-dropped-in-cotorsion-periodicity}).
 In fact, for a coalgebra $\C$ over a field $A=k$, if the global
dimension of $\C$ is infinite, then the coderived category of
$\C$\+comodules is usually quite different from their derived
category~\cite[Section~7]{Pksurv}.
\end{rem}

\Section{Preliminaries on Topological Rings and Contramodules}
\label{contra-preliminaries-secn}

 The material of this section goes back to~\cite[Remark~A.3]{Psemi},
\cite[Appendix~E]{Pcosh}, \cite[Section~2.1]{Prev},
and~\cite[Section~5]{PR}.
 For more recent references, see~\cite[Section~2]{Pproperf}
or~\cite[Section~2]{Pcoun}.
 The exposition in~\cite[Sections~6\+-7]{PS1} may be the most
accessible one for a beginner.
 In this paper, we are mostly interested in contramodules over
complete, separated topological associative rings with a countable
base of neighborhoods of zero consisting of open two-sided ideals.

 Let $\fA$ be a topological abelian group where open subgroups form
a base of neighborhoods of zero.
 Consider the natural map to the directed limit $\lambda_\fA\:\fA
\rarrow\varprojlim_{\fU\subset\fA}\fA/\fU$, where $\fU$ ranges over
the open subgroups of~$\fA$.
 The topological group $\fA$ is said to be \emph{separated} if
the map~$\lambda_\fA$ is injective, and \emph{complete} if
$\lambda_\fA$~is surjective.

 Let $\R$ be a complete, separated topological ring with a base of
neighborhoods of zero consisting of open right ideals.
 A right $\R$\+module $\N$ is said to be \emph{discrete} if the action
map $\N\times\R\rarrow\N$ is continuous in the given topology of $\R$
and the discrete topology on~$\N$.
 Equivalently, a right $\R$\+module $\N$ is discrete if, for every
element $x\in\N$, the annihilator of~$x$ in $\R$ is an open right ideal.
 The full subcategory of discrete right $\R$\+modules $\Discr\R\subset
\Modr\R$ is a Grothendieck abelian category.
 The inclusion functor $\Discr\R\rarrow\Modr\R$ is exact and preserves
all colimits.

 Given an abelian group $A$ and a set $X$, we use $A[X]=A^{(X)}$ as
a notation for the direct sum of $X$ copies of~$A$.
 The elements of $A[X]$ are interpreted as finitely supported formal
linear combinations $\sum_{x\in X}a_xx$ of elements of $X$ with
the coefficients $a_x\in A$ (where $a_x=0$ for all but a finite subset
of indices $x\in X$).
 
 Let $\fA$ be a complete, separated topological abelian group with a base
of neighborhoods of zero consisting of open subgroups.
 Given a set $X$, we denote by $\fA[[X]]$ the directed limit of abelian
groups $\varprojlim_{\fU\subset\fA}(\fA/\fU)[X]$ (where $\fU$ ranges
over the open subgroups of~$\fA$).
 The elements of $\fA[[X]]$ are interpreted as infinite formal linear
combinations $\sum_{x\in X}a_xx$ with the families of coefficients
$a_x\in\fA$ \emph{converging to zero in the topology of\/~$\fA$}.
 The latter condition means that, for every open subgroup
$\fU\subset\fA$, one has $a_x\in\fU$ for all but a finite subset of
indices $x\in X$.
 
 Given a map of sets $X\rarrow Y$, the induced (pushforward) map
$\fA[[f]]\:\fA[[X]]\rarrow\fA[[Y]]$ is defined by the rule
$\sum_{x\in X}a_xx\longmapsto\sum_{y\in Y}b_yy$, where
$b_y=\sum_{x\in X}^{f(x)=y}a_x$ for every $y\in Y$.
 Here the latter infinite sum (defining the element $b_y\in\fA$) is
\emph{not} formal; rather, it is understood as the limit of finite
partial sums in the topology of~$\fA$.
 Due to this construction, the assigment $X\longmapsto\fA[[X]]$ is
a covariant functor $\Sets\rarrow\Ab$ from the category of sets to
the category of abelian groups.
 We will mostly view it as an endofunctor on the category of sets,
$\fA[[{-}]]\:\Sets\rarrow\Sets$, ignoring the abelian group structures
on the sets $\fA[[X]]$.

 Let $\R$ be a complete, separated topological ring where open right
ideals form a base of neighborhoods of zero.
 Then the functor $\R[[{-}]]\:\Sets\rarrow\Sets$ acquires a natural
structure of a \emph{monad} on the category of sets.
 We refer to~\cite[Chapter~VI]{MacLane}
(see also~\cite[Section~6]{PS1}) for the background discussion of
monads and algebras over monads. 
 Specifically, for any set $X$, the monad unit $\epsilon_X\:X
\rarrow\R[[X]]$ is the obvious natural map taking every element $x\in X$
to the formal linear combination $\sum_{y\in X}r_yy$, where $r_x=1$
and $r_y=0$ for all $y\ne x$.

 The monad multiplication $\phi_X\:\R[[\R[[X]]]]\rarrow\R[[X]]$ is
the natural ``opening of parentheses'' map producing a formal linear
combination from a formal linear combination of formal linear
combinations.
 This construction involves taking products of pairs of elements in $\R$
and infinite sums of zero-convergent families of elements.
 The conditions that $\R$ is complete, separated, and open right ideals
form a base of neighborhoods of zero guarantee the convergence.

 \emph{Left\/ $\R$\+contramodules} are defined as algebras over
the monad $(\R[[{-}]],\epsilon,\phi)$ on the category of sets.
 We usually prefer to speak of \emph{modules} (rather than algebras)
\emph{over monads} in this context, as contramodules are more similar
to modules over rings than to algebras.
 So, specifically, a left $\R$\+contramodule $\fP$ is a set endowed
with a \emph{left contraaction} map $\pi\:\R[[\fP]]\rarrow\fP$
satisfying the following \emph{contraassociativity} and
\emph{contraunitality} axioms.
 The two compositions $\pi\circ\phi_\fP$ and $\pi\circ\R[[\pi]]$ must
be equal to each other,
$$
 \R[[\R[[\fP]]]]\rightrightarrows\R[[\fP]]\rarrow\fP,
$$
and the composition $\pi\circ\epsilon_\fP$ must be ideal to
the identity map~$\id_\fP$,
$$
 \fP\rarrow\R[[\fP]]\rarrow\fP.
$$
 We denote the category of left $\R$\+contramodules by $\R\Contra$.

 Restricting the contraaction map $\pi\:\R[[\fP]]\rarrow\fP$ to
the subset of finitely supported formal linear combinations
$\R[\fP]\subset\R[[\fP]]$, one constructs the underlying left
$\R$\+module structure on a left $\R$\+contramodule~$\fP$
\,\cite[Sections~6.1\+-6.2]{PS1}.
 Hence the forgetful functor $\R\Contra\rarrow\R\Modl$.
 The category of left $\R$\+contramodules is abelian.
 All set-indexed products and coproducts (hence also limits and
colimits) exist in $\R\Contra$.
 The forgetful functor $\R\Contra\rarrow\R\Modl$ is exact, faithful,
and preserves infinite products.
 We use the notation $\Hom^\R({-},{-})$ for the groups of morphisms
in the abelian category $\R\Contra$.

 For any set $X$, the set/abelian group $\R[[X]]$ has a natural
left $\R$\+contramodule structure with the contraaction map
$\pi=\phi_X\:\R[[\R[[X]]]]\rarrow\R[[X]]$.
 The $\R$\+con\-tra\-mod\-ules $\R[[X]]$ are called the \emph{free}
left $\R$\+contramodules.
 For any left $\R$\+con\-tra\-mod\-ule $\fP$, the $\R$\+contramodule
morphisms $\R[[X]]\rarrow\fP$ correspond bijectively to maps of sets
$X\rarrow\fP$; so there is a natural isomorphism of abelian groups
$$
 \Hom^\R(\R[[X]],\fP)\simeq\Hom_\Sets(X,\fP).
$$
 There are enough projective objects in the abelian category
$\R\Contra$; the projective $\R$\+contramodules are precisely
the direct summands of the free ones.

 Given a left $\R$\+contramodule $\fP$ and a closed subgroup
$\fA\subset\R$, we denote by $\fA\tim\fP\subset\fP$ the image of
the composition $\fA[[\fP]]\rarrow\R[[\fP]]\rarrow\fP$ of the natural
inclusion $\fA[[\fP]]\hookrightarrow\R[[\fP]]$ with the contraaction
map $\pi\:\R[[\fP]]\rarrow\fP$.
 So $\fA\tim\fP$ is a subgroup in~$\fP$ (since $\pi$~is a left
$\R$\+contramodule morphism, hence in particular an abelian group
homomorphism, and $\fA[[\fP]]$ is subgroup in $\R[[\fP]]$).
 If $\fJ\subset\R$ is a closed left ideal, then $\fJ\tim\fP$ is
an $\R$\+subcontramodule in~$\fP$ (since $\fJ[[\fP]]$ is
an $\R$\+subcontramodule in the free contramodule $\R[[\fP]]$).

 Let $\N$ be a discrete right $\R$\+module and $\fP$ be a left
$\R$\+contramodule.
 Then the \emph{contratensor product} $\N\ocn_\R\fP$ is the abelian
group constructed as the cokernel of (the difference of)
the natural pair of maps
$$
 \N\ot_\boZ\R[[\fP]]\,\rightrightarrows\,\N\ot_\boZ\fP.
$$
 Here the first map $\N\ot_\boZ\R[[\fP]]\rarrow\N\ot_\boZ\fP$ is
induced by the contraaction map $\pi\:\R[[\fP]]\rarrow\fP$, while
the second map $\N\ot_\boZ\R[[\fP]]\rarrow\N\ot_\boZ\fP$ is
defined by the rule
$$
 y\ot\sum\nolimits_{p\in\fP}r_pp\longmapsto
 \sum\nolimits_{p\in\fP}yr_p\ot p
$$
for all $y\in\N$ and $\sum_{p\in\fP}r_pp\in\R[[\fP]]$.
 Here the sum in the right-hand side is well-defined, because one has
$yr_p=0$ in $\N$ for all but a finite subset of indices $p\in\fP$.

 The contratensor product functor $\ocn_\R\:\Discr\R\times\R\Contra
\rarrow\Ab$ preserves all colimits in both the arguments.
 For any discrete right $\R$\+module $\N$ and any set $X$, one has
a natural isomorphism of abelian groups
$$
 \N\ocn_\R\R[[X]]\simeq\N[X]=\N^{(X)}.
$$
 For any closed right ideal $\fJ\subset\R$, one has
$$
 \fJ\tim(\R[[X]])=\fJ[[X]]\subset\R[[X]].
$$
 For any open right ideal $\fI\subset\R$ and any left
$\R$\+contamodule $\fP$, one has
$$
 (\R/\fI)\ocn_\R\fP\simeq\fP/(\fI\tim\fP).
$$
 In particular, $(\R/\fI)\ocn_\R\R[[X]]\simeq\R[[X]]/(\fI\tim\R[[X]])
=\R[[X]]/\fI[[X]]\simeq(\R/\fI)[X]$.

 For any open two-sided ideal $\fI\subset\R$ and any left
$\R$\+contramodule $\fP$, the quotient group $\fP/(\fI\tim\fP)$
has a natural module structure over the quotient ring~$\R/\fI$.
 The subgroup $\fI\tim\fP$ is an $\R$\+subcontramodule in $\fP$,
as mentioned above; and the quotient $\R$\+contramodule structure
on $\fP/(\fI\tim\fP)$ comes from the $\R/\fI$\+module structure.

\Section{Separated and Flat Contramodules}
\label{separated-and-flat-contra-secn}

 In this section we discuss contramodules over a topological ring
with a \emph{countable} base of neighborhoods of zero.
 There are no new results in this section, which is based
on~\cite[Section~E.1]{Pcosh} and~\cite[Section~6]{PR}.
 A brief survey is available in~\cite[Section~3.3]{Prev}.
 A more recent exposition can be found in~\cite[Sections~4\+-5]{Pcoun}.
 A connection with algebraic geometry is discussed
in~\cite[Chapter~3]{Psemten}.

 Given a left $\R$\+contramodule $\fP$, consider the natural map to
the directed limit $\lambda_{\R,\fP}\:\fP\rarrow
\varprojlim_{\fI\subset\R}\fP/(\fI\tim\fP)$, where $\fI$ ranges
over the open right ideals of~$\R$.
 A left $\R$\+contramodule $\fP$ is said to be \emph{separated} if
the map~$\lambda_{\R,\fP}$ is injective, and \emph{complete} if
$\lambda_{\R,\fP}$ is surjective.
 We denote full subcategory of separated $\R$\+contamodules by
$\R\Contra_\sep\subset\R\Contra$.

\begin{lem} \label{all-contramodules-complete}
 Let\/ $\R$ be a complete, separated topological ring with
a \emph{countable} base of neighborhoods of zero consisting of open
right ideals.
 Then all left\/ $\R$\+contramodules are complete (but they
\emph{need not} be separated).
\end{lem}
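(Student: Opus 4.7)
The plan is to fix a countable base of open right ideals $\R\supset\fI_1\supset\fI_2\supset\fI_3\supset\dotsb$ forming a base of neighborhoods of zero, and to show that any compatible family $(\bar p_n)_{n\ge 1}$, with $\bar p_n\in\fP/(\fI_n\tim\fP)$, lifts to an element of~$\fP$. First I would choose arbitrary lifts $p_n\in\fP$ of~$\bar p_n$; the compatibility then says $p_{n+1}-p_n\in\fI_n\tim\fP$, so by the very definition of $\fI_n\tim\fP$ there exists an element $\xi_n\in\fI_n[[\fP]]\subset\R[[\fP]]$ with $\pi(\xi_n)=p_{n+1}-p_n$. Writing $\xi_n=\sum_{q\in\fP}r_{n,q}q$ with coefficients $r_{n,q}\in\fI_n$ that are zero-convergent in the topology of~$\R$, the task is to extract from the sequence $(p_n)$ a single element of $\fP$ congruent to $p_n$ modulo $\fI_n\tim\fP$ for every~$n$.

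The key move is to produce a single element $\xi\in\R[[\fP]]$ that plays the role of the formal sum $\sum_n\xi_n$. For each $q\in\fP$, put $s_q=\sum_{n=1}^\infty r_{n,q}$; since $r_{n,q}\in\fI_n$ and $(\fI_n)$ is a countable base of neighborhoods of zero, the partial sums form a Cauchy sequence in~$\R$ and converge by completeness of~$\R$. To know that $\xi=\sum_{q}s_qq$ actually lies in $\R[[\fP]]$, one has to verify zero-convergence of $(s_q)_{q\in\fP}$: fixing~$m$, the tail $\sum_{n\ge m}r_{n,q}$ lies in~$\fI_m$ for every~$q$ (open subgroups being closed), while for each of the finitely many indices $n<m$ the family $(r_{n,q})_{q\in\fP}$ is already zero-convergent, so a finite union of finite exceptional sets controls all of them simultaneously, giving $s_q\in\fI_m$ outside a finite set.

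The telescoping identity is then transparent. Denote $T_m=\xi_1+\dotsb+\xi_{m-1}\in\R[[\fP]]$; the same computation shows $\xi-T_m\in\fI_m[[\fP]]$, hence $\pi(\xi)-\pi(T_m)\in\fI_m\tim\fP$. On the other hand, $\pi$~is a homomorphism of abelian groups (being an $\R$\+contramodule morphism), so $\pi(T_m)=\sum_{n<m}\pi(\xi_n)=p_m-p_1$. Setting $p:=p_1+\pi(\xi)$ therefore yields $p\equiv p_m\equiv\bar p_m\pmod{\fI_m\tim\fP}$ for every~$m$, proving surjectivity of $\lambda_{\R,\fP}$ and hence completeness of~$\fP$.

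The main obstacle, and essentially the only nontrivial point, is packaging the above infinite-sum manipulation rigorously: one has to use completeness of~$\R$ to define the coefficients $s_q$, and then use the countable character of the topology base on~$\R$ to verify zero-convergence of $(s_q)_{q\in\fP}$ (an uncountable base would make this last step fail). For the parenthetical assertion that contramodules need not be separated, I would simply point to the standard counterexamples, for instance the non-separated $p$\+contramodules over $\R=\boZ_p$ recorded in~\cite[Section~E.1]{Pcosh} and~\cite[Section~5]{PR}.
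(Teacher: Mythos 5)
Your proof is correct. The paper itself only cites the references (\cite[Lemma~E.1.1]{Pcosh}, \cite[Lemma~6.3(b)]{PR}) for this statement, and your telescoping argument --- summing the coefficient families $r_{n,q}\in\fI_n$ using completeness of $\R$, checking zero-convergence of the resulting family $(s_q)$ via the countability of the base and the closedness of open subgroups, and then applying additivity of the contraaction map $\pi$ --- is precisely the standard proof given in those sources, so you have simply supplied in full the argument the paper leaves to the references.
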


\begin{proof}
 This goes back, at least, to~\cite[Lemma~A.2.3 and Remark~A.3]{Psemi}.
 Counterexamples of nonseparated contramodules appeared
in~\cite[Section~A.1]{Psemi} (see also~\cite[Example~2.5]{Sim},
\cite[Example~3.20]{Yek}, \cite[Section~1.5]{Prev},
\cite[Example~2.7(1)]{Pcta}, and a further discussion in
Remark~\ref{separated-contramodules-not-well-behaved} below).
 The completeness assertion for contramodules over topological rings
with a countable topology base consisting of two-sided ideals can be
found in~\cite[Lemma~E.1.1]{Pcosh}, and the general case of a countable
topology base of right ideals is covered by~\cite[Lemma~6.3(b)]{PR}.
 See also~\cite[Lemmas~3.6 and~3.9]{Prev}.
\end{proof}

\begin{lem} \label{contramodule-nakayama}
 Let\/ $\R$ be a complete, separated topological ring with
a \emph{countable} base of neighborhoods of zero consisting of open
right ideals.
 Then, for any nonzero left\/ $\R$\+contramodule\/ $\fP$, there exists
an open right ideal\/ $\fI\subset\R$ such that\/ $\fP/(\fI\tim\fP)\ne0$.
\end{lem}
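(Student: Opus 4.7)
The plan is to prove the contrapositive: if $\fP/(\fI\tim\fP) = 0$ for every open right ideal $\fI\subset\R$, then $\fP = 0$. Fix a countable decreasing base $\fI_1\supset\fI_2\supset\dotsb$ of open right ideals; the hypothesis becomes $\fI_n\tim\fP = \fP$ for every $n\ge 1$. I would fix an arbitrary $p\in\fP$ and aim to show $p = 0$.

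The strategy is to produce an element $z\in\R[[\fP]]$ lying in $\bigcap_n\fI_n[[\fP]]$ with $\pi(z) = p$. Because $\R$ is separated, $\bigcap_n\fI_n = 0$ inside $\R$, hence $\bigcap_n\fI_n[[\fP]] = 0$ inside $\R[[\fP]]$, which would force $z = 0$ and therefore $p = \pi(z) = 0$.

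To construct $z$, I would iterate the natural expansion procedure. Starting from $z^{(1)}\in\fI_1[[\fP]]$ with $\pi(z^{(1)}) = p$, write $z^{(1)} = \sum_i r_i^{(1)} q_i^{(1)}$ with $r_i^{(1)}\in\fI_1$ zero-convergent in $\R$; then, using $\fI_2\tim\fP = \fP$, expand each $q_i^{(1)} = \pi(z_i^{(2)})$ with $z_i^{(2)}\in\fI_2[[\fP]]$, and combine the resulting nested formal sum via the contraassociativity identity $\pi\circ\phi_\fP = \pi\circ\R[[\pi]]$ (the monad multiplication $\phi_\fP$ opens parentheses). Iterating through all levels produces, for each $n$, an approximation $z_n\in\R[[\fP]]$ with $\pi(z_n) = p$, whose coefficients are products of edge labels along root-to-leaf paths in a rooted tree of expansions, with the $k$th-level labels drawn from~$\fI_k$.

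The main obstacle is the right-ideal hypothesis: a product $r^{(1)}r^{(2)}\dotsb r^{(n)}$ with $r^{(k)}\in\fI_k$ lies only in~$\fI_1$ in general, since the ideals~$\fI_n$ are closed only under right multiplication by~$\R$. The key to bypass this is the zero-convergence condition built into $\R[[-]]$: at the top level of the tree, only finitely many $r_i^{(1)}$ lie outside a chosen target~$\fI_N$, and for the remaining (overwhelming) majority the right-ideal property $\fI_N\cdot\R\subseteq\fI_N$ forces the entire coefficient product into~$\fI_N$. The finitely many ``bad'' top-level summands are then further expanded at the next level, and the same argument is applied one step deeper; iterating carefully should assemble a single $z\in\R[[\fP]]$ with every coefficient in every~$\fI_N$. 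The technical heart of the proof is verifying that this limiting construction produces a well-defined element of $\R[[\fP]]$ (i.e., that the coefficient family is zero-convergent) and that contraassociativity passes through to the limit to yield $\pi(z) = p$.
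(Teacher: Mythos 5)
There is a genuine gap, and it sits exactly where you locate the ``technical heart'': the limiting construction does not produce a well-defined element of $\R[[\fP]]$, and even if it did, it would not yield the conclusion. At each stage of your expansion finitely many ``bad'' summands remain (those whose accumulated coefficient product has not yet landed in the target ideal), and nothing forces this set to become empty after finitely many steps: for right ideals a product $r^{(1)}r^{(2)}\dotsm r^{(k)}$ with $r^{(j)}\in\fI_j$ is only guaranteed to lie in $\fI_1$, so a single bad branch can persist through every level. An infinite branch of the expansion tree has no leaf and hence contributes no term to any formal linear combination, so the ``limit'' is not an element of $\R[[\fP]]$. Moreover, even granting a coefficientwise limit $z$ of the approximants $z_n$, the contraaction $\pi$ is only an abstract contramodule morphism; the most one can extract from $z_n-z\in\fI_N[[\fP]]$ is $p-\pi(z)=\pi(z_n-z)\in\fI_N\tim\fP$, and under your standing hypothesis $\fI_N\tim\fP=\fP$ this says nothing. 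There is also a structural reason the target is unreachable by approximation: since $\bigcap_N\fI_N=0$ by separatedness, the element $z\in\bigcap_N\fI_N[[\fP]]$ you seek is necessarily the zero formal sum, so exhibiting it with $\pi(z)=p$ is literally the assertion $p=0$; your process would have to terminate in the identically zero coefficient family, which it never does. Producing instead, for each $N$ separately, some $z_N\in\fI_N[[\fP]]$ with $\pi(z_N)=p$ proves only $p\in\bigcap_N(\fI_N\tim\fP)$, which is the hypothesis itself and is strictly weaker than $p\in\bigl(\bigcap_N\fI_N\bigr)\tim\fP=0$ (for general contramodules the nonseparated part $\bigcap_N(\fI_N\tim\fP)$ is frequently nonzero).

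Note that the paper does not prove this lemma but cites \cite[Lemma~A.2.1]{Psemi}, \cite[Lemma~E.1.2]{Pcosh}, and \cite[Lemma~6.14]{PR}. The arguments there avoid the intersection strategy entirely and run a telescoping (Eilenberg swindle) cancellation: starting from $p=p_0$, one chooses for each $n$ an expansion of $p_n$ through $\fI_{n+1}[[\fP]]$ involving the next generation of elements, assembles \emph{all} generations into a single convergent formal linear combination $u\in\R[[\fP]]$ (convergent because the $n$\+th generation carries coefficients from $\fI_n$), and computes $\pi(u)$ in two ways using contraassociativity, obtaining an identity of the shape $\pi(u)=\pi(u)-p$, whence $p=0$. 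The hypothesis $\fI_n\tim\fP=\fP$ is thereby applied to infinitely many elements at once, but only finitely many applications of $\pi$ to genuine elements of $\R[[\fP]]$ and $\R[[\R[[\fP]]]]$ are ever needed, and no limit inside $\R[[\fP]]$ is taken. You would need to restructure your argument along these lines; your tree of expansions is the right raw material, but it must be fed into a cancellation identity rather than into a nonexistent limit.
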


\begin{proof}
 This assertion belongs to a class of results known as ``contramodule
Nakayama lemmas'' and going back, at least,
to~\cite[Lemma~A.2.1]{Psemi}.
 The assertion for contramodules over topological rings with
a countable topology base of two-sided ideals can be found
in~\cite[Lemma~E.1.2]{Pcosh}, and the general case of a countable base
of right ideals is covered by~\cite[Lemma~6.14]{PR}.
 See also~\cite[Lemmas~2.1(b) and~3.22]{Prev}.
\end{proof}

 A left $\R$\+contramodule $\fF$ is called \emph{flat} if
the contratensor product functor ${-}\ocn_\R\nobreak\fF\:
\allowbreak\Discr\R\rarrow\Ab$ is exact on the abelian category of
discrete right $\R$\+modules.
 All projective $\R$\+contramodules are separated and flat.
 All directed colimits and coproducts of flat $\R$\+contramodules
(computed in the category $\R\Contra$) are flat.
 We denote the full subcategory of flat $\R$\+contramodules by
$\R\Contra_\flat\subset\R\Contra$.

 When $\R$ has a base of neighborhoods of zero consisting of
open two-sided ideals~$\fI$, the discrete right $\R$\+modules are
the directed unions of modules over the quotient rings~$\R/\fI$.
 For any right $\R/\fI$\+module $N$ and any left $\R$\+contramodule
$\fP$, one has
$$
 N\ocn_\R\fP\simeq N\ot_{\R/\fI}\fP/(\fI\tim\fP).
$$
 Hence, in this case, a left $\R$\+contramodule $\fF$ is flat
if and only if the left $\R/\fI$\+module $\fF/(\fI\tim\fF)$ is flat
for every open two-sided ideal $\fI\subset\R$.

\begin{lem} \label{flat-contramodules-separated}
 Let\/ $\R$ be a complete, separated topological ring with
a \emph{countable} base of neighborhoods of zero consisting of open
right ideals.
 Then all \emph{flat} left\/ $\R$\+con\-tra\-mod\-ules are separated.
\end{lem}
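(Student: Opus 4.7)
The goal is to show $\bigcap_n(\fI_n \tim \fF) = 0$, where $\fI_1 \supseteq \fI_2 \supseteq \cdots$ is any decreasing countable base of open right ideals of~$\R$. Equivalently, I would show that the separation map $\fF \to \varprojlim_n \fF/(\fI_n \tim \fF)$ is injective. By Lemma~\ref{all-contramodules-complete}, this map is automatically surjective, so the task reduces to injectivity.

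The first step is to extract the content of flatness. For $n \leq m$, the short exact sequence of discrete right $\R$-modules $0 \to \fI_n/\fI_m \to \R/\fI_m \to \R/\fI_n \to 0$ remains short exact after applying the exact functor $(-) \ocn_\R \fF$, giving
$$0 \to (\fI_n/\fI_m) \ocn_\R \fF \to \fF/(\fI_m \tim \fF) \to \fF/(\fI_n \tim \fF) \to 0.$$
In particular, the transition maps in the inverse system $(\fF/(\fI_n \tim \fF))_n$ are surjective, and $(\fI_n \tim \fF)/(\fI_m \tim \fF) \simeq (\fI_n/\fI_m) \ocn_\R \fF$. This is the sole homological input from flatness.

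Given any $x \in \bigcap_n(\fI_n \tim \fF)$, I would then choose witnesses $\sigma_n = \sum_{p\in\fF} r_p^{(n)} p \in \fI_n[[\fF]]$ with $\pi(\sigma_n) = x$. Using the surjectivity established in the previous step, I would refine these lifts inductively so that the successive differences $\sigma_n - \sigma_{n+1}$ lie in $\fI_n[[\fF]]$ and satisfy $\pi(\sigma_n - \sigma_{n+1}) = 0$. Exploiting the countable indexing and the zero-convergence condition on coefficient families, I would then assemble the collection $(\sigma_n - \sigma_{n+1})_n$ into a single convergent element $\tau \in \R[[\R[[\fF]]]]$. Applying the contraassociativity identity $\pi \circ \phi_\fF = \pi \circ \R[[\pi]]$ to $\tau$, together with the fact that $\R$ is separated and hence $\bigcap_n \fI_n = 0$, forces $x = \pi(\sigma_1) = 0$.

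The principal obstacle is the convergence bookkeeping when assembling~$\tau$: the supports of the $\sigma_n$ within $\fF$ need not coincide, and one must interleave the refinements with the zero-convergence condition on coefficient families so that all formal sums actually make sense in $\R[[\R[[\fF]]]]$. This telescope mechanism is essentially the one used in the proofs of Lemma~\ref{all-contramodules-complete} and Lemma~\ref{contramodule-nakayama}, as given in~\cite[Appendix~E]{Pcosh} and~\cite[Section~6]{PR}; flatness enters at exactly one point, namely in arranging that the lifts $\sigma_n$ can be refined coherently via the short exact sequences of the first step.
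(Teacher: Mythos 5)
There is a genuine gap, and it sits exactly at the step you yourself flag as ``the principal obstacle'': the element $\tau$ cannot be formed. An element of $\R[[\R[[\fF]]]]$ is a formal linear combination of elements of $\R[[\fF]]$ whose family of \emph{outer} coefficients converges to zero in $\R$; your $\tau=\sum_n 1\cdot(\sigma_n-\sigma_{n+1})$ has all outer coefficients equal to~$1$, which is not a zero-convergent family unless $\R$ is discrete, and elements of $\fI_n$ do not in general factor as (small scalar)$\,\times\,$(something) so as to allow a repair. Without a legitimate $\tau$, contraassociativity cannot be invoked. Note moreover that the two properties you propose to arrange by ``refining the lifts'' --- $\sigma_n-\sigma_{n+1}\in\fI_n[[\fF]]$ and $\pi(\sigma_n-\sigma_{n+1})=0$ --- hold automatically for \emph{any} witnesses $\sigma_n\in\fI_n[[\fF]]$ with $\pi(\sigma_n)=x$, since $\pi$ is additive and $\fI_{n+1}\subset\fI_n$; so flatness is never actually used in your argument. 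This is fatal: if the telescope went through, it would show that \emph{every} left $\R$\+contramodule is separated, contradicting the counterexamples recalled in Remark~\ref{separated-contramodules-not-well-behaved}. One can see concretely where it breaks: the coefficientwise sum of the family $(\sigma_n-\sigma_{n+1})_n$ is a perfectly good element of $\R[[\fF]]$, but it telescopes to $\sigma_1$ (because $r_p^{(N)}\in\fI_N\to0$ for each fixed~$p$), so applying $\pi$ to it returns $x$, not~$0$. The conclusion you want is precisely the ``countable additivity'' $\pi\bigl(\sum_n\omega_n\bigr)=\sum_n\pi(\omega_n)$ with unit coefficients, and the failure of exactly this identity is what nonseparated contramodules are made of.

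The proofs cited in the paper (\cite[Lemma~E.1.7]{Pcosh}, \cite[Corollary~6.15]{PR}) run along different lines. One sets $\fK=\bigcap_n(\fI_n\tim\fF)=\ker\bigl(\fF\rarrow\Lambda_\R(\fF)\bigr)$ (a subcontramodule, being the kernel of a contramodule morphism; surjectivity of this morphism is Lemma~\ref{all-contramodules-complete}) and uses flatness of $\fF$ to prove that $\fK=\fI\tim\fK$ for every open right ideal $\fI$ --- roughly, because the contratensor product against the short exact sequence $0\rarrow\fK\rarrow\fF\rarrow\Lambda_\R(\fF)\rarrow0$ is left exact when the quotient is flat, so that $\fK/(\fI\tim\fK)\rarrow\fF/(\fI\tim\fF)$ is injective, while its image vanishes since $\fK\subset\fI\tim\fF$ by construction. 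The contramodule Nakayama lemma (Lemma~\ref{contramodule-nakayama}) then forces $\fK=0$. Your exact sequences from the first step are a sensible starting point, but the Nakayama lemma and this left-exactness input are the two ingredients the sketch is missing, and no amount of convergence bookkeeping substitutes for them.
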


\begin{proof}
 The case of topological rings with a countable topology base of
two-sided ideals can be found in~\cite[Lemma~E.1.7]{Pcosh}
(cf.\ the definition of a ``flat contramodule''
in~\cite[the paragraph before Lemma~E.1.4]{Pcosh}, which is stated
differently than our definition above).
 The general case of a countable base of right ideals is covered
by~\cite[Corollary~6.15]{PR}.
 See also~\cite[Propositions~3.7(a) and~3.10(a)]{Prev}.
\end{proof}

\begin{lem} \label{flat-contramodules-well-behaved}
 Let\/ $\R$ be a complete, separated topological ring with
a \emph{countable} base of neighborhoods of zero consisting of open
right ideals.
 Then \par
\textup{(a)} the class of all flat left\/ $\R$\+contramodules is closed
under extensions and kernels of epimorphisms in\/ $\R\Contra$; \par
\textup{(b)} for any discrete right\/ $\R$\+module\/ $\N$,
the contratensor product functor\/ $\N\ocn_\R\nobreak{-}\,\:\allowbreak
\R\Contra\rarrow\Ab$ takes short exact sequences of flat left\/
$\R$\+contramodules to short exact sequences of abelian groups.
\end{lem}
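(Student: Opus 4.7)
My plan is to reduce both parts to the long exact sequence of the left-derived contratensor product in its second variable. The category $\R\Contra$ is abelian with enough projectives: the free contramodules $\R[[X]]$ are projective by the adjunction $\Hom^\R(\R[[X]], \fP) \simeq \Hom_\Sets(X, \fP)$, and every contramodule $\fP$ is a quotient of $\R[[\fP]]$ via its own contraaction map. Since $\N \ocn_\R -$ is right exact (it is realized as a cokernel of maps of ordinary tensor products of abelian groups), its left derived functors, which I will denote $T_i(\N, -)$, are well-defined and furnish the usual long exact sequence for every short exact sequence in $\R\Contra$. Projective contramodules are moreover flat, because $\N \ocn_\R \R[[X]] \simeq \N^{(X)}$ is an exact functor of $\N$ on $\Discr\R$.

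The core technical ingredient is the vanishing $T_i(\N, \fF) = 0$ for all $i \geq 1$, all flat $\fF$, and all $\N \in \Discr\R$. The subtlety is that flatness is defined via exactness in the first argument, whereas $T_i$ is formed in the second, so a balancing step is required. I would carry it out using the Hom--contratensor adjunction
\[
\Hom_\Ab(\N \ocn_\R \fP,\, A) \;\simeq\; \Hom^\R\bigl(\fP,\, \Hom_\Ab(\N, A)\bigr),
\]
which holds because $\N \ocn_\R -$ is a left adjoint for any discrete $\N$. Taking $A$ to be an injective cogenerator of $\Ab$, the vanishing $T_i(\N, \fF) = 0$ is converted into an $\Ext^i$-vanishing statement for $\fF$ against the ``dual'' contramodules $\Hom_\Ab(\N, A)$; this is accessible through the countable-base structural apparatus of Lemmas~\ref{all-contramodules-complete}, \ref{contramodule-nakayama}, and~\ref{flat-contramodules-separated}, and is the content of the arguments worked out in \cite[Section~E.1]{Pcosh} and \cite[Section~6]{PR}.

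Once the Tor-vanishing is established, both parts follow formally. For~(a), the LES applied to $0 \to \fF' \to \fF \to \fF'' \to 0$ contains the segment
\[
T_2(\N, \fF'') \;\to\; T_1(\N, \fF') \;\to\; T_1(\N, \fF) \;\to\; T_1(\N, \fF''),
\]
so whenever any two of $\fF', \fF, \fF''$ are flat, the $T_1$ of the remaining one is squeezed to zero (the higher vanishing $T_2(\N, \fF'') = 0$ on flats is what handles the kernel-of-epimorphism case), yielding flatness of the third. For~(b), when all three contramodules are flat, the LES begins with $T_1(\N, \fF'') = 0$ and degenerates to the required short exact sequence $0 \to \N \ocn_\R \fF' \to \N \ocn_\R \fF \to \N \ocn_\R \fF'' \to 0$. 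The main obstacle is the balancing argument of the second paragraph; the countable-base hypothesis on $\R$ is essential here, since without it flat contramodules need not even be separated and the Tor-formalism above breaks down.
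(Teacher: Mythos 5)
The paper's own ``proof'' of this lemma is a pure citation to \cite[Lemmas~E.1.4 and~E.1.5]{Pcosh} and \cite[Section~6]{PR}, so there is no in-paper argument to compare against; the question is whether your sketch is a self-contained proof, and it is not. Your reduction puts all of the mathematical content into the single claim $T_i(\N,\fF)=0$ for flat $\fF$, and at exactly that point you write that the dualized statement ``is the content of the arguments worked out in'' the same references that the paper cites for the whole lemma. That vanishing is not a formal consequence of Lemmas~\ref{all-contramodules-complete}, \ref{contramodule-nakayama}, and~\ref{flat-contramodules-separated}: unwinding it for a free presentation $0\to\fH\to\fP\to\fF\to0$, it asserts that $\N\ocn_\R\fH\to\N\ocn_\R\fP$ is injective, which (in the two-sided ideal case) amounts to the left-exactness of the reduction functors $\fQ\mapsto\fQ/(\fI\tim\fQ)$ on such sequences --- and that is precisely the substance of \cite[Lemma~E.1.4]{Pcosh}. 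Worse, the $\Ext$-vanishing you reduce to, namely that the duals $\Hom_\Ab(\N,A)$ are cotorsion contramodules, is in \cite{PR} \emph{deduced from} part~(b) of the present lemma (it is how one shows the contratensor product with flats is ``balanced''), so the proposed route is circular as stated. The long-exact-sequence bookkeeping at the end is the trivial part; the hard part is left unproved.

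There is also a secondary error in your deduction of part~(a): you conclude flatness of the third contramodule from the vanishing of its $T_1(\N,{-})$. But flatness is exactness of $\N\mapsto\N\ocn_\R\fF$ in the \emph{first} variable, while $T_1$ is a derived functor in the \emph{second}; the implication ``$T_1(\N,\fG)=0$ for all $\N$ $\Rightarrow$ $\fG$ flat'' is exactly the converse balancing problem you flagged two paragraphs earlier and cannot be waved through. (The conclusion is salvageable: granting $T_1(\N,\fF'')=0$ for flat $\fF''$, every short exact sequence of contramodules with flat cokernel stays exact after applying $\N\ocn_\R{-}$, and then a $3\times3$/snake argument in the first variable, using flatness of the two given terms, yields flatness of the third. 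But that is a different argument from the one you wrote, and it still rests entirely on the unproved vanishing.) As it stands, your proposal is a plausible organizational skeleton around the external references rather than a proof.
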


\begin{proof}
 The case of topological rings with a countable topology base of
two-sided ideals can be found in~\cite[Lemmas~E.1.4 and~E.1.5]{Pcosh}.
 The general case of a countable base of right ideals is covered
by~\cite[Lemma~6.7 or~6.10, Corollaries~6.8, 6.13 and~6.15]{PR}, with
a summary in~\cite[Corollary~7.1(a\+-b)]{PR}.
 See also~\cite[Propositions~3.7(b) and~3.10(b)]{Prev}.
\end{proof}

 In particular, according to
Lemma~\ref{flat-contramodules-well-behaved}(a), the full subcategory of
flat $\R$\+contramod\-ules $\R\Contra_\flat$ is closed under extensions
in $\R\Contra$ when the topological ring $\R$ has a countable base of
neighborhoods of zero.
 So the full subcategory\/ $\R\Contra_\flat$ inherits an exact category
structure from the abelian exact structure of $\R\Contra$ in this case.

\begin{rem} \label{separated-contramodules-not-well-behaved}
 Lemma~\ref{flat-contramodules-well-behaved} tells us that the class
of flat $\R$\+contramodules is well-behaved (under the assumption of
a countable topology base in~$\R$).
 But the class of separated $\R$\+contramodules is \emph{not}
well-behaved.

 In suffices to consider the case of the ring of $p$\+adic integers
$\R=\boZ_p$ (in the $p$\+adic topology), or the ring of formal Taylor
power series $\R=k[[t]]$ in one variable~$t$ over a field~$k$ (in
the $t$\+adic topology).
 In both cases, the forgetful functor $\R\Contra\rarrow\R\Modl$ is
fully faithful; in fact, even the forgetful functors $\boZ_p\Contra
\rarrow\Ab$ and $k[[t]]\Contra\rarrow k[t]\Modl$ are fully faithful.
 The essential images of these forgetful functors are described
in~\cite[Section~2.2]{Prev} and the references therein.

 With these descriptions in mind, there is a now-classical
counterexample showing that the the full subcategory $\R\Contra_\sep$
is not closed under extensions in $\R\Contra$ \,\cite[Example~2.5]{Sim}
and the category $\R\Contra_\sep$ is \emph{not}
abelian~\cite[Example~2.7(1)]{Pcta}.
 Furthermore, the full subcategory $\R\Contra_\sep$ is \emph{not}
closed under directed colimits, and in fact, \emph{not} even closed
under coproducts in $\R\Contra$ \,\cite[Section~1.5]{Prev}.
 (See also~\cite[Example~3.1]{Psemten} and
Example~\ref{ind-affine-ind-scheme-example} below.)

 However, separated contramodules are sometimes easier to work with
than arbitrary ones, as the following lemma illustrates.
\end{rem}

\begin{lem} \label{separated-and-flat-contramodules-by-reduction-data}
 Let\/ $\R$ be a complete, separated topological ring, and let\/
$\R\supset\fI_1\supset\fI_2\supset\fI_3\supset\dotsb$ be a countable
base of neighborhoods of zero in\/ $\R$ consisting of open
\emph{two-sided} ideals\/ $\fI_n\subset\R$, \,$n\ge1$. \par
\textup{(a)} The full subcategory\/ $\R\Contra_\sep$ of separated\/
$\R$\+contramodules\/ $\fM$ in\/ $\R\Contra$ is equivalent to
the category formed by the following sets of data:
\begin{enumerate}
\item for every $n\ge1$, a left\/ $\R/\fI_n$\+module $M_n$ is given; and
\item for every $n\ge1$, an isomorphism of left\/ $\R/\fI_n$\+modules
$$
 M_n\,\simeq\,\R/\fI_n\ot_{\R/\fI_{n+1}}M_{n+1}
$$
is given.
\end{enumerate} \par
\textup{(b)} The full subcategory\/ $\R\Contra_\flat$ of flat\/
$\R$\+contramodules\/ $\fF$ in\/ $\R\Contra$ is equivalent to
the category formed by the following sets of data:
\begin{enumerate}
\item for every $n\ge1$, a flat left\/ $\R/\fI_n$\+module $F_n$
is given; and
\item for every $n\ge1$, an isomorphism of left\/ $\R/\fI_n$\+modules
$$
 F_n\,\simeq\,\R/\fI_n\ot_{\R/\fI_{n+1}}F_{n+1}
$$
is given.
\end{enumerate} \par
 Here, in both parts~(a) and~(b), morphisms between the described sets
of data are defined in the obvious way as collections module morphisms
forming commutative diagrams with the reduction isomorphisms.
\end{lem}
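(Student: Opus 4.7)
The plan is to construct a pair of mutually inverse functors between $\R\Contra_\sep$ and the category of compatible systems $(M_n,\phi_n)$ described in part~(a). In the forward direction, given a separated $\R$-contramodule $\fM$, set $M_n=\fM/(\fI_n\tim\fM)\simeq(\R/\fI_n)\ocn_\R\fM$; this is naturally an $\R/\fI_n$-module because $\fI_n$ is a two-sided ideal. The reduction isomorphism $M_n\simeq\R/\fI_n\ot_{\R/\fI_{n+1}}M_{n+1}$ follows from the identification $(\R/\fI_n)\ocn_\R\fM\simeq\R/\fI_n\ot_{\R/\fI_{n+1}}((\R/\fI_{n+1})\ocn_\R\fM)$, which reduces to observing that the image of $\fI_n\tim\fM$ inside $\fM/(\fI_{n+1}\tim\fM)$ coincides with $(\fI_n/\fI_{n+1})\cdot(\fM/(\fI_{n+1}\tim\fM))$: zero-convergent formal sums with coefficients in $\fI_n$ become \emph{finite} sums after reduction modulo $\fI_{n+1}$.

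In the reverse direction, given a compatible system $(M_n,\phi_n)$, set $\fM=\varprojlim_n M_n$ as a set. To produce a contraaction $\pi\colon\R[[\fM]]\to\fM$, note that for any zero-convergent family $(r_m)_{m\in\fM}$ with $r_m\in\R$, only finitely many of the $r_m$ fail to lie in $\fI_n$, so the expression $\sum_m(r_m\bmod\fI_n)\cdot m^{(n)}$ (with $m^{(n)}$ the image of $m$ in $M_n$) is a genuine finite sum in $M_n$; the reduction isomorphisms ensure these partial sums are compatible as $n$ varies, and they assemble into an element of $\fM$. Contraunitality is immediate; contraassociativity is verified at each level~$n$, where it reduces to associativity of ordinary $\R/\fI_n$-module multiplication. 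The main technical step, and the place I expect to spend the most care, is matching the monad multiplication $\phi_X\colon\R[[\R[[X]]]]\to\R[[X]]$—which involves genuine infinite sums in $\R$—with its image modulo $\fI_n$, where everything collapses to finite sums in $\R/\fI_n$.

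To check the two functors are mutually inverse, I would use Lemma~\ref{all-contramodules-complete} (completeness is automatic) together with the separatedness hypothesis: the map $\lambda_{\R,\fM}\colon\fM\to\varprojlim_n\fM/(\fI_n\tim\fM)$ is then a bijection of sets, and a direct verification shows it intertwines the original and reconstructed contraactions. Conversely, starting with $(M_n)$ and forming $\fM=\varprojlim M_n$, one recovers $M_n$ as $\fM/(\fI_n\tim\fM)$: surjectivity of $\fM\to M_n$ comes from the reduction isomorphisms by a straightforward stepwise lifting, and the vanishing of the kernel modulo $\fI_n\tim\fM$ is built into the construction of~$\pi$.

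Part~(b) follows almost formally from part~(a) combined with Lemma~\ref{flat-contramodules-separated} (every flat $\R$-contramodule is separated) and the flatness criterion recorded in the paragraph preceding Lemma~\ref{flat-contramodules-separated}: $\fF$ is flat if and only if $\fF/(\fI\tim\fF)$ is flat over $\R/\fI$ for every open two-sided ideal $\fI$, which under the countable-base hypothesis reduces to flatness of each $F_n=\fF/(\fI_n\tim\fF)$ over $\R/\fI_n$. Conversely, if each $F_n$ is $\R/\fI_n$-flat, then the reduction isomorphisms $F_n\simeq\R/\fI_n\ot_{\R/\fI_{n+1}}F_{n+1}$ are consistent with flatness under base change, so the contramodule $\fF=\varprojlim F_n$ produced by part~(a) is flat.
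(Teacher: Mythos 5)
Your proposal is correct and follows essentially the same route as the paper's proof, which sets up the same pair of functors ($\fM\mapsto(\fM/(\fI_n\tim\fM))_{n\ge1}$ and $(M_n)_{n\ge1}\mapsto\varprojlim_n M_n$) and delegates the verification that they are mutually inverse to \cite[Lemma~E.1.3]{Pcosh} (or \cite[Corollary~6.4]{PR}), invoking Lemma~\ref{all-contramodules-complete} in part~(a) and Lemma~\ref{flat-contramodules-separated} together with the reduction criterion for flatness in part~(b), exactly as you do. The one step you gloss over is the inclusion $\ker\bigl(\varprojlim_m M_m\to M_n\bigr)\subseteq\fI_n\tim\fM$, which is not quite ``built into the construction of~$\pi$'' but requires the standard successive-approximation argument: lift a finite expression for the image in $M_{n+1}$, subtract, iterate, and use that the resulting coefficients lie in $\fI_n\supset\fI_{n+1}\supset\dotsb$ to obtain a zero-convergent family with coefficients in~$\fI_n$.
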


\begin{proof}
 In part~(a), the equivalence of categories assigns to a separated
left $\R$\+con\-tra\-mod\-ule $\fM$ the collection of modules
$M_n=(\R/\fI_n)\ocn_\R\fM=\fM/(\fI_n\tim\fM)$, endowed with the obvious
reduction isomorphisms.
 The inverse functor assigns to a collection of modules $M_n$ endowed
with reduction isomorphisms the separated $\R$\+contramodule
$\fM=\varprojlim_{n\ge1}M_n$.
 Here the $\R/\fI_n$\+modules $M_n$ are viewed as $\R$\+contramodules
via the natural surjective ring homomorphisms $\R\rarrow\R/\fI_n$, and
the directed limit can be computed in the category $\R\Contra$ (as
the forgetful functor $\R\Contra\rarrow\R\Modl$ preserves all limits).
 The assertion that these rules indeed define mutually inverse
equivalences of categories is~\cite[Lemma~E.1.3]{Pcosh}
(see~\cite[Corollary~6.4]{PR} for a generalization to topological rings
with a countable base of right ideals).
 Lemma~\ref{all-contramodules-complete} needs to be used here.

 The equivalence of categories in part~(b) is obtained by restricting
the equivalence of part~(a) to the respective full subcategories on
both sides.
 Lemma~\ref{flat-contramodules-separated} plays an important role here.
\end{proof}

\begin{lem} \label{projective-contramodules-by-reduction-data}
 Let\/ $\R$ be a complete, separated topological ring, and let\/
$\R\supset\fI_1\supset\fI_2\supset\fI_3\supset\dotsb$ be a countable
base of neighborhoods of zero in\/ $\R$ consisting of open
\emph{two-sided} ideals\/ $\fI_n\subset\R$, \,$n\ge1$.
 Then a left\/ $\R$\+contramodule\/ $\fP$ is projective if and only if
(it is separated and flat and) the corresponding reduction datum in
Lemma~\ref{separated-and-flat-contramodules-by-reduction-data} consists
of projective modules, that is, if and only if
the\/ $\R/\fI_n$\+module\/ $\fP/(\fI_n\tim\fP)$ is projective
for every $n\ge1$.
\end{lem}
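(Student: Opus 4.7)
The forward implication is immediate: any projective $\R$\+contramodule $\fP$ is a direct summand of a free contramodule $\R[[X]]$. Both separatedness and flatness pass to direct summands of the visibly separated flat contramodule $\R[[X]]$; moreover, contratensoring the split inclusion $\fP\hookrightarrow\R[[X]]$ with $\R/\fI_n$ exhibits $P_n := \fP/(\fI_n\tim\fP)$ as a direct summand of the free $\R/\fI_n$\+module $(\R/\fI_n)[X]$, hence as a projective $\R/\fI_n$\+module.

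For the converse, suppose $\fP$ is flat and separated with each $P_n$ projective. The plan is to split the canonical surjection $\pi\colon\R[[\fP]]\twoheadrightarrow\fP$ induced by the contraaction, thereby displaying $\fP$ as a direct summand of a free contramodule. Setting $\mathfrak{K}=\ker\pi$, Lemma~\ref{flat-contramodules-well-behaved}(a) guarantees that $\mathfrak{K}$ is flat, and Lemma~\ref{flat-contramodules-well-behaved}(b) applied with $\N=\R/\fI_n$ yields, for each $n\ge1$, a short exact sequence of $\R/\fI_n$\+modules
$$0\rarrow\mathfrak{K}_n\rarrow(\R/\fI_n)[\fP]\xrightarrow{\pi_n}P_n\rarrow0.$$
Via the equivalence of categories in Lemma~\ref{separated-and-flat-contramodules-by-reduction-data}(b), a contramodule section $s\colon\fP\to\R[[\fP]]$ of $\pi$ is the same datum as a compatible family of $\R/\fI_n$\+linear sections $s_n\colon P_n\to(\R/\fI_n)[\fP]$ of the individual maps~$\pi_n$.

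The $s_n$ will be constructed by induction on~$n$. Projectivity of each $P_n$ splits the individual SES, yielding $s_1$ and, at stage $n+1$, some preliminary splitting $\tilde{s}_{n+1}$ which a priori need not reduce to the previously chosen $s_n$ modulo~$\fI_n$. The obstruction is the $\R/\fI_n$\+linear difference $s_n-(\tilde{s}_{n+1}\bmod\fI_n)\colon P_n\to\mathfrak{K}_n$; correcting $\tilde{s}_{n+1}$ amounts to lifting this obstruction through the reduction map $\mathfrak{K}_{n+1}\to\mathfrak{K}_n$ and then invoking projectivity of $P_{n+1}$. The main (and essentially only) obstacle is therefore to verify that $\mathfrak{K}_{n+1}\to\mathfrak{K}_n$ is surjective. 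This I expect to establish by applying the right-exact functor $(\R/\fI_n)\ot_{\R/\fI_{n+1}}(-)$ to the stage-$(n+1)$ SES and comparing with the stage-$n$ SES via the reduction isomorphisms $(\R/\fI_n)\ot_{\R/\fI_{n+1}}(\R/\fI_{n+1})[\fP]\cong(\R/\fI_n)[\fP]$ and $(\R/\fI_n)\ot_{\R/\fI_{n+1}}P_{n+1}\cong P_n$ supplied by the flat-separated reduction data of $\R[[\fP]]$ and of $\fP$. Once this surjectivity is in hand, a lift $\alpha\colon P_{n+1}\to\mathfrak{K}_{n+1}$ of the obstruction exists by projectivity of $P_{n+1}$, and $s_{n+1}:=\tilde{s}_{n+1}+\alpha$ is simultaneously a splitting of $\pi_{n+1}$ and a lift of $s_n$. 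The resulting compatible system $(s_n)$ assembles under the equivalence of Lemma~\ref{separated-and-flat-contramodules-by-reduction-data}(b) to the required section~$s$, exhibiting $\fP$ as a direct summand of $\R[[\fP]]$ and hence as a projective $\R$\+contramodule.
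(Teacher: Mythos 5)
Your argument is correct, but note that the paper does not actually prove this lemma: its ``proof'' is a citation to \cite[Corollary~E.1.10(a)]{Pcosh} and \cite[Proposition~3.8]{Prev}, so you are supplying a self-contained argument where the paper outsources one. What you do is essentially the standard proof from those references: split the counit surjection $\pi\:\R[[\fP]]\rarrow\fP$ by building a compatible tower of splittings $s_n$ of the reductions $\pi_n$, using projectivity of each $P_n$ to lift the obstruction at each stage, and then invoke the equivalence of Lemma~\ref{separated-and-flat-contramodules-by-reduction-data}(b) to assemble the $s_n$ into a contramodule section. Two small remarks. First, the one step you flag as the main obstacle --- surjectivity of $\mathfrak K_{n+1}\rarrow\mathfrak K_n$ --- is immediate and needs no tensor comparison: $\mathfrak K$ is flat by Lemma~\ref{flat-contramodules-well-behaved}(a), so $\mathfrak K_n=(\R/\fI_n)\ocn_\R\mathfrak K=\mathfrak K/(\fI_n\tim\mathfrak K)$, and since $\fI_{n+1}\tim\mathfrak K\subset\fI_n\tim\mathfrak K$ the reduction map is a surjection of quotients of $\mathfrak K$ (your proposed right-exactness argument also works, and is what identifies this surjection with the structure map of the reduction datum of~$\mathfrak K$). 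Second, the lemma's final ``if and only if'' drops the parenthetical hypothesis, so for full strength you should add the one-line observation that projectivity of every $P_n$ already forces $\fP$ to be flat (by the criterion that $\fF$ is flat iff all $\fF/(\fI\tim\fF)$ are flat, checked on the base $\fI_n$ and extended to arbitrary open two-sided ideals by base change) and hence separated by Lemma~\ref{flat-contramodules-separated}; with that added, your proof is complete.
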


\begin{proof}
 This is~\cite[Corollary~E.1.10(a)]{Pcosh};
see also~\cite[Proposition~3.8]{Prev}.
\end{proof}

 The category $\R\Contra$ is better behaved than $\R\Contra_\sep$ in
that $\R\Contra$ is abelian (cf.\
Remark~\ref{separated-contramodules-not-well-behaved}).
 Still, the directed colimit functors are \emph{not} exact
in $\R\Contra$, as one can see, e.~g., from~\cite[Examples~4.4]{PR}.
 The next lemma tells us that the directed colimit functors \emph{are}
exact in the exact category $\R\Contra_\flat$ (assuming
a countable base of neighborhoods of zero in~$\R$).

\begin{lem} \label{directed-colimits-of-flat-contramodules-exact}
 Let\/ $\R$ be a complete, separated topological ring with
a \emph{countable} base of neighborhoods of zero consisting of open
right ideals.
 Then the directed colimit of any directed diagram of short exact
sequences of \emph{flat} left\/ $\R$\+contramodules (computed in
the category\/ $\R\Contra$) is a short exact sequence of (flat)
left\/ $\R$\+contramodules.
\end{lem}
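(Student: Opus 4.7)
The plan is to exploit the contratensor product $(\R/\fI)\ocn_\R{-}$ for open right ideals $\fI\subset\R$, which preserves all colimits in the contramodule variable and, by Lemma~\ref{flat-contramodules-well-behaved}(b), is exact on short exact sequences of flat contramodules. Combined with the contramodule Nakayama lemma~\ref{contramodule-nakayama}, this should reduce the exactness question to the trivial exactness of directed colimits of short exact sequences of abelian groups.

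Write $\fF'=\varinjlim_i\fF'_i$, $\fF=\varinjlim_i\fF_i$, $\fF''=\varinjlim_i\fF''_i$ for the colimits computed in $\R\Contra$; all three are flat since the class of flat $\R$\+contramodules is closed under directed colimits. Because colimits commute with colimits, $\fF''$ is already the cokernel of $\fF'\rarrow\fF$ in $\R\Contra$, so the map $\fF\rarrow\fF''$ is surjective. By Lemma~\ref{flat-contramodules-well-behaved}(a) its kernel $\fK$ in $\R\Contra$ is flat, and the map $\fF'\rarrow\fF$ factors canonically through $\fK$. It remains to show that the resulting canonical map $\fF'\rarrow\fK$ is an isomorphism.

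Fix an open right ideal $\fI\subset\R$ and apply $(\R/\fI)\ocn_\R{-}$ throughout. Applied to the given directed system of short exact sequences of flats, Lemma~\ref{flat-contramodules-well-behaved}(b) yields a directed system of short exact sequences of abelian groups, whose (exact) directed colimit, combined with the fact that contratensor product commutes with colimits, gives a short exact sequence
$$
 0\rarrow(\R/\fI)\ocn_\R\fF'\rarrow(\R/\fI)\ocn_\R\fF
 \rarrow(\R/\fI)\ocn_\R\fF''\rarrow0.
$$
Applied instead to the short exact sequence $0\rarrow\fK\rarrow\fF\rarrow\fF''\rarrow0$ of flats, Lemma~\ref{flat-contramodules-well-behaved}(b) gives a second short exact sequence with the same right half. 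Comparing, the natural map $(\R/\fI)\ocn_\R\fF'\rarrow(\R/\fI)\ocn_\R\fK$ is an isomorphism for every open right ideal~$\fI$.

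Two applications of Nakayama finish the argument. The cokernel $\fC$ of $\fF'\rarrow\fK$ in $\R\Contra$ satisfies $(\R/\fI)\ocn_\R\fC=0$ for all $\fI$ by right exactness of contratensor product, so Lemma~\ref{contramodule-nakayama} forces $\fC=0$ and hence $\fF'\twoheadrightarrow\fK$. The kernel $\fN$ of this surjection is then flat by Lemma~\ref{flat-contramodules-well-behaved}(a), and applying $(\R/\fI)\ocn_\R{-}$ to $0\rarrow\fN\rarrow\fF'\rarrow\fK\rarrow0$ and using the isomorphism just established yields $(\R/\fI)\ocn_\R\fN=0$ for all $\fI$, whence $\fN=0$ by Nakayama again. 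The main conceptual obstacle, and the reason the countable topology base hypothesis is essential, is that directed colimits are not exact in $\R\Contra$ in general; one circumvents this precisely because the reductions $(\R/\fI)\ocn_\R{-}$ see exactness on the flat subcategory and jointly detect nonzero contramodules.
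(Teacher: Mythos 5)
Your proof is correct, and all the ingredients you invoke are available and correctly applied: the contratensor product $(\R/\fI)\ocn_\R{-}$ does preserve colimits in the contramodule argument, Lemma~\ref{flat-contramodules-well-behaved} gives both the closure of flats under kernels of epimorphisms (needed for $\fK$ and $\fN$) and the exactness of reduction on short exact sequences of flats, and the identification $(\R/\fI)\ocn_\R\fP\simeq\fP/(\fI\tim\fP)$ turns Lemma~\ref{contramodule-nakayama} into exactly the ``joint conservativity'' statement you need. The route is, however, genuinely different from the paper's. The paper's argument for the case of a countable base of \emph{two-sided} ideals passes through the equivalence of $\R\Contra_\flat$ with systems of flat $\R/\fI_n$\+modules from Lemma~\ref{separated-and-flat-contramodules-by-reduction-data}(b): one takes directed colimits of the reduced short exact sequences of flat $\R/\fI_n$\+modules (exact in $\Ab$) and then reassembles the contramodule short exact sequence as a countable inverse limit, using that countable inverse limits of surjections of abelian groups are exact; the general right-ideal case is then delegated to an external reference. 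Your argument avoids both the reduction-data equivalence and the inverse-limit step: you work directly in $\R\Contra$, produce the candidate kernel $\fK$ there, and use the reductions only to \emph{detect} that $\fF'\rarrow\fK$ is an isomorphism, via two applications of the contramodule Nakayama lemma. This buys you a self-contained proof valid in the stated generality of a countable base of open \emph{right} ideals, at the modest cost of the two-step (cokernel, then kernel) Nakayama bootstrap.
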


\begin{proof}
 In the case of a topological ring with a countable topology base of
two-sided ideals, the assertion is easily deduced
from the fact that the class of all flat left $\R$\+contramodules
is closed under directed colimits in $\R\Contra$ and the description of
flat left $\R$\+contramodules in
Lemma~\ref{separated-and-flat-contramodules-by-reduction-data}(b)
together with Lemma~\ref{flat-contramodules-well-behaved}(b).
 The fact that countable directed limits of diagrams of surjective
maps of abelian groups are exact functors needs to be used here.
 The general case of a countable base of right ideals is covered
by~\cite[Lemma~6.16]{PR} (see also~\cite[Proposition~6.17,
Remark~6.18, and Corollary~7.1(c)]{PR}
and~\cite[Proposition~3.10(c)]{Prev}).
\end{proof}

\begin{ex} \label{ind-affine-ind-scheme-example}
 In the context of algebraic geometry, a \emph{strict ind-affine\/
$\aleph_0$\+ind-scheme} $\fX$ is an ind-object in the category of
affine schemes representable by a countable directed diagram of closed
immersions of affine schemes $X_1\rarrow X_2\rarrow X_2\rarrow\dotsb$.
 So one has $X_n=\Spec R_n$ for some commutative rings $R_n$,
and the scheme morphisms $X_n\rarrow X_{n+1}$ correspond to surjective
ring homomorphisms $R_{n+1}\rarrow R_n$.

 The category of strict ind-affine $\aleph_0$\+ind-schemes is
equivalent to the category of complete, separated topological
commutative rings having a countable base of neighborhoods of zero
consisting of two-sided ideals.
 To an ind-scheme $\fX=\ilim_{n\ge1}X_n$ as above, the topological ring
$\R=\varprojlim_{n\ge1}R_n$ (with the topology of projective limit of
discrete rings~$R_n$) is assigned~\cite[Example~7.11.2(i)]{BD2},
\cite[Example~1.6(2)]{Psemten}.

 In this context, the sets of data from
Lemma~\ref{separated-and-flat-contramodules-by-reduction-data}(a)
are called ``$\cO^p$\+modules on~$\fX$'' in~\cite[Section~7.11.3]{BD2}
or \emph{pro-quasi-coherent pro-sheaves on\/~$\fX$}
in~\cite[Section~3.1]{Psemten}.
 The sets of data from
Lemma~\ref{separated-and-flat-contramodules-by-reduction-data}(b)
are called ``flat $\cO^p$\+modules on~$\fX$''
in~\cite[Section~7.11.3]{BD2} or \emph{flat pro-quasi-coherent
pro-sheaves on\/~$\fX$} in~\cite[Section~3.4]{Psemten}.
 So the category of pro-quasi-coherent pro-sheaves on $\fX$ is
equivalent to the category of separated $\R$\+contramodules, while
the category of flat pro-quasi-coherent pro-sheaves on $\fX$ is
equivalent to the category of flat $\R$\+contramodules.
 We refer to~\cite[Remark~7.11.3(ii)]{BD2}
and~\cite[Examples~3.1 and~3.8]{Psemten} for further discussion.
 For a discussion of non-ind-affine $\aleph_0$\+ind-schemes with further
details, see Remark~\ref{non-ind-affine-ind-schemes-remark} below.
\end{ex}

\Section{Countably Presentable Flat Contramodules}

 We refer to Section~\ref{accessible-preliminaries-secn} above for
the definitions of \emph{$\kappa$\+presentable objects},
\emph{locally $\kappa$\+presentable categories}, and
\emph{$\kappa$\+accessible categories},  with the references
to~\cite{AR}.

\begin{prop} \label{presentable-contramodules-described}
 Let $\kappa$ be an uncountable regular cardinal and\/ $\R$ be
a complete, separated topological ring with a base of neighborhoods
of zero of cardinality less than~$\kappa$, consisting of
open right ideals. \par
\textup{(a)} The category of left\/ $\R$\+contramodules\/
$\R\Contra$ is locally $\kappa$\+presentable.
 A left\/ $\R$\+contramodule\/ $\fP$ is $\kappa$\+presentable as
an object of the category\/ $\R\Contra$ if and only if it is
the cokernel of a morphism of free left\/ $\R$\+contramodules\/
$\R[[Y]]\rarrow\R[[X]]$ with the sets $X$ and $Y$ having cardinalities
less than~$\kappa$. \par
\textup{(b)} The category of complexes of left\/ $\R$\+contramodules\/
$\Com(\R\Contra)$ is locally $\kappa$\+presentable.
 A complex of left\/ $\R$\+contramodules\/ $\fP^\bu$ is 
$\kappa$\+presentable as an object of the category\/ $\Com(\R\Contra)$
if and only if all its terms\/ $\fP^n$, \,$n\in\boZ$, are
$\kappa$\+presentable\/ $\R$\+contramodules.
\end{prop}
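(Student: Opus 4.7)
The strategy is to realize $\R\Contra$ as the category of modules over the monad $T=\R[[{-}]]$ on $\Sets$ and then apply standard accessibility results. For part~(b), I reduce to part~(a) using Theorem~\ref{diagram-theorem}.

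For part~(a), the key first step is to show that the monad $T$ on $\Sets$ is $\kappa$\+accessible, i.e., its underlying endofunctor preserves $\kappa$\+directed colimits. Using the formula $\R[[X]]=\varprojlim_\fI(\R/\fI)[X]$, where $\fI$ ranges over a fixed base of open right ideals of cardinality less than~$\kappa$, this reduces to two well-known observations: each functor $X\mapsto(\R/\fI)[X]$ is finitary (since every element has finite support in~$X$), and in $\Sets$, $\kappa$\+directed colimits commute with $\kappa$\+small projective limits. Given this, standard theory of categories of modules over accessible monads on locally presentable categories (a variant of~\cite[Theorem~2.78]{AR}) yields that $\R\Contra$ is locally $\kappa$\+presentable, and the free contramodule functor $X\mapsto\R[[X]]$ sends sets of cardinality less than~$\kappa$ to $\kappa$\+presentable contramodules. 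The latter can also be verified directly: $\Hom^\R(\R[[X]],\fP)\simeq\Hom_\Sets(X,U\fP)$ preserves $\kappa$\+directed colimits in $\fP$ because the forgetful functor $U$ creates $\kappa$\+directed colimits (a consequence of $T$ preserving them), and $\Hom_\Sets(X,{-})$ preserves $\kappa$\+directed colimits of sets for $|X|<\kappa$.

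For the description of $\kappa$\+presentable objects in~(a), one direction is immediate: a cokernel $\coker(\R[[Y]]\to\R[[X]])$ with $|X|,|Y|<\kappa$ is a $\kappa$\+small colimit of $\kappa$\+presentable objects, hence $\kappa$\+presentable. For the converse, I would use that $\R=\R[[\{*\}]]$ is a $\kappa$\+presentable projective generator, so every $\kappa$\+presentable $\fP$ is a retract of a $\kappa$\+small colimit of copies of~$\R$. In any abelian category such a $\kappa$\+small colimit of copies of a single generator can be rewritten as $\coker(\R[[Y]]\to\R[[X]])$ with $|X|,|Y|<\kappa$, using regularity of~$\kappa$ to bound the cardinalities of the index set and the morphism set of the diagram. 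To promote ``retract of such a cokernel'' to ``honest cokernel of this form,'' I would write $\coker(\R[[Y]]\to\R[[X]])=\fP\oplus\fP'$, observe that $\fP'$ is $\kappa$\+presentable and hence $\kappa$\+generated, lift a generating map $\R[[Z]]\to\fP'$ with $|Z|<\kappa$ through the projectivity of $\R[[Z]]$ to a map $\R[[Z]]\to\R[[X]]$, and absorb this lift into $f$ to obtain $\fP=\coker(\R[[Y\sqcup Z]]\to\R[[X]])$.

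For part~(b), $\Com(\R\Contra)$ can be identified with the diagram category $\Fun_\boZ(D,\R\Contra)$ for a specific countably presented $\boZ$\+linear category $D$ classifying complexes in additive categories, as constructed in~\cite[proof of Corollary~10.4]{Pacc}. Applying Theorem~\ref{diagram-theorem} with $\sC=\R\Contra$ (locally $\kappa$\+presentable by part~(a), in particular $\kappa$\+accessible and having colimits of $\aleph_0$\+indexed chains) and $\lambda=\aleph_0<\kappa$ immediately yields that $\Com(\R\Contra)$ is $\kappa$\+accessible and that its $\kappa$\+presentable objects are precisely the $\boZ$\+linear functors $D\to(\R\Contra)_{<\kappa}$, i.e., complexes all of whose terms are $\kappa$\+presentable. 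Local $\kappa$\+presentability then follows because colimits in $\Com(\R\Contra)$ are computed termwise, so all colimits exist. The main technical obstacle is the ``retract of a cokernel is a cokernel'' step in part~(a); everything else is a direct application of previously established accessibility machinery.
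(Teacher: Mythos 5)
Your proof is correct and follows essentially the same route as the paper's (which is only sketched there): the key points in both are that $\R[[{-}]]$ preserves $\kappa$\+directed colimits because the topology base has cardinality less than~$\kappa$, that free contramodules on fewer than~$\kappa$ generators are therefore $\kappa$\+presentable generators, and that a retract of a cokernel of a morphism between such free contramodules is again such a cokernel. For part~(b) you use the diagram-category route via Theorem~\ref{diagram-theorem}, which is precisely the alternative argument the paper itself points to.
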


\begin{proof}
 Part~(a): the ``if'' assertion is easily provable using the fact
that the forgetful functor $\R\Contra\rarrow\R\Modl$ preserves
$\kappa$\+directed colimits.
 The latter follows from the observation that any zero-convergent family
of elements in $\R$ has cardinality less than~$\kappa$.
 To prove the ``only if'' assertion, one then needs to check that all
left $\R$\+contramodules are $\kappa$\+directed colimits of
the cokernels of morphisms of free contramodules with less
than~$\kappa$ generators, and the class of all such cokernels is
closed under direct summands.
 The first assertion of part~(a) follows immediately (cf.\
the discussion in~\cite[Sections~1.1 and~5]{PR} and~\cite[Sections~6.2
and~6.4]{PS1}).

 Part~(b): the ``if'' assertion follows from the ``if'' assertion of
part~(a) by Lemma~\ref{complexes-presentability}(b).
 To deduce the ``only if'', one can check directly that all complexes of
left $\R$\+contramodules are $\kappa$\+directed colimits of complexes
whose terms are the cokernels of morphisms of free contramodules with
less than~$\kappa$ generators.
 The first assertion is then also clear (cf.~\cite[second paragraph
of the proof of Lemma~6.3]{PS4}).
 Alternatively, one can refer to Theorem~\ref{diagram-theorem}
or a suitable additive version of~\cite[Theorem~1.2]{Hen}, similarly
to the argument in Proposition~\ref{complexes-of-flat-comodules-prop}
and Remark~\ref{arbitrary-complexes-of-comodules}.
\end{proof}

 Now let $\R$ be a complete, separated topological ring with
a \emph{countable} base of neighborhoods of zero consisting of open
right ideals.
 Then the fully faithful inclusion functor $\R\Contra_\sep\rarrow
\R\Contra$ has a left adjoint functor $\Lambda_\R\:\R\Contra\rarrow
\R\Contra_\sep$ assigning to every left $\R$\+contramodule $\fP$
its maximal separated quotient $\R$\+contramodule
$$
 \Lambda_\R(\fP)=\varprojlim\nolimits_{\fI\subset\R}\fP/(\fI\tim\fP).
$$
 It is explained in~\cite[Lemma~6.2]{PR}
or~\cite[Proposition~4.2(b)]{Pcoun} how to endow the directed limit
$\varprojlim_{\fI\subset\R}\fP/(\fI\tim\fP)$ computed in the category
of abelian groups with a left $\R$\+contramodule structure.
 The natural morphism $\lambda_{\R,\fP}\:\fP\rarrow
\varprojlim_{\fI\subset\R}\fP/(\fI\tim\fP)$ is the adjunction unit.

\begin{lem} \label{max-separated-quotient-is-flat-implies-coincides}
 Let\/ $\R$ be a complete, separated topological ring with
a \emph{countable} base of neighborhoods of zero consisting of open
right ideals.
 Let\/ $\fP$ be a left\/ $\R$\+contramodule such that the separated
left\/ $\R$\+contramodule\/ $\Lambda_\R(\fP)$ is flat.
 Then the\/ $\R$\+contramodule\/ $\fP$ is flat and separated, and
the natural morphism\/ $\lambda_{\R,\fP}\:\fP\rarrow \Lambda_\R(\fP)$
is an isomorphism.
\end{lem}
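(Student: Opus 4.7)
The plan is to reduce the claim to showing that $K := \ker(\lambda_{\R,\fP})$ vanishes. Since $\lambda_{\R,\fP}$ is surjective by Lemma~\ref{all-contramodules-complete} (all contramodules are complete), once $K = 0$ we will have $\fP \simeq \fM := \Lambda_\R(\fP)$, which forces $\fP$ to be flat and separated because $\fM$ is. To show $K = 0$, I will invoke the contramodule Nakayama lemma (Lemma~\ref{contramodule-nakayama}) and thereby reduce further to showing that $K/(\fI \tim K) = 0$ for every open right ideal $\fI$ in the countable base.

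Fix such an~$\fI$ and consider the short exact sequence $0 \to K \to \fP \to \fM \to 0$ in $\R\Contra$. Applying the right exact functor $(\R/\fI) \ocn_\R {-} \simeq {-}/(\fI \tim {-})$ yields the right exact sequence
\[
 K/(\fI \tim K) \longrightarrow \fP/(\fI \tim \fP) \longrightarrow \fM/(\fI \tim \fM) \longrightarrow 0.
\]
A first technical step is to verify that the second arrow here is actually an \emph{isomorphism}: the surjection $\fP \twoheadrightarrow \fM$ admits a set-theoretic section, which allows lifting any zero-convergent family in $\fI[[\fM]]$ to one in $\fI[[\fP]]$, so $\fI \tim \fP$ surjects onto $\fI \tim \fM$; on the other hand $K \subseteq \bigcap_n (\fI_n \tim \fP) \subseteq \fI \tim \fP$, so the image of $K$ in $\fP/(\fI \tim \fP)$ is zero. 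Hence $K/(\fI \tim K) \to \fP/(\fI \tim \fP)$ has zero image, and the remaining task is to prove that it is also injective.

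For injectivity I will use the flatness of $\fM$ in the form that the first left derived functor of $(\R/\fI) \ocn_\R {-}$ vanishes on~$\fM$. Take a projective resolution $\fR_\bullet \to \fM$; all $\fR_i$ are flat, and starting from the flat object $\fM$ and applying Lemma~\ref{flat-contramodules-well-behaved}(a) inductively, every cycle $Z_i$ of $\fR_\bullet$ is flat, so Lemma~\ref{flat-contramodules-well-behaved}(b) shows that $(\R/\fI) \ocn_\R \fR_\bullet$ is acyclic in positive degrees. Then the horseshoe lemma provides a short exact sequence of projective resolutions lifting $0 \to K \to \fP \to \fM \to 0$; applying $(\R/\fI) \ocn_\R {-}$ to it remains termwise exact by Lemma~\ref{flat-contramodules-well-behaved}(b) once more, since the resolutions consist of projectives (hence flats), and the resulting long exact sequence in homology, combined with the vanishing just established, upgrades the right exact sequence above to a short exact sequence, yielding $K/(\fI \tim K) = 0$. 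The main obstacle I anticipate is this derived-functor bookkeeping; the identification of $\fP/(\fI \tim \fP)$ with $\fM/(\fI \tim \fM)$ is elementary but also requires some care about the direction of the comparison map. Once $K/(\fI \tim K) = 0$ for every $\fI$ in the base, Lemma~\ref{contramodule-nakayama} delivers $K = 0$ and the proof is complete.
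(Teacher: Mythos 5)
Your proof is correct, but it follows a different route from the one the paper spells out. The paper's argument observes that the adjunction morphism~$\lambda_{\R,\fP}$ induces an isomorphism of functors ${-}\ocn_\R\fP\simeq{-}\ocn_\R\Lambda_\R(\fP)$ on all discrete right modules (the nonseparated part is killed by contratensor products), so flatness of $\Lambda_\R(\fP)$ immediately gives flatness of~$\fP$; it then quotes Lemma~\ref{flat-contramodules-separated} (flat implies separated) together with Lemma~\ref{all-contramodules-complete} to conclude that $\lambda_{\R,\fP}$ is an isomorphism. You instead prove only the special case $(\R/\fI)\ocn_\R\fP\simeq(\R/\fI)\ocn_\R\Lambda_\R(\fP)$ of that functor isomorphism and then attack the kernel $K=\ker(\lambda_{\R,\fP})=\bigcap_\fI(\fI\tim\fP)$ directly: the vanishing of the first left derived functor of $(\R/\fI)\ocn_\R{-}$ on the flat contramodule $\Lambda_\R(\fP)$ (obtained, exactly as in Lemma~\ref{projective-dimension-of-flat-contramodule}, by splicing syzygies via Lemma~\ref{flat-contramodules-well-behaved}(a\+-b)) makes $K/(\fI\tim K)\rarrow\fP/(\fI\tim\fP)$ injective, while $K\subset\fI\tim\fP$ makes its image zero, so the contramodule Nakayama lemma kills~$K$. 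Your version is closer in spirit to the original Nakayama-based proofs in the cited references, and it has the advantage of not invoking Lemma~\ref{flat-contramodules-separated} at all (separatedness of $\fP$ comes for free from $\fP\simeq\Lambda_\R(\fP)$), so it is self-contained modulo the basic well-behavedness of flat contramodules; the price is the derived-functor bookkeeping, which the paper's shorter argument avoids by quoting the flat-implies-separated result as a black box.
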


\begin{proof}
 This is an intermediate step in the proofs
of~\cite[Corollary~E.1.7]{Pcosh} and~\cite[Corollary~6.15]{PR}
(which are restated above as Lemma~\ref{flat-contramodules-separated}).
 The argument is based on the contramodule Nakayama
lemma (Lemma~\ref{contramodule-nakayama} above).
 Alternatively, the same proof can be phrased as follows.
 One observes that, for any left $\R$\+contramodule $\fP$,
the adjunction morphism~$\lambda_{\R,\fP}$ induces an isomorphism
of the contratensor product functors ${-}\ocn_\R\fP\simeq
{-}\ocn_\R\Lambda_\R(\fP)$ (essentially, the nonseparated part of
$\fP$ is killed by contratensor products).
 This shows that flatness of $\Lambda_\R(\fP)$ implies flatness
of~$\fP$.
 Then the separatedness and completeness of $\fP$ follow from
Lemmas~\ref{all-contramodules-complete}
and~\ref{flat-contramodules-separated}.
\end{proof}

\begin{prop} \label{presentable-separated-contramodules-described}
 Let\/ $\R$ be a complete, separated topological ring with
a \emph{countable} base of neighborhoods of zero consisting of open
right ideals, and let $\kappa$~be an uncountable regular cardinal.
 Then the category of separated left\/ $\R$\+contramodules\/
$\R\Contra_\sep$ is locally $\kappa$\+presentable.
 A separated left\/ $\R$\+contramodule\/ $\fM$ is\/
$\kappa$\+pre\-sentable as an object of the category\/ $\R\Contra_\sep$
if and only if it has the form\/ $\fM=\Lambda_\R(\fP)$, where\/ $\fP$
is a $\kappa$\+presentable object of the category\/ $\R\Contra$.
\end{prop}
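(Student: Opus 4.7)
The plan is to exploit the reflection $\Lambda_\R \dashv \iota$ between $\R\Contra$ and $\R\Contra_\sep$ (where $\iota$ denotes the fully faithful inclusion) together with the local $\kappa$-presentability of $\R\Contra$ established in Proposition~\ref{presentable-contramodules-described}(a). First I would record the basic structural facts: $\R\Contra_\sep$ is complete, with limits computed in $\R\Contra$ since separatedness is preserved by limits; and $\R\Contra_\sep$ is cocomplete, with colimits obtained by applying $\Lambda_\R$ to the corresponding colimits in $\R\Contra$.

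The central technical step, which I expect to be the main obstacle, is to verify that the inclusion $\iota$ preserves $\kappa$-directed colimits for every uncountable regular cardinal $\kappa$. Equivalently, a $\kappa$-directed colimit of separated $\R$-contramodules, computed in $\R\Contra$, remains separated. The countability of the topology base of $\R$ is essential here: separatedness is a condition indexed by the countably many open ideals $\fI_n$, and in a $\kappa$-directed diagram with $\kappa \geq \aleph_1$ every countable subset of indices admits an upper bound. The argument would reduce the condition $\bigcap_n (\fI_n \tim \varinjlim_i \fM_i) = 0$ to the same condition on individual terms $\fM_i$, with the key technical input being the identity $\fM/(\fI_n \tim \fM) \simeq (\R/\fI_n) \ocn_\R \fM$ together with the fact that contratensor products preserve colimits in the contramodule variable.

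Granting this preservation property, the rest is formal. By a standard adjoint-functor argument, since $\Lambda_\R$ is left adjoint to a functor preserving $\kappa$-directed colimits, $\Lambda_\R$ carries $\kappa$-presentable objects of $\R\Contra$ to $\kappa$-presentable objects of $\R\Contra_\sep$; this gives the ``if'' direction of the characterization. For the generating property, any $\fM \in \R\Contra_\sep$ satisfies $\fM \simeq \Lambda_\R(\iota \fM)$ because $\iota$ is fully faithful. Writing $\iota\fM \simeq \varinjlim_i^{\R\Contra} \fP_i$ as a $\kappa$-directed colimit of $\kappa$-presentable contramodules via Proposition~\ref{presentable-contramodules-described}(a), and applying the colimit-preserving left adjoint $\Lambda_\R$, yields $\fM \simeq \mathrm{colim}_i^{\R\Contra_\sep} \Lambda_\R(\fP_i)$. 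Combined with cocompleteness, this establishes local $\kappa$-presentability.

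Finally, for the ``only if'' direction: a $\kappa$-presentable $\fM \in \R\Contra_\sep$ is a retract of some $\Lambda_\R(\fP_{i_0})$, since the identity on $\fM$ factors through a single term of the colimit just constructed. A retract-lifting argument — splitting the resulting idempotent on $\Lambda_\R(\fP_{i_0})$ and transferring it along the surjective adjunction unit $\lambda_{\R,\fP_{i_0}}\colon \fP_{i_0} \twoheadrightarrow \iota\Lambda_\R(\fP_{i_0})$, which is surjective by Lemma~\ref{all-contramodules-complete} — produces a $\kappa$-presentable $\fP \in \R\Contra$ with $\Lambda_\R(\fP) \simeq \fM$, completing the proof.
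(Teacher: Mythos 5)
Your overall strategy coincides with the paper's: show that the inclusion $\iota\:\R\Contra_\sep\rarrow\R\Contra$ preserves $\kappa$\+directed colimits (via the countable base, the commutation of $\fI\tim{-}$ with $\aleph_1$\+directed colimits, and the commutation of countable limits with $\aleph_1$\+directed colimits of abelian groups), deduce that the left adjoint $\Lambda_\R$ preserves all colimits and takes $\kappa$\+presentable objects to $\kappa$\+presentable objects, and obtain local $\kappa$\+presentability and the ``if'' direction by applying $\Lambda_\R$ to the $\kappa$\+directed presentations in $\R\Contra$. All of this is correct and matches the paper's proof.

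The one step that does not work as literally stated is the final ``retract-lifting argument.'' An idempotent $e$ on $\Lambda_\R(\fP_{i_0})$ cannot in general be transferred along the surjective adjunction unit $\lambda_{\R,\fP_{i_0}}\:\fP_{i_0}\rarrow\Lambda_\R(\fP_{i_0})$ to an idempotent on $\fP_{i_0}$: idempotents do not lift along arbitrary epimorphisms, and $\fP_{i_0}$ is only $\kappa$\+presentable, not projective. The repair (which is what the paper actually does, phrased as closure of the class $\{\Lambda_\R(\fP):\fP\in\R\Contra_{<\kappa}\}$ under cokernels, hence under direct summands) is to write $\fM=\coker(1-e)$, choose a surjection $q\:\fF\rarrow\fP_{i_0}$ from a free contramodule with fewer than~$\kappa$ generators, use the projectivity of $\fF$ to lift the composite $(1-e)\circ\lambda_{\R,\fP_{i_0}}\circ q$ over the surjective unit to a morphism $f\:\fF\rarrow\fP_{i_0}$, and set $\fP=\coker(f)$, which is $\kappa$\+presentable in $\R\Contra$. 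Since $\Lambda_\R$ preserves cokernels and both $q$ and the unit are epimorphisms, one gets $\Lambda_\R(\fP)\simeq\Lambda_\R(\coker(1-e))\simeq\fM$, the last isomorphism because $\fM$ is separated (as a subobject of a separated contramodule) and complete (as every contramodule is), so $\lambda_{\R,\fM}$ is both monic and epic in the abelian category $\R\Contra$. With this substitution your argument is complete and agrees with the paper's.
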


\begin{proof}
 One needs to observe that the full subcategory $\R\Contra_\sep
\subset\R\Contra$ is closed under $\aleph_1$\+directed colimits.
 Indeed, for any closed subgroup $\fA\subset\R$, the functor
$\fP\longmapsto\fA\tim\fP$ commutes with $\aleph_1$\+directed colimits
(since the functor $X\longmapsto\fA[[X]]\:\Sets\rarrow\Sets$ does),
and countable directed limits also commute with $\aleph_1$\+directed
colimits.
 Consequently, the inclusion functor $\R\Contra_\sep\rarrow\R\Contra$
preserves $\kappa$\+directed colimits, and it follows that the left
adjoint functor $\Lambda_\R\:\R\Contra\rarrow\R\Contra_\sep$ takes
$\kappa$\+presentable objects to $\kappa$\+presentable objects.
 This proves the ``if'' assertion.

 Furthermore, the functor $\Lambda_\R$, being a left adjoint, preserves
all colimits.
 Hence all colimits exist in $\R\Contra_\sep$ and can be constructed
by applying $\Lambda_\R$ to the colimits in $\R\Contra$.
 Since all objects of $\R\Contra$ are $\kappa$\+directed colimits
of $\kappa$\+presentable objects by
Proposition~\ref{presentable-contramodules-described}(a), so are
all the objects of $\R\Contra_\sep$.
 Thus the category $\R\Contra_\sep$ is locally $\kappa$\+presentable.

 To deduce the assertion ``only if'', we notice that all the objects of
$\R\Contra_\sep$ are $\kappa$\+directed colimits of separated
contramodules of the desired form $\fM=\Lambda_\R(\fP)$ with
$\fP\in\R\Contra_{<\kappa}$, as we have just shown.
 It remains to check that the class of all separated contramodules of
the desired form $\fM=\Lambda_\R(\fP)$ is closed under direct summands.
 Let us prove this class is closed under cokernels in $\R\Contra_\sep$.

 Suppose given a morphism $g\:\fM\rarrow\fN$, where
$\fM=\Lambda_\R(\fP)$, \,$\fN=\Lambda_\R(\fQ)$, and $\fP$,
$\fQ\in\R\Contra_{<\kappa}$.
 Then the natural (adjunction) morphisms $\fP\rarrow\fM$ and
$\fQ\rarrow\fN$ are surjective.
 Let $\fF\rarrow\fP$ be a surjective morphism onto $\fP$ from
a free left $\R$\+contramodule with less than~$\kappa$ generators.
 Then the composition $\fF\rarrow\fP\rarrow\fM\rarrow\fN$ can be lifted
to a morphism $f\:\fF\rarrow\fQ$.
 The cokernel of the morphism~$g$ in $\R\Contra_\sep$ can be obtained
by applying $\Lambda_\R$ to the cokernel of the morphism~$f$ in
$\R\Contra$.
 The latter cokernel is $\kappa$\+presentable in $\R\Contra$.
\end{proof}

\begin{lem} \label{presentable-flat-contramodules-lemma}
 Let\/ $\R$ be a complete, separated topological ring with
a \emph{countable} base of neighborhoods of zero consisting of open
right ideals, and let $\kappa$~be an uncountable regular cardinal.
 Then a \emph{flat} left\/ $\R$\+contramodule is $\kappa$\+presentable
as an object of the abelian category\/ $\R\Contra$ if and only if
it is $\kappa$\+presentable as an object of the category\/
$\R\Contra_\sep$.
\end{lem}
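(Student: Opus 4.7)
The plan is to use the two earlier results just proved, namely Proposition~\ref{presentable-separated-contramodules-described} together with Lemma~\ref{max-separated-quotient-is-flat-implies-coincides}, and to notice that the inclusion functor $\R\Contra_\sep\rarrow\R\Contra$ is very well-behaved with respect to $\kappa$\+directed colimits for uncountable~$\kappa$.

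For the ``only if'' direction, suppose $\fF$ is flat and $\kappa$\+presentable in $\R\Contra$. By Lemma~\ref{flat-contramodules-separated}, $\fF\in\R\Contra_\sep$. To test $\kappa$\+presentability in $\R\Contra_\sep$, take any $\kappa$\+directed diagram $(\fM_\xi)_{\xi\in\Xi}$ in $\R\Contra_\sep$ with colimit $\fM$ in $\R\Contra_\sep$. The key point, already established inside the proof of Proposition~\ref{presentable-separated-contramodules-described}, is that $\R\Contra_\sep$ is closed under $\aleph_1$\+directed colimits in $\R\Contra$; since $\kappa$ is uncountable, the colimit $\fM$ computed in $\R\Contra_\sep$ agrees with the colimit in $\R\Contra$. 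As $\R\Contra_\sep\subset\R\Contra$ is a full subcategory, $\kappa$\+presentability of $\fF$ in $\R\Contra$ now yields the required isomorphism $\Hom^\R(\fF,\fM)\simeq\varinjlim_\xi\Hom^\R(\fF,\fM_\xi)$.

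For the ``if'' direction, suppose $\fF$ is flat and $\kappa$\+presentable as an object of $\R\Contra_\sep$. By Proposition~\ref{presentable-separated-contramodules-described} one can write $\fF=\Lambda_\R(\fP)$ for some $\kappa$\+presentable object $\fP\in\R\Contra$. Since $\Lambda_\R(\fP)=\fF$ is flat, Lemma~\ref{max-separated-quotient-is-flat-implies-coincides} tells us that $\fP$ itself is flat and separated and that the adjunction unit $\lambda_{\R,\fP}\:\fP\rarrow\Lambda_\R(\fP)=\fF$ is an isomorphism. Hence $\fF\cong\fP$ is $\kappa$\+presentable as an object of $\R\Contra$.

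There is essentially no obstacle here: both directions reduce immediately to results already established in this section. If anything, the one point worth double-checking is that the preservation of $\kappa$\+directed colimits by the inclusion $\R\Contra_\sep\rarrow\R\Contra$ used in the forward direction really does hold for every uncountable regular~$\kappa$ (and not only for $\kappa=\aleph_1$); but this is visible from the argument in the proof of Proposition~\ref{presentable-separated-contramodules-described}, where the decisive input is merely that countable directed limits commute with $\aleph_1$\+directed (hence $\kappa$\+directed) colimits of abelian groups.
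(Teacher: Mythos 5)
Your proof is correct and follows essentially the same route as the paper: the ``if'' direction is identical (Proposition~\ref{presentable-separated-contramodules-described} plus Lemma~\ref{max-separated-quotient-is-flat-implies-coincides}), and your ``only if'' direction rests on the same key fact --- closure of $\R\Contra_\sep$ under $\aleph_1$\+directed, hence $\kappa$\+directed, colimits in $\R\Contra$ --- which the paper merely repackages as the statement that $\Lambda_\R$ preserves $\kappa$\+presentable objects. Your direct use of fullness and closure under colimits, together with Lemma~\ref{flat-contramodules-separated}, is a perfectly sound (and arguably slightly more transparent) phrasing of the same argument.
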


\begin{proof}
 The ``only if'' assertion holds, since the functor $\Lambda_\R\:
\R\Contra\rarrow\R\Contra_\sep$ takes $\kappa$\+presentable objects to
$\kappa$\+presentable objects, as explained in the proof of
Proposition~\ref{presentable-separated-contramodules-described},
and flat contramodules are separated by
Lemma~\ref{flat-contramodules-separated}.
{\emergencystretch=1em\par}

 The proof of the ``if'' is based on
Lemma~\ref{max-separated-quotient-is-flat-implies-coincides}
and the ``only if'' assertion of
Proposition~\ref{presentable-separated-contramodules-described}.
 If a left $\R$\+contramodule $\fF$ is
$\kappa$\+presentable in $\R\Contra_\sep$, then
Proposition~\ref{presentable-separated-contramodules-described} tells
us that $\fF=\Lambda_\R(\fP)$, where $\fP$ is $\kappa$\+presentable
in $\R\Contra$.
 Now if $\fF$ is flat, then $\fP\simeq\fF$ by
Lemma~\ref{max-separated-quotient-is-flat-implies-coincides}.
\end{proof}

\begin{lem} \label{prsntable-separated-and-flat-over-base-of-two-sided}
 Let\/ $\R$ be a complete, separated topological ring, and let\/
$\R\supset\fI_1\supset\fI_2\supset\fI_3\supset\dotsb$ be a countable
base of neighborhoods of zero in\/ $\R$ consisting of open
\emph{two-sided} ideals\/ $\fI_n\subset\R$, \,$n\ge1$.
 Let $\kappa$~be an uncountable regular cardinal.
 Then a separated\/ $\R$\+contramodule\/ $\fM$ is
$\kappa$\+presentable as an object of\/ $\R\Contra_\sep$ if and only if
the\/ $\R/\fI_n$\+module $M_n=\fM/(\fI_n\tim\fM)$ is\/
$\kappa$\+presentable for every $n\ge1$.
 Consequently, a flat\/ $\R$\+contramodule\/ $\fF$ is
$\kappa$\+presentable as an object of\/ $\R\Contra$ (or of\/
$\R\Contra_\sep$) if and only if the flat\/ $\R/\fI_n$\+module
$F_n=\fF/(\fI_n\tim\fF)$ is $\kappa$\+presentable for every $n\ge1$.
\end{lem}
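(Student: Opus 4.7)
The plan is to exploit the equivalence of categories between $\R\Contra_\sep$ and reduction data provided by Lemma~\ref{separated-and-flat-contramodules-by-reduction-data}(a). Under this equivalence, a morphism $\fM\rarrow\fN$ is exactly a compatible family of $\R/\fI_n$\+module morphisms $M_n\rarrow N_n$, so
$$
 \Hom_{\R\Contra_\sep}(\fM,\fN)\,\simeq\,\varprojlim\nolimits_n
 \Hom_{\R/\fI_n}(M_n,N_n),
$$
and this formula will do most of the work.

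First I would handle the ``only if'' direction using
Proposition~\ref{presentable-separated-contramodules-described}. If $\fM$ is $\kappa$\+presentable in $\R\Contra_\sep$, then $\fM\simeq\Lambda_\R(\fP)$ where $\fP$ is the cokernel of a morphism $\R[[Y]]\rarrow\R[[X]]$ with $|X|,|Y|<\kappa$. The key observation is that the functor $(\R/\fI_n)\ocn_\R{-}$ is invariant under $\Lambda_\R$: the kernel of $\fP\rarrow\Lambda_\R(\fP)$ equals $\bigcap_\fI(\fI\tim\fP)$ and hence lies in $\fI_n\tim\fP$, so it maps to zero in the contratensor product. Therefore $M_n\simeq(\R/\fI_n)\ocn_\R\fP$ is the cokernel of $(\R/\fI_n)[Y]\rarrow(\R/\fI_n)[X]$ with $|X|,|Y|<\kappa$, which is $\kappa$\+presentable.

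For the ``if'' direction, suppose each $M_n$ is $\kappa$\+presentable over $\R/\fI_n$, and let $\fN=\varinjlim_\xi\fN_\xi$ be a $\kappa$\+directed colimit in $\R\Contra_\sep$. I will first show that $N_n\simeq\varinjlim_\xi N_{\xi,n}$ as $\R/\fI_n$\+modules. This uses that $\Lambda_\R$ is a left adjoint (so it commutes with colimits), together with the invariance of $(\R/\fI_n)\ocn_\R{-}$ under $\Lambda_\R$ from the previous step, and the fact that $(\R/\fI_n)\ocn_\R{-}$ preserves colimits. Plugging this into the Hom formula and using $\kappa$\+presentability of each $M_n$ inside the $\varprojlim_n$, I obtain
$$
 \Hom_{\R\Contra_\sep}(\fM,\varinjlim\nolimits_\xi\fN_\xi)\,\simeq\,
 \varprojlim\nolimits_n\varinjlim\nolimits_\xi\Hom_{\R/\fI_n}(M_n,N_{\xi,n}).
$$
Since $\kappa$ is uncountable, $\kappa$\+directed colimits commute with countable projective limits in $\Sets$, so this rearranges to $\varinjlim_\xi\Hom_{\R\Contra_\sep}(\fM,\fN_\xi)$, establishing $\kappa$\+presentability of $\fM$ in $\R\Contra_\sep$.

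The ``consequently'' assertion for a flat contramodule $\fF$ is then immediate from the first statement combined with Lemma~\ref{presentable-flat-contramodules-lemma}, which identifies $\kappa$\+presentability of $\fF$ in $\R\Contra$ with $\kappa$\+presentability in $\R\Contra_\sep$. The main delicacy I anticipate is the careful bookkeeping of the interaction between $\Lambda_\R$, contratensor products, and colimits taken in $\R\Contra_\sep$ as opposed to $\R\Contra$, but everything hinges on the single technical fact $(\R/\fI)\ocn_\R\fP\simeq(\R/\fI)\ocn_\R\Lambda_\R(\fP)$, which handles both directions uniformly.
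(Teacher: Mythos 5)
Your proposal is correct and follows essentially the same route as the paper's (much terser) proof: the ``only if'' via Proposition~\ref{presentable-separated-contramodules-described} combined with the invariance of $(\R/\fI_n)\ocn_\R{-}$ under $\Lambda_\R$, the ``if'' via the reduction-data description of Lemma~\ref{separated-and-flat-contramodules-by-reduction-data}(a) and the commutation of $\kappa$\+directed colimits with countable limits, and the flat case via Lemma~\ref{presentable-flat-contramodules-lemma}. Your write-up in fact supplies useful details (the $\varprojlim_n\Hom_{\R/\fI_n}$ formula and the identification $N_n\simeq\varinjlim_\xi N_{\xi,n}$) that the paper leaves implicit.
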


\begin{proof}
 The first ``only if'' assertion follows easily from the ``only if''
assertion of
Proposition~\ref{presentable-separated-contramodules-described}.
 To prove the first ``if'', use the description of the category
$\R\Contra_\sep$ provided by
Lemma~\ref{separated-and-flat-contramodules-by-reduction-data}(a)
together with the fact that $\aleph_1$\+directed colimits commute with
countable limits in the category of abelian groups.
 Then the remaining assertions (concerning flat contramodules) follow
from Lemma~\ref{presentable-flat-contramodules-lemma}.
\end{proof}

\begin{lem} \label{projective-dimension-of-flat-contramodule}
 Let\/ $\R$ be a complete, separated topological ring, and let\/
$\R\supset\fI_1\supset\fI_2\supset\fI_3\supset\dotsb$ be a countable
base of neighborhoods of zero in\/ $\R$ consisting of open
\emph{two-sided} ideals\/ $\fI_n\subset\R$, \,$n\ge1$.
 Then the projective dimension of a flat\/ $\R$\+contramodule\/ $\fF$
(as an object of\/ $\R\Contra$) is equal to the supremum of
the projective dimensions of the flat\/ $\R/\fI_n$\+modules
$F_n=\fF/(\fI_n\tim\fF)$ (as objects of\/ $\R/\fI_n\Modl$), \,$n\ge1$.
\end{lem}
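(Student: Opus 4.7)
The plan is to prove the claimed equality by establishing the two inequalities separately, with Lemma~\ref{flat-contramodules-well-behaved}(b) and Lemma~\ref{projective-contramodules-by-reduction-data} as the main ingredients. The key observation is that although the reduction functor $(\R/\fI_n)\ocn_\R{-}$ is not exact in general, it is exact on short exact sequences of flat $\R$\+contramodules, and this is precisely the situation in which I will apply it.

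First I would show that the projective dimension of each $F_n$ is bounded above by the projective dimension of $\fF$. Assuming $\fF$ admits a projective resolution $0\to\fP_d\to\dotsb\to\fP_0\to\fF\to0$ of length~$d$ in $\R\Contra$, the syzygies are all flat $\R$\+contramodules: projective contramodules are flat, and the class of flat contramodules is closed under kernels of epimorphisms by Lemma~\ref{flat-contramodules-well-behaved}(a). Splitting the resolution into short exact sequences of flat contramodules and applying $(\R/\fI_n)\ocn_\R{-}$, Lemma~\ref{flat-contramodules-well-behaved}(b) yields an exact sequence $0\to F_{n,d}\to\dotsb\to F_{n,0}\to F_n\to 0$ in $\R/\fI_n\Modl$. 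Each $F_{n,i}=\fP_i/(\fI_n\tim\fP_i)$ is a projective $\R/\fI_n$\+module by Lemma~\ref{projective-contramodules-by-reduction-data}, so $F_n$ has projective dimension at most~$d$, giving one inequality.

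For the reverse inequality, let $s$ denote the supremum on the right-hand side, and assume $s<\infty$ (otherwise there is nothing to prove). I would choose any projective resolution $\dotsb\to\fP_1\to\fP_0\to\fF\to0$ in $\R\Contra$ and let $\fK$ denote the $s$-th syzygy. By the flatness of $\fF$ together with the closure properties of flat contramodules recalled above, $\fK$ is a flat $\R$\+contramodule. Applying $(\R/\fI_n)\ocn_\R{-}$ to this flat resolution as before, we find that $K_n=\fK/(\fI_n\tim\fK)$ appears as the $s$-th syzygy in a projective resolution of $F_n$ over $\R/\fI_n$; since $F_n$ has projective dimension at most~$s$, the module $K_n$ is projective. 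Lemma~\ref{projective-contramodules-by-reduction-data} then forces $\fK$ itself to be a projective $\R$\+contramodule, and truncating at $\fK$ exhibits a projective resolution of $\fF$ of length at most~$s$.

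There is no genuine obstacle here once the two cited lemmas are in hand; the argument is a symmetric descent/ascent along the reduction functor. The only point that requires care is that $(\R/\fI_n)\ocn_\R{-}$ is applied only to short exact sequences whose three terms are all flat contramodules, so that Lemma~\ref{flat-contramodules-well-behaved}(b) genuinely applies and the resulting sequence of $\R/\fI_n$\+modules remains exact.
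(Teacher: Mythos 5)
Your proposal is correct and follows essentially the same route as the paper's proof: both pass a projective resolution of $\fF$ through the reduction functors $(\R/\fI_n)\ocn_\R{-}$ using the flatness of the syzygies and Lemma~\ref{flat-contramodules-well-behaved}(b), and then read off projectivity of the relevant syzygy contramodule from Lemma~\ref{projective-contramodules-by-reduction-data}. The paper merely phrases the two inequalities in one stroke rather than separately.
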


\begin{proof}
 Let $\fP_\bu\rarrow\fF$ be a projective resolution of $\fF$ in
$\R\Contra$ and $\fF_i$ be the image of the differential
$\fP_i\rarrow\fP_{i-1}$ (so $\fF=\fF_0$).
 Applying Lemma~\ref{flat-contramodules-well-behaved}(a) iteratively
and keeping in mind that projective contramodules are flat, one shows
that the $\R$\+contramodules $\fF_i$ are flat for all $i\ge0$.
 Now Lemma~\ref{flat-contramodules-well-behaved}(b) tells us that
the functors $\fQ\longmapsto(\R/\fI_n)\ocn_\R\fQ=\fQ/(\fI_n\tim\fQ)$
preserve exactness of the short exact sequences $0\rarrow\fF_i
\rarrow\fP_i\rarrow\fF_{i-1}\rarrow0$.
 Therefore, $(\R/\fI_n)\ocn_\R\fP_\bu$ is a projective resolution of
the $\R/\fI_n$\+module $F_n=(\R/\fI_n)\ocn_\R\fF$, and
the $\R/\fI_n$\+module $(\R/\fI_n)\ocn_\R\fF_i$ is the image of
the differential $(\R/\fI_n)\ocn_\R\fP_i\rarrow(\R/\fI_n)
\ocn_\R\fP_{i-1}$.
 It remains to use the characterization of projective
$\R$\+contramodules provided by
Lemma~\ref{projective-contramodules-by-reduction-data} in order to
deduce the desired assertion.
\end{proof}

\begin{cor} \label{count-pres-flat-contra-projdim1-cor}
 Let\/ $\R$ be a complete, separated topological ring with
a \emph{countable} base of neighborhoods of zero consisting of open
\emph{two-sided} ideals.
 Then any countably presentable flat\/ $\R$\+contramodule has projective
dimension at most\/~$1$ in the abelian category\/ $\R\Contra$.
\end{cor}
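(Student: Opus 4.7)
The proof is a short assembly of three preceding lemmas, with essentially no real obstacle: the corollary is almost a formal consequence of what has been built up.

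The plan is as follows. Let $\fF$ be a countably presentable (i.e., $\aleph_1$-presentable) flat left $\R$-contramodule, and for each $n \ge 1$ set $F_n = \fF/(\fI_n \tim \fF) = (\R/\fI_n) \ocn_\R \fF$. First, I would apply Lemma~\ref{prsntable-separated-and-flat-over-base-of-two-sided} with $\kappa = \aleph_1$: this tells us that each $F_n$ is a countably presentable left $\R/\fI_n$-module. Since $\fF$ is flat, each $F_n$ is additionally a flat $\R/\fI_n$-module.

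Next, Lemma~\ref{count-pres-flat-mods} (the module-theoretic fact that every countably presentable flat module has projective dimension $\le 1$) applied to each $\R/\fI_n$-module $F_n$ yields that the projective dimension of $F_n$ in $\R/\fI_n\Modl$ is at most $1$, uniformly in $n$.

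Finally, I would invoke Lemma~\ref{projective-dimension-of-flat-contramodule}, which identifies the projective dimension of a flat $\R$-contramodule $\fF$ in $\R\Contra$ with $\sup_{n\ge 1}$ of the projective dimensions of the reductions $F_n$ in $\R/\fI_n\Modl$. Combined with the uniform bound from the previous step, this supremum is at most $1$, so the projective dimension of $\fF$ in $\R\Contra$ is at most $1$, as claimed. There is no real obstacle here; the work was already carried out in the three lemmas being chained together, and the only thing to check is that the hypotheses (in particular, the countable base of two-sided ideals, which is needed for both Lemma~\ref{prsntable-separated-and-flat-over-base-of-two-sided} and Lemma~\ref{projective-dimension-of-flat-contramodule}) match the hypotheses of the corollary.
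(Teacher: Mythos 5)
Your proposal is correct and matches the paper's own proof exactly: the paper likewise combines Lemma~\ref{count-pres-flat-mods}, Lemma~\ref{prsntable-separated-and-flat-over-base-of-two-sided} (for flat contramodules and $\kappa=\aleph_1$), and Lemma~\ref{projective-dimension-of-flat-contramodule} in precisely this way. Nothing is missing.
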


\begin{proof}
 Recall that all countably presentable flat modules (over a discrete
ring~$R$) have projective dimension at most~$1$ in $R\Modl$, by
Lemma~\ref{count-pres-flat-mods}.
 Then it remains to compare
Lemma~\ref{prsntable-separated-and-flat-over-base-of-two-sided}
(for flat contramodules and $\kappa=\aleph_1$) with
Lemma~\ref{projective-dimension-of-flat-contramodule}.
\end{proof}

\begin{qst} \label{count-pres-flat-contra-projdim-base-of-right-qst}
 One would expect the assertion of
Corollary~\ref{count-pres-flat-contra-projdim1-cor} to hold for left
contramodules over any complete, separated topological ring $\R$ with
a countable base of neighborhoods of zero consisting of open
\emph{right} ideals.
 But we do not know whether this is true.
 In fact, we don't even know the answer to
Question~\ref{contramodule-flat-coherence-base-of-right-qst} below,
which would seem to be easier.
\end{qst}

\Section{Flat Contramodules as Directed Colimits}

 The following theorem describing flat $\R$\+contramodules is the second
main result of this paper.
 It should be compared with the discussion of the conventional finite,
rather than countable Govorov--Lazard theorem for flat contramodules
in the paper~\cite[Section~10 and Example~13.4]{PPT}.
 The counterexamples in~\cite{PPT} show that the finite Govorov--Lazard
theorem fails for flat contramodules over topological rings
\emph{without} a countable base of neighborhoods of zero
(it is still an open question whether it holds under the assumption of
a countable base).
 Theorem~\ref{flat-contramodules-as-directed-colimits-theorem} is also
a contramodule analogue of~\cite[Theorems~2.4 and~3.5]{PS6}
and Theorem~\ref{flat-comodules-as-directed-colimits-theorem} above.

 We refer to Lemmas~\ref{presentable-flat-contramodules-lemma}
and~\ref{prsntable-separated-and-flat-over-base-of-two-sided} for
an explication of what is meant by a countably presentable flat
$\R$\+contramodule in
Theorem~\ref{flat-contramodules-as-directed-colimits-theorem}
(i.~e., countably presentable in $\R\Contra$ or $\R\Contra_\sep$, as
described in Propositions~\ref{presentable-contramodules-described}(a)
and~\ref{presentable-separated-contramodules-described}).

\begin{thm} \label{flat-contramodules-as-directed-colimits-theorem}
 Let\/ $\R$ be a complete, separated topological ring with
a \emph{countable} base of neighborhoods of zero consisting of open
\emph{two-sided} ideals.
 Then the category\/ $\R\Contra_\flat$ of flat left\/
$\R$\+contramodules is\/ $\aleph_1$\+accessible.
 The\/ $\aleph_1$\+presentable objects of\/ $\R\Contra_\flat$ are
precisely all the countably presentable flat\/ $\R$\+contramodules.
 Consequently, every flat\/ $\R$\+contramodule is
an\/ $\aleph_1$\+directed colimit of countably presentable flat\/
$\R$\+contramodules.
\end{thm}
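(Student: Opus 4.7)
The plan is to proceed in analogy with Theorem~\ref{flat-comodules-as-directed-colimits-theorem}, but to exploit the reduction-data description of flat contramodules from Lemma~\ref{separated-and-flat-contramodules-by-reduction-data}(b) so as to work entirely with module categories and apply Theorem~\ref{isomorpher-theorem}. Fix a countable base $\R\supset\fI_1\supset\fI_2\supset\dotsb$ of open two-sided ideals and set $A_n=\R/\fI_n$. The cited lemma identifies $\R\Contra_\flat$ with the category of systems $((F_n),(\theta_n))_{n\ge1}$ in which $F_n$ is a flat left $A_n$-module and $\theta_n\:F_n\xrightarrow{\sim}A_n\ot_{A_{n+1}}F_{n+1}$ is an $A_n$-module isomorphism.

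Next I would take $\sK=\prod_{n\ge1}A_n\Modl_\flat$ and $\sL=\prod_{n\ge1}A_n\Modl$. Because the index set is countable (hence of cardinality $<\aleph_1$), Proposition~\ref{product-proposition} combined with Lemma~\ref{flat-modules-accessible} and the local finite presentability of each $A_n\Modl$ shows that both $\sK$ and $\sL$ are $\aleph_1$-accessible, with $\aleph_1$-presentable objects being exactly the families of countably presentable (flat) $A_n$-modules. Define parallel functors $F_1$, $F_2\:\sK\rightrightarrows\sL$ by $F_1((F_n)_n)=(F_n)_n$ and $F_2((F_n)_n)=(A_n\ot_{A_{n+1}}F_{n+1})_n$. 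Both functors preserve all colimits, and both send $\aleph_1$-presentable systems to $\aleph_1$-presentable systems; for $F_2$ this is because base change along the surjection $A_{n+1}\to A_n$ takes a countable $A_{n+1}$-presentation of $F_{n+1}$ to a countable $A_n$-presentation of $A_n\ot_{A_{n+1}}F_{n+1}$. Theorem~\ref{isomorpher-theorem} with $\kappa=\aleph_1$ and $\lambda=\aleph_0$ then asserts that the isomorpher of $(F_1,F_2)$ is $\aleph_1$-accessible, and that its $\aleph_1$-presentable objects are precisely those pairs $((F_n),(\theta_n))$ with each $F_n$ countably presentable.

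Under the equivalence of Lemma~\ref{separated-and-flat-contramodules-by-reduction-data}(b), this isomorpher is $\R\Contra_\flat$, yielding the claimed $\aleph_1$-accessibility. To align the two descriptions of $\aleph_1$-presentable objects, I would invoke Lemma~\ref{prsntable-separated-and-flat-over-base-of-two-sided}, which says that a flat $\R$-contramodule $\fF$ is countably presentable in $\R\Contra$ if and only if each $F_n=\fF/(\fI_n\tim\fF)$ is countably presentable; moreover, since flat $\R$-contramodules are closed under directed colimits in $\R\Contra$, $\aleph_1$-presentability inside $\R\Contra_\flat$ coincides with $\aleph_1$-presentability in $\R\Contra$ for flat objects. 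The final directed-colimit statement is then immediate from $\aleph_1$-accessibility. The step requiring most care is checking that directed colimits in the isomorpher correspond to directed colimits computed inside $\R\Contra$ under the reduction-data equivalence, which comes down to the fact that $(\R/\fI_n)\ocn_\R{-}$ preserves colimits, together with the explicit base-change verification that $F_2$ preserves $\aleph_1$-presentability, since the isomorpher theorem (unlike the inserter theorem) imposes this condition on \emph{both} parallel functors.
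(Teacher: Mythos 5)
Your proposal is correct and follows essentially the same route as the paper's own proof: the reduction-data description of Lemma~\ref{separated-and-flat-contramodules-by-reduction-data}(b) realizes $\R\Contra_\flat$ as the isomorpher of the base-change functor against the identity on $\prod_{n\ge1}\R/\fI_n\Modl_\flat$, and Theorem~\ref{isomorpher-theorem} together with Proposition~\ref{product-proposition}, Lemma~\ref{flat-modules-accessible}, and Lemma~\ref{prsntable-separated-and-flat-over-base-of-two-sided} yields the accessibility and the identification of the $\aleph_1$-presentable objects. Your choice of $\sL=\prod_n A_n\Modl$ rather than $\prod_n A_n\Modl_\flat$ and the swap of the roles of $F_1$ and $F_2$ are immaterial, and your closing remarks correctly identify the two points needing care (both parallel functors must preserve $\aleph_1$-presentables, and the equivalence must be checked to commute with directed colimits via the colimit-preservation of $(\R/\fI_n)\ocn_\R{-}$).
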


\begin{proof}
 The full subcategory $\R\Contra_\flat$ is closed under directed
colimits in $\R\Contra$.
 Therefore, all countably presentable flat $\R$\+contramodules are
$\aleph_1$\+pre\-sentable in $\R\Contra_\flat$.
{\hbadness=1400\par}

 The full assertion of the theorem is obtained by applying
Theorem~\ref{isomorpher-theorem} for $\kappa=\aleph_1$ and
$\lambda=\aleph_0$.
 The argument is based on the description of the category of
flat $\R$\+contramodules provided by
Lemma~\ref{separated-and-flat-contramodules-by-reduction-data}(b)
and the description of countably presentable separated/flat
$\R$\+countramodules provided by
Lemma~\ref{prsntable-separated-and-flat-over-base-of-two-sided}.

 Let $\R\supset\fI_1\supset\fI_2\supset\fI_3\supset\dotsb$ be
a countable base of neighborhoods of zero in $\R$ consisting of open
two-sided ideals.
 Let $\sK=\sL$ be the Cartesian product of the categories of flat left
modules over the rings $\R/\fI_n$, taken over the integers $n\ge1$;
so $\sK=\sL=\prod_{n=1}^\infty(\R/\fI_n\Modl_\flat)$.
 Consider the following pair of parallel functors $F_1$, $F_2\:
\sK\rightrightarrows\sL$.
 The functor $F_1$ takes a collection of flat modules
$(K_n\in\R/\fI_n\Modl_\flat)_{n\ge1}$ to the collection of
flat modules $L_n=\R/\fI_n\ot_{\R/\fI_{n+1}}K_{n+1}$.
 The functor $F_2$ is the identity functor.

 Then Lemma~\ref{separated-and-flat-contramodules-by-reduction-data}(b)
tells us that the isomorpher category $\sC$ is equivalent to
the category of flat left $\R$\+contramodules $\R\Contra_\flat$.
 The categories $\R/\fI_n\Modl_\flat$ are $\aleph_1$\+accessible
by Lemma~\ref{flat-modules-accessible}, and their Cartesian product
$\prod_{n=1}^\infty(\R/\fI_n\Modl_\flat)$ is $\aleph_1$\+accessible
by Proposition~\ref{product-proposition}.
 Theorem~\ref{isomorpher-theorem} is applicable, and it tells us that
the category $\sC$ is $\aleph_1$\+accessible.
 Comparing the description of the $\aleph_1$\+presentable objects of
$\sC$ provided by Theorem~\ref{isomorpher-theorem} with the description
of the $\aleph_1$\+presentable flat $\R$\+contramodules provided by
Lemma~\ref{prsntable-separated-and-flat-over-base-of-two-sided} one 
can see that these two classes of objects coincide, as desired.
\end{proof}

 Let us also provide a description of arbitrary complexes of flat
$\R$\+contramodules as directed colimits.

\begin{prop} \label{complexes-of-flat-contramodules-prop}
 Let\/ $\R$ be a complete, separated topological ring with
a \emph{countable} base of neighborhoods of zero consisting of open
\emph{two-sided} ideals.
 Then the category\/ $\Com(\R\Contra_\flat)$ of complexes of flat
left\/ $\R$\+contramodules is\/ $\aleph_1$\+accessible.
 The\/ $\aleph_1$\+presentable objects of\/ $\Com(\R\Contra_\flat)$ are
precisely all the complexes of countably presentable flat\/
$\R$\+contramodules.
 Consequently, every complex of flat\/ $\R$\+contramodules is
an\/ $\aleph_1$\+directed colimit of complexes of countably
presentable flat\/ $\R$\+contramodules.
\end{prop}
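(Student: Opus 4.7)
The plan is to mimic the argument used for Proposition~\ref{complexes-of-flat-comodules-prop} in the comodule setting, combining the main Theorem~\ref{flat-contramodules-as-directed-colimits-theorem} on flat contramodules with the diagram-category Theorem~\ref{diagram-theorem} applied to a small $\aleph_1$-presented $\boZ$-linear category $D$ that encodes complexes in an additive category (the construction referenced in \cite[proof of Corollary~10.4]{Pacc}).

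First I would verify the easier ``if'' direction, namely that any complex $\fF^\bu$ whose terms $\fF^n$ are countably presentable flat $\R$-contramodules is $\aleph_1$-presentable as an object of $\Com(\R\Contra_\flat)$. By Proposition~\ref{presentable-contramodules-described}(b), such a complex is $\aleph_1$-presentable in the larger category $\Com(\R\Contra)$. Since the full subcategory $\R\Contra_\flat\subset\R\Contra$ is closed under directed colimits, the full subcategory $\Com(\R\Contra_\flat)\subset\Com(\R\Contra)$ is closed under directed colimits as well, so $\aleph_1$-presentability transfers from $\Com(\R\Contra)$ to $\Com(\R\Contra_\flat)$.

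For the main assertions, I would apply Theorem~\ref{diagram-theorem} with $\kappa=\aleph_1$, $\lambda=\aleph_0$, $k=\boZ$, the ambient $\aleph_1$-accessible $\boZ$-linear category $\sC=\R\Contra_\flat$ (given by Theorem~\ref{flat-contramodules-as-directed-colimits-theorem}), and $D$ the $\aleph_1$-presented $\boZ$-linear category from~\cite[proof of Corollary~10.4]{Pacc} for which $\Fun_\boZ(D,\sC)\simeq\Com(\sC)$. The hypothesis that $\sC$ has colimits of $\lambda$-indexed chains reduces to the existence of countable directed colimits in $\R\Contra_\flat$, which holds because $\R\Contra_\flat$ is closed under directed colimits in the cocomplete category $\R\Contra$. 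Theorem~\ref{diagram-theorem} then delivers that $\Com(\R\Contra_\flat)$ is $\aleph_1$-accessible and that its $\aleph_1$-presentable objects are precisely the $\boZ$-linear functors $D\to(\R\Contra_\flat)_{<\aleph_1}$. By Theorem~\ref{flat-contramodules-as-directed-colimits-theorem}, the class $(\R\Contra_\flat)_{<\aleph_1}$ coincides with the class of countably presentable flat $\R$-contramodules, so these functors are exactly the complexes of countably presentable flat $\R$-contramodules, as desired. The final ``consequently'' clause is then the general fact that every object of an $\aleph_1$-accessible category is an $\aleph_1$-directed colimit of $\aleph_1$-presentable objects \cite[Definition~2.1]{AR}.

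The only real point to watch is the compatibility of hypotheses in Theorem~\ref{diagram-theorem}: one must know that $\R\Contra_\flat$ is $\aleph_1$-accessible (which is exactly Theorem~\ref{flat-contramodules-as-directed-colimits-theorem}) and that it admits colimits of $\aleph_0$-indexed chains. I do not anticipate a serious obstacle here, since both are inherited from standard properties of $\R\Contra$ together with the closure of $\R\Contra_\flat$ under directed colimits. Thus the proof is essentially a transcription of the comodule argument, with Theorem~\ref{flat-contramodules-as-directed-colimits-theorem} and Proposition~\ref{presentable-contramodules-described}(b) playing the roles of Theorem~\ref{flat-comodules-as-directed-colimits-theorem} and Lemma~\ref{comodule-presentability-lemma}(d).
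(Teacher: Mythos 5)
Your proposal is correct and follows essentially the same route as the paper: the easy direction via Proposition~\ref{presentable-contramodules-described}(b) and closure of $\Com(\R\Contra_\flat)$ under directed colimits, and the full assertion by combining Theorem~\ref{flat-contramodules-as-directed-colimits-theorem} with Theorem~\ref{diagram-theorem} applied to the $\aleph_1$-presented $\boZ$-linear category encoding complexes, exactly as in the comodule case.
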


\begin{proof}
 All complexes of countably presentable $\R$\+contramodules are
$\aleph_1$\+presentable as objects of $\Com(\R\Contra)$ by
Proposition~\ref{presentable-contramodules-described}(b).
 Since the full subcategory $\Com(\R\Contra_\flat)$ is closed under
directed colimits in $\Com(\R\Contra)$, it follows that all complexes
of countably presentable flat $\R$\+contramodules are
$\aleph_1$\+presentable in $\Com(\R\Contra_\flat)$.

 The full assertion of the proposition is obtained by combining
the results of
Theorems~\ref{flat-contramodules-as-directed-colimits-theorem}
and~\ref{diagram-theorem}.
 The argument is similar to the proof of
Proposition~\ref{complexes-of-flat-comodules-prop}.
\end{proof}

\begin{rem} \label{non-ind-affine-ind-schemes-remark}
 One can restrict the results of this section to commutative topological
rings $\R$, interpret commutative topological rings as ind-affine
ind-schemes (as per Example~\ref{ind-affine-ind-scheme-example}), and
then generalize to non-ind-affine ind-schemes.
 To avoid a detailed discussion of the basics of the ind-scheme theory
(see~\cite[Section~1.2]{Psemten} and the references therein), let us
say that a \emph{strict ind-quasi-compact ind-quasi-separated\/
$\aleph_0$\+ind-scheme} $\fX$ can be defined as an ind-object in
the category of quasi-compact quasi-separated schemes representable by
a countable directed diagram of closed immersions of quasi-compact
quasi-separated schemes $X_1\rarrow X_2\rarrow X_3\rarrow\dotsb$.

 An ``$\cO^p$\+module on~$\fX$'' (in the terminology
of~\cite[Section~7.11.3]{BD2}) or a \emph{pro-quasi-coherent pro-sheaf
on\/~$\fX$} in our preferred terminology of~\cite[Section~3.1]{Psemten}
can be then defined as a sequence of quasi-coherent sheaves $M_n$ of
the schemes $X_n$ together with isomorphisms of quasi-coherent sheaves
$M_n\simeq i_n^*M_{n+1}$ on $X_n$, where $i_n\:X_n\rarrow X_{n+1}$
denote the closed immersion morphisms ($n\ge1$).
 A pro-quasi-coherent pro-sheaf $(F_n)_{n\ge1}$ is said to be
\emph{flat} if the quasi-coherent sheaf $F_n$ on $X_n$ is flat
for every $n\ge1$.
 The flat pro-quasi-coherent pro-sheaves form a well-behaved (exact)
full subcategory in a badly behaved additive category of arbitrary
pro-quasi-coherent pro-sheaves on $\fX$ (see
Example~\ref{ind-affine-ind-scheme-example} and
Remark~\ref{separated-contramodules-not-well-behaved}).

 The argument similar to the proof of
Theorem~\ref{flat-contramodules-as-directed-colimits-theorem}
and based on the result of~\cite[Theorem~2.4]{PS6} shows that
the category of flat pro-quasi-coherent pro-sheaves on $\fX$ is
$\aleph_1$\+accessible, and the flat pro-quasi-coherent
pro-sheaves $(F_n)_{n\ge1}$ with locally countably presentable
flat quasi-coherent sheaves $F_n$ on $X_n$ are
the $\aleph_1$\+presentable objects of this category.
 The flat pro-quasi-coherent pro-sheaf version of
Proposition~\ref{complexes-of-flat-contramodules-prop} also holds,
with the same proof.
 The same applies to the results of
Proposition~\ref{short-exact-sequences-of-flat-contramodules} and
Corollary~\ref{contramods-pure-acycl-cplxs-as-aleph1-dir-colims} below.
\end{rem}

\Section{Exact Sequences of Flat Contramodules as Directed Colimits}

 Let us start with a contramodule analogue/generalization of
Lemma~\ref{countable-flat-coherence}.

\begin{lem} \label{contramodule-countable-flat-coherence}
 Let\/ $\R$ be a complete, separated topological ring with
a \emph{countable} base of neighborhoods of zero consisting of open
\emph{two-sided} ideals.
 Then the kernel of any surjective morphism from a countably
presentable flat\/ $\R$\+contramodule to a countably presentable flat\/
$\R$\+contramodule is a countably presentable (flat)
$\R$\+contramodule.
\end{lem}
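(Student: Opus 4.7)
My plan is to reduce the statement to the module-theoretic Lemma~\ref{countable-flat-coherence} using the characterization of countably presentable flat $\R$\+contramodules in terms of their reductions modulo the open two-sided ideals (Lemmas~\ref{separated-and-flat-contramodules-by-reduction-data}(b) and~\ref{prsntable-separated-and-flat-over-base-of-two-sided}). Fix a countable base $\R\supset\fI_1\supset\fI_2\supset\dotsb$ of open two-sided ideals in~$\R$, and let $f\:\fG\rarrow\fH$ be a surjection of countably presentable flat $\R$\+contramodules with kernel~$\fK$.

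First I would observe that $\fK$ is itself a flat $\R$\+contramodule; this is immediate from Lemma~\ref{flat-contramodules-well-behaved}(a), since the class of flat left $\R$\+contramodules is closed under kernels of epimorphisms. Next, since $\R/\fI_n$ is a discrete right $\R$\+module, Lemma~\ref{flat-contramodules-well-behaved}(b) applies to tell us that the functor $(\R/\fI_n)\ocn_\R{-}\simeq{-}/(\fI_n\tim{-})$ sends the short exact sequence $0\rarrow\fK\rarrow\fG\rarrow\fH\rarrow 0$ of flat contramodules to a short exact sequence
\[
 0\longrightarrow K_n\longrightarrow G_n\longrightarrow H_n\longrightarrow 0
\]
of $\R/\fI_n$\+modules, where $K_n=\fK/(\fI_n\tim\fK)$, $G_n=\fG/(\fI_n\tim\fG)$, and $H_n=\fH/(\fI_n\tim\fH)$.

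By Lemma~\ref{prsntable-separated-and-flat-over-base-of-two-sided} applied to $\fG$ and~$\fH$, the $\R/\fI_n$\+modules $G_n$ and $H_n$ are countably presentable flat modules for every $n\ge 1$. The module-theoretic Lemma~\ref{countable-flat-coherence}, applied over the (discrete) ring~$\R/\fI_n$, then gives that the kernel $K_n$ is a countably presentable $\R/\fI_n$\+module. Finally, applying Lemma~\ref{prsntable-separated-and-flat-over-base-of-two-sided} in the opposite direction to the flat contramodule~$\fK$, whose reductions $K_n$ are all countably presentable, I conclude that $\fK$ is countably presentable as an object of~$\R\Contra$.

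I do not expect any serious obstacle: the whole argument is a bootstrap from the discrete-ring case, with the real work absorbed into the preliminaries — most crucially Lemma~\ref{flat-contramodules-well-behaved} (guaranteeing that kernels of surjections of flat contramodules are flat and that contratensoring with $\R/\fI_n$ is exact on such sequences), and Lemma~\ref{prsntable-separated-and-flat-over-base-of-two-sided} (providing the termwise reduction-datum criterion for countable presentability of flat contramodules). The restriction to a topology base of two-sided (rather than merely right) ideals is used precisely because the reduction-datum description requires it; the analogous question in the right-ideal setting is flagged as open in Question~\ref{count-pres-flat-contra-projdim-base-of-right-qst}.
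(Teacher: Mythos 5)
Your proposal is correct and follows essentially the same route as the paper's own proof: flatness of the kernel via Lemma~\ref{flat-contramodules-well-behaved}(a), exactness of $(\R/\fI_n)\ocn_\R{-}$ on short exact sequences of flat contramodules via Lemma~\ref{flat-contramodules-well-behaved}(b), reduction to the discrete-ring case of Lemma~\ref{countable-flat-coherence}, and the two-way use of Lemma~\ref{prsntable-separated-and-flat-over-base-of-two-sided}. (The paper also sketches an alternative deduction from Corollary~\ref{count-pres-flat-contra-projdim1-cor}, but your argument is its primary one.)
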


\begin{proof}
 The class of flat $\R$\+contramodules is closed under kernels of
surjective morphisms in $\R\Contra$ by
Lemma~\ref{flat-contramodules-well-behaved}(a).
 Now if $0\rarrow\fH\rarrow\fG\rarrow\fF\rarrow0$ is a short exact
sequence of flat left $\R$\+contramodules, then
Lemma~\ref{flat-contramodules-well-behaved}(b) tells us that
$0\rarrow(\R/\fI)\ocn_\R\fH\rarrow(\R/\fI)\ocn_\R\fG\rarrow
(\R/\fI)\ocn_\R\fF\rarrow0$ is a short exact sequence of
left $\R/\fI$\+modules for any open two-sided ideal $\fI\subset\R$.
 If the $\R$\+contramodules $\fF$ and $\fG$ are countably presentable,
then so are the $\R/\fI$\+modules $(\R/\fI)\ocn_\R\fF$ and
$(\R/\fI)\ocn_\R\fG$, by
Lemma~\ref{prsntable-separated-and-flat-over-base-of-two-sided}.
 So Lemma~\ref{countable-flat-coherence} tells us that
the $\R/\fI$\+module $(\R/\fI)\ocn_\R\fH$ is countably presentable,
and it remains to apply
Lemma~\ref{prsntable-separated-and-flat-over-base-of-two-sided} again
in order to conclude that the $\R$\+contramodule $\fH$ is
countably presentable.

 Alternatively, one can deduce the lemma from
Corollary~\ref{count-pres-flat-contra-projdim1-cor} essentially in
the same way as the proof of Lemma~\ref{countable-flat-coherence}
deduces it from Lemma~\ref{count-pres-flat-mods}.
 Then one needs to use the facts that the kernel of any surjective
morphism of countably presentable $\R$\+contramodules is a countably
generated $\R$\+contramodule, and any countably generated projective
$\R$\+contramodule is countably presentable.
\end{proof}

\begin{qst} \label{contramodule-flat-coherence-base-of-right-qst}
 Similarly to
Question~\ref{count-pres-flat-contra-projdim-base-of-right-qst},
we do \emph{not} know, and it would be interesting to learn, whether
the assertion of Lemma~\ref{contramodule-countable-flat-coherence}
holds for left contramodules over any complete, separated topological
ring $\R$ with a countable base of neighborhoods of zero consising
of open \emph{right} ideals.
\end{qst}

 The following proposition describes short exact sequences of
flat $\R$\+contramodules.

\begin{prop} \label{short-exact-sequences-of-flat-contramodules}
 Let\/ $\R$ be a complete, separated topological ring with
a \emph{countable} base of neighborhoods of zero consisting of open
\emph{two-sided} ideals.
 Then the category of short exact sequences of flat\/
$\R$\+contramodules is\/ $\aleph_1$\+accessible.
 The\/ $\aleph_1$\+presentable objects of this category are precisely
all the short exact sequences of countably presentable flat\/
$\R$\+contramodules.
 Consequently, every short exact sequence of flat\/ $\R$\+contramodules
is an\/ $\aleph_1$\+directed colimit of short exact sequences of
countably presentable flat\/ $\R$\+contramodules.
\end{prop}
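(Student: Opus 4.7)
The plan is to imitate the proof of Theorem~\ref{flat-contramodules-as-directed-colimits-theorem}, working throughout with short exact sequences of flat contramodules instead of individual flat contramodules. The easy half is standard: the category of short exact sequences of flat $\R$-contramodules sits as a full subcategory closed under directed colimits inside the locally $\aleph_1$-presentable category of three-term complexes of $\R$-contramodules (cf.\ Proposition~\ref{presentable-contramodules-described} and Lemma~\ref{directed-colimits-of-flat-contramodules-exact}), so combining Lemmas~\ref{presentable-flat-contramodules-lemma} and~\ref{prsntable-separated-and-flat-over-base-of-two-sided} I would conclude that every short exact sequence of countably presentable flat $\R$-contramodules is $\aleph_1$-presentable in the category of short exact sequences of flat $\R$-contramodules.

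For the substantive direction, I would first establish an ``upgraded'' reduction-data description: fixing a countable base $\R\supset\fI_1\supset\fI_2\supset\dotsb$ of open two-sided ideals, the category of short exact sequences of flat $\R$-contramodules is equivalent to the category of collections $(0\rarrow H_n\rarrow G_n\rarrow F_n\rarrow0)_{n\ge1}$ of short exact sequences of flat $\R/\fI_n$-modules equipped with termwise reduction isomorphisms in the spirit of Lemma~\ref{separated-and-flat-contramodules-by-reduction-data}(b). One direction uses Lemma~\ref{flat-contramodules-well-behaved}(b) to ensure that the reductions of a short exact sequence of flat contramodules remain short exact. The inverse direction reconstructs a short exact sequence of flat contramodules by passing to termwise directed limits; exactness of the assembled sequence relies on the fact that countable directed limits of systems of short exact sequences with surjective transition maps are exact.

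Next I would apply Theorem~\ref{isomorpher-theorem} with $\kappa=\aleph_1$ and $\lambda=\aleph_0$. Take $\sK=\sL=\prod_{n\ge1}\sE_n$, where $\sE_n$ is the $\aleph_1$-accessible category of short exact sequences of flat left $\R/\fI_n$-modules from Lemma~\ref{short-exact-of-flat-modules-accessible}; Proposition~\ref{product-proposition} then guarantees $\aleph_1$-accessibility of $\sK=\sL$ and gives the standard description of its $\aleph_1$-presentable objects as collections of short exact sequences of countably presentable flat $\R/\fI_n$-modules. Set $F_2=\id_\sK$ and let $F_1$ be the shift-and-reduce functor sending $(0\rarrow H_n\rarrow G_n\rarrow F_n\rarrow0)_{n\ge1}$ to the collection obtained by applying $(\R/\fI_n)\ot_{\R/\fI_{n+1}}({-})$ termwise to the sequence at level $n+1$. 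Base change along a flat quotient ring takes short exact sequences of flat modules to short exact sequences of flat modules, so $F_1$ lands in~$\sL$. Both functors preserve all colimits, and $F_1$ takes $\aleph_1$-presentables to $\aleph_1$-presentables since tensor products of countably presentable modules are countably presentable. By the upgraded reduction-data equivalence, the isomorpher category of the pair $(F_1,F_2)$ is precisely the category of short exact sequences of flat $\R$-contramodules, so Theorem~\ref{isomorpher-theorem} delivers its $\aleph_1$-accessibility, and Lemma~\ref{prsntable-separated-and-flat-over-base-of-two-sided} translates the isomorpher's description of $\aleph_1$-presentables into the desired statement about short exact sequences of countably presentable flat $\R$-contramodules.

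The main obstacle I anticipate is rigorously justifying the upgraded reduction-data equivalence for short exact sequences; in particular, confirming that a compatible system of short exact sequences of flat $\R/\fI_n$-modules assembles into an honest short exact sequence of flat $\R$-contramodules with no $\varprojlim^1$ obstruction on the leftmost terms. The two-sided ideals assumption and Lemma~\ref{flat-contramodules-well-behaved}, together with Mittag-Leffler type vanishing for countable systems of surjections, make this verification routine, but it needs to be spelled out carefully before the isomorpher machinery can be invoked.
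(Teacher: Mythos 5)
Your proposal is correct and follows essentially the same route as the paper: the paper also realizes the category of short exact sequences of flat $\R$-contramodules as the isomorpher of the shift-and-reduce functor versus the identity on $\prod_{n\ge1}$ (short exact sequences of flat $\R/\fI_n$-modules), invoking Lemma~\ref{short-exact-of-flat-modules-accessible}, Proposition~\ref{product-proposition}, Theorem~\ref{isomorpher-theorem}, Lemmas~\ref{flat-contramodules-well-behaved}(b) and~\ref{separated-and-flat-contramodules-by-reduction-data}(b), and Lemma~\ref{prsntable-separated-and-flat-over-base-of-two-sided}. The ``upgraded reduction-data equivalence'' you flag as the main obstacle is handled in the paper exactly as you anticipate, via the exactness of countable inverse limits of termwise-surjective systems of short exact sequences.
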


\begin{proof}
 The argument is similar to the proof of
Theorem~\ref{flat-contramodules-as-directed-colimits-theorem} and
uses Lemmas~\ref{short-exact-of-flat-modules-accessible}
and~\ref{flat-contramodules-well-behaved}(b) together with
Lemmas~\ref{separated-and-flat-contramodules-by-reduction-data}(b)
and~\ref{prsntable-separated-and-flat-over-base-of-two-sided}.
 The assertion is obtained by applying
Proposition~\ref{product-proposition} and
Theorem~\ref{isomorpher-theorem} for $\kappa=\aleph_1$ and
$\lambda=\aleph_0$.

 Let $\R\supset\fI_1\supset\fI_2\supset\fI_3\supset\dotsb$ be
a countable base of neighborhoods of zero in $\R$ consisting of open
two-sided ideals.
 Let $\sK=\sL$ be the Cartesian product of the categories of short
exact sequences of flat left modules over the rings $\R/\fI_n$,
taken over the integers $n\ge1$.
 Consider the pair of functors $F_1$, $F_2\:\sK\rightrightarrows\sL$
similar to the one in the proof of
Theorem~\ref{flat-contramodules-as-directed-colimits-theorem}.

 Then Lemmas~\ref{flat-contramodules-well-behaved}(b)
and~\ref{separated-and-flat-contramodules-by-reduction-data}(b)
imply that the isomorpher category $\sC$ is equivalent to the category
of short exact sequences of flat left $\R$\+contramodules (one also
needs to use the fact that directed limits of countable sequences of
surjective maps of short exact sequences of abelian groups are short
exact sequences of abelian groups again).
 The categories $\sK$ and $\sL$ are $\aleph_1$\+accessible by
Lemma~\ref{short-exact-of-flat-modules-accessible} and
Proposition~\ref{product-proposition}.
 Theorem~\ref{isomorpher-theorem} is applicable; it tells us that
the category $\sC$ is $\aleph_1$\+accessible and provides a description
of its full subcategory of $\aleph_1$\+presentable objects.
 To compare it with the description asserted in the proposition,
one needs to use
Lemma~\ref{prsntable-separated-and-flat-over-base-of-two-sided}.
\end{proof}

 The assertion of
Proposition~\ref{short-exact-sequences-of-flat-contramodules} (together
with Theorem~\ref{flat-contramodules-as-directed-colimits-theorem})
can be restated by saying that the exact category of flat
$\R$\+contramodules $\R\Contra_\flat$ is
a \emph{locally\/ $\aleph_1$\+coherent exact category} in
the sense of~\cite[Section~1]{Plce}.

 Let us say that an acyclic complex of flat $\R$\+contramodules is
\emph{pure acyclic} if its $\R$\+contramodules of cocycles
are flat.
 For topological rings $\R$ with a countable base of neighborhoods
of zero, this is consistent with the notion of \emph{contratensor
purity}~\cite[Section~3]{Pproperf}, \cite[Section~13]{PPT}.
 It is worth noticing that any bounded above acyclic complex of flat
contramodules is pure acyclic by
Lemma~\ref{flat-contramodules-well-behaved}(a).

\begin{cor} \label{contramods-pure-acycl-cplxs-as-aleph1-dir-colims}
 Let\/ $\R$ be a complete, separated topological ring with
a \emph{countable} base of neighborhoods of zero consisting of open
\emph{two-sided} ideals.
 Then the category of pure acyclic complexes of flat\/
$\R$\+contramodules is\/ $\aleph_1$\+accessible.
 The\/ $\aleph_1$\+presentable objects of this category are precisely
all the pure acyclic complexes of countably presentable flat\/
$\R$\+contramodules.
 Consequently, every pure acyclic complex of flat\/ $\R$\+contramodules
is an\/ $\aleph_1$\+directed colimit of pure acyclic complexes of
countably presentable flat\/ $\R$\+contramodules.
\end{cor}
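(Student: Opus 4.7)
My plan is to mimic closely the proof of Corollary \ref{comods-pure-acycl-complexes-as-aleph1-dir-colims}, replacing each comodule ingredient by its contramodule counterpart that has already been established in this section. The first observation is that any pure acyclic complex of flat $\R$\+contramodules $\fF^\bu$ is the same datum as a $\boZ$\+indexed collection of short exact sequences $0\rarrow\fG^n\rarrow\fF^n\rarrow\fH^n\rarrow0$ of flat $\R$\+contramodules together with a $\boZ$\+indexed collection of isomorphisms $\fH^n\simeq\fG^{n+1}$ in $\R\Contra_\flat$. (In one direction, the $\fG^n$ and $\fH^n$ are the contramodules of cocycles, which are flat by assumption; the conversion uses that pure acyclicity forces all cocycles to be flat, and conversely flatness of the cocycles makes the short sequences admissible in the exact structure on $\R\Contra_\flat$ inherited from the abelian exact structure of $\R\Contra$.)

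Set $\kappa=\aleph_1$ and $\lambda=\aleph_0$. Let $\sK$ denote the Cartesian product, over $n\in\boZ$, of copies of the category of short exact sequences of flat $\R$\+contramodules (from Proposition~\ref{short-exact-sequences-of-flat-contramodules}), and let $\sL$ denote the Cartesian product, over $n\in\boZ$, of copies of the category $\R\Contra_\flat$ (from Theorem~\ref{flat-contramodules-as-directed-colimits-theorem}). Define parallel functors $F_1,F_2\:\sK\rightrightarrows\sL$ by sending a collection $(0\to\fG^n\to\fF^n\to\fH^n\to0)_{n\in\boZ}$ to $(\fH^n)_{n\in\boZ}$ and to $(\fG^{n+1})_{n\in\boZ}$, respectively. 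Then, by the description recalled in the preceding paragraph, the isomorpher category of the pair $(F_1,F_2)$ is precisely the category of pure acyclic complexes of flat $\R$\+contramodules.

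Now Proposition~\ref{short-exact-sequences-of-flat-contramodules} together with Proposition~\ref{product-proposition} tells us that $\sK$ is $\aleph_1$\+accessible, with $\aleph_1$\+presentable objects the $\boZ$\+indexed collections of short exact sequences of countably presentable flat $\R$\+contramodules; and Theorem~\ref{flat-contramodules-as-directed-colimits-theorem} together with Proposition~\ref{product-proposition} tells us the same for $\sL$, with $\aleph_1$\+presentable objects the $\boZ$\+indexed collections of countably presentable flat $\R$\+contramodules. Both $F_1$ and $F_2$ are given by component projections, so they preserve all directed colimits (in particular $\aleph_1$\+directed colimits and colimits of $\aleph_0$\+indexed chains), and they send $\aleph_1$\+presentable objects to $\aleph_1$\+presentable objects. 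Thus Theorem~\ref{isomorpher-theorem} applies and yields that the isomorpher is $\aleph_1$\+accessible with the $\aleph_1$\+presentable objects being exactly those pure acyclic complexes $(0\to\fG^n\to\fF^n\to\fH^n\to0)_{n\in\boZ}$ in which every $\fG^n$, $\fF^n$, $\fH^n$ is a countably presentable flat $\R$\+contramodule, i.e., pure acyclic complexes of countably presentable flat $\R$\+contramodules.

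The only step that is not entirely routine is the initial identification of pure acyclic complexes with gluing data in $\R\Contra_\flat$: one must verify that the short exact sequences involved really are short exact sequences in the sense of the exact category $\R\Contra_\flat$ (so that the construction takes place entirely inside that category), and that the isomorphisms $\fH^n\simeq\fG^{n+1}$ in $\R\Contra_\flat$ suffice to recover the complex as an object of $\Com(\R\Contra)$. Once this is in place, the accessibility machinery of Section~\ref{accessible-preliminaries-secn} does all the remaining work, and the final ``consequently'' clause follows by the standard fact that every object of an $\aleph_1$\+accessible category is an $\aleph_1$\+directed colimit of its $\aleph_1$\+presentable objects.
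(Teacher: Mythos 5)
Your proposal is correct and follows exactly the route the paper takes: the paper's proof of this corollary explicitly says to deduce it from Proposition~\ref{short-exact-sequences-of-flat-contramodules} by the splicing-plus-isomorpher argument of Corollary~\ref{comods-pure-acycl-complexes-as-aleph1-dir-colims}, which is precisely what you carry out (the paper also mentions an alternative direct argument via the reduction-data description, but does not spell it out). Your attention to the identification of pure acyclic complexes with gluing data in the exact category $\R\Contra_\flat$ matches the intended use of Lemmas~\ref{flat-contramodules-well-behaved} and~\ref{directed-colimits-of-flat-contramodules-exact}.
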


\begin{proof}
 Similar to the proof of
Corollary~\ref{comods-pure-acycl-complexes-as-aleph1-dir-colims}.
 One can either prove the assertion of the corollary by a direct
argument similar to the proof of
Proposition~\ref{short-exact-sequences-of-flat-contramodules}
and using~\cite[Corollary~10.14]{Pacc} instead of
Lemma~\ref{short-exact-of-flat-modules-accessible}, or deduce
the corollary from
Proposition~\ref{short-exact-sequences-of-flat-contramodules}
using the argument with building pure acyclic complexes by splicing
short exact sequences as in the proof of
Corollary~\ref{comods-pure-acycl-complexes-as-aleph1-dir-colims}.
\end{proof}

\Section{Cotorsion Periodicity for Contramodules}

 For the sake of completeness of the exposition, we start with
presenting a weak version of flat/projective periodicity
for contramodules.
 The definition of a \emph{periodic object} (with respect to a class
of objects in an abelian category) was given in
Section~\ref{comodule-cotorsion-periodicity-secn}.
 The following proposition is a contramodule generalization
of~\cite[Theorem~2.5]{BG} and~\cite[Remark~2.15]{Neem}
(see also~\cite[Proposition~7.6]{CH}).
 For a strong version of flat/projective periodicity theorem for
contramodules (with a proof based on the results of this paper)
see the recent preprint~\cite{Pbc}.

\begin{prop} \label{contramodule-flat-projective-periodicity}
 Let\/ $\R$ be a complete, separated topological ring with
a \emph{countable} base of neighborhoods of zero consisting of open
\emph{two-sided} ideals.
 Then \par
\textup{(a)} any projective-periodic flat\/ $\R$\+contramodule is
projective;
\par
\textup{(b)} in any acyclic complex of projective\/ $\R$\+contramodules
with flat\/ $\R$\+contramodules of cocycles, the contramodules of
cocycles are actually projective (so the complex is contractible).
\end{prop}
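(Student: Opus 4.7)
The plan is to reduce both parts to the classical flat/projective periodicity theorems of Benson--Goodearl~\cite{BG} and Neeman~\cite{Neem} for modules over an associative ring, using the countable base of \emph{two-sided} ideals $\R\supset\fI_1\supset\fI_2\supset\dotsb$ to pass to discrete quotients. The main technical tool is the contratensor reduction functor $(\R/\fI_n)\ocn_\R{-}\,\colon\R\Contra\rarrow\R/\fI_n\Modl$, which by Lemma~\ref{flat-contramodules-well-behaved}(b) is exact on short exact sequences of flat $\R$-contramodules, combined with the characterization of projectivity via reduction data in Lemma~\ref{projective-contramodules-by-reduction-data}.

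For part~(a), given a short exact sequence $0\rarrow\fF\rarrow\fP\rarrow\fF\rarrow 0$ in $\R\Contra$ with $\fF$ flat and $\fP$ projective (hence flat), I would apply $(\R/\fI_n)\ocn_\R{-}$ to obtain a short exact sequence $0\rarrow F_n\rarrow P_n\rarrow F_n\rarrow 0$ of $\R/\fI_n$-modules, where $F_n=\fF/(\fI_n\tim\fF)$ is flat and $P_n=\fP/(\fI_n\tim\fP)$ is projective by Lemma~\ref{projective-contramodules-by-reduction-data}. The classical Benson--Goodearl theorem~\cite{BG} then forces $F_n$ to be projective. Since this holds for every $n\ge 1$, a second application of Lemma~\ref{projective-contramodules-by-reduction-data} yields that $\fF$ is a projective $\R$-contramodule.

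For part~(b), let $\fP^\bu$ be an acyclic complex of projective $\R$-contramodules with flat cocycles $\fZ^i$. Each short exact sequence $0\rarrow\fZ^i\rarrow\fP^i\rarrow\fZ^{i+1}\rarrow 0$ then consists of flat contramodules, so Lemma~\ref{flat-contramodules-well-behaved}(b) turns $(\R/\fI_n)\ocn_\R\fP^\bu$ into an acyclic complex of projective $\R/\fI_n$-modules with flat cocycles $Z^i_n=\fZ^i/(\fI_n\tim\fZ^i)$. By the flat/projective periodicity theorem for complexes~\cite[Remark~2.15]{Neem}, each $Z^i_n$ is projective; Lemma~\ref{projective-contramodules-by-reduction-data} then shows that each $\fZ^i$ is a projective $\R$-contramodule, whereupon the short exact sequences $0\rarrow\fZ^i\rarrow\fP^i\rarrow\fZ^{i+1}\rarrow 0$ all split and $\fP^\bu$ is contractible. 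The only genuine obstacle is verifying that the flatness hypotheses suffice to keep Lemma~\ref{flat-contramodules-well-behaved}(b) applicable at every splicing step; once this is granted, the argument becomes a routine reduction to the well-known module-theoretic statements. The two-sided ideal hypothesis is essential here, both because $\R/\fI_n$ needs to be a ring and because the reduction data characterization of projective contramodules is only available in this setting.
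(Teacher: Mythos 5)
Your proposal is correct and follows essentially the same route as the paper's own proof: reduce modulo each open two-sided ideal $\fI_n$ using the exactness of $(\R/\fI_n)\ocn_\R{-}$ on short exact sequences of flat contramodules (Lemma~\ref{flat-contramodules-well-behaved}(b)), invoke the classical Benson--Goodearl and Neeman module-theoretic periodicity theorems, and lift projectivity back via Lemma~\ref{projective-contramodules-by-reduction-data}. The ``obstacle'' you flag in part~(b) is not one: the cocycles are flat by hypothesis, so every spliced short exact sequence consists of flat contramodules and the reduction lemma applies directly, exactly as in the paper.
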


\begin{proof}
 This is an easy corollary of the the flat/projective periodicity for
modules over a ring (\cite[Theorem~2.5]{BG}, \cite[Remark~2.15]{Neem})
together with the basic properties of flat and projective contramodules
(Lemmas~\ref{flat-contramodules-well-behaved}(b)
and~\ref{projective-contramodules-by-reduction-data}).

 Part~(a): let $0\rarrow\fF\rarrow\fP\rarrow\fF\rarrow0$ be a short
exact sequence of flat left $\R$\+contramodules with a projective
$\R$\+contramodule~$\fP$.
 By Lemma~\ref{flat-contramodules-well-behaved}(b), for any open
two-sided ideal $\fI\subset\R$, the short sequence of left
$\R/\fI$\+modules $0\rarrow(\R/\fI)\ocn_\R\fF\rarrow
(\R/\fI)\ocn_\R\fP\rarrow(\R/\fI)\ocn_\R\fF\rarrow0$ is exact.
 Clearly, the $\R/\fI$\+module $(\R/\fI)\ocn_\R\fF$ is flat and
the $\R/\fI$\+module $(\R/\fI)\ocn_\R\fP$ is projective
(see the discussion in
Sections~\ref{contra-preliminaries-secn}\+-%
\ref{separated-and-flat-contra-secn}, cf.\
Lemmas~\ref{separated-and-flat-contramodules-by-reduction-data}(b)
and~\ref{projective-contramodules-by-reduction-data}).
 By~\cite[Theorem~2.5]{BG}, it follows that
the $\R/\fI$\+module $(\R/\fI)\ocn_\R\fF$ is projective for every
open two-sided ideal $\fI\subset\R$.
 Using Lemma~\ref{projective-contramodules-by-reduction-data},
we conclude that the $\R$\+contramodule~$\fF$ is projective.

 Part~(b): let $\fP^\bu$ be a pure acyclic complex of projective
left $\R$\+contramodules.
 By Lemma~\ref{flat-contramodules-well-behaved}(b), for any open
two-sided ideal $\fI\subset\R$, the complex of left
$\R/\fI$\+modules $(\R/\fI)\ocn_\R\fP^\bu$ is (pure) acyclic.
 Moreover, if $\fZ^n$, \,$n\in\boZ$, are the $\R$\+contramodules of
cocycles of the complex $\fP^\bu$, then $(\R/\fI)\ocn_\R\fZ^n$ are
the $\R/\fI$\+modules of cocycles of the complex
$(\R/\fI)\ocn_\R\fP^\bu$.
 The complex $(\R/\fI)\ocn_\R\fP^\bu$ is also a complex of
projective left $\R/\fI$\+modules.
 By~\cite[Theorem~8.6 and Remark~2.15]{Neem}
or~\cite[Proposition~7.6]{CH}, it follows that the complex
$(\R/\fI)\ocn_\R\fP^\bu$ is contractible and the $\R/\fI$\+modules
$(\R/\fI)\ocn_\R\fZ^n$ are projective.
 Once again, we use
Lemma~\ref{projective-contramodules-by-reduction-data} in order to
conclude that the $\R$\+contramodules $\fZ^n$ are projective and
the complex $\fP^\bu$ is contractible.
\end{proof}

 Now we pass to the cotorsion periodicity for $\R$\+contramodules,
which is the main topic of this section.
 Let us denote by $\Ext^{\R,*}({-},{-})$ the Ext groups in
the abelian category $\R\Contra$.
 A left $\R$\+contramodule $\fB$ is said to be \emph{cotorsion}
\cite[Section~E.5]{Pcosh}, \cite[Definition~7.3]{PR} if
$\Ext^{\R,1}(\fF,\fB)=0$ for all flat left $\R$\+contramodules~$\fF$.

 Introduce the notation $\sB=\R\Contra^\cot$ for the full subcategory
of cotorsion $\R$\+contramodules in the abelian category
$\sC=\R\Contra$.
 Obviously, the full subcategory $\R\Contra^\cot$ is closed under
extensions in $\R\Contra$, so it inherits an exact category structure
from the abelian exact structure of $\R\Contra$.

 The following corollary summarizes some of our previous results and
observations.
 It is a contramodule version of~\cite[Corollary~5.6 and Lemma~9.1]{PS6}
and Corollary~\ref{prepare-for-comodule-cotorsion-periodicity} above.

\begin{cor} \label{prepare-for-contramodule-cotorsion-periodicity}
 Let\/ $\R$ be a complete, separated topological ring. \par
\textup{(a)} Assume that\/ $\R$ has a countable base of neighborhoods
of zero consisting of open right ideals.
 Then the class all flat left\/ $\R$\+contramodules is resolving
and closed under directed colimits in\/ $\R\Contra$.
 Consequently, one has\/ $\Ext^{\R,n}(\fF,\fB)=0$ for all flat
left\/ $\R$\+contramodules\/ $\fF$, all cotorsion left\/
$\R$\+contramodules\/ $\fB$, and all integers $n\ge1$.
 The directed colimit functors in the exact category of flat
contramodules\/ $\R\Contra_\flat$ are exact (i.~e., the directed
colimits of admissible short exact sequences are admissible
short exact sequences). \par
\textup{(b)} Assume that\/ $\R$ has a countable base of neighborhoods
of zero consisting of open two-sided ideals.
 Then any flat\/ $\R$\+contramodule is a directed (in fact,
$\aleph_1$\+directed) colimit of flat\/ $\R$\+contramodules having
projective dimensions not exceeding\/~$1$ in\/ $\R\Contra$.
\end{cor}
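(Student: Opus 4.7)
This corollary is a summary result, so the plan is to assemble it from the machinery already established in Sections~\ref{contra-preliminaries-secn}\+-\ref{separated-and-flat-contra-secn} and the preceding theorems; no new ideas are needed beyond combining them correctly, and the analogy with the proof of Corollary~\ref{prepare-for-comodule-cotorsion-periodicity} is a direct guide.

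For part~(a), I would verify the three defining properties of a resolving subcategory for $\R\Contra_\flat\subset\R\Contra$ in turn. Closure under extensions and under kernels of epimorphisms is exactly the content of Lemma~\ref{flat-contramodules-well-behaved}(a). For the generating property, I would observe that the free left $\R$\+contramodules $\R[[X]]$ are projective, hence flat (as recalled right after the definition of flatness in Section~\ref{separated-and-flat-contra-secn}), and every left $\R$\+contramodule is a quotient of a free one; so every contramodule is a quotient of a flat one. Closure of $\R\Contra_\flat$ under directed colimits in $\R\Contra$ was already observed in Section~\ref{separated-and-flat-contra-secn}. The Ext vanishing $\Ext^{\R,n}(\fF,\fB)=0$ for $n\ge1$ is then formal: given a flat $\fF$, using that $\R\Contra_\flat$ is resolving one constructs a (left) resolution of $\fF$ by flat contramodules whose syzygies remain flat, and dimension-shifts the vanishing of $\Ext^1$ out of the hypothesis that $\fB$ is cotorsion; this is exactly the reference to \cite[Lemma~6.1]{PS6} used in Corollary~\ref{prepare-for-comodule-cotorsion-periodicity}(b). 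Finally, the assertion that directed colimit functors in the exact category $\R\Contra_\flat$ are exact is a direct restatement of Lemma~\ref{directed-colimits-of-flat-contramodules-exact}.

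For part~(b), I would combine two previous results. By Theorem~\ref{flat-contramodules-as-directed-colimits-theorem}, under the assumption of a countable base of open two-sided ideals, every flat left $\R$\+contramodule is an $\aleph_1$\+directed colimit of countably presentable flat $\R$\+contramodules. By Corollary~\ref{count-pres-flat-contra-projdim1-cor}, every countably presentable flat $\R$\+contramodule has projective dimension at most $1$ in $\R\Contra$. Putting these together yields the claim, with the explicit bound $\le1$ in place of just ``finite.''

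The main point to be careful about is the distinction between the two hypotheses on the topology of $\R$: part~(a) uses only a countable base of open \emph{right} ideals, so I must cite only those lemmas (Lemma~\ref{flat-contramodules-well-behaved} and Lemma~\ref{directed-colimits-of-flat-contramodules-exact}) that are valid in that generality, while part~(b) genuinely requires the stronger hypothesis of a countable base of open \emph{two-sided} ideals, because the projective dimension bound in Corollary~\ref{count-pres-flat-contra-projdim1-cor} rests on Lemma~\ref{projective-dimension-of-flat-contramodule} which uses reduction data with respect to two-sided ideals (see Question~\ref{count-pres-flat-contra-projdim-base-of-right-qst}). There is no serious obstacle; the work has already been done in the preceding sections.
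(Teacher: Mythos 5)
Your proposal is correct and follows essentially the same route as the paper: part~(a) assembles Lemma~\ref{flat-contramodules-well-behaved}(a), the closure under directed colimits from the earlier discussion, \cite[Lemma~6.1]{PS6} for the Ext vanishing, and Lemma~\ref{directed-colimits-of-flat-contramodules-exact}; part~(b) combines Theorem~\ref{flat-contramodules-as-directed-colimits-theorem} with Corollary~\ref{count-pres-flat-contra-projdim1-cor}. Your explicit verification of the generating condition via free (hence projective, hence flat) contramodules is a detail the paper leaves implicit, but it is the intended argument.
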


\begin{proof}
 Part~(a): the full subcategory $\R\Contra_\flat\subset\R\Contra$ is
resolving by Lemma~\ref{flat-contramodules-well-behaved}(a) and
closed under directed colimits according to the discussion in
Sections~\ref{contra-preliminaries-secn}\+-%
\ref{separated-and-flat-contra-secn}.
 The Ext vanishing assertion follows by~\cite[Lemma~6.1]{PS6}.
 The directed colimits in the exact category $\R\Contra_\flat$ (though
not in the abelian category $\R\Contra$\,!) are exact by
Lemma~\ref{directed-colimits-of-flat-contramodules-exact}.

 In part~(b), all flat $\R$\+contramodules are $\aleph_1$\+directed
colimits of countably presentable flat $\R$\+contramodules by
Theorem~\ref{flat-contramodules-as-directed-colimits-theorem}, and
countably presentable flat $\R$\+contramodules have projective dimension
at most~$1$ by Corollary~\ref{count-pres-flat-contra-projdim1-cor}.
\end{proof}

 The following result is a contramodule generalization of
the cotorsion periodicity theorem of Bazzoni, Cort\'es-Izurdiaga, and
Estrada~\cite[Theorem~1.2(2) or Proposition~4.8(2)]{BCE}.
 It is also a contramodule analogue of~\cite[Theorem~9.2]{PS6} and
Theorem~\ref{comodule-cotorsion-periodicity-theorem} above.

\begin{thm} \label{contramodule-cotorsion-periodicity-theorem}
 Let\/ $\R$ be a complete, separated topological ring with
a \emph{countable} base of neighborhoods of zero consisting of open
\emph{two-sided} ideals.
 Then any cotorsion-periodic $\R$\+contramodule is cotorsion.
\end{thm}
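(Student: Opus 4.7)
The plan is to invoke Theorem~\ref{general-cotorsion-periodicity-scheme} with $\sC = \R\Contra$ and $\sF = \R\Contra_\flat$, exactly parallel to the proof of Theorem~\ref{comodule-cotorsion-periodicity-theorem}. With these choices, the class $\sB = \sF^{\perp_1}$ arising in the general scheme is, by definition, the class of cotorsion $\R$-contramodules; hence the conclusion of that theorem, namely that every $\sB$-periodic object lies in $\sB$, is precisely the desired statement.

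To apply the general scheme, I would verify its four hypotheses in turn, all of which have already been packaged for use. First, $\sC = \R\Contra$ is an abelian category with arbitrary (co)products, as recalled in Section~\ref{contra-preliminaries-secn}. Second, the full subcategory $\sF = \R\Contra_\flat$ is resolving in $\sC$: it is closed under extensions and kernels of epimorphisms by Lemma~\ref{flat-contramodules-well-behaved}(a), and every $\R$-contramodule is a quotient of a projective (hence flat) one since $\R\Contra$ has enough projectives. Third, $\sF$ is closed under directed colimits in $\sC$, and the induced directed colimit functors in the exact category $\sF$ take admissible short exact sequences to admissible short exact sequences by Lemma~\ref{directed-colimits-of-flat-contramodules-exact}. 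Fourth, every flat $\R$-contramodule is an $\aleph_1$-directed colimit of flat $\R$-contramodules of finite projective dimension in $\sC$: this follows by combining Theorem~\ref{flat-contramodules-as-directed-colimits-theorem} (describing any flat contramodule as an $\aleph_1$-directed colimit of countably presentable flat contramodules) with Corollary~\ref{count-pres-flat-contra-projdim1-cor} (which bounds the projective dimensions of countably presentable flat contramodules by~$1$). These items are consolidated in Corollary~\ref{prepare-for-contramodule-cotorsion-periodicity}, so once they are cited the conclusion is immediate.

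There is essentially no real obstacle left: the theorem is a streamlined consequence of the category-theoretic periodicity principle together with the structural machinery already in place for flat contramodules. The one place where the countable base of \emph{two-sided} ideals is genuinely used is in invoking Corollary~\ref{count-pres-flat-contra-projdim1-cor}; without this input (see Question~\ref{count-pres-flat-contra-projdim-base-of-right-qst}), the fourth hypothesis of the general scheme would be unavailable and the argument would stall. It is also worth observing that, in contrast with the comodule setting, no auxiliary conditions analogous to $(*)$ and $(*{*})$ from Section~\ref{comodule-cotorsion-periodicity-secn} are needed here: the category $\R\Contra$ has enough projectives automatically, and countably presentable flat $\R$-contramodules have finite projective dimension automatically, so the corresponding clauses in the general scheme are satisfied without further hypotheses on~$\R$.
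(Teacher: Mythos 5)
Your proposal is correct and follows exactly the paper's own route: the paper proves this theorem by citing Theorem~\ref{general-cotorsion-periodicity-scheme} for $\sF=\R\Contra_\flat\subset\R\Contra=\sC$, with the hypotheses supplied by Corollary~\ref{prepare-for-contramodule-cotorsion-periodicity}, which is precisely the package of facts you verify. Your additional remarks about where the two-sided-ideal hypothesis enters and why no analogues of~($*$) and~(${*}{*}$) are needed are accurate.
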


\begin{proof}
 This is another application of
Theorem~\ref{general-cotorsion-periodicity-scheme}, whose assumptions
are satisfied in the situation at hand (for $\sF=\R\Contra_\flat
\subset\R\Contra=\sC$) by
Corollary~\ref{prepare-for-contramodule-cotorsion-periodicity}.
\end{proof}

\begin{cor} \label{contramodule-cotorsion-cocycles-cor}
 Let\/ $\R$ be a complete, separated topological ring with
a \emph{countable} base of neighborhoods of zero consisting of open
\emph{two-sided} ideals.
 Let\/ $\fB^\bu$ be an acyclic complex in\/ $\R\Contra$ whose terms\/
$\fB^n$ are cotorsion\/ $\R$\+contramodules for all $n\in\boZ$.
 Then the\/ $\R$\+contramodules of cocycles of the complex\/ $\fB^\bu$
are also cotorsion.
\end{cor}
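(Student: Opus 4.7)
My plan is to mirror the proof of Corollary~\ref{comodule-cotorsion-cocycles-cor}: deduce the statement from Theorem~\ref{contramodule-cotorsion-periodicity-theorem} by invoking the abstract reduction~\cite[Proposition~8.4]{PS6}. That proposition converts cotorsion-periodicity into cocycle-cotorsion in acyclic complexes, provided that the ambient abelian category $\sC = \R\Contra$ admits products, the cotorsion class $\sB = \R\Contra^\cot$ is closed under products and direct summands, and every object of $\sC$ is a quotient of something of finite projective dimension whose positive $\Ext$'s into $\sB$ vanish.

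The four checks should go through cleanly. First, $\R\Contra$ is abelian with all set-indexed products, as mentioned in Section~\ref{contra-preliminaries-secn} (the forgetful functor $\R\Contra \rarrow \R\Modl$ preserves products). Second, $\R\Contra^\cot$ is closed under products, because $\Ext^{\R,1}({-},{-})$ carries products in its second argument to products of abelian groups, and the defining vanishing condition is preserved. Third, closure under direct summands is immediate from the same $\Ext^{\R,1}$-vanishing description. Fourth — and this is where the contramodule argument is actually \emph{simpler} than the comodule one — every left $\R$\+contramodule is a quotient of a free $\R$\+contramodule $\R[[X]]$; such a free contramodule is projective (hence of projective dimension zero) and flat, so $\Ext^{\R,n}(\R[[X]], \fB) = 0$ for all $\fB \in \sB$ and all $n \geq 1$, either directly from projectivity or from Corollary~\ref{prepare-for-contramodule-cotorsion-periodicity}(a).

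With these verifications, \cite[Proposition~8.4]{PS6} combined with Theorem~\ref{contramodule-cotorsion-periodicity-theorem} yields the conclusion. I do not foresee any substantive obstacle: the substantial ingredient is the cotorsion-periodicity theorem itself, already in place, and the resolution hypothesis is trivially satisfied because $\R\Contra$ has enough projectives. In particular, the analogues of the comodule conditions ($*$) and (${*}{*}$) are automatic in the contramodule setting — free contramodules simultaneously play the role of the $A$\+flat objects of finite projective dimension used in the proof of Corollary~\ref{comodule-cotorsion-cocycles-cor}. Thus the proof reduces to citing \cite[Proposition~8.4]{PS6} and verifying its hypotheses, which the above remarks accomplish in a few lines.
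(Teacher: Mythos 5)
Your proof is correct, and the overall strategy --- reducing cocycle-cotorsion in acyclic complexes to the periodicity statement of Theorem~\ref{contramodule-cotorsion-periodicity-theorem} via a product-of-shifts argument --- is the same as the paper's. The difference is in the abstract lemma invoked: the paper cites~\cite[Proposition~2]{EFI}, whose hypotheses here are just that the infinite product functors are \emph{exact} in $\R\Contra$ (which they are, since the forgetful functor to $\R\Modl$ is exact and preserves products) and that the cotorsion class is closed under products and direct summands; no resolution hypothesis is needed. You instead cite~\cite[Proposition~8.4]{PS6}, the version designed for the comodule setting where products exist but are not exact, and therefore you must additionally verify that every object is a quotient of an object of finite projective dimension with vanishing higher $\Ext$ into the cotorsion class. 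Your verification of that extra hypothesis is correct and indeed trivial --- free contramodules are projective and flat --- so your route goes through; it simply carries a hypothesis that the exactness of products in $\R\Contra$ renders superfluous. Your remaining checks (closure of $\R\Contra^\cot$ under products via computing $\Ext^{\R,1}$ with projective resolutions of the first argument, and closure under direct summands) are also fine.
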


\begin{proof}
 This is a contramodule generalization of~\cite[Theorem~5.1(2)]{BCE},
and an analogue of~\cite[Corollary~9.4]{PS6} and
Corollary~\ref{comodule-cotorsion-cocycles-cor} above.
 The infinite product functors are exact in $\R\Contra$, and the class
of cotorsion $\R$\+contramodules is closed under (direct summands and)
infinite products.
 So the argument of~\cite[Proposition~2]{EFI} is applicable, deducing
the assertion of the corollary from
Theorem~\ref{contramodule-cotorsion-periodicity-theorem}.
\end{proof}

 The definition of a \emph{coresolving subcategory} is dual to that
of a resolving one.
 A class of objects $\sB$ in an abelian category $\sC$ is said to be
\emph{cogenerating} if every object of $\sC$ is a subobject of
an object from~$\sB$.
 We will say that a class of objects $\sB\subset\sC$ is
\emph{coresolving} if it is cogenerating, closed under extensions,
and closed under cokernels of monomorphisms in~$\sC$.

\begin{prop} \label{cotorsion-coresolving-prop}
 Let\/ $\R$ be a complete, separated topological ring with
a \emph{countable} base of neighborhoods of zero consisting of open
right ideals.
 Then the class\/ $\sB=\R\Contra^\cot$ of all cotorsion left\/
$\R$\+contramodules is coresolving in\/ $\sC=\R\Contra$.
\end{prop}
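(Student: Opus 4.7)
My plan is to verify the three conditions in the definition of a coresolving subcategory for $\sB = \R\Contra^\cot$: closure under extensions, closure under cokernels of monomorphisms, and cogeneration in $\sC = \R\Contra$. Closure under extensions and cokernels of monomorphisms are routine $\Ext$ computations; cogeneration is the serious point and will require invoking the completeness of the cotorsion pair $(\R\Contra_\flat,\R\Contra^\cot)$ established in~\cite[Section~7]{PR}.

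For closure under extensions, given a short exact sequence $0 \to \fB' \to \fB \to \fB'' \to 0$ in $\R\Contra$ with $\fB',\fB'' \in \sB$ and any flat left $\R$\+contramodule $\fF$, the long exact sequence of $\Ext^{\R,*}(\fF,-)$ in the abelian category $\R\Contra$ provides the segment $\Ext^{\R,1}(\fF,\fB') \to \Ext^{\R,1}(\fF,\fB) \to \Ext^{\R,1}(\fF,\fB'')$; the outer terms vanish by the definition of cotorsion, forcing $\Ext^{\R,1}(\fF,\fB)=0$ and hence $\fB \in \sB$. For closure under cokernels of monomorphisms, given $0 \to \fB' \to \fB \to \fB'' \to 0$ with $\fB',\fB \in \sB$, the same long exact sequence yields $\Ext^{\R,1}(\fF,\fB) \to \Ext^{\R,1}(\fF,\fB'') \to \Ext^{\R,2}(\fF,\fB')$. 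The first group vanishes by hypothesis, while the third vanishes by Corollary~\ref{prepare-for-contramodule-cotorsion-periodicity}(a), which asserts $\Ext^{\R,n}(\fF,\fB')=0$ for all $n\ge 1$, flat $\fF$, and cotorsion $\fB'$; therefore $\fB'' \in \sB$. Note that both steps use only the hypothesis of a countable base of open right ideals, exactly as available here.

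The substantive step, and the expected main obstacle, is cogeneration: producing, for every $\fP \in \R\Contra$, a monomorphism $\fP \hookrightarrow \fB$ with $\fB \in \sB$. My approach is to invoke completeness of the cotorsion pair $(\R\Contra_\flat,\R\Contra^\cot)$ in the abelian category $\R\Contra$, which is proved in~\cite[Section~7]{PR} via an Eklof--Trlifaj style transfinite small-object construction. The required ingredients are all in place under the present hypothesis: the class $\R\Contra_\flat$ is closed under extensions and kernels of epimorphisms by Lemma~\ref{flat-contramodules-well-behaved}(a), closed under directed colimits as noted in Section~\ref{separated-and-flat-contra-secn}, and deconstructible in the sense of being generated by a set of flat contramodules of bounded cardinality. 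Completeness then delivers, for each $\fP$, a special cotorsion preenvelope $0 \to \fP \to \fB \to \fF \to 0$ with $\fB \in \sB$ and $\fF \in \R\Contra_\flat$; the resulting monomorphism $\fP \hookrightarrow \fB$ is precisely the required embedding into $\sB$, completing the verification of cogeneration and hence of the proposition.
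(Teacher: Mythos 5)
Your proposal is correct and follows essentially the same route as the paper: closure under extensions and under cokernels of monomorphisms are handled by the long exact sequence together with the higher $\Ext$ vanishing of Corollary~\ref{prepare-for-contramodule-cotorsion-periodicity}(a), and cogeneration is delegated to the completeness of the flat cotorsion pair in $\R\Contra$ established in~\cite[Corollary~7.8]{PR}, which is exactly the reference the paper uses. The only point to be careful about is that your parenthetical justification of that completeness (deconstructibility of $\R\Contra_\flat$) is itself nontrivial and is best left entirely to the citation, as the paper does.
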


\begin{proof}
 It is obvious from the definition that the class of all cotorsion
$\R$\+contramodules is closed under extensions, and it follows from
the higher Ext vanishing assertion of
Corollary~\ref{prepare-for-contramodule-cotorsion-periodicity}(a)
that $\R\Contra^\cot$ is also closed under cokernels of monomorphisms
in $\R\Contra$.
 The most nontrivial assertion in the proposition is that the class
of cotorsion $\R$\+contramodules is cogenerating.
 This is a part of~\cite[Corollary~7.8]{PR}.
 (Notice that there are usually \emph{not} enough injective objects
in $\R\Contra$.)
\end{proof}

 Recall the notation $\sK(\sE)$ for the homotopy category of
an additive category~$\sE$.
 For an abelian (or exact) category $\sE$, we denote by $\Ac(\sE)
\subset\sK(\sE)$ the full subcategory of acyclic complexes.
 So the derived category $\sD(\sE)$ is the Verdier quotient category
$\sD(\sE)=\sK(\sE)/\Ac(\sE)$.

\begin{prop} \label{abelian-exact-products-coresolution-prop}
 Let\/ $\sC$ be an abelian category with exact functors of countable
product, and let\/ $\sB\subset\sC$ be a cogenerating subcategory closed
under countable products.
 Then for any complex $C^\bu$ in\/ $\sC$ there exists a complex $B^\bu$
in\/ $\sB$ together with a quasi-isomorphism $C^\bu\rarrow B^\bu$ of
complexes in\/~$\sC$.
 Consequently, the inclusion of additive categories\/ $\sB\rarrow\sC$
induces a triangulated equivalence of Verdier quotient categories
$$
 \frac{\sK(\sB)}{\sK(\sB)\cap\Ac(\sC)}\overset{\simeq}\lrarrow
 \frac{\sK(\sC)}{\Ac(\sC)}\,=\,\sD(\sC).
$$
\end{prop}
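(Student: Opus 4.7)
The plan is, for each complex $C^\bu$ in $\sC$, to construct a coresolution by a complex $B^\bu$ in $\sB$ via a Cartan--Eilenberg-type bicomplex argument and a product totalization that exploits the hypothesis of exact countable products in~$\sC$. The Verdier equivalence will then follow by a standard localization argument.

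First, since $\sB$ is cogenerating, every object $X\in\sC$ admits a coresolution $0\to X\to E^0\to E^1\to\dotsb$ with $E^i\in\sB$, constructed by iteratively embedding each current cokernel into an object of~$\sB$. Applying this to the terms of~$C^\bu$ and organising the coresolutions into a bicomplex, I build a Cartan--Eilenberg-type bicomplex $(B^{p,q})_{p\in\boZ,\,q\ge 0}$ equipped with an augmentation $\epsilon^p\: C^p\to B^{p,0}$, with vertical differentials $d_v\: B^{p,q}\to B^{p,q+1}$ coming from the coresolutions and horizontal differentials $d_h\: B^{p,q}\to B^{p+1,q}$ produced by a classical inductive lifting procedure. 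Each column $0\to C^p\to B^{p,\bu}$ is an exact coresolution; the liftings needed for $d_h$ exist thanks to column exactness (rather than any injectivity of individual $\sB$-objects).

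Next, I form the product totalization
\[
 B^m \;=\; \prod_{q\ge 0} B^{m-q,\,q},
\]
a countable product lying in~$\sB$ by the closure hypothesis. The augmentations assemble into a chain map $\epsilon^\bu\: C^\bu\to B^\bu$. To show $\epsilon^\bu$ is a quasi-isomorphism, I enlarge the bicomplex by setting $B^{p,-1}=C^p$ with $d_v^{-1}=\epsilon^p$, making all columns exact; its product totalization is then $\mathrm{Cone}(\epsilon^\bu)[-1]$. Decreasing-filtering by the vertical degree gives a countable tower whose bounded truncations are totalizations of finite stretches of exact columns and are therefore acyclic. Since countable products are exact in~$\sC$, countable inverse limits are exact too, so the higher derived inverse limits vanish on this tower; the Milnor exact sequence then shows that $\mathrm{Cone}(\epsilon^\bu)$ itself is acyclic.

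The Verdier equivalence follows by the standard localization principle of Verdier and Keller. Essential surjectivity of the induced functor $\sK(\sB)/(\sK(\sB)\cap\Ac(\sC))\rarrow\sD(\sC)$ is immediate from the existence of quasi-isomorphisms $C^\bu\to B^\bu$ with $B^\bu\in\sK(\sB)$. Full faithfulness reduces to showing that any roof $X^\bu\larrow Y^\bu\rarrow Z^\bu$ in $\sK(\sC)$ with $X^\bu, Z^\bu\in\sK(\sB)$ and left arrow a quasi-isomorphism can be refined to an equivalent roof with middle term in~$\sK(\sB)$, which is achieved by applying the first part to~$Y^\bu$ to produce a quasi-isomorphism $Y^\bu\to Y'^\bu$ into~$\sK(\sB)$.

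The main obstacle is establishing that $\mathrm{Cone}(\epsilon^\bu)$ is acyclic for unbounded $C^\bu$. In bounded regimes the product totalization of a column-exact bicomplex is automatically acyclic, but in the unbounded case this holds only when the derived inverse limit obstructions vanish --- precisely what the hypothesis of exact countable products provides. Once this is in place, the Cartan--Eilenberg construction and the Verdier calculus of fractions are routine.
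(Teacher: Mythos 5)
Your overall architecture (coresolve, totalize with products, then apply the Verdier localization lemma) is the right one, and the final localization step is fine. But there are two genuine gaps in the middle. First, the Cartan--Eilenberg-type bicomplex you describe cannot be built the way you claim: to produce the horizontal differentials $d_h\:B^{p,q}\rarrow B^{p+1,q}$ you must extend a map $C^p\rarrow B^{p+1,0}$ along the monomorphism $C^p\hookrightarrow B^{p,0}$ (and similarly at every stage), and this is a lifting problem against a monomorphism whose solvability requires the targets to be injective relative to such monomorphisms. Exactness of the columns gives you nothing here; a cogenerating class $\sB$ carries no lifting property. The way around this (and the way the paper proceeds) is not to coresolve the terms $C^p$ separately but to coresolve the complex $C^\bu$ itself, i.e., to construct termwise monomorphisms of complexes $C^\bu\hookrightarrow B^{0,\bu}$ with $B^{0,\bu}$ a complex in $\sB$, then iterate on the cokernel; no comparison-of-resolutions lifting is ever needed.

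Second, and more seriously, even granting the bicomplex, your acyclicity argument fails. The quotients of the filtration by vertical degree are totalizations of the \emph{brutal} truncations of the columns, and a brutal truncation of an exact complex is not exact at its top term; so these quotients are not acyclic, and the $\varprojlim$ and $\varprojlim^1$ terms in the Milnor sequence need not vanish. Indeed, termwise exactness of the augmented columns is simply not sufficient: a ``staircase'' bicomplex supported on two adjacent diagonals in the half-plane $q\ge0$, with all columns of the form $0\to k\to k\to0$ (identity differential) and identity horizontal maps, has all columns exact while $H^0$ of its product totalization is $\ker\bigl(\prod_{q\ge0}k\to\prod_{q\ge1}k\bigr)\cong k\ne0$. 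What the argument actually requires is that the coresolution $0\to C^\bu\to B^{0,\bu}\to B^{1,\bu}\to\dotsb$ be exact \emph{also after passing to cohomology objects}, which is the hypothesis of the Eilenberg--Moore lemma (Lemma~\ref{eilenberg-moore}) that the paper invokes. The paper secures this extra property by a truncation trick you are missing: for every $n$ one coresolves the bounded-below complex $\tau_{\ge n}C^\bu$ by a termwise-monic quasi-isomorphism into a complex in $\sB$, and takes $B^{0,\bu}$ to be the countable product over $n$ of these; the resulting map $C^\bu\to B^{0,\bu}$ is then monic on each cohomology object (not just termwise), and iterating yields a coresolution of complexes that is exact on cohomology. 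Without some such device your totalization need not be quasi-isomorphic to $C^\bu$.
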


\begin{proof}
 The proof of the first assertion is similar to the construction
of $K$\+projective and $K$\+injective resolutions of complexes of
modules in~\cite[Section~3]{Spal}.
 Let us spell out the construction as follows.
 For every $n\in\boZ$, consider the canonical truncation
$\tau_{\ge n}C^\bu$ of the complex~$C^\bu$.
 Since the complex $\tau_{\ge n}C^\bu$ is bounded below and
the subcategory $\sB\subset\sC$ is cogenerating, one can easily find
a (bounded below) complex ${}^{(n)}D^\bu$ in $\sB$ together with
a quasi-isomorphism $\tau_{\ge n}C^\bu\rarrow{}^{(n)}D^\bu$ in~$\sC$.
 Moreover, one can choose the complex ${}^{(n)}D^\bu$ so that
the morphism of complexes $\tau_{\ge n}C^\bu\rarrow{}^{(n)}D^\bu$
is a termwise monomorphism.
 The composition $C^\bu\rarrow\tau_{\ge n}C^\bu\rarrow{}^{(n)}D^\bu$
provides a morphism of complexes $C^\bu\rarrow{}^{(n)}D^\bu$.

 Now the induced morphism into the product $C^\bu\rarrow B^{0,\bu}=
\prod_{n\in\boZ}{}^{(n)}D^\bu$ is a termwise monomorphism of complexes
in $\sC$ inducing monomorphisms on all the cohomology objects.
 Furthermore, $B^{0,\bu}$ is a complex in~$\sB$.
 Next we apply the same construction to the cokernel of the morphism
of complexes $C^\bu\rarrow B^{0,\bu}$, embedding the complex
$B^{0,\bu}/C^\bu$ into a complex $B^{1,\bu}$ in $\sB$ in such a way
that the morphism of complexes $B^{0,\bu}/C^\bu\rarrow B^{1,\bu}$ is
(not only termwise monic, but also) induces monomorphisms on all
the cohomology objects.
 Iterating the procedure, we obtain an exact complex of complexes
$$
 0\lrarrow C^\bu\lrarrow B^{0,\bu}\lrarrow B^{1,\bu}\lrarrow
 B^{2,\bu}\lrarrow\dotsb
$$
such that passing to the cohomology objects produces an exact complex
$$
 0\lrarrow H^*(C^\bu)\lrarrow H^*(B^{0,\bu})\lrarrow
 H^*(B^{1,\bu})\lrarrow H^*(B^{2,\bu})\lrarrow\dotsb
$$
of graded objects in~$\sC$.

 Finally, we totalize the bicomplex $B^{\bu,\bu}$ by taking infinite
products along the diagonals.
 Put $B^\bu=\Tot^\sqcap(B^{\bu,\bu})$ to be the product totalization.
 Then $B^\bu$ is a complex in~$\sB$.
 Furthermore, the resulting morphism of complexes $C^\bu\rarrow B^\bu$
is a quasi-isomorphism by the next Lemma~\ref{eilenberg-moore}.
 This proves the first assertion of the proposition, and the second
one then follows by~\cite[Lemma~9.5]{PS6}.
 An opposite version of this argument can be found in~\cite[proof of
Proposition~A.4.3]{Pcosh}.
\end{proof}

 The next lemma is a classical result of Eilenberg and Moore~\cite{EM}.

\begin{lem} \label{eilenberg-moore}
 Let\/ $\sC$ be an abelian category with exact functors of countable
product, and let
$$
 0\lrarrow C^{0,\bu}\lrarrow C^{1,\bu}\lrarrow
 C^{2,\bu}\lrarrow C^{3,\bu}\lrarrow\dotsb
$$
be a bounded below complex of complexes in\/~$\sC$.
 Assume that, passing to the cohomology objects, we obtain an acyclic
complex
$$
 0\lrarrow H^*(C^{0,\bu})\lrarrow H^*(C^{1,\bu})\lrarrow H^*(C^{2,\bu})
 \lrarrow H^*(C^{3,\bu})\lrarrow\dotsb
$$
of graded objects in\/~$\sC$.
 Then the product totalization\/ $\Tot^\sqcap(C^{\bu,\bu})$ of
the bicomplex $C^{\bu,\bu}$ is an acyclic complex in\/~$\sC$.
\end{lem}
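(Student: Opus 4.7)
The plan is to run the classical Eilenberg--Moore argument, manufacturing a primitive for a given total cocycle by a recursion along the horizontal direction of the bicomplex, exploiting the lower bound $p\ge 0$ so that every component of the resulting primitive is automatically a finite sum. Write $d_h$ and $d_v$ for the horizontal and vertical differentials of $C^{\bu,\bu}$, and $d=d_h+(-1)^p d_v$ for the total differential on $\Tot^\sqcap(C^{\bu,\bu})$. Let $z=(z_p)_{p\ge 0}$ be a cocycle of total degree $n$, with $z_p\in C^{p,n-p}$; the goal is to produce $w=(w_p)_{p\ge 0}$ of total degree $n-1$ with $d(w)=z$.

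I will inductively construct elements $u_{k-1}\in C^{k-1,n-k}$ (with $u_{-1}:=0$), $v_k\in C^{k,n-k-1}$, and residual cocycles $r^{(k)}$ with $r^{(0)}=z$ and $r^{(k+1)}=r^{(k)}-d(\tilde w^{(k)})$, where $\tilde w^{(k)}$ takes the value $u_{k-1}$ in horizontal position $k-1$ and $(-1)^k v_k$ in position $k$, and zero elsewhere. The cocycle relation for $r^{(k)}$, whose components in positions $p<k$ vanish by induction, forces $d_v(r^{(k)}_k)=0$ and $d_h(r^{(k)}_k)=\pm d_v(r^{(k)}_{k+1})$, so the class $[r^{(k)}_k]\in H^{n-k}(C^{k,\bu})$ maps to zero in $H^{n-k}(C^{k+1,\bu})$. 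Invoking exactness at position $k$ of the complex $0\to H^{n-k}(C^{0,\bu})\to H^{n-k}(C^{1,\bu})\to\dotsb$, which at $k=0$ reduces to injectivity of the first arrow, produces a $d_v$-cocycle $u_{k-1}$ and an element $v_k$ satisfying $r^{(k)}_k=d_h(u_{k-1})+(-1)^k d_v(v_k)$; a direct verification then shows that $r^{(k+1)}$ has all components in positions $p\le k$ equal to zero.

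Assembly is then immediate: define $w$ by $w_p=(-1)^p v_p+u_p$, which is a legitimate element of the product since only the two data $v_p$ and $u_p$ contribute at each coordinate. A short telescoping calculation from the recursion $r^{(k+1)}=r^{(k)}-d(\tilde w^{(k)})$, combined with the eventual stabilization $r^{(k)}_p=0$ for $k\ge p+1$, yields $d(w)_p=z_p$ for every $p$. The only genuine obstacle is bookkeeping of the bicomplex sign conventions in the recursive step, so that the element chosen at stage $k$ really does cancel the leading nonzero component of $r^{(k)}$; apart from that the argument is formal and works in any abelian category with countable products, with element manipulations justified by Freyd--Mitchell or reformulated diagrammatically via the universal property of the product. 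The exactness of countable products assumed in the lemma is in fact not needed for this particular proof, and is invoked only in Proposition~\ref{abelian-exact-products-coresolution-prop}, where it is essential.
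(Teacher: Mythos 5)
Your recursion is precisely the classical Eilenberg--Moore cocycle chase to which the paper's proof delegates (via \cite[Theorem~7.4]{EM}), and as a computation with honest elements --- i.e., in a category of modules --- it is correct: each coordinate $w_p=(-1)^pv_p+u_p$ really is a finite sum, and the telescoping identity $d(w)_p=z_p-r^{(p+2)}_p=z_p$ works as you describe. The genuine gap is your final sentence: the exactness of countable products is \emph{not} a removable hypothesis, and neither of your two devices for legitimizing the element manipulations covers its role. A Freyd--Mitchell embedding does not preserve infinite products, so it does not identify $\Tot^\sqcap(C^{\bu,\bu})$ computed in $\sC$ with the total complex of the embedded bicomplex in a module category; and the universal property of the product only handles the final assembly of $w$ from its components, not the countably many existential choices of $u_{k-1}$ and $v_k$. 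If one replaces elements by generalized elements $T\to{-}$, each such choice forces a passage to an epimorphic cover of $T$, and splicing the resulting countable tower of covers into a single epimorphism onto $T$ is exactly where exactness of countable products (equivalently, exactness of countable inverse limits of towers of epimorphisms) is used.

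Moreover, the statement is simply false without that hypothesis, so no amount of sign bookkeeping can complete the proof as framed. Take $\sC$ to be the Grothendieck category of torsion abelian groups, in which countable products exist but are not exact; let $f_k\:B_k\rarrow C_k$ be the surjections $\boZ/p^k\rarrow\boZ/p$ with kernels $K_k$; let $C^{k,\bu}$ be the acyclic complex $K_k\rarrow B_k\rarrow C_k$ placed in cohomological degrees $-k-1$, $-k$, $-k+1$; and let all the morphisms $C^{k,\bu}\rarrow C^{k+1,\bu}$ be zero. The complex of cohomology objects is the zero complex, hence acyclic, but $\Tot^\sqcap(C^{\bu,\bu})$ is the three-term complex $\prod_kK_k\rarrow\prod_kB_k\rarrow\prod_kC_k$ sitting in degrees $-1$, $0$, $1$, and its degree-one cohomology is $\coker\bigl(\prod_kf_k\bigr)\ne0$, because any element of bounded order in $\prod_k\boZ/p^k$ maps to an eventually zero sequence in $(\boZ/p)^{\mathbb{N}}$. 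So the hypothesis must enter your argument --- it is what allows the countably many lifting steps to be performed simultaneously, exactly as in the $\varprojlim$/$\varprojlim^1$ formulation of the Eilenberg--Moore theorem --- rather than being declared unnecessary.
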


\begin{proof}
 This is a particular case of~\cite[Theorem~7.4]{EM} or (essentially)
the dual version of~\cite[Lemma~A.4.4]{Pcosh}.
\end{proof}

 The following corollary is a contramodule/ind-affine ind-scheme
analogue of~\cite[Corollary~9.7]{PS6} and
Corollary~\ref{comodule-cotorsion-derived-equivalence} above.

\begin{cor} \label{contramodule-cotorsion-derived-equivalence}
 Let\/ $\R$ be a complete, separated topological ring with
a \emph{countable} base of neighborhoods of zero consisting of open
\emph{two-sided} ideals.
 Then the inclusion of exact/abelian categories\/ $\R\Contra^\cot
\rarrow\R\Contra$ induces an equivalence of their unbounded derived
categories,
$$
 \sD(\R\Contra^\cot)\,\simeq\sD(\R\Contra).
$$
\end{cor}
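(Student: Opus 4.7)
The plan is to derive the equivalence by combining Proposition~\ref{abelian-exact-products-coresolution-prop} with Corollary~\ref{contramodule-cotorsion-cocycles-cor}, in the same spirit in which Corollary~\ref{comodule-cotorsion-derived-equivalence} was obtained from Corollary~\ref{comodule-cotorsion-cocycles-cor}.

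First, I would verify the hypotheses of Proposition~\ref{abelian-exact-products-coresolution-prop} for $\sC=\R\Contra$ and $\sB=\R\Contra^\cot$. Countable products are exact in $\R\Contra$: the forgetful functor $\R\Contra\rarrow\R\Modl$ is exact, faithful, and preserves infinite products (as recalled in Section~\ref{contra-preliminaries-secn}), and countable products are exact in $\R\Modl$. The subcategory $\R\Contra^\cot$ is cogenerating by Proposition~\ref{cotorsion-coresolving-prop}. Finally, $\R\Contra^\cot$ is closed under countable products, because for any flat left $\R$\+contramodule $\fF$ and any family $(\fB_n)_{n\ge1}$ of cotorsion contramodules one has $\Ext^{\R,1}(\fF,\prod_n\fB_n)\simeq\prod_n\Ext^{\R,1}(\fF,\fB_n)=0$, the isomorphism being valid precisely because countable products are exact in $\R\Contra$. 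Proposition~\ref{abelian-exact-products-coresolution-prop} then yields a triangulated equivalence
$$
 \sK(\R\Contra^\cot)/(\sK(\R\Contra^\cot)\cap\Ac(\R\Contra))
 \,\overset{\simeq}\lrarrow\,\sD(\R\Contra).
$$

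To finish, I would identify the left-hand side with $\sD(\R\Contra^\cot)$. Since $\R\Contra^\cot$ is closed under extensions in $\R\Contra$, the admissible short exact sequences of the inherited exact structure on $\R\Contra^\cot$ are exactly the short exact sequences in $\R\Contra$ all of whose three terms are cotorsion. Consequently, a complex $\fB^\bu$ in $\R\Contra^\cot$ is acyclic in the exact-category sense if and only if it is acyclic in $\R\Contra$ \emph{and} every $\R$\+contramodule of cocycles $Z^n(\fB^\bu)$ is cotorsion. By Corollary~\ref{contramodule-cotorsion-cocycles-cor}, the second condition is automatic whenever the first holds, so $\sK(\R\Contra^\cot)\cap\Ac(\R\Contra)=\Ac(\R\Contra^\cot)$, and the displayed equivalence rewrites as $\sD(\R\Contra^\cot)\simeq\sD(\R\Contra)$.

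The substantive ingredient is Corollary~\ref{contramodule-cotorsion-cocycles-cor}, whose proof rested on the cotorsion periodicity Theorem~\ref{contramodule-cotorsion-periodicity-theorem}; the remaining steps above are formal. The only point that requires a moment of care is that in the exact category $\R\Contra^\cot$, acyclicity in the sense of~\cite{Bueh} really does coincide with \emph{acyclicity in $\R\Contra$ together with cotorsion cocycles}, but this is standard once one spells out the inherited admissible sequences as above.
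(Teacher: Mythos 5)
Your argument is correct and is exactly the paper's proof, which simply says to compare Propositions~\ref{cotorsion-coresolving-prop} and~\ref{abelian-exact-products-coresolution-prop} with Corollary~\ref{contramodule-cotorsion-cocycles-cor}; you have merely spelled out the verification of the hypotheses (exactness of countable products via the exact, faithful, product-preserving forgetful functor to $\R\Modl$; closure of $\R\Contra^\cot$ under countable products via the $\Ext$ computation using projective resolutions) and the identification $\sK(\R\Contra^\cot)\cap\Ac(\R\Contra)=\Ac(\R\Contra^\cot)$. All of these details check out.
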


\begin{proof}
 It suffices to compare
Propositions~\ref{cotorsion-coresolving-prop}
and~\ref{abelian-exact-products-coresolution-prop} with
Corollary~\ref{contramodule-cotorsion-cocycles-cor}.
\end{proof}

\Section{Exact Sequences of Flat Contramodules~II}

 In this short section we present contramodule/ind-affine ind-scheme
analogues of the results of
Section~\ref{comodules-exact-sequences-II-secn}.
 We refer to Section~\ref{comodules-exact-sequences-II-secn} for
the discussion of \emph{self-resolving subcategories} in abelian/exact
categories.

 The following lemma is a contramodule analogue of
Corollary~\ref{comodules-count-pres-flat-exact-category}.

\begin{lem} \label{contramodules-count-pres-flat-exact-category}
 Let\/ $\R$ be a complete, separated topological ring with
a \emph{countable} base of neighborhoods of zero consisting of open
\emph{two-sided} ideals.
 Then \par
\textup{(a)} the full subcategory\/ $\sG$ of all countably presentable
flat\/ $\R$\+contramodules is self-resolving in the abelian category\/
$\R\Contra$; \par
\textup{(b)} the homological dimension of the exact category\/ $\sG$
(with the exact structure inherited from\/ $\R\Contra$) does not
exceed\/~$1$.
\end{lem}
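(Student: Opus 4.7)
The plan is to verify the three closure properties defining ``self-resolving'' for part~(a), and then combine part~(a) with the earlier projective dimension bound to obtain part~(b).

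For closure of $\sG$ under extensions in $\R\Contra$, given a short exact sequence $0 \rarrow \fG_1 \rarrow \fG \rarrow \fG_2 \rarrow 0$ with $\fG_1, \fG_2 \in \sG$, flatness of $\fG$ is Lemma~\ref{flat-contramodules-well-behaved}(a), while countable presentability of $\fG$ is a consequence of the fact that in a locally $\aleph_1$-presentable category (here $\R\Contra$, by Proposition~\ref{presentable-contramodules-described}(a)), the class of $\aleph_1$-presentable objects is closed under $\aleph_1$-small colimits, and in particular under extensions. Closure of $\sG$ under kernels of surjective morphisms in $\R\Contra$ is the content of Lemma~\ref{contramodule-countable-flat-coherence}, which is the key nontrivial ingredient here.

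For the self-generation property, I would avoid the directed-colimit route used for the comodule analogue in Corollary~\ref{comodules-count-pres-flat-exact-category}(a) and instead argue directly via projectivity of free contramodules. Given any surjection $\fF \rarrow \fH$ in $\R\Contra$ with $\fH \in \sG$, the countable presentability of $\fH$ gives, in particular, a surjection $\R[[X]] \rarrow \fH$ for some countable set~$X$. Since $\R[[X]]$ is projective, this surjection lifts through $\fF \rarrow \fH$ to a morphism $\fG := \R[[X]] \rarrow \fF$ whose postcomposition with $\fF \rarrow \fH$ agrees with the chosen surjection $\R[[X]] \rarrow \fH$, and is therefore surjective. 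The free contramodule $\R[[X]]$ on a countable set is countably presentable and flat (being projective), so $\fG \in \sG$, which completes part~(a).

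For part~(b), part~(a) supplies the self-resolving hypothesis of Lemma~\ref{self-resolving-ext-isomorphism}, which identifies the Yoneda $\Ext$ groups computed in the exact category $\sG$ with those computed in the abelian category $\R\Contra$. By Corollary~\ref{count-pres-flat-contra-projdim1-cor}, every object $X \in \sG$ has projective dimension at most $1$ in $\R\Contra$, so $\Ext^n_{\R\Contra}(X, Y) = 0$ for all $n \ge 2$ and all $Y \in \R\Contra$; a fortiori $\Ext^n_\sG(X, Y) = 0$ for $n \ge 2$ and $X, Y \in \sG$, giving homological dimension $\le 1$. The substantive technical work is thus already packaged inside Lemma~\ref{contramodule-countable-flat-coherence} and Corollary~\ref{count-pres-flat-contra-projdim1-cor}; the main obstacle would have been these two results, and with them in hand the present lemma is essentially a formal assembly.
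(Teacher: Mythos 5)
Your proof follows essentially the same route as the paper's: closure under extensions via Lemma~\ref{flat-contramodules-well-behaved}(a) together with closure of countably presentable objects under extensions, closure under kernels via Lemma~\ref{contramodule-countable-flat-coherence}, self-generation by lifting a surjection from a countably generated free (hence projective, flat, countably presentable) contramodule $\R[[X]]$ through the given epimorphism, and part~(b) from Corollary~\ref{count-pres-flat-contra-projdim1-cor} combined with Lemma~\ref{self-resolving-ext-isomorphism}. The one imprecision is that an extension is not literally an $\aleph_1$\+small colimit of its outer terms, so closure of countably presentable objects under extensions should instead be justified by the usual horseshoe-type argument (exhibiting the middle term as a cokernel of a morphism between countably presentable objects), which is what the paper implicitly invokes when it says the assertion is ``provable in the same way as the similar assertion for modules over a ring.''
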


\begin{proof}
 Part~(a): the class of all flat $\R$\+contramodules is closed under
extensions in $\R\Contra$ by
Lemma~\ref{flat-contramodules-well-behaved}(a).
 The assertion that the class of all countably presentable
$\R$\+contramodules is closed under extensions is provable in
the same way as the similar assertion for modules over a ring.
 Hence the class $\sG$ of all countably presentable flat
$\R$\+contramodules is closed under extensions in $\R\Contra$.
 The class $\sG$ is also closed under kernels of epimorphisms in
$\R\Contra$ by Lemma~\ref{contramodule-countable-flat-coherence}.
 Finally, the class $\sG$ is self-generating in $\R\Contra$, since
every object of $\sG$ is a quotient object in $\R\Contra$ of an object
from $\sG$ (namely, of the free $\R$\+contramodule with a countable
set of generators) that is projective in $\R\Contra$.

 Part~(b): the assertion follows directly from
Corollary~\ref{count-pres-flat-contra-projdim1-cor}.
 One can also invoke Lemma~\ref{self-resolving-ext-isomorphism} here
to the effect that it follows from part~(a) that the Ext groups computed
in $\sG$ agree with the ones computed in $\R\Contra$.
\end{proof}

 The following theorem is a contramodule/ind-affine ind-scheme version
of~\cite[Theorem~2.4\,(1)\,$\Leftrightarrow$\,(3)]{EG} or
\cite[Theorem~8.6\,(ii)\,$\Leftrightarrow$\,(iii)]{Neem},
and an analogue of
Theorem~\ref{flat-comodule-pure-acyclic-complexes-characterized} above.
 It should be also compared to~\cite[Corollary~0.5 or
Proposition~8.13]{PS5}.
 In the latter context,
Theorem~\ref{flat-contramodule-pure-acyclic-complexes-charact-ed} can
be interpreted as saying that (over a topological ring $\R$ with
a countable topology base of two-sided ideals) \emph{for the exact
category of flat\/ $\R$\+contramodules, the derived category coincides
with the} (\emph{suitably defined}) \emph{coderived category}.

\begin{thm} \label{flat-contramodule-pure-acyclic-complexes-charact-ed}
 Let\/ $\R$ be a complete, separated topological ring with
a \emph{countable} base of neighborhoods of zero consisting of open
\emph{two-sided} ideals.
 Then the following classes of complexes of left\/ $\R$\+contramodules
coincide:
\begin{enumerate}
\item all pure acyclic complexes of flat\/ $\R$\+contramodules;
\item the closure of the class of all contractible complexes of
countably presentable flat\/ $\R$\+contramodules under extensions and
directed colimits;
\item all\/ $\aleph_1$\+directed colimits of totalizations of short
exact sequences of complexes of countably presentable flat\/
$\R$\+contramodules.
\end{enumerate}
\end{thm}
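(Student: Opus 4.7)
The plan is to mirror the proof of Theorem~\ref{flat-comodule-pure-acyclic-complexes-characterized} with $d=1$, the relevant bound on the homological dimension of the countably presentable flat $\R$\+contramodules coming from Corollary~\ref{count-pres-flat-contra-projdim1-cor} (so that a $(d+2)$\+term exact complex is precisely a short exact sequence, as in the theorem statement). The inclusions $(3)\subset(2)\subset(1)$ are immediate. For $(3)\subset(2)$: the totalization of a short exact sequence of complexes is a two\+step iterated extension of $d+1=2$ cones of identity endomorphisms of complexes in $\sG$, and those cones are contractible; the class (2) is closed under extensions and directed colimits (in particular $\aleph_1$\+directed) by definition. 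For $(2)\subset(1)$: contractible complexes of flat $\R$\+contramodules are trivially pure acyclic; the class of pure acyclic complexes of flat $\R$\+contramodules is closed under extensions by Lemma~\ref{flat-contramodules-well-behaved}(a) applied to the cocycle short exact sequences, and closed under directed colimits by Lemma~\ref{directed-colimits-of-flat-contramodules-exact}.

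The content lies in $(1)\subset(3)$. Take any pure acyclic complex $\fF^\bu$ of flat $\R$\+contramodules. Corollary~\ref{contramods-pure-acycl-cplxs-as-aleph1-dir-colims} provides an $\aleph_1$\+directed diagram $(\fF^\bu_\xi,\phi_{\xi\eta})_{\xi\in\Xi}$ of pure acyclic complexes of countably presentable flat $\R$\+contramodules with $\fF^\bu\simeq\varinjlim_{\xi\in\Xi}\fF^\bu_\xi$. Each $\fF^\bu_\xi$ is an acyclic complex in the exact category $\sG$ of countably presentable flat $\R$\+contramodules (by the self\+resolving property from Lemma~\ref{contramodules-count-pres-flat-exact-category}(a)), and by Lemma~\ref{contramodules-count-pres-flat-exact-category}(b) this exact category has homological dimension at most $1$. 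Hence by Lemma~\ref{acyclic-are-absolutely-acyclic} each $\fF^\bu_\xi$ is absolutely acyclic in $\sG$, and Proposition~\ref{absolutely-acyclic-are-summands-of-totalizations} with $d=1$ presents $\fF^\bu_\xi$ as a direct summand of the totalization $\mathfrak{T}^\bu_\xi$ of a $3$\+term exact complex of complexes in $\sG$, that is, of a short exact sequence.

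The main obstacle is upgrading ``direct summand of a totalization'' to ``$\aleph_1$\+directed colimit of totalizations''. I would handle this by changing the diagram: fix splittings $\iota_\xi\colon\fF^\bu_\xi\hookrightarrow\mathfrak{T}^\bu_\xi$ and $\pi_\xi\colon\mathfrak{T}^\bu_\xi\twoheadrightarrow\fF^\bu_\xi$ with $\pi_\xi\iota_\xi=\id$, and define a new system on the same index set $\Xi$ by $\xi\mapsto\mathfrak{T}^\bu_\xi$ and $\widetilde\phi_{\xi\eta}=\iota_\eta\circ\phi_{\xi\eta}\circ\pi_\xi$ for $\xi<\eta$ (with $\widetilde\phi_{\xi\xi}=\id$). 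A short check yields functoriality $\widetilde\phi_{\eta\zeta}\circ\widetilde\phi_{\xi\eta}=\widetilde\phi_{\xi\zeta}$, using $\pi_\eta\iota_\eta=\id$. Because $\widetilde\phi_{\xi\eta}$ factors through $\iota_\eta$ whenever $\xi<\eta$, the complementary summand $\ker(\pi_\xi)$ of $\mathfrak{T}^\bu_\xi$ vanishes in the colimit, so (after handling any eventual maximum of $\Xi$ by appending a cofinal copy of $\omega_1$ if needed) every class in $\varinjlim_\xi\mathfrak{T}^\bu_\xi$ has a representative in some $\iota_\eta(\fF^\bu_\eta)$, giving $\varinjlim_\xi\mathfrak{T}^\bu_\xi\simeq\varinjlim_\xi\fF^\bu_\xi\simeq\fF^\bu$. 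This exhibits $\fF^\bu$ as an $\aleph_1$\+directed colimit of totalizations, placing it in class~(3).
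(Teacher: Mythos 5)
Your proof is correct and follows essentially the same route as the paper: the same chain of inclusions, and the same appeal to Corollary~\ref{contramods-pure-acycl-cplxs-as-aleph1-dir-colims}, Lemma~\ref{contramodules-count-pres-flat-exact-category}, Lemma~\ref{acyclic-are-absolutely-acyclic}, and Proposition~\ref{absolutely-acyclic-are-summands-of-totalizations} with $d=1$. The one place where you go beyond the paper's write-up is the explicit diagram modification $\widetilde\phi_{\xi\eta}=\iota_\eta\circ\phi_{\xi\eta}\circ\pi_\xi$ upgrading ``direct summand of a totalization'' to ``$\aleph_1$\+directed colimit of totalizations''; the paper leaves this step implicit, and your argument for it (including the treatment of a possible maximum of $\Xi$) is correct.
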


\begin{proof}
 The totalization of a short exact sequence of complexes is an extension
of the two cones of identity endomorphisms of the rightmost and
leftmost complexes in the short exact sequence.
 Furthermore, the class of all pure acyclic complexes of flat
$\R$\+contramodules is closed under extensions and directed colimits
(by Lemmas~\ref{flat-contramodules-well-behaved}(a)
and~\ref{directed-colimits-of-flat-contramodules-exact}), and
the contractible complexes of flat $\R$\+contramodules are pure acyclic.
 This proves the inclusions (3)\,$\subset$\,(2)\,$\subset$\,(1).

 Conversely, to prove that (1)\,$\subset$\,(3), recall that all pure
acyclic complexes of flat $\R$\+contramodules are $\aleph_1$\+directed
colimits of pure acyclic complexes of countably presentable flat
$\R$\+contramodules by 
Corollary~\ref{contramods-pure-acycl-cplxs-as-aleph1-dir-colims}.
 The exact category $\sG$ of countably presentable flat
$\R$\+contramodules has homological dimension~$\le1$
by Lemma~\ref{contramodules-count-pres-flat-exact-category}(b).
 Hence the pure acyclic complexes of countably presentable
flat $\R$\+contramodules are absolutely acyclic by
Lemma~\ref{acyclic-are-absolutely-acyclic}, and it remains to refer
to Proposition~\ref{absolutely-acyclic-are-summands-of-totalizations}
(for $d=1$).
\end{proof}

\begin{qsts}
 The main results of this paper concerning contramodules, such as
Theorems~\ref{flat-contramodules-as-directed-colimits-theorem},
\ref{contramodule-cotorsion-periodicity-theorem}, and
and~\ref{flat-contramodule-pure-acyclic-complexes-charact-ed},
as well as Propositions~\ref{complexes-of-flat-contramodules-prop}
and~\ref{short-exact-sequences-of-flat-contramodules}, and
Corollary~\ref{contramods-pure-acycl-cplxs-as-aleph1-dir-colims}
assume existence of a countable topology base of two-sided ideals
in the topological ring~$\R$.
 Are any of these theorems valid for complete, separated topological
rings with a countable base of neighborhoods of zero consisting of
open \emph{right} ideals?
 It would be interesting to know; but in fact, even
Questions~\ref{count-pres-flat-contra-projdim-base-of-right-qst}
and~\ref{contramodule-flat-coherence-base-of-right-qst} seem to be open.
\end{qsts}

\bigskip

\end{document}